\numberwithin{equation}{section}
\newtheorem{theorem}{Theorem}[section]
\newtheorem{lemma}[theorem]{Lemma}
\newtheorem{corollary}[theorem]{Corollary}
\newtheorem{proposition}[theorem]{Proposition}
\theoremstyle{definition}
\newtheorem*{conjecture}{Conjecture}
\theoremstyle{remark}
\newtheorem{remark}[theorem]{Remark}
\newtheorem*{remark*}{Note}
\numberwithin{equation}{section}
\newcommand{\RNum}[1]{\uppercase\expandafter{\romannumeral #1\relax}}
\newcommand{\specificthanks}[1]{\@fnsymbol{#1}}
\DeclareFontFamily{OML}{rsfs}{\skewchar\font'177}
\DeclareFontShape{OML}{rsfs}{m}{n}{ <5> <6> rsfs5 <7> <8> <9>
	rsfs7 <10> <10.95> <12> <14.4> <17.28> <20.74> <24.88> rsfs10 }{}
\DeclareMathAlphabet{\mathfs}{OML}{rsfs}{m}{n}
\newcounter{cnstcnt}
\newcommand{\cl}{%
	\refstepcounter{cnstcnt}%
	\ensuremath{c_{\thecnstcnt}}}
\newcommand{\cref}[1]{\ensuremath{c_{\ref*{#1}}}}
\newcounter{newcnstcnt}
\newcommand{\Cl}{%
	\refstepcounter{newcnstcnt}%
	\ensuremath{C_{\thenewcnstcnt}}}
\newcommand{\Cref}[1]{\ensuremath{C_{\ref*{#1}}}}
\DeclareFontFamily{U}{mathx}{}
\DeclareFontShape{U}{mathx}{m}{n}{<-> mathx10}{}
\DeclareSymbolFont{mathx}{U}{mathx}{m}{n}
\DeclareMathAccent{\widehat}{0}{mathx}{"70}
\DeclareMathAccent{\widecheck}{0}{mathx}{"71}
\begin{document}

	\title{Quasi-multiplicativity and regularity for metric graph Gaussian free fields}
	
	%\title{Regularity of connecting probabilities for metric graph Gaussian free fields}

%Spatial properties of critical clusters for Gaussian free fields and loop soups on metric graphs
	
	%critical level-set clusters for metric graph Gaussian free fields
	
%Spatial properties of critical clusters for Gaussian free field level-sets and loop soups on metric graphs
%Incipient infinite clusters for Gaussian free field level-sets and loop soups on metric graphs

		\author{Zhenhao Cai$^1$}
		\address[Zhenhao Cai]{Faculty of Mathematics and Computer Science, Weizmann Institute of Science}
		\email{zhenhao.cai@weizmann.ac.il}
		\thanks{$^1$Faculty of Mathematics and Computer Science, Weizmann Institute of Science}

		\author{Jian Ding$^2$}
		\address[Jian Ding]{School of Mathematical Sciences, Peking University}
		\email{dingjian@math.pku.edu.cn}
		\thanks{$^2$School of Mathematical Sciences, Peking University. Partially supported by NSFC Key Program Project No.12231002 and by New Cornerstone Science Foundation through the XPLORER PRIZE}
	%	%\urladdr{}
	\maketitle
	%	
	%	
	%	\tableofcontents
	
	 	\begin{abstract}
	 	We prove quasi-multiplicativity for critical level-sets of Gaussian free fields (GFF) on the metric graphs $\widetilde{\mathbb{Z}}^d$ ($d\ge 3$). Specifically, we study the probability of connecting two general sets located on opposite sides of an annulus with inner and outer radii both of order $N$, where additional constraints are imposed on the distance of each set to the annulus. We show that for all $d \ge 3$ except the critical dimension $d=6$, this probability is of the same order as $N^{(6-d)\land 0}$ (serving as a correction factor) times the product of the two probabilities of connecting each set to the closer boundary of this annulus. The analogue for $d=6$ is also derived, although the upper and lower bounds differ by a divergent factor of $N^{o(1)}$. Notably, it was conjectured by Basu and Sapozhnikov (2017) that quasi-multiplicativity without correction factor holds for Bernoulli percolation on $\mathbb{Z}^d$ when $3\le d<6$ and fails when $d>6$. In high dimensions (i.e., $d>6$), taking into account the similarity between the metric graph GFF and Bernoulli percolation (which was  proposed by Werner (2016) and later partly confirmed by the authors (2024)), our result provides support to the conjecture that Bernoulli percolation exhibits quasi-multiplicativity with correction factor $N^{6-d}$.

	 	   During our proof of quasi-multiplicativity, numerous regularity properties, which are interesting on their own right, were also established. E.g., the probability of connecting a point and a general set, as a function of the point, exhibits behaviors similar to harmonic functions, including Harnack's inequality, decay rate, stability on boundary conditions, etc.

	 	   A crucial application of quasi-multiplicativity is proving the existence of the  incipient infinite cluster (IIC), which has been completed by Basu and Sapozhnikov (2017) for Bernoulli percolation for $3\le d<6$. Inspired by their work, in a companion paper, we also utilize quasi-multiplicativity to establish the IIC for metric graph GFFs, for all $d\ge 3$ except the critical dimension $6$.

	 	\end{abstract}

\section{Introduction}\label{section_intro}

The aim of this paper is to establish several regularity properties of connecting probabilities for critical level-sets of Gaussian free fields (GFF) on metric graphs. For clarity of exposition, we first review some definitions. We assume the dimension $d\ge 3$ throughout this paper. For the $d$-dimensional integer lattice $\mathbb{Z}^d$, we denote its edge set by $\mathbb{L}^d:= \{\{x,y\}:x,y\in \mathbb{Z}^d, |x-y|=1\}$, where $|\cdot|$ represents the  Euclidean norm. For each $e=\{x,y\}\in \mathbb{L}^d$, consider a compact interval $I_e$ of length $d$  (here the length $d$ is chosen only for convenience and does not cause any difference on the geometry of the GFF) whose endpoints are identical to the lattice points $x$ and $y$ respectively. The union of all these intervals forms the metric graph of $\mathbb{Z}^d$, denoted by $\widetilde{\mathbb{Z}}^d:=\cup_{e\in \mathbb{L}^d}I_e$. The main focus of this paper $\{\widetilde{\phi}_v\}_{v\in \widetilde{\mathbb{Z}}^d}$, i.e., the GFF on $\widetilde{\mathbb{Z}}^d$, can be sampled by the following two steps.

\begin{enumerate}
	\item Restricted to the lattice $\mathbb{Z}^d$, $\{\widetilde{\phi}_x\}_{x\in \mathbb{Z}^d}$ is distributed as a discrete GFF on $\mathbb{Z}^d$, i.e., mean-zero Gaussian random variables with covariance 
\begin{equation}
		\mathbb{E}[\phi_x\phi_y]=G(x,y),\ \ \forall x,y\in \mathbb{Z}^d,
	\end{equation}
	where $G(x,y)$ is the Green's function and equals to the expected number of visits to $y$ by a simple random walk on $\mathbb{Z}^d$ starting from $x$.

	\item  Given all values of $\widetilde{\phi}_\cdot$ on $\mathbb{Z}^d$, $\{\widetilde{\phi}_v\}_{v\in I_e}$ for $e\in \mathbb{L}^d$ are independent Brownian bridges generated by Brownian motions with variance $2$ at time $1$, whose boundary values match $\{\widetilde{\phi}_x\}_{x\in \mathbb{Z}^d}$.

\end{enumerate}
It was proved in \cite{lupu2016loop} that the GFF level-set $\widetilde{E}^{\ge h}:=\big\{v\in \widetilde{\mathbb{Z}}^d:\widetilde{\phi}_v\ge h\big\}$ ($h\in \mathbb{R}$) a.s. percolates (i.e., includes an infinite connected component) for all $h<0$, and a.s. does not percolate for all $h\ge 0$. As a natural way to understand the clusters of the GFF level-set at the critical level $\widetilde{h}_*=0$, the (critical) connecting probabilities have been extensively studied. Precisely, for any disjoint subsets $A_1,A_2\subset \widetilde{\mathbb{Z}}^d$, let $A_1\xleftrightarrow{\ge h} A_2$ be the event that there exists a path in $\widetilde{E}^{\ge h}$ connecting $A_1$ and $A_2$ (when $A_i=\{v\}$ for some $v\in \widetilde{\mathbb{Z}}^d$, the braces may be omitted). It was proved in \cite[Proposition 5.2]{lupu2016loop} that the two-point function 
\begin{equation}\label{two_point}
	\mathbb{P}\big(x\xleftrightarrow{\ge 0} y\big)= \pi^{-1}\arcsin\bigg(\frac{G(x,y)}{\sqrt{G(x,x)G(y,y)}}\bigg) \asymp |x-y|^{2-d},
	\end{equation}
for all $x\neq y\in \mathbb{Z}^d$, where $f\asymp g$ means that $cg\le f\le Cg$ holds for some constants $c$ and $C$ depending only on $d$. Moreover, the (critical) one-arm probability $\theta_d(N):= \mathbb{P}(\bm{0}\xleftrightarrow{\ge 0}\partial B(N))$ is also of great interest, where $\bm{0}:=(0,0,...,0)$ is the origin of $\mathbb{Z}^d$, $B(N):=[-N,N]^d\cap \mathbb{Z}^d$, and $\partial A:= \{x\in A: \exists y\in \mathbb{Z}^d\setminus A\ \text{such that}\ \{x,y\}\in \mathbb{L}^d\}$. Through the collective efforts from \cite{ding2020percolation,cai2024high,drewitz2023arm,cai2024one,drewitz2024critical1} (also see \cite{drewitz2023critical} on extensions to more general transient graphs), it was finally established that (see \cite{cai2024high} for $d>6$; see \cite{cai2024one} for $3\le d\le 6$ and also see a concurrent work \cite{drewitz2024critical1} for $d=3$) 
\begin{align}
	&\text{when}\ 3\le d<6,\ \ \ \ \ \ \ \ \  \theta_d(N) \asymp N^{-\frac{d}{2}+1};\label{one_arm_low} \\
	&\text{when}\ d=6,\ \ \ \ \  N^{-2}\lesssim \theta_6(N) \lesssim N^{-2+\varsigma(N)}, \  \text{where}\ \varsigma(N):= \tfrac{\ln\ln(N)}{\ln^{1/2}(N)}\ll 1; \label{one_arm_6} \\
	&\text{when}\ d>6,\ \ \ \ \ \ \ \ \ \ \ \ \ \ \ \theta_d(N) \asymp N^{-2}.\label{one_arm_high}
\end{align}
Here $f\lesssim g$ means $f\le Cg$ for some constant $C$ depending only on $d$. In words, the exact order of $\theta_d(N)$ for all dimensions except $6$, as well as the exponent of $\theta_6(N)$, have been ascertained. In addition to the two-point function and the one-arm probability, the crossing probability $\rho_d(n,N):=\mathbb{P}\big(B(n)\xleftrightarrow{\ge 0} \partial B(N) \big)$ was also studied in \cite{cai2024one}. Specifically, it was shown in \cite[Theorem 1.2]{cai2024one} that
\begin{align}
	&\text{when}\ 3\le d<6, \ \ \ \ \ \ \ \rho_d(n,N) \asymp \big(\frac{n}{N}\big)^{\frac{d}{2}+1};\label{crossing_low} \\
	&\text{when}\ d=6,\  \big(\frac{n}{N}\big)^2  \lesssim  	\rho_6(n,N)  \lesssim   \big(\frac{n}{N^{1-\varsigma(N)}}\big)^2; \label{crossing_6} \\
	&\text{when}\ d>6, \ \ \ \ \   \ \ \ \ \  \ \    \rho_d(n,N) \asymp  (n^{d-4}N^{-2})\land 1. \label{crossing_high}
\end{align}
Readers may refer to \cite[Section 1]{cai2024one} for a detailed account on related results.

\subsection{Quasi-multiplicativity}

Instead of considering connecting probabilities between specific sets (say points or boundaries of boxes, as in the two-point function, the one-arm probability $\theta_d$ and the crossing probability $\rho_d$), in this paper we explore the ones between general sets (even with certain zero boundary condition). In \cite{basu2017kesten}, a fundamental property called \textit{quasi-multiplicativity} concerning such general connecting probabilities was conjectured to hold for Bernoulli percolation on $\mathbb{Z}^d$ with $3\le d\le 5$. Precisely, let $\mathbb{P}_p$ denote the law of Bernoulli percolation on $\mathbb{Z}^d$ with parameter $p$, and let $p_c(d)$ be the critical threshold.

\begin{conjecture}[{\cite[Assumption (A2)]{basu2017kesten}}] For any $3\le d\le 5$ and $\delta>0$, there exists a constant $c=c(d,\delta)>0$ such that for any $p\in [p_c(d),p_c(d)+\delta]$, $N\ge 1$, $Z\subset \mathbb{Z}^d$ with $B(4N)\setminus B(N) \subset Z$, $X\subset Z\cap B(N)$ and $Y\subset Z\setminus B(4N)$,  
	\begin{equation}\label{conj_QM}
		\mathbb{P}_p\big(X\xleftrightarrow{} Y\ \text{in}\ Z\big)\ge c\cdot  \mathbb{P}_p\big(X\xleftrightarrow{} \partial B(2N)\ \text{in}\ Z\big)\mathbb{P}_p\big(Y\xleftrightarrow{} \partial B(2N)\ \text{in}\ Z\big).
	\end{equation}	
	Here ``$A\xleftrightarrow{} A'$ in $Z$'' means that there exists a path within $Z$ that connects $A$ and $A'$ and consists of open edges.
\end{conjecture}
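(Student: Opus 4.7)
My plan is to follow the classical three-step pattern for quasi-multiplicativity in percolation: decouple the configuration inside and outside an intermediate annulus at scale $2N$, recognize each half as approximately an independent one-arm event, and glue the two arms together in a connector annulus with uniformly positive probability.

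\textbf{Step 1 (Decoupling).} I would first introduce a buffer scale $\ell\ll N$ and partition the edges of $Z$ into three disjoint subsets supported on $Z\cap B(2N-\ell)$, the buffer $A_*:=Z\cap(B(2N+\ell)\setminus B(2N-\ell))$, and $Z\setminus B(2N+\ell)$. By the product structure of Bernoulli percolation, any events supported on these three regions are independent, so for any admissible triple $(E^{\mathrm{in}},E^{\mathrm{glue}},E^{\mathrm{out}})$ whose conjunction forces $X\leftrightarrow Y$ in $Z$,
\[
\mathbb{P}_p(X\leftrightarrow Y\ \text{in}\ Z)\ \ge\ \mathbb{P}_p(E^{\mathrm{in}})\,\mathbb{P}_p(E^{\mathrm{glue}})\,\mathbb{P}_p(E^{\mathrm{out}}).
\]

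\textbf{Step 2 (Arm regularity).} I would then take $E^{\mathrm{in}}=\{X\xleftrightarrow{}\partial B(2N-\ell)\ \text{in}\ Z\cap B(2N-\ell)\}$ and $E^{\mathrm{out}}=\{Y\xleftrightarrow{}\partial B(2N+\ell)\ \text{in}\ Z\setminus B(2N+\ell)\}$. To recover \eqref{conj_QM} I need
\[
\mathbb{P}_p(E^{\mathrm{in}})\ \gtrsim\ \mathbb{P}_p(X\xleftrightarrow{}\partial B(2N)\ \text{in}\ Z),
\]
and similarly for $E^{\mathrm{out}}$. This is a \emph{Harnack}-type stability of the one-arm function in its terminal boundary, and should follow by conditioning on the innermost open cluster of $X$ in $Z\cap B(2N-\ell)$ and extending it across the thin shell between $\partial B(2N-\ell)$ and $\partial B(2N)$ at bounded multiplicative cost, provided the near-critical scaling of one-arm probabilities is stable across annuli of bounded ratio.

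\textbf{Step 3 (Gluing, the main obstacle).} Finally I would need $E^{\mathrm{glue}}$ supported on $A_*$ with $\mathbb{P}_p(E^{\mathrm{glue}})\ge c>0$ that concatenates \emph{any} two arms landing on $\partial B(2N-\ell)$ and $\partial B(2N+\ell)$ into a single open crossing of $A_*$. The natural candidate --- a thick open ``ring'' cluster crossing $A_*$ radially and enveloping $\partial B(2N)$ --- would require a box-crossing estimate of Russo--Seymour--Welsh type in dimensions $3\le d\le 5$. This is the heart of the difficulty: proving $\mathbb{P}_p(E^{\mathrm{glue}})\gtrsim 1$ uniformly at criticality is essentially equivalent to the conjecture itself. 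I expect any genuine proof to require either a high-dimensional RSW theorem, which has resisted attack for decades, or a different structural input such as the Markov / loop-soup representation available for the metric graph GFF --- precisely the replacement the authors exploit in the GFF setting of this paper.
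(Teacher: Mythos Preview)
You have correctly recognized that this statement is presented in the paper as an \emph{open conjecture} about Bernoulli percolation, not as a result the paper proves. There is no proof in the paper to compare against; the authors cite it as motivation and then establish the analogue for the metric graph GFF (Theorem~\ref{prop_new_QM}).

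Your three-step outline is a clean diagnosis of why the Bernoulli case remains open, and your self-assessment of Step~3 is accurate: the universal gluing event --- an open annular structure in the buffer connecting every landing point on $\partial B(2N-\ell)$ to every landing point on $\partial B(2N+\ell)$ --- is essentially an RSW-type statement in dimensions $3\le d\le 5$, and no such result is known. One small correction: your Step~2 is actually trivial rather than delicate. Any open path in $Z$ from $X\subset B(N)$ to $\partial B(2N)$ must first hit $\partial B(2N-\ell)$ while remaining inside $Z\cap B(2N-\ell)$, so $\mathbb{P}_p(E^{\mathrm{in}})\ge\mathbb{P}_p(X\leftrightarrow\partial B(2N)\text{ in }Z)$ by inclusion, and similarly for $E^{\mathrm{out}}$. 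The entire obstruction therefore sits in Step~3.

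For contrast, the paper's proof of the GFF lower bound (Section~\ref{section_proof_theorem1}) bypasses the geometric gluing problem altogether. It explores the sign clusters from $A_1$ and $A_2$ separately, shows (Lemma~\ref{lemma_newHr}) that each produces with good probability a sufficiently positive \emph{harmonic average} on an intermediate sphere, combines the two events via FKG, and then invokes the exact connection formula~(\ref{2.22}) to lower-bound the conditional connecting probability given those boundary values. The gluing is thus analytic rather than geometric, relying on the strong Markov property of the GFF and Lupu's explicit formula --- inputs with no known Bernoulli analogue, exactly as you anticipated in your closing sentence.
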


Note that the independence of Bernoulli percolation ensures that the left-hand side of (\ref{conj_QM}) is bounded from above by the product of the two probabilities on the right-hand side of (\ref{conj_QM}) (note that this argument does not work for the GFF). Thus, this conjecture implies that the two sides of (\ref{conj_QM}) are of the same order:
\begin{equation}\label{formula_QM}
	\mathbb{P}_p\big(X\xleftrightarrow{} Y\ \text{in}\ Z\big)\asymp  \mathbb{P}_p\big(X\xleftrightarrow{} \partial B(2N)\ \text{in}\ Z\big)\mathbb{P}_p\big(Y\xleftrightarrow{} \partial B(2N)\ \text{in}\ Z\big).
\end{equation}
Such a property is called quasi-multiplicativity. It was also conjectured in \cite{basu2017kesten} that (\ref{formula_QM}) does not hold in high-dimensional cases $d\ge 7$. Our main result shows that quasi-multiplicativity holds for $\{\widetilde{\phi}_v\}_{v\in \widetilde{\mathbb{Z}}^d}$ with $3\le d\le 5$; moreover, when $d\ge 7$, it also holds after multiplying a correction factor $N^{6-d}$ to the right-hand side.

We use $D$ (sometimes with superscripts or subscripts) to denote a subset of $\widetilde{\mathbb{Z}}^d$ consisting of finitely many compact connected components. Let $\mathbb{P}^D$ denote the conditional law of $\{\widetilde{\phi}_v\}_{v\in \widetilde{\mathbb{Z}}^d}$ given that all the GFF values on $D$ equal zero. For any $n\ge 1$, we denote the box of the metric graph $\widetilde{B}(n)$ as the union of $I_{e}$ for all $e=\{x,y\}\in \mathbb{L}^d$ with $\{x,y\}\cap B(n-1)\neq \emptyset$. For convenience, we set $\varsigma(N)=\tfrac{\ln\ln(N)}{\ln^{1/2}(N)}=0$ for  $N<e^2$, and set $\frac{0}{0}:=1$. 

%Note that every interval $I_{\{x,y\}}$ with $x,y\in \partial B(n)$ is not included in $\widetilde{B}(n)$. 

\begin{theorem}\label{prop_new_QM}
	For any $d\ge 3$, there exist constants $\Cl\label{const_new_QM1}(d)>\cl\label{const_new_QM2}(d)>0$ such that the following hold. When $3\le d\le 5$, for any $N\ge 1$, $A_1,D_1\subset  \widetilde{B}(\cref{const_new_QM2}N)$ and $A_2,D_2\subset [\widetilde{B}(\Cref{const_new_QM1}N)]^c$,
		\begin{equation}\label{QM_ineq_1}
	\begin{split}
			\mathbb{P}^{D_1\cup D_2}\big( A_1\xleftrightarrow{\ge 0} A_2\big) 
			 \asymp  \mathbb{P}^{D_1}\big( A_1 \xleftrightarrow{\ge 0} \partial B(N) \big) \mathbb{P}^{D_2}\big( A_2 \xleftrightarrow{\ge 0} \partial B(N) \big).	
			 \end{split}
	\end{equation}
When $d=6$, for any $N\ge 1$, $A_1,D_1\subset  \widetilde{B}(\cref{const_new_QM2}N^{1-\varsigma(N)})$ and $A_2,D_2\subset [\widetilde{B}(\Cref{const_new_QM1}N^{1+\varsigma(N)})]^c$,
\begin{equation}\label{QM_ineq_2}
N^{-10\varsigma(N)}	\lesssim \frac{\mathbb{P}^{D_1\cup D_2}\big( A_1\xleftrightarrow{\ge 0} A_2\big)}{\mathbb{P}^{D_1}\big( A_1 \xleftrightarrow{\ge 0} \partial B(N) \big) \mathbb{P}^{D_2}\big( A_2 \xleftrightarrow{\ge 0} \partial B(N) \big)}\lesssim N^{\varsigma(N)}.
\end{equation}
When $d\ge 7$, for any $N\ge 1$, $A_1,D_1\subset  \widetilde{B}(\cref{const_new_QM2}N^{\frac{2}{d-4}})$ and $A_2,D_2\subset [\widetilde{B}(\Cref{const_new_QM1}N^{\frac{d-4}{2}})]^c$,
	\begin{equation}\label{QM_ineq_3}
	\begin{split}
			\mathbb{P}^{D_1\cup D_2}\big( A_1\xleftrightarrow{\ge 0} A_2\big)
			 \asymp  N^{6-d} \mathbb{P}^{D_1}\big( A_1 \xleftrightarrow{\ge 0} \partial B(N) \big) \mathbb{P}^{D_2}\big( A_2 \xleftrightarrow{\ge 0} \partial B(N) \big).	
			 \end{split}
	\end{equation}
\end{theorem}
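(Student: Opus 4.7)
The plan is to prove the upper and lower bounds of (\ref{QM_ineq_1})--(\ref{QM_ineq_3}) separately, both directions ultimately resting on the domain Markov property of the metric graph GFF together with a package of regularity estimates for connecting probabilities that I would develop as preparatory lemmas. The regularity package should include: (i) a Harnack-type comparison showing that $x\mapsto \mathbb{P}^D(x\xleftrightarrow{\ge 0} A)$ varies by at most a multiplicative constant over $x$ ranging in a box well-separated from $A\cup D$; (ii) a boundary-condition stability estimate of the form $\mathbb{P}^{D}(A\xleftrightarrow{\ge 0} B)\asymp \mathbb{P}^{D'}(A\xleftrightarrow{\ge 0} B)$ whenever $D\triangle D'$ lies at macroscopic distance from $A\cup B$; and (iii) extensions of the sharp one-arm and crossing estimates (\ref{one_arm_low})--(\ref{crossing_high}) to arms emanating from general sets rather than from a single point or box. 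These estimates will allow me to replace the conditional probabilities produced by any Markov decomposition by their unconditional counterparts up to multiplicative constants (or, at $d=6$, up to factors $N^{O(\varsigma(N))}$).

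For the upper bound, I would apply the domain Markov property simultaneously at $\partial\widetilde{B}(\rho_1 N)$ and $\partial\widetilde{B}(\rho_2 N)$ for two intermediate radii $\cref{const_new_QM2}<\rho_1<\rho_2<\Cref{const_new_QM1}$, which makes the fields in the three resulting regions conditionally independent. Any path realizing $\{A_1\xleftrightarrow{\ge 0} A_2\}$ must cross both spheres, and a union bound over crossing points $v^\star\in \partial\widetilde{B}(\rho_1 N)$ and $w^\star\in\partial\widetilde{B}(\rho_2 N)$ yields
\begin{equation*}
\mathbb{P}^{D_1\cup D_2}\bigl(A_1\xleftrightarrow{\ge 0} A_2\bigr)\lesssim \sum_{v^\star,w^\star}\mathbb{P}^{D_1}\bigl(A_1\xleftrightarrow{\ge 0} v^\star\bigr)\cdot\mathbb{P}\bigl(v^\star\xleftrightarrow{\ge 0} w^\star\bigr)\cdot\mathbb{P}^{D_2}\bigl(w^\star\xleftrightarrow{\ge 0} A_2\bigr),
\end{equation*}
with the middle factor governed by (\ref{two_point}). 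Using items (i)--(iii) to convert the two outer factors into comparable pieces of the one-arm probabilities $\mathbb{P}^{D_i}\bigl(A_i\xleftrightarrow{\ge 0}\partial B(N)\bigr)$ and then performing the remaining convolution of two-point functions produces a dimension-dependent integral that yields a constant for $3\le d\le 5$, a factor $N^{\varsigma(N)}$ at $d=6$, and precisely $N^{6-d}$ for $d\ge 7$ (in the latter case the sum is dominated by the diagonal regime $|v^\star-w^\star|\asymp N$).

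For the lower bound, I would run a second-moment argument on the random set $\Lambda_i$ of contact points of the cluster of $A_i$ with $\partial B(\rho N)$ under $\mathbb{P}^{D_i}$. First and second moment estimates for $|\Lambda_i|$, obtained from the regularity package together with a bubble-type bound built on (\ref{two_point}), yield via Paley--Zygmund that with probability $\gtrsim \mathbb{P}^{D_i}(A_i\xleftrightarrow{\ge 0}\partial B(N))$ the set $\Lambda_i$ contains a well-spread macroscopic family of points. FKG combined with an explicit gluing construction in the collar $\widetilde{B}(\rho_2 N)\setminus \widetilde{B}(\rho_1 N)$, based on (\ref{two_point}) and Gaussian lower-tail estimates, then connects $\Lambda_1$ to $\Lambda_2$ at the announced cost. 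The main obstacle I expect is the critical dimension $d=6$: tracking how the subpolynomial losses $\varsigma(N)$ inherited from (\ref{one_arm_6})--(\ref{crossing_6}) accumulate through the moment calculation, the two-fold Markov conditioning and the gluing step requires tight bookkeeping to absorb everything into the announced exponent $N^{-10\varsigma(N)}$ while simultaneously preventing the upper-bound slack $N^{\varsigma(N)}$ from compounding into an unacceptable gap. A secondary technical challenge is the high-dimensional case $d\ge 7$, where the ``meeting'' of the two conditionally independent arms in the collar is an event of probability only $N^{6-d}$ and must be produced quantitatively from how a typical critical cluster decorates the intermediate sphere.
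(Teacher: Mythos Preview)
Your upper-bound strategy has a genuine gap. The display
\[
\mathbb{P}^{D_1\cup D_2}\bigl(A_1\xleftrightarrow{\ge 0} A_2\bigr)\lesssim \sum_{v^\star,w^\star}\mathbb{P}^{D_1}\bigl(A_1\xleftrightarrow{\ge 0} v^\star\bigr)\,\mathbb{P}\bigl(v^\star\xleftrightarrow{\ge 0} w^\star\bigr)\,\mathbb{P}^{D_2}\bigl(w^\star\xleftrightarrow{\ge 0} A_2\bigr)
\]
does not follow from the domain Markov property. Conditioning on the GFF values on two spheres makes the three regions \emph{conditionally} independent given those boundary values, but the boundary values are random and positively correlated; you cannot drop to unconditional probabilities with zero boundary data. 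What you are really invoking is a BK-type inequality, and the GFF level-set is FKG-positive, so no such inequality is available. The paper makes exactly this point in Section~1.4 and, for the upper bound, abandons the GFF picture entirely in favour of the loop-soup representation via the isomorphism theorem, where a BKR inequality \emph{is} valid for the underlying Poisson point process (Lemma~\ref{lemma_BKR}).

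Even granting the factorization, the union bound over $(v^\star,w^\star)$ is quantitatively too weak. With the Harnack estimate you propose, $\mathbb{P}^{D_i}(A_i\xleftrightarrow{\ge 0}\cdot)$ is essentially constant on the spheres, and $\sum_{v^\star,w^\star\in\partial B(\cdot N)}|v^\star-w^\star|^{2-d}\asymp N^{d}$. Combining with Proposition~\ref{prop_relation_pointandboundary} to pass from point-to-set to boundary-to-set probabilities produces an extra factor of $N^{2}$ in every dimension (e.g.\ $N^{8-d}$ rather than $N^{6-d}$ for $d\ge 7$). The paper avoids this overcount by decomposing the connecting loop cluster according to a \emph{single} pivotal loop $\widetilde\ell_*$ crossing $\partial\mathcal B(N)$ (Section~\ref{section5.2}), using the loop decomposition and stochastic-domination machinery of Section~\ref{subsection_loopsoup} (Lemmas~\ref{lemma_prepare_loop_decompose}--\ref{lemma_decompose_moresets}) rather than summing over all crossing points of a path.

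Your lower-bound plan (second moment on contact points, Paley--Zygmund, FKG gluing) is a plausible alternative to the paper's route, which instead controls the harmonic averages $\mathcal H^{\mathrm{in}},\mathcal H^{\mathrm{out}}$ of the explored sign clusters (Lemma~\ref{lemma_newHr}) and then applies the exact Lupu--Werner formula~(\ref{2.22}) to connect them. The paper's approach is cleaner because the connection probability in the collar is expressed directly through the harmonic averages rather than assembled from local gluing events; but the main obstruction in your proposal is the upper bound, not this part.
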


\begin{remark}[application of quasi-multiplicativity]
For Bernoulli percolation on $\mathbb{Z}^d$ with $d\ge 3$, assuming that quasi-multiplicativity in (\ref{formula_QM}) holds for all $p\in [p_c(d), p_c(d) + \delta]$ with some $\delta>0$, \cite[Theorem 1.1]{basu2017kesten} established the existence and equivalence of two versions of the incipient infinite cluster (IIC). I.e., the following two limiting probability measures exist and are equivalent: 
	\begin{equation}\label{iic_bernoulli}
		\lim\limits_{N\to \infty} \mathbb{P}_{p_c}\big(\cdot \mid \bm{0}\xleftrightarrow{} \partial B(N)\big)\ \  \text{and} \ \ \lim\limits_{p\downarrow  p_c} \mathbb{P}_{p}(\cdot \mid \bm{0}\xleftrightarrow{} \infty).
	\end{equation}
In a companion paper \cite{iic}, we use quasi-multiplicativity in Theorem \ref{prop_new_QM} to prove the existence and equivalence of several versions of the IIC for $\{\widetilde{\phi}_v\}_{v\in \widetilde{\mathbb{Z}}^d}$ (including the analogues of IICs in (\ref{iic_bernoulli})) for all $d\ge 3$ except the critical dimension $6$. Notably, since the critical level-set and the slightly supercritical level-set of $\{\widetilde{\phi}_v\}_{v\in \widetilde{\mathbb{Z}}^d}$ can be related using the isomorphism theorem involving random interlacements (see \cite[Proposition 6.3]{lupu2016loop}), in \cite{iic} we only need quasi-multiplicativity at the critical level $\widetilde{h}_*=0$, as presented in Theorem \ref{prop_new_QM}. 
\end{remark}

%Moreover, in \cite{iic} we also establish the exact orders of the typical volumes of the macroscopic clusters (including the IIC), thereby indicating that the critical GFF level-set exhibits self-similarity. for both macroscopic critical level-sets and IICs, thereby indicating that the critical GFF level-set exhibits self-similarity. 

\begin{remark}[correction factor at the critical dimension $6$]\label{remark_cf_cd6}
	When $d=6$, unlike in other dimensions, the order of the ratio in (\ref{QM_ineq_2}) is still unclear. As shown by the proofs in later sections, this issue arises because the order of the one-arm probability 	$\theta_6(N)$ is not derived yet (see (\ref{one_arm_6})). Particularly, it was conjectured in \cite[Remark 1.5]{cai2024one} that $\theta_d(N)\asymp N^{-2}\ln^{\delta}(N)$ for some constant $\delta>0$. Based on this conjecture, we expect that the critical level-set $\widetilde{E}^{\ge 0}$ on $\widetilde{\mathbb{Z}}^6$ behaves more similarly to that in high dimensions $d\ge 7$ than in low dimensions $3\le d\le 5$, and that the ratio in (\ref{QM_ineq_2}) should vanish in a poly-logarithmic rate as $N\to \infty$. 
	\end{remark}

\begin{remark}[sharpness of the conditions for high dimensions]\label{remark_sharp_condition}
	In the conditions for (\ref{QM_ineq_3}), we require that the target set $A_1$ and the zero boundary $D_1$ are contained in a box of radius $cN^{\frac{2}{d-4}}$($\ll N$), and that $A_2$ and $D_2$ are outside the box of radius $CN^{\frac{d-4}{2}}$($\gg N$). In fact, these conditions are necessary for (\ref{QM_ineq_3}) to hold. To see this, we consider a special case when $A_1=B(R_1)$, $A_2=\partial B(R_2)$ (where $R_1<N<R_2$) and $D_1=D_2=\emptyset$. By (\ref{crossing_high}), the connecting probability between $A_1$ and $A_2$ on the left-hand side of (\ref{QM_ineq_3}), which now equals $\rho_d(R_1,R_2)$, is of order $(R_1^{d-4}R_2^{-2})\land 1$. Meanwhile, (\ref{crossing_high}) also shows that the right-hand side of (\ref{QM_ineq_3}) is of order $N^{6-d}\cdot [(R_1^{d-4}N^{-2})\land 1]\cdot [(N^{d-4}R_2^{-2})\land 1]$. In this case, ``$R_1\lesssim N^{\frac{2}{d-4}}, R_2\gtrsim N^{\frac{d-4}{2}}$'' is indeed necessary for matching these two orders (since the polynomials in $N$ on the right-hand side need to be canceled). As for $d=6$, applying the same argument and taking the conjecture $\theta_6(N)\asymp N^{-2}\ln^{\delta}(N)$ into consideration, we expect that the $\varsigma(N)$ term in the conditions of (\ref{QM_ineq_2}) can be improved to $\delta\ln\ln(N)$. 
\end{remark}

\subsection{Comparison inequalities for connecting probabilities}
During the proof of Theorem \ref{prop_new_QM}, we derive some regularity properties of the connecting probability between a point and a general set (referred to as the ``point-to-set connecting probability''), showing that this type of probability, as a function of the point, exhibits behaviors similar to harmonic functions. The first property establishes that as the distance between a set and a point increases, the connecting probability between them decreases at the same rate as a harmonic function, such as the hitting probability for a simple random walk on $\mathbb{Z}^d$ (see e.g. \cite[Proposition 2.2.2]{lawler2012intersections}). For brevity, we denote $\varsigma_d(N):=\varsigma(N)\cdot \mathbbm{1}_{d=6}$ (recall $\varsigma(N)$ in (\ref{one_arm_6})). As mentioned before, all implicit constants for $\asymp$ and $\lesssim$ in the following results depend only on the dimension $d$.

\begin{proposition}\label{prop_pointtoset_speed}
	For any $d\ge 3$, there exists a constant $\Cl\label{const_pointtoset_speed}(d)>0$ such that for any $N\ge 1$, $A\subset \widetilde{B}(N)$, $x_1,x_2\in [B(\Cref{const_pointtoset_speed}N^{1+\varsigma_d(N)})]^c$, $M\ge 2(|x_1|\vee |x_2|)$ and $D\subset \widetilde{B}(N)\cup [\widetilde{B}(M)]^c$, 
	\begin{equation}\label{addnew3.25}
		|x_1|^{d-2}	\mathbb{P}^D\big( A \xleftrightarrow{\ge 0}  x_1 \big) \asymp |x_2|^{d-2}	\mathbb{P}^{D}\big( A \xleftrightarrow{\ge 0}  x_2 \big). 
	\end{equation}
\end{proposition}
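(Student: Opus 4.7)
The plan is to establish~(\ref{addnew3.25}) via a \emph{Harnack-plus-scaling} strategy, carried out in two phases and combined by a dyadic chaining in the end. The overall picture is that the point-to-set connection function $x \mapsto \mathbb{P}^D(A \xleftrightarrow{\ge 0} x)$ behaves, once $|x|$ exceeds the relevant threshold, like a positive harmonic function outside $\widetilde{B}(N) \cup D$, which forces the decay rate $|x|^{-(d-2)}$ up to multiplicative constants.

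\emph{Phase 1 (Harnack at a single scale).} I would first show that for points $y_1, y_2$ at comparable scale, say $|y_1|, |y_2| \asymp M$ with $|y_1 - y_2| \le M/4$, and $M$ in the allowed range, one has $\mathbb{P}^D(A \xleftrightarrow{\ge 0} y_1) \asymp \mathbb{P}^D(A \xleftrightarrow{\ge 0} y_2)$. The idea is: given the event $\{A \xleftrightarrow{\ge 0} y_1\}$, extending the connection to $y_2$ requires only a short-range link between $y_1$ and $y_2$, whose conditional probability is of order $1$ thanks to the two-point function estimate~(\ref{two_point}) together with FKG for the critical level set (applied after truncating over reasonable field values near $y_1$).

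\emph{Phase 2 (Scaling across distances).} Next I would prove that for $|x_1| < |x_2|$ in the allowed range, $\mathbb{P}^D(A \xleftrightarrow{\ge 0} x_2) \asymp (|x_1|/|x_2|)^{d-2}\, \mathbb{P}^D(A \xleftrightarrow{\ge 0} x_1)$. For the upper bound, any path from $A$ to $x_2$ must first cross $\partial B(|x_1|/2)$ at some point $z$; Phase 1 gives $\mathbb{P}^D(A \xleftrightarrow{\ge 0} z) \asymp \mathbb{P}^D(A \xleftrightarrow{\ge 0} x_1)$, and the continuation from $z$ to $x_2$ contributes a factor $\lesssim (|x_1|/|x_2|)^{d-2}$ coming from the two-point function decay (the sum over $z \in \partial B(|x_1|/2)$ remains controlled because the Green-type kernel is summable at this rate). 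For the lower bound, construct $A \xleftrightarrow{\ge 0} x_2$ by first connecting $A$ to $x_1$ and then extending through a short chain of GFF level-set connections to $x_2$; the extension probability is bounded below by $\gtrsim (|x_1|/|x_2|)^{d-2}$ via FKG combined with the two-point function lower bound in~(\ref{two_point}). A dyadic chain applied to Phases~1 and~2 then yields~(\ref{addnew3.25}) in full generality.

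\emph{Main obstacle.} The most delicate step is the lower bound in Phase~2: conditioning on $\{A \xleftrightarrow{\ge 0} x_1\}$ may bias the GFF near $x_1$ in ways that obstruct a clean FKG-based extension to $x_2$ (e.g.\ the cluster of $A$ may end at $x_1$ with atypical boundary values). I would handle this via the Markov property of the metric graph GFF across a surface such as $\partial B(|x_1|)$ separating the ``connection region'' from the ``extension region'', together with a priori estimates on the typical boundary values there. The hypothesis $M \ge 2(|x_1| \vee |x_2|)$ is used precisely here: it ensures that $D$ lies outside the annular region where the bulk of the connection and extension occur, so that the conditioning under $\mathbb{P}^D$ does not interfere with the Markov decomposition. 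Finally, the correction $\varsigma_d(N)$ in the admissible range for $x_1, x_2$ reflects the suboptimal one-arm bound at $d=6$ in~(\ref{one_arm_6}), and enters precisely when controlling the conditional hitting distribution of the cluster of $A$ on intermediate scales.
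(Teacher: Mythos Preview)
Your Phase~1 argument has a genuine gap that propagates through everything else. You claim that for $|y_1|,|y_2|\asymp M$ with $|y_1-y_2|\le M/4$, the ``short-range link'' from $y_1$ to $y_2$ has conditional probability of order $1$ via FKG and the two-point function. But FKG gives only
\[
\mathbb{P}^D(A\xleftrightarrow{\ge 0}y_2)\ \ge\ \mathbb{P}^D(A\xleftrightarrow{\ge 0}y_1)\cdot \mathbb{P}^D(y_1\xleftrightarrow{\ge 0}y_2),
\]
and by~(\ref{two_point}) the second factor is $\asymp |y_1-y_2|^{2-d}\asymp M^{2-d}$, not $\asymp 1$. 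Chaining nearest-neighbor steps does not help either: moving a distance $\asymp M$ along $\partial B(M)$ requires $\asymp M$ steps, each losing a fixed constant, hence an exponential loss. So Harnack at a single scale cannot be obtained this way. The same issue kills your Phase~2 upper bound: even granting Phase~1, summing $\mathbb{P}^D(A\xleftrightarrow{\ge 0}z)\cdot |z-x_2|^{2-d}$ over $z\in\partial B(|x_1|/2)$ produces an extra factor $|x_1|^{d-1}$, not the claimed $(|x_1|/|x_2|)^{d-2}$; and there is no decoupling mechanism (BKR or otherwise) in your outline to justify splitting the probability at $z$ in the first place.

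The paper's proof takes a completely different route that avoids these combinatorial losses. The key input is Lemma~\ref{lemma3.3} (built on~(\ref{ineq_2.25})), which says
\[
\mathbb{P}^D(x\xleftrightarrow{\ge 0}A)\ \asymp\ \mathbb{E}^D\big[\mathcal{H}_x(\widehat{\mathcal{C}}_A^{-}\cup D)\big],
\]
i.e.\ the connection probability is comparable to the expected \emph{harmonic average} at $x$ of the boundary values on the (random) negative cluster $\widehat{\mathcal{C}}_A^{-}$. Once the problem is recast in terms of $\mathcal{H}_x$, genuine Harnack and $|x|^{2-d}$ decay for hitting probabilities (Lemma~\ref{lemma_compare_hm}) apply directly---provided one controls where $\widehat{\mathcal{C}}_A^{-}$ sits. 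For $3\le d\le 6$ this control is simply that $\widehat{\mathcal{C}}_A^{-}\subset\widetilde{B}(C_*N^{1+\varsigma_d(N)})$ with probability $\ge 3/4$ (via the crossing bound~(\ref{3.8})). For $d\ge 7$ that containment fails, and the paper instead shows, via a random-walk argument combined with FKG and the one-arm bound, that adding $\widehat{\mathcal{C}}_A^{-}$ as an obstacle only perturbs the relevant Green's functions by a constant factor in expectation (see the estimate of $\mathbb{I}_{z_2}$ around~(\ref{ineq_Iz_2})). Your outline has no analogue of either the harmonic-average representation or the obstacle analysis; the Markov decomposition across $\partial B(|x_1|)$ you propose does not by itself supply the missing Harnack step.
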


The second property shows that the point-to-set connecting probability satisfies Harnack's inequality, meaning that this connecting probability remains of the same order when the point varies within a region whose diameter is bounded from above by a constant times its distance to the set.

\begin{proposition}\label{prop_pointtoset_harnack}
	For any $d\ge 3$, there exists a constant $\cl\label{const_pointtoset_harnack}(d)>0$ such that for any $N\ge 1$, $A\subset [\widetilde{B}(N)]^{c}$, $x_1,x_2\in B(\cref{const_pointtoset_harnack}N^{1-\varsigma_d(N)})$, $M\le \frac{1}{2}(|x_1|\land |x_2|)$ and $D\subset \widetilde{B}(M)\cup [\widetilde{B}(N)]^{c}$,
		\begin{equation}\label{ineqnew_3.10}
		\mathbb{P}^D\big( A \xleftrightarrow{\ge 0}  x_1 \big) \asymp \mathbb{P}^{D}\big( A \xleftrightarrow{\ge 0}  x_2 \big). 
	\end{equation}
\end{proposition}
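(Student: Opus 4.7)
My plan is to reduce the claim to an "inside" Harnack estimate on a smaller ball $\widetilde{B}(R)$ containing both $x_1,x_2$ via the sphere Markov decomposition for the metric graph GFF, and then prove the inside estimate by an FKG-based gluing construction.

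\textit{Sphere decomposition.} Assume without loss of generality that $|x_1|\le |x_2|=:r$, so $2M\le r\le cN^{1-\varsigma_d(N)}$. Pick an intermediate radius $R$ with $100 r\le R\le N/100$, which is possible provided $c$ is taken small enough (and, in the critical case $d=6$, using the slack from $N^{\varsigma_d(N)}\to\infty$). Let $\mathcal{F}$ be the $\sigma$-algebra generated by $\{\widetilde{\phi}_v:v\notin \widetilde{B}(R)\}$. By the strong Markov property on the metric graph, conditional on $\mathcal{F}$ the restriction of $\widetilde{\phi}_\cdot$ to $\widetilde{B}(R)$ is a metric graph GFF with Dirichlet boundary equal to the conditioned values on $\partial B(R)$ and to $0$ on $D\cap \widetilde{B}(M)$. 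Since $A\subset [\widetilde{B}(N)]^c\subset [\widetilde{B}(R)]^c$, any $\widetilde{E}^{\ge 0}$-path from $x_i$ to $A$ must traverse $\partial B(R)$. Defining the $\mathcal{F}$-measurable set
\[
Y := \bigl\{y\in \partial B(R): y\xleftrightarrow{\ge 0} A \text{ in } \widetilde{E}^{\ge 0}\cap [\widetilde{B}(R)]^c\bigr\},
\]
we obtain the identity
\[
\mathbb{P}^D(A\xleftrightarrow{\ge 0} x_i)=\mathbb{E}^D\bigl[\mathbb{P}^D(x_i\xleftrightarrow{\ge 0} Y\text{ in } \widetilde{B}(R)\mid \mathcal{F})\bigr],
\]
so it suffices to establish the \emph{inside Harnack}
\[
\mathbb{P}^D(x_1\xleftrightarrow{\ge 0} Y\text{ in } \widetilde{B}(R)\mid \mathcal{F}) \asymp \mathbb{P}^D(x_2\xleftrightarrow{\ge 0} Y\text{ in } \widetilde{B}(R)\mid \mathcal{F})
\]
uniformly in $\mathcal{F}$ and in $Y$, then take expectations.

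\textit{Inside Harnack via gluing.} Since $x_1,x_2\in B(R/100)$ are deep inside $\widetilde{B}(R)$ and $D\cap \widetilde{B}(R)\subset \widetilde{B}(M)\subset \widetilde{B}(r/2)$, I pick an intermediate sphere $\partial B(\rho)$ with $10 r\le \rho\le R/10$ and split the field inside $\widetilde{B}(R)$ once more at $\partial B(\rho)$ via the Markov property. Given a realization of the field outside $\widetilde{B}(\rho)$, a connection of $x_2$ to $Y$ is witnessed by the event that the boundary set $Y':=\{y'\in \partial B(\rho): y'\xleftrightarrow{\ge 0} Y \text{ outside } \widetilde{B}(\rho)\}$ is met by the cluster of $x_2$ in $\widetilde{B}(\rho)$. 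I then combine, via the FKG inequality applied to the independent GFF in $\widetilde{B}(\rho)$, the event that the cluster of $x_1$ in $\widetilde{B}(\rho)$ also reaches $Y'$. The latter "bridge" event has positive probability uniformly in the boundary trace and $Y'$ provided: (i) with constant probability the cluster of $x_1$ meets $\partial B(\rho/2)$ in a macroscopic set, and (ii) with constant probability, any two macroscopic sets on concentric spheres can be joined by a $\widetilde{E}^{\ge 0}$-path in an annulus, using positive-density estimates for level-set clusters on spheres together with the two-point estimate \eqref{two_point}. Applying FKG, symmetrizing the roles of $x_1$ and $x_2$, and integrating closes the argument.

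\textit{Main obstacle.} The principal technical difficulty is proving the inside Harnack uniformly in the random boundary condition $\mathcal{F}$ on $\partial B(R)$: atypical configurations where $\widetilde{\phi}$ is strongly negative near $\partial B(R)$ can collapse the level set inside $\widetilde{B}(R)$ and ruin the gluing step. I would handle this by introducing an $\mathcal{F}$-measurable "good" event $G$ on which the boundary trace is of typical size (up to slowly-varying corrections), verify the gluing-based inside Harnack uniformly on $G$ using standard GFF regularity, and control the contribution from $G^c$ by Gaussian tail bounds combined with the one-arm and crossing estimates \eqref{one_arm_low}--\eqref{crossing_high}. The slack in the hypothesis $r\le cN^{1-\varsigma_d(N)}$, together with the freedom to tune $R$ and $\rho$ between $r$ and $N$, is precisely what makes the $G^c$ contribution comparable to or smaller than the main term and allows the bridge event on $G$ to enjoy a uniform positive probability lower bound.
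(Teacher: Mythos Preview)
Your approach has a genuine gap in the ``Inside Harnack via gluing'' step. You claim that ``(i) with constant probability the cluster of $x_1$ meets $\partial B(\rho/2)$ in a macroscopic set.'' But this is precisely a one-arm event from the single point $x_1$ to a sphere at distance $\asymp \rho$, and its probability is at most $\theta_d(c\rho)$, which by \eqref{one_arm_low}--\eqref{one_arm_high} tends to $0$ as $\rho\to\infty$. Since you must take $\rho\ge 10r$ with $r=|x_2|$ potentially as large as $cN^{1-\varsigma_d(N)}$, there is no uniform lower bound. The same obstruction kills any direct FKG gluing through $\{x_1\xleftrightarrow{\ge 0} x_2\}$, whose probability is $\asymp |x_1-x_2|^{2-d}$ by \eqref{two_point}. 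In short, you are trying to deduce Harnack for point-to-set connections from a ``bridge'' event whose cost is exactly the quantity you are trying to control; no amount of tuning $R,\rho$ or restricting to a good event $G$ on the boundary recovers a uniform constant here. (There is also a smaller issue in the sphere decomposition: the identity with ``in $\widetilde{B}(R)$'' is only an inclusion in one direction, since a path from $x_i$ to $Y$ may exit and re-enter $\widetilde{B}(R)$; this is fixable but worth noting.)

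The paper takes a completely different route. The key input is the identity (Lemma~\ref{lemma3.3})
\[
\mathbb{P}^D\big(x\xleftrightarrow{\ge 0} A\big)\ \asymp\ \mathbb{E}^D\big[\mathcal{H}_x(\widehat{\mathcal{C}}_A^-\cup D)\big],
\]
which replaces the connecting probability by the expected harmonic average at $x$ with respect to the random negative cluster of $A$. Since $\mathcal{H}_x(\cdot)$ is genuinely harmonic in $x$ away from $\widehat{\mathcal{C}}_A^-\cup D$, the classical random-walk Harnack inequality applies directly once one controls how close $\widehat{\mathcal{C}}_A^-$ comes to $x_1,x_2$. For $3\le d\le 6$, the crossing estimate \eqref{3.8} shows that with probability bounded away from $0$ the negative cluster stays outside $B(c_*N^{1-\varsigma_d(N)})\ni x_1,x_2$, and the harmonic comparison is immediate. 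For $d\ge 7$ the cluster may intrude, but each fixed lattice point near $x_i$ is hit with probability $\lesssim N^{-2}$; the paper exploits this ``thinness'' via a last-exit decomposition and an averaged Green's function lower bound (parallel to \eqref{ineq_Iz_2}) to show that adding $\widehat{\mathcal{C}}_A^-$ as an obstacle changes the relevant hitting probabilities only by a constant factor in expectation. This is the missing idea in your proposal.
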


\begin{remark}[extra conditions at the critical dimension $6$] \label{remark_extra_conditon}
	In the conditions of Propositions \ref{prop_pointtoset_speed} and \ref{prop_pointtoset_harnack}, the $\varsigma_d(\cdot)$ term appears several times. In fact, we expect that these unfavorable terms are essentially avoidable. The only reason for retaining them is that the order of the one-arm probability $\theta_6(N)$ remains unclear.
\end{remark}

The next property states that the connecting probability between a box boundary and a general set (referred to as the ``boundary-to-set connecting probability'') is stable under the addition of certain zero boundary conditions. Specifically, we show that by imposing zero boundary conditions in a region whose distances to the boundary and the set are both comparable to their mutual distance, the boundary-to-set connecting probability will change only by a constant factor. 

% (or more accurately, decrease; see (\ref{ineq_compare})) 

\begin{proposition}\label{prop_boundtoset_zero}
	For any $d\ge 3$, there exist constants $\Cl\label{prop_boundtoset_zero1}(d),\cl\label{prop_boundtoset_zero2}(d)>0$ such that 
		\begin{equation}
			\mathbb{P}^D\big( A\xleftrightarrow{\ge 0}  \partial B(M) \big) \asymp \mathbb{P}^{D\cup D'}\big( A\xleftrightarrow{\ge 0}  \partial B(M) \big) 
	\end{equation}
	if either of the following two conditions holds:
	\begin{enumerate}
		\item[(a)]  $N\ge 1$, $A,D\subset \widetilde{B}(N)$, $M\ge \Cref{prop_boundtoset_zero1}N$ and $D' \subset [\widetilde{B}( \Cref{prop_boundtoset_zero1}M)]^c$;

		\item[(b)] $N\ge 1$, $A,D\subset [\widetilde{B}(N)]^c$, $M\le \cref{prop_boundtoset_zero2}N$ and $D' \subset \widetilde{B}( \cref{prop_boundtoset_zero2}M)$.
	\end{enumerate}

\end{proposition}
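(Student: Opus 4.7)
The plan is to condition on the field along a suitable separating ``sphere'' $S$ and to observe that $\mathbb{P}^D$ and $\mathbb{P}^{D\cup D'}$ give the same conditional probability of $E:=\{A\xleftrightarrow{\ge 0}\partial B(M)\}$ given $\widetilde{\phi}|_S$. Adding $D'$ therefore only changes the marginal law of $\widetilde{\phi}|_S$, and crucially this change turns out to be polynomially small in $M$ even though $\widetilde{\phi}|_S$ itself is of size $O(\sqrt{\log M})$. Concretely, take $S:=\partial B(M)$ in case (a) and $S:=\partial B(M-1)$ in case (b). In each case, removing the lattice vertices $S$ from $\widetilde{\mathbb{Z}}^d$ disconnects an interior piece from an exterior piece, and by the strong Markov property of the metric graph GFF the restrictions of $\widetilde{\phi}$ to the two pieces are conditionally independent given $\widetilde{\phi}|_S$. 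In case (a), $E$ is measurable with respect to the interior (containing $A\cup D$) while $D'$ lies in the exterior; in case (b), after truncating each connecting path at its first hit of $\partial B(M)\subset[\widetilde{B}(M-1)]^c$, $E$ is measurable with respect to the exterior while $D'$ lies in the interior. Setting $f(X):=\mathbb{P}^D(E\mid \widetilde{\phi}^D|_S=X)=\mathbb{P}^{D\cup D'}(E\mid \widetilde{\phi}^{D\cup D'}|_S=X)$, this gives
\[
\mathbb{P}^D(E)=\mathbb{E}^D\bigl[f(\widetilde{\phi}^D|_S)\bigr]\qquad\text{and}\qquad \mathbb{P}^{D\cup D'}(E)=\mathbb{E}^{D\cup D'}\bigl[f(\widetilde{\phi}^{D\cup D'}|_S)\bigr],
\]
and reduces the problem to comparing these two integrals.

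Couple the two marginals via the Gibbs--Markov decomposition $\widetilde{\phi}^D=\widetilde{\phi}^{D\cup D'}+\mathfrak{h}$, with $\mathfrak{h}$ Gaussian, independent of $\widetilde{\phi}^{D\cup D'}$, and of covariance $G^D-G^{D\cup D'}$. The standard hitting-time identity gives, for $x,y\in S$,
\[
(G^D-G^{D\cup D'})(x,y)=\mathbb{E}_x\bigl[\mathbbm{1}_{\tau_{D'}<\tau_D}\,G^D(X_{\tau_{D'}},y)\bigr]\lesssim M^{2-d},
\]
where the bound combines the fact that the hitting probability of $D'$ from $S$ is a small constant, controlled by $\Cref{prop_boundtoset_zero1}$ or $\cref{prop_boundtoset_zero2}$ via transience, with the $|z-y|^{2-d}$ decay of $G$ (since any $z\in D'$ is at distance $\gtrsim M$ from $y\in S$). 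Consequently $\mathfrak{h}|_S$ has pointwise variance $\lesssim M^{2-d}$, and a Gaussian concentration argument on a polynomial-size net, combined with Brownian bridge moduli estimates on the cable interiors, yields $\|\widetilde{\mathfrak{h}}\|_\infty\le \epsilon:=M^{(2-d)/2}(\log M)^{1/2}$ with super-polynomially high probability, where $\widetilde{\mathfrak{h}}$ denotes the harmonic extension of $\mathfrak{h}|_S$ into the event's side.

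On this good event, $f(X+\mathfrak{h}|_S)$ and $f(X)$ correspond to the connecting event $E$ in the conditional GFF at levels differing by $\widetilde{\mathfrak{h}}$: decomposing the conditional GFF as a GFF with zero boundary on $D\cup S$ plus the harmonic extension of its boundary data, the boundary shift by $\mathfrak{h}|_S$ becomes a level shift by $\widetilde{\mathfrak{h}}$ in the free component. A level-shift stability lemma---obtainable via Lupu's isomorphism joining the metric graph level-sets at heights $0$ and $-\sqrt{2u}$ through a sparse random interlacement, combined with FKG-type arguments---then gives $f(X+\mathfrak{h}|_S)\asymp f(X)$ uniformly in $X$, provided $\|\widetilde{\mathfrak{h}}\|_\infty\le \epsilon_0$ for a universal $\epsilon_0>0$. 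Integrating over $\widetilde{\phi}^{D\cup D'}|_S$ and over $\mathfrak{h}|_S$, and noting that the super-polynomially small bad-event probability is dwarfed by the only polynomially small connecting probability (by the already established crossing bounds \eqref{crossing_low}--\eqref{crossing_high}), yields the desired $\mathbb{P}^D(E)\asymp \mathbb{P}^{D\cup D'}(E)$.

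The hard part will be verifying the Green's function estimate $(G^D-G^{D\cup D'})(x,y)\lesssim M^{2-d}$ uniformly in the (otherwise unconstrained) sets $D,D'$ and $x,y\in S$: this is exactly where the constants $\Cref{prop_boundtoset_zero1}$ and $\cref{prop_boundtoset_zero2}$ enter, since they must be chosen so that the hitting probability of $D'$ starting from $S$ is a small enough constant for the subsequent level-shift tolerance to absorb the $M^{(2-d)/2}(\log M)^{1/2}$ harmonic fluctuations. A secondary subtlety is making the level-shift stability lemma apply to the GFF with zero boundary on $D\cup S$ rather than the infinite-volume GFF; here one uses that the level-shift tolerance $\epsilon_0$ provided by the isomorphism-based proof is a universal constant, so that the $\epsilon=o(1)$ fluctuation falls comfortably within the allowed range.
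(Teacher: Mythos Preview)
Your reduction to the marginal of $\widetilde\phi|_S$ is sound, and the variance bound $(G^D-G^{D\cup D'})(x,y)\lesssim M^{2-d}$ for $x,y\in S$ is correct. The gap is the ``level-shift stability lemma'': the claim that $f(X+h)\asymp f(X)$ \emph{uniformly in $X$} for $\|h\|_\infty\le\epsilon_0$ with a universal $\epsilon_0>0$ is false. Take $X\equiv\epsilon_0/2$ and $h\equiv-\epsilon_0$: every boundary value in $X+h$ is strictly negative, so no point of $S$ lies in $\widetilde E^{\ge0}$ and hence $f(X+h)=0$, while $f(X)>0$. More structurally, with $D=\emptyset$ and $A=\{\bm0\}$, the harmonic extension of $\epsilon_0\mathbf 1$ from $S$ into $B(M)$ is the constant $\epsilon_0$, so integrating $f(X+\epsilon_0\mathbf 1)$ against the full-GFF marginal of $\widetilde\phi|_S$ gives exactly $\mathbb P\big(\bm0\xleftrightarrow{\ge-\epsilon_0}\partial B(M)\big)$, which is bounded below by $\mathbb P\big(\bm0\xleftrightarrow{\ge-\epsilon_0}\infty\big)>0$ for every $M$ (the level $-\epsilon_0$ is supercritical). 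Meanwhile the integral of $f(X)$ is $\theta_d(M)\to0$. This rules out any uniform bound $f(X+\epsilon_0\mathbf 1)\le Cf(X)$, however small $\epsilon_0$ is chosen (as long as it is independent of $M$). The interlacement isomorphism you invoke couples $\widetilde E^{\ge0}$ with $\widetilde E^{\ge-\sqrt{2u}}$, but it does not say the corresponding connection probabilities are comparable; for fixed $u>0$ they are not. Salvaging the argument would require a genuine near-critical stability statement with the $M$-dependent $\epsilon_M$, proved uniformly over all boundary data $X$, and nothing of that strength is available.

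The paper's proof is entirely different and sidesteps this issue by working in the loop-soup picture. There the direction $\mathbb P^{D\cup D'}(E)\lesssim\mathbb P^D(E)$ is immediate from monotonicity (enlarging the zero set only removes loops; see \eqref{ineq_compare}). For the reverse direction it suffices, again by loop-soup monotonicity, to treat the extremal case $D'=\partial B(\Cref{prop_boundtoset_zero1}M)$, and the paper bounds the ``bad'' event $\{A\xleftrightarrow{(D)}\partial B(M)\}\setminus\{A\xleftrightarrow{(D\cup\partial B(\Cref{prop_boundtoset_zero1}M))}\partial B(M)\}$: this forces some loop to cross the annulus $B(\Cref{prop_boundtoset_zero1}^{2/3}M)\setminus B(\Cref{prop_boundtoset_zero1}^{1/3}M)$, and a stochastic-domination lemma for crossing paths (Lemma~\ref{lemma_sto_dom}, packaged as Lemma~\ref{lemma_prepare_loop_decompose}) together with the geometric decay of the crossing count (Lemma~\ref{lemma_Gl}) shows this bad probability is at most $\tfrac12\,\mathbb P\big(A\xleftrightarrow{(D)}\partial B(M)\big)$.
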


In addition to studying the point-to-set and boundary-to-set connecting probabilities on their own, we also establish the quantitative relation between these two types of connecting probabilities.

For any two functions $f,g$ depending on the dimension $d$, we denote 
\begin{equation}
	(f \boxdot g) (d):= f(d)\cdot \mathbbm{1}_{d\le 6}+ g(d)\cdot \mathbbm{1}_{d>6}. 
\end{equation}

\begin{proposition}\label{prop_relation_pointandboundary}
	For any $d\ge 3$, there exist constants $\Cl\label{const_relation_pointandboundary1}(d),\cl\label{const_relation_pointandboundary2}(d)>0$ such that 
	\begin{equation}\label{ineq_relation_pointandboundary}
	M^{-\varsigma_d(N)}	\lesssim 	\frac{ \mathbb{P}^D\big(A\xleftrightarrow{\ge 0} x\big)}{\mathbb{P}^D\big(A\xleftrightarrow{\ge 0}  B(M)\big)}\cdot M^{(\frac{d}{2}-1)\boxdot (d-4)}	 \lesssim   M^{3\varsigma_d(M)}
	\end{equation}
for all $N\ge 1$ and $x\in \partial B(M)$ if either of the following two conditions holds:
\begin{enumerate}
	\item[(a)] $A\subset \widetilde{B}(N)$, $M\ge \Cref{const_relation_pointandboundary1}N^{( 1 \boxdot \frac{d-4}{2})+\varsigma_d(N)}$ and $D\subset  \widetilde{B}(N)\cup [\widetilde{B}(2M)]^c$;

	\item[(b)] $A\subset [\widetilde{B}(N)]^c$, $M\le \cref{const_relation_pointandboundary2}N^{(1 \boxdot \frac{2}{d-4})-\varsigma_d(N)}$ and $D\subset  [\widetilde{B}(N)]^c \cup \widetilde{B}(\frac{1}{2}M)$.

	\end{enumerate}

\end{proposition}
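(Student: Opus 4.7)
The plan is to interpret the ratio on the left-hand side of (\ref{ineq_relation_pointandboundary}) as a normalized conditional expectation for the number of boundary lattice points in the cluster of $A$. I set $X := |\{y\in\partial B(M)\cap\mathbb{Z}^d : A\xleftrightarrow{\ge 0} y\}|$, considered under the conditional law $\mathbb{P}^D$. Applying Proposition~\ref{prop_pointtoset_speed} with its outer parameter set to $2M$ (so that $D\subset\widetilde{B}(N)\cup[\widetilde{B}(2M)]^c$ in (a) matches), the quantity $|y|^{d-2}\mathbb{P}^D(A\xleftrightarrow{\ge 0} y)$ is uniformly comparable to $|x|^{d-2}\mathbb{P}^D(A\xleftrightarrow{\ge 0} x)$ as $y$ ranges over $\partial B(M)$, and since $|y|=|x|=M$ this gives $\mathbb{E}^D[X]\asymp M^{d-1}\,\mathbb{P}^D(A\xleftrightarrow{\ge 0} x)$. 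Combined with the identity $\mathbb{E}^D[X]=\mathbb{E}^D[X\mid X\ge 1]\cdot\mathbb{P}^D(A\xleftrightarrow{\ge 0}\partial B(M))$, this reduces the target (\ref{ineq_relation_pointandboundary}) to the assertion
\[
\mathbb{E}^D[X\mid X\ge 1]\asymp M^{d_f-1},
\]
with $d_f:=\tfrac{d}{2}+1$ in low dimensions and $d_f:=4$ in high dimensions, so that the resulting exponent $d_f-d$ matches $-((d/2-1)\boxdot(d-4))$.

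For the upper bound I would use Paley--Zygmund, $\mathbb{E}^D[X\mid X\ge 1]\le\mathbb{E}^D[X^2]/\mathbb{E}^D[X]$, and estimate the second moment via the three-point bound
\[
\mathbb{P}^D(A\xleftrightarrow{\ge 0} y_1,\, A\xleftrightarrow{\ge 0} y_2) \lesssim \mathbb{P}^D(A\xleftrightarrow{\ge 0} y_1)\cdot |y_1-y_2|^{2-d}/\theta_d(|y_1-y_2|),
\]
which I would establish by conditioning on the GFF values on a small sphere around the ``branching point'' where the two arms to $y_1$ and $y_2$ separate, and then applying Propositions~\ref{prop_pointtoset_speed} and \ref{prop_boundtoset_zero} together with the one-arm estimates (\ref{one_arm_low})--(\ref{one_arm_high}) and the two-point formula (\ref{two_point}). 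A direct spherical sum gives $\sum_{y_2\in\partial B(M)\cap\mathbb{Z}^d}|y_1-y_2|^{2-d}/\theta_d(|y_1-y_2|)\asymp M^{d_f-1}$ (via $\int_0^{2M} r^{d-2}\cdot r^{1-d/2}\,dr\asymp M^{d/2}$ in low dimensions and the analogous $\int r^{d-2}\cdot r^{4-d}\,dr\asymp M^{3}$ in high dimensions), and hence $\mathbb{E}^D[X^2]/\mathbb{E}^D[X]\lesssim M^{d_f-1}$ as required. The matching lower bound on $\mathbb{E}^D[X\mid X\ge 1]$ demands the reverse three-point inequality, obtainable via an explicit Markov-property gluing in which the two arms from a near-branching-point to $y_1$ and $y_2$ are constructed independently using Proposition~\ref{prop_boundtoset_zero}, combined with a concentration argument showing that $X$ conditional on $\{X\ge 1\}$ stays within constants of $M^{d_f-1}$.

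Condition (b) is proved by a symmetric argument: with $A$ outside and $\partial B(M)$ inside, Proposition~\ref{prop_pointtoset_harnack} (Harnack) replaces Proposition~\ref{prop_pointtoset_speed} in the uniform boundary comparison, and the rest of the strategy carries over. At the critical dimension $d=6$, the gap in (\ref{one_arm_6})--(\ref{crossing_6}) propagates into both the three-point estimate and the uniform boundary comparison, each picking up factors of $M^{\pm\varsigma_d(\cdot)}$; carefully tracking these accumulations yields the $M^{-\varsigma_d(N)}\lesssim\cdots\lesssim M^{3\varsigma_d(M)}$ bounds. The hard part will be establishing the matching \emph{lower} bound in the three-point estimate: in low dimensions this requires a gluing argument preserving the $1/\theta_d(|y_1-y_2|)$ weight up to constants, while in high dimensions an additional $M^{6-d}$ factor (mirroring the correction in Theorem~\ref{prop_new_QM}) must be inserted and tracked. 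The nonstandard exponent $(1\boxdot(d-4)/2)$ in the condition on $M$ versus $N$ is precisely what places a candidate branching point at a scale where Propositions~\ref{prop_pointtoset_speed}--\ref{prop_boundtoset_zero} can all be invoked simultaneously at their optimal parameters.
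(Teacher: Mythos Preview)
Your second-moment framework for the upper bound in (\ref{ineq_relation_pointandboundary}) is exactly the paper's approach: set $\mathfrak{X}=\sum_{z\in\partial B(M)}\mathbbm{1}_{A\xleftrightarrow{\ge 0}z}$, compute $\mathbb{E}^D[\mathfrak{X}]\asymp M^{d-1}\mathbb{P}^D(A\xleftrightarrow{\ge 0}x)$ via Proposition~\ref{prop_pointtoset_speed} (or Proposition~\ref{prop_pointtoset_harnack} for (b)), bound $\mathbb{E}^D[\mathfrak{X}^2]$, and apply Paley--Zygmund. However, the three-point input is not obtained by ``conditioning on GFF values on a sphere around the branching point''. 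For $3\le d\le 6$ the paper proves $\mathbb{P}^D(A\xleftrightarrow{\ge 0}y_1,y_2)\lesssim r^{-(d/2-1)+3\varsigma_d(r)}\mathbb{P}^D(A\xleftrightarrow{\ge 0}y_1)$ (Lemma~\ref{lemma_final}) via the loop-soup decomposition of Lemma~\ref{lemma_decompose_moresets} together with the BKR inequality; your GFF-conditioning sketch does not explain how the branching point is located before one conditions. For $d\ge 7$ the paper does \emph{not} prove a pointwise three-point bound at all: it runs a tree expansion directly on $\sum_{z_1,z_2}\mathbb{P}(A\xleftrightarrow{}z_1,z_2)$ and splits into four cases according to where the vertices $v_1,v_2,v_3$ on the pivotal loop lie relative to $\partial B(M^{2/(d-4)})$ and $\partial B(C_*N)$. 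The delicate case ($v_3\in\widetilde B(C_*N)$, where Proposition~\ref{prop_pointtoset_speed} is unavailable) is handled by decomposing the crossing loop into forward/backward paths and is precisely where the hypothesis $M\gtrsim N^{(d-4)/2}$ is consumed. Your sketch does not address this regime.

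The more serious gap is the other direction. To lower-bound $\mathbb{E}^D[X\mid X\ge 1]$ you propose a \emph{reverse} three-point inequality plus a concentration argument for $X$; you give no mechanism for either, and in high dimensions you yourself flag a stray $N^{6-d}$ factor that would have to be absorbed. The paper bypasses all of this. It bounds $\mathbb{P}^D(A\xleftrightarrow{\ge 0}\partial B(M))$ \emph{from above} directly: exploring $\widehat{\mathcal C}_A^-$ and applying the conditional connection estimate (Lemma~\ref{lemma_upper_boundarytoset}) gives
\[
\mathbb{P}^D\big(A\xleftrightarrow{\ge 0}\partial B(M)\big)\ \lesssim\ M^{d-2}\,\theta_d(M/4)\cdot \mathbb{E}^D\big[\mathcal H\big],
\]
where $\mathcal H$ is the average over a sphere of radius $\asymp M$ of $\mathcal{H}_z(\widehat{\mathcal C}_A^-,\widehat{\mathcal C}_A^-\cup D)$. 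Lemma~\ref{lemma3.3} then identifies each summand with $\mathbb{P}^D(A\xleftrightarrow{\ge 0}z)\asymp\mathbb{P}^D(A\xleftrightarrow{\ge 0}x)$, and the prefactor $M^{d-2}\theta_d(M/4)$ produces the exponent $(\tfrac d2-1)\boxdot(d-4)$ immediately. This direction requires only $M\gtrsim N^{1+\varsigma_d(N)}$, not $M\gtrsim N^{(d-4)/2}$, so no high-dimensional case split is needed. Your plan misses this simplification and replaces it with a step that is neither standard nor supported by a concrete argument.
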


By combining the properties of point-to-set connecting probabilities in Propositions \ref{prop_pointtoset_speed} and \ref{prop_pointtoset_harnack} with the relation between point-to-set and boundary-to-set connecting probabilities in Proposition \ref{prop_relation_pointandboundary}, we directly obtain the decay rates of boundary-to-set connecting probabilities as follows.

\begin{corollary}\label{corollary_compare_boundtoset}
	(1) For any $d\ge 3$, there exists a constant $\Cl\label{const_compare_boundtoset1}(d)>0$ such that for any $N\ge 1$, $A\subset \widetilde{B}(N)$, $M_2\ge M_1 \ge \Cref{const_compare_boundtoset1}N^{(1 \boxdot \frac{d-4}{2})+\varsigma_d(N)}$ and $D\subset  \widetilde{B}(N)\cup [\widetilde{B}(2M_2]^c$, 
	\begin{equation}\label{ineq1_compare_boundtoset}
M_2^{-4\varsigma_d(M_2)}   \lesssim	\frac{\mathbb{P}^D\big(A\xleftrightarrow{\ge 0} \partial B(M_1)\big)}{\mathbb{P}^D\big(A\xleftrightarrow{\ge 0} \partial B(M_2)\big)}\cdot \big(\frac{M_1}{M_2}\big)^{(\frac{d}{2}-1)\boxdot 2}\lesssim  M_2^{4\varsigma_d(M_2)}.
\end{equation}

	(2) For any $d\ge 3$, there exists a constant $\cl\label{const_compare_boundtoset2}(d)>0$ such that for any $N\ge 1$, $A\subset [\widetilde{B}(N)]^c$, $M_1\le M_2\le  \cref{const_compare_boundtoset2}N^{(1\boxdot \frac{2}{d-4})-\varsigma_d(N)}$ and $D\subset  [\widetilde{B}(N)]^c \cup \widetilde{B}(\frac{1}{2}M_1)$, 
		\begin{equation}\label{ineq2_compare_boundtoset}
M_2^{-4\varsigma_d(M_2)}	\lesssim 	\frac{\mathbb{P}^D\big(A\xleftrightarrow{\ge 0}  B(M_1)\big)}{ \mathbb{P}^D\big(A\xleftrightarrow{\ge 0} B(M_2)\big)}\cdot \big(\frac{M_2}{M_1}\big)^{(\frac{d}{2}-1)\boxdot (d-4)}	 \lesssim  M_2^{4\varsigma_d(M_2)}.
	\end{equation}
	
\end{corollary}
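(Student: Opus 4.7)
The plan is to deduce both inequalities by inserting an auxiliary boundary point $x_i\in \partial B(M_i)$ for $i=1,2$ as a bridge between the boundary-to-set connecting probability (the object of interest) and the point-to-set connecting probability (for which we have strong a priori control from the preceding propositions), and then chaining together the three preceding propositions in an essentially mechanical manner.

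For part (1), fix any $x_i\in \partial B(M_i)$. Because $M_1\le M_2$, the inclusion $[\widetilde{B}(2M_2)]^c\subset [\widetilde{B}(2M_i)]^c$ shows that the $D$-hypothesis of Proposition \ref{prop_relation_pointandboundary}(a) is satisfied at both radii, so two applications of that proposition yield
\[
\mathbb{P}^D\bigl(A\xleftrightarrow{\ge 0}x_i\bigr)\;\asymp\; M_i^{-[(d/2-1)\boxdot(d-4)]}\,\mathbb{P}^D\bigl(A\xleftrightarrow{\ge 0}\partial B(M_i)\bigr)
\]
up to multiplicative corrections of size $M_i^{O(\varsigma_d(M_i))}$. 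Next, since $|x_i|=M_i\gtrsim N^{1+\varsigma_d(N)}$ and $D\subset \widetilde{B}(N)\cup [\widetilde{B}(2M_2)]^c$ with $2M_2\ge 2(|x_1|\vee|x_2|)$, Proposition \ref{prop_pointtoset_speed} supplies the point-to-point comparison
\[
|x_1|^{d-2}\,\mathbb{P}^D\bigl(A\xleftrightarrow{\ge 0}x_1\bigr)\;\asymp\; |x_2|^{d-2}\,\mathbb{P}^D\bigl(A\xleftrightarrow{\ge 0}x_2\bigr).
\]
Dividing these two displays produces (\ref{ineq1_compare_boundtoset}) once the exponent is simplified using the case-check $d-2-[(d/2-1)\boxdot(d-4)]=(d/2-1)\boxdot 2$ (trivially verified separately for $d\le 6$ and $d>6$) and the $\varsigma_d$-corrections are absorbed into $M_2^{4\varsigma_d(M_2)}$.

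Part (2) is the symmetric counterpart. We again place $x_i\in\partial B(M_i)$; the inclusion $\widetilde{B}(M_1/2)\subset \widetilde{B}(M_i/2)$ lets Proposition \ref{prop_relation_pointandboundary}(b) be applied at both radii, giving the analogous relation
\[
\mathbb{P}^D\bigl(A\xleftrightarrow{\ge 0}x_i\bigr)\;\asymp\; M_i^{-[(d/2-1)\boxdot(d-4)]}\,\mathbb{P}^D\bigl(A\xleftrightarrow{\ge 0}B(M_i)\bigr).
\]
The key difference from part (1) is that the hypothesis $M_i\le \cref{const_compare_boundtoset2}N^{1-\varsigma_d(N)}$ places $x_1$ and $x_2$ inside the Harnack region $B(\cref{const_pointtoset_harnack}N^{1-\varsigma_d(N)})$, and the assumption $D\subset [\widetilde{B}(N)]^c\cup \widetilde{B}(M_1/2)$ matches the one required by Proposition \ref{prop_pointtoset_harnack} with $M=M_1/2\le \tfrac12(|x_1|\wedge|x_2|)$. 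Harnack's inequality therefore provides the clean point-wise comparison $\mathbb{P}^D(A\xleftrightarrow{\ge 0}x_1)\asymp\mathbb{P}^D(A\xleftrightarrow{\ge 0}x_2)$ with no polynomial factor in $M_2/M_1$, and dividing the two displays yields (\ref{ineq2_compare_boundtoset}).

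The whole argument reduces to a combination of the three preceding propositions, so no substantive obstacle is expected. The only accounting requiring attention is verifying that a single hypothesis on $D$ simultaneously fulfills the $D$-requirements of Propositions \ref{prop_pointtoset_speed}/\ref{prop_pointtoset_harnack} and of Proposition \ref{prop_relation_pointandboundary} at \emph{both} radii $M_1$ and $M_2$; in each part this follows from the containments implied by $M_1\le M_2$. Likewise, verifying that two applications of Proposition \ref{prop_relation_pointandboundary} produce a total correction no larger than $M_2^{4\varsigma_d(M_2)}$ is immediate from the bound $M_i^{3\varsigma_d(M_i)}\le M_2^{3\varsigma_d(M_2)}$ together with the negligible negative tail $M_i^{-\varsigma_d(N)}$.
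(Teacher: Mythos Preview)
Your approach is exactly the one the paper intends: the sentence preceding Corollary~\ref{corollary_compare_boundtoset} says it follows ``by combining the properties of point-to-set connecting probabilities in Propositions~\ref{prop_pointtoset_speed} and~\ref{prop_pointtoset_harnack} with the relation between point-to-set and boundary-to-set connecting probabilities in Proposition~\ref{prop_relation_pointandboundary},'' and your chaining of those three results is precisely that combination. Your verification that the single $D$-hypothesis covers both radii via the containments $[\widetilde{B}(2M_2)]^c\subset[\widetilde{B}(2M_1)]^c$ and $\widetilde{B}(M_1/2)\subset\widetilde{B}(M_2/2)$, and your exponent check $d-2-[(d/2-1)\boxdot(d-4)]=(d/2-1)\boxdot 2$, are both correct.

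One small caveat on the $\varsigma_d$-bookkeeping at $d=6$: your phrase ``negligible negative tail $M_i^{-\varsigma_d(N)}$'' is not quite the right justification. Taken literally from the \emph{stated} lower bound of Proposition~\ref{prop_relation_pointandboundary} you pick up a factor $M_1^{\varsigma_d(N)}$ in the upper direction, and since $\varsigma(\cdot)$ is decreasing while $M_2$ may be much larger than $N$, the inequality $M_1^{\varsigma_d(N)}\lesssim M_2^{\varsigma_d(M_2)}$ is not automatic. The resolution is either (i) to observe that the \emph{proof} of the lower bound in Proposition~\ref{prop_relation_pointandboundary} (see (\ref{finish4.2})) actually yields the stronger correction $M^{-\varsigma_d(M)}$ rather than $M^{-\varsigma_d(N)}$, or equivalently (ii) to note that $t\mapsto t^{\varsigma(t)}=\exp(\sqrt{\ln t}\,\ln\ln t)$ is increasing, so $M_1^{\varsigma_d(M_1)}\le M_2^{\varsigma_d(M_2)}$; either way the total correction is at most $M_2^{4\varsigma_d(M_2)}$ as claimed. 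For $d\neq 6$ the corrections vanish identically and there is nothing to track.
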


As noted in Remark \ref{remark_sharp_condition}, for all $d\ge 3$ except for $d=6$, the conditions and estimates in Proposition \ref{prop_relation_pointandboundary} and Corollary \ref{corollary_compare_boundtoset} are sharp up to a constant factor. For $d=6$, based on the conjecture that $\theta_6(N)\asymp N^{-2}\ln^{\delta}(N)$ (recall Remark \ref{remark_cf_cd6}), we expect the intermediate term in (\ref{ineq_relation_pointandboundary}) to be of order $\ln^{-\delta}(M)$, and those in (\ref{ineq1_compare_boundtoset}) and (\ref{ineq2_compare_boundtoset}) to be of order $(\frac{\ln(M_1)}{\ln(M_2)})^{\delta}$, where $\delta$ refers to the same constant throughout this sentence. Moreover, we expect that the $\varsigma_d(\cdot)$ term in the conditions of Proposition \ref{prop_relation_pointandboundary} and Corollary \ref{corollary_compare_boundtoset} can be improved to $\delta \ln\ln(N)$ for $d=6$, as suggested in Remark \ref{remark_sharp_condition}.

\subsection{Volume exponent}

For $h\in \mathbb{R}$ and $v\in \widetilde{\mathbb{Z}}^d$, let $\mathcal{C}^{\ge h}(v):=\{w\in \widetilde{\mathbb{Z}}^d:w \xleftrightarrow{\ge h} v \}$ be the cluster of $\widetilde{E}^{\ge h}$ containing $v$. For any $A\subset \widetilde{\mathbb{Z}}^d$, the volume of $A$, denoted by $\mathrm{vol}(A)$, is defined as the number of lattice points in $\mathbb{Z}^d$ contained in $A$.

Volumes of critical clusters (i.e. clusters of the critical level-sets) are also natural and interesting quantities, and are highly related to the connecting probabilities. For example, the expectation of $\mathrm{vol}(\mathcal{C}^{\ge 0}\cap \widetilde{B}(N))$ can be given by the sum of connecting probabilities between $\bm{0}$ and $x$ for all $x\in B(N)$. For $d\ge 3$ and $M\ge 1$, let  $\nu_d(M):=\mathbb{P}\big(
\mathrm{vol}(\mathcal{C}^{\ge 0}(\bm{0}))\ge M \big)$. In \cite[Corollary 1.6]{drewitz2023critical}, it was derived from the bound $\theta_3(N)\lesssim N^{-\frac{1}{2}}\ln^{\frac{1}{2}}(N) $ that 
\begin{equation}\label{ineq_nu_3d}
\nu_3(M) \lesssim  M^{-\frac{1}{5}}\ln^{\frac{1}{2}}(M). 
\end{equation}
In fact, referring to \cite[Section 8]{drewitz2023critical} (see also \cite[Corollary 1.4]{drewitz2023arm}), the proof of (\ref{ineq_nu_3d}) can be easily extended to the following inequality: for all $d\ge 3$, 
\begin{equation}\label{volume_extend}
		\nu_d(M) \le  \theta_d(M^{\frac{2}{d+2}})+ CM^{-\frac{d-2}{d+2}}, 
\end{equation}
for some constant $C=C(d)>0$. For completeness, we will provide the proof of (\ref{volume_extend}) in Section \ref{section_volume}. Plugging (\ref{one_arm_low}) and (\ref{one_arm_6}) into (\ref{volume_extend}), one has
\begin{align}
	&\text{when}\ 3\le d<6,\ \ \nu_d(M) \lesssim  M^{-\frac{d-2}{d+2}};\label{upper_volume} \\
	&\text{when}\ d=6,\ \ \ \ \ \ \ \ \nu_6(M)  \lesssim M^{-\frac{1}{2}+\varsigma(M)}=M^{-\frac{1}{2}+o(1)}. \label{upper_volume_6d}
\end{align}
	In high-dimensional cases, it was established in \cite[Theorem 1.2 and (1.9)]{cai2024high} that 
	\begin{equation}
		\hspace{-3.1cm} \text{when}\ d>6,\ \ \ \ \ \ \ \ \nu_d(M) \asymp  M^{-\frac{1}{2}}.
			\end{equation}
			Referring to \cite[Inequality (1.9)]{cai2024high}, one has $\nu_d(M)\gtrsim M^{-\frac{1}{2}}$ for all $d\ge 3$, implying that (\ref{upper_volume_6d}) provides the exact exponent of $\nu_6(M)$.

As the by-product of a key step in our proof of Theorem \ref{prop_new_QM}, we establish the following bounds for $\nu_d(M)$, showing that (\ref{upper_volume}) is sharp up to a constant factor.

\begin{theorem}\label{thm_volume}
	For any $3\le d\le 5$, there exists a constant $\cl\label{const_volume_1}>0$ such that for any $M\ge 1$,
	\begin{equation}
		\nu_d(M) \ge \cref{const_volume_1}M^{-\frac{d-2}{d+2}}.
	\end{equation}
	\end{theorem}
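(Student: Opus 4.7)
The plan is to set $N := \lfloor c_0 M^{2/(d+2)} \rfloor$ for a small constant $c_0 > 0$ and reduce the matching lower bound to a conditional second-moment estimate on the restricted volume
\[
V_N := \mathrm{vol}\bigl(\mathcal{C}^{\ge 0}(\bm{0}) \cap \widetilde{B}(N)\bigr)
\]
under the one-arm event $A_N := \{\bm{0} \xleftrightarrow{\ge 0} \partial B(N)\}$. Since $\mathrm{vol}(\mathcal{C}^{\ge 0}(\bm{0})) \ge V_N$ and $\mathbb{P}(A_N) = \theta_d(N) \asymp N^{1-d/2} \asymp M^{-(d-2)/(d+2)}$ by \eqref{one_arm_low}, it suffices to prove $\mathbb{P}(V_N \ge M \mid A_N) \gtrsim 1$. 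Conditional Paley--Zygmund reduces this in turn to the two moment estimates $\mathbb{E}[V_N \mid A_N] \asymp N^{d/2+1}$ and $\mathbb{E}[V_N^2 \mid A_N] \lesssim N^{d+2}$; then $c_0$ is chosen small enough that $M \le c_1 N^{d/2+1}$ for the constant $c_1 > 0$ coming out of Paley--Zygmund.

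For the conditional first moment, I would write
\[
\mathbb{E}[V_N \mathbbm{1}_{A_N}] = \sum_{x \in B(N)} \mathbb{P}\bigl(\bm{0} \xleftrightarrow{\ge 0} x,\ x \xleftrightarrow{\ge 0} \partial B(N)\bigr),
\]
and, for each $x$ with $|x| \le N/2$, apply quasi-multiplicativity (Theorem \ref{prop_new_QM}) together with the sharp crossing asymptotics from Corollary \ref{corollary_compare_boundtoset} and \eqref{crossing_low} at the annular scale $|x|$ to obtain
\[
\mathbb{P}\bigl(\bm{0} \xleftrightarrow{\ge 0} x,\ x \xleftrightarrow{\ge 0} \partial B(N)\bigr) \asymp |x|^{2-d} \cdot (|x|/N)^{d/2+1}.
\]
A dyadic summation over $x \in B(N)$ then gives $\mathbb{E}[V_N \mathbbm{1}_{A_N}] \asymp N^2$, so that $\mathbb{E}[V_N \mid A_N] \asymp N^{d/2+1}$; the matching upper bound is immediate from the trivial estimate $\mathbb{E}[V_N \mathbbm{1}_{A_N}] \le \mathbb{E}[V_N] \lesssim N^2$ via the two-point function \eqref{two_point}.

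For the conditional second moment, I would expand $\mathbb{E}[V_N^2 \mathbbm{1}_{A_N}] = \sum_{x,y \in B(N)} \mathbb{P}(\bm{0} \xleftrightarrow{\ge 0} x,\ \bm{0} \xleftrightarrow{\ge 0} y,\ A_N)$ and aim for a three-point-function bound obtained by conditioning on the \emph{branching point} $z$ from which the cluster of $\bm{0}$ splits into the three arms reaching $x$, $y$, and $\partial B(N)$. Repeated applications of quasi-multiplicativity, combined with the point-to-set and boundary-to-set regularity inputs from Propositions \ref{prop_pointtoset_speed}--\ref{prop_relation_pointandboundary}, should give each arm a factor comparable to a two-point function, yielding
\[
\mathbb{P}(\bm{0} \xleftrightarrow{\ge 0} x,\ \bm{0} \xleftrightarrow{\ge 0} y,\ A_N) \lesssim \sum_{z} |z|^{2-d} |z-x|^{2-d} |z-y|^{2-d} \cdot \mathbb{P}\bigl(z \xleftrightarrow{\ge 0} \partial B(N)\bigr).
\]
Summing over $x,y \in B(N)$ (using $\sum_{x \in B(N)} |z-x|^{2-d} \lesssim N^2$) and then over $z$ should produce $\mathbb{E}[V_N^2 \mathbbm{1}_{A_N}] \lesssim N^{d/2+3}$, equivalent to $\mathbb{E}[V_N^2 \mid A_N] \lesssim N^{d+2}$ after dividing by $\theta_d(N)$. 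The main obstacle is obtaining the correct power of $N$ in this 4-point bound when $d=3$: a naive tree-graph-style inequality with $z$ ranging freely over $B(N)$ is too loose by a factor of $N$, so the argument must localize $z$ near a thickening of the $\bm{0}$-to-$\partial B(N)$ backbone, exploiting the sharp comparison estimates for connecting probabilities developed in the preceding subsections. I expect the bulk of the technical work to lie precisely in this localization step.
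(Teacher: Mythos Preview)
Your approach has a genuine gap in the second moment. By conditioning on the arm event $A_N$ from the start, you turn the second moment into a \emph{four}-point bound $\mathbb{P}(\bm{0}\xleftrightarrow{\ge 0} x,\,\bm{0}\xleftrightarrow{\ge 0} y,\,\bm{0}\xleftrightarrow{\ge 0}\partial B(N))$. You correctly observe that the naive tree-graph bound loses a factor of $N$ when $d=3$, but your proposed fix---``localize $z$ near a thickening of the backbone''---is not made concrete and is not supported by any estimate established in the paper; the regularity propositions you cite compare connecting probabilities, they do not give the pointwise four-arm bound you need. (Your pointwise first-moment claim $\mathbb{P}(\bm{0}\xleftrightarrow{\ge 0} x,\,x\xleftrightarrow{\ge 0}\partial B(N))\asymp |x|^{2-d}(|x|/N)^{d/2+1}$ is also incorrect for $|x|\ll N$: at $|x|\asymp 1$ the left side is $\asymp\theta_d(N)$, not $N^{-d/2-1}$. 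The summed conclusion $\asymp N^2$ may still hold, but not by the argument you wrote.)

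The paper avoids the four-point difficulty entirely by \emph{not} conditioning on $\bm{0}$. It applies Paley--Zygmund to the unconditional count $\mathfrak{X}=\sum_{x\in B(cN)}\mathbbm{1}_{x\xleftrightarrow{\ge 0}\partial B(N)}$, whose second moment is only the three-point function $\mathbb{P}(\partial B(N)\xleftrightarrow{\ge 0} x,y)$---exactly what Lemma~\ref{lemma_final} controls. This yields $\mathbb{P}(\mathfrak{X}\ge cN^{d/2+1})\gtrsim 1$. A separate BKR bound (using $\rho_d(cN,N)\le 1/2$) shows there are $O(1)$ crossing clusters, so by pigeonhole the \emph{maximal} crossing cluster $\hat{\mathcal{C}}$ satisfies $\mathrm{vol}(\hat{\mathcal{C}}\cap B(cN))\gtrsim N^{d/2+1}$ with positive probability. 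The passage to $\nu_d$ is then the one-line Markov estimate $\mathbb{E}[\mathrm{vol}(\hat{\mathcal{C}}\cap B(cN))]\le s+CN^d\,\nu_d(s)$, which replaces your conditional argument and needs no multi-point input at all.
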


To summarize, the exact order of $\nu_d(M)$ is now known for all dimensions except the critical dimension $6$, and the exponent of $\nu_6(M)$ has also been determined. In addition, we remark that these estimates for $3\le d\le 6$ are also derived in a concurrent work \cite{drewitz2024cluster}. Furthermore, as in \cite[Remark 1.5]{cai2024one}, we conjecture that $\nu_6(M)\asymp M^{-\frac{1}{2}}\ln^{\delta'}(M)$ for some constant $\delta'>0$.

\textbf{Statements about constants.} We use the letters $C$ and $c$ (sometimes with superscripts or subscripts) to represent constants. The uppercase letter $C$ denotes large constants and the lowercase $c$ represents small constants. Moreover, constants with numerical labels, such as $C_1,C_2,c_1,c_2,...$, are fixed throughout the paper; other constants may vary depending on the context. Unless otherwise stated, a constant can only depend on the dimension $d$. If a constant depends on parameters other than $d$, all these parameters will be specified in parentheses.

\subsection{Intuition behind quasi-multiplicativity} 

In this subsection, we provide heuristics on quasi-multiplicativity of $\{\widetilde{\phi}_{v}\}_{v\in \widetilde{\mathbb{Z}}^d}$ and explain how these heuristics are developed into rigorous proofs. Thanks to the isomorphism theorem, we can equivalently study connecting probabilities from the perspective of clusters of the critical loop soup (referred to as ``loop clusters''; see Section \ref{subsection_loopsoup} for the precise definition). Notably, these two perspectives (from GFF level-sets and loop clusters) have distinct advantages and limitations for proving the different directions of the inequality involved in quasi-multiplicativity; they complement each other and jointly establish Theorem \ref{prop_new_QM}. To facilitate the exposition, we assume throughout this subsection that $N\ge 1$ and $A_1,A_2,D_1,D_2\subset \widetilde{\mathbb{Z}}^d$ satisfy all conditions in Theorem \ref{prop_new_QM}. Moreover, for clarity, we intentionally skip the critical dimension $d=6$ due to numerous unsolved problems, as noted in Remarks \ref{remark_cf_cd6} and \ref{remark_sharp_condition}.

We first discuss the heuristic on the lower bound. I.e., 
\begin{equation}\label{newadd_1.29}
	\mathbb{P}^{D_1\cup D_2}\big( A_1\xleftrightarrow{\ge 0} A_2\big)
			 \gtrsim   N^{0\boxdot (6-d)} \mathbb{P}^{D_1}\big( A_1 \xleftrightarrow{\ge 0} \partial B(N) \big) \mathbb{P}^{D_2}\big( A_2 \xleftrightarrow{\ge 0} \partial B(N) \big).
\end{equation}
This inequality suggests the following plausible two-step scheme for the critical level-set $\widetilde{E}^{\ge 0}$ to connect $A_1$ and $A_2$:
\begin{enumerate}

\item We first explore the sign cluster containing $A_1$ (i.e., the collection of points connected to $A_1$ by $\widetilde{E}^{\ge 0}$ or $\widetilde{E}^{\le 0}$) and the one containing $A_2$, stopped upon reaching $\partial B(cN)$ and $\partial B(CN)$ respectively.  At this point, each cluster will contain a positive part intersecting $\partial B(cN)$ or $\partial B(CN)$.

	\item  The conditional probability of connecting the positive parts of these two (partial) sign clusters, given the boundary conditions from the exploration in Step (1), is typically at least of order $N^{0\boxdot (6-d) }$.

	 \textbf{P.S.} When we scale the radius of the intermediate boundary by a constant factor, the order of the boundary-to-set connecting probability remains the same, as shown in Corollary \ref{corollary_compare_boundtoset}. Therefore, there is no essential difference between considering $\partial B(cN)$ and $\partial B(CN)$ instead of $\partial B(N)$ as in Theorem \ref{prop_new_QM}. The only reason we choose $\partial B(cN)$ and $\partial B(CN)$ here is to create an annulus between the two explored clusters, making the connecting event in Step (2) appear more natural.

\end{enumerate}
To justify this scheme, we demonstrate (see Lemma \ref{lemma_newHr}) that for each $i\in \{1,2\}$, with probability at least $c\cdot \mathbb{P}^{D_i}\big( A_i \xleftrightarrow{\ge 0} \partial B(N) \big)$, the boundary condition after exploring the sign cluster containing $A_i$ in Step (1) is sufficiently positive (we quantify this positivity using the concept of ``harmonic average''; see (\ref{def_Hv})). Building on this, we apply the FKG inequality to show that the intersection of these two increasing events still has a significant probability, and then we apply a powerful formula in \cite{lupu2018random} (see (\ref{2.22})) to achieve the lower bound for the conditional connecting probability mentioned in Step (2). However, this method does not work well for proving the other direction of quasi-multiplicativity, mainly because the two parts of boundary conditions in Step (1) exhibit a strong long-range correlation.

For the upper bound on quasi-multiplicativity, in light of the aforementioned challenge, we instead study the question from the perspective of loop clusters. The inequality in the opposite direction of (\ref{newadd_1.29}) indicates that when $A_1$ and $A_2$ are connected by loop clusters, the following two events typically occur. 
\begin{enumerate}
	\item[(i)]   There are two loop clusters: one connecting $A_1$ and $\partial B(cN)$, and the other connecting $A_2$ and $\partial B(CN)$. Moreover, these two clusters   are composed of disjoint collections of loops, allowing the use of the van den Berg-Kesten-Reimer (BKR) inequality (see Lemma \ref{lemma_BKR}).

	\item[(ii)]   The probability of connecting these two (partial) loop clusters using the remaining loops (i.e., the loops not used in the construction of these two loop clusters) is at most of order $N^{0\boxdot (6-d)}$. 
	
\end{enumerate}
Due to significant differences, we divide the subsequent discussion into two cases: the low-dimensional case ($3\le d<6$) and the high-dimensional case ($d>6$).

\textbf{When $3\le d<6$.} We construct the loop clusters in Item (i) based on the consideration of crossing loops as follows. Precisely, if no loop crosses the annulus $B(N)\setminus B(cN)$, then $A_1$ is connected to $\partial B(cN)$ by loops contained in $\widetilde{B}(N)$ (otherwise, it would be impossible for $A_1\xleftrightarrow{} A_2$ to happen without such a crossing loop). Similarly, if no loop crosses $B(CN)\setminus B(N)$, then $A_2$ must be connected to $\partial B(CN)$ by loops within $[\widetilde{B}(N)]^c$. Combining these two observations with the thinning property of the loop soup (as a Poisson point process), we obtain an upper bound (which fits the desired bound, according to Corollary \ref{corollary_compare_boundtoset}) for the probability of $A_1\xleftrightarrow{} A_2$ restricted to the absence of these crossing loops. In the remaining case (i.e., when a large crossing loop exists), we employ a decomposition method for loops (see Section \ref{subsection_loopsoup}) to separate the influence of these crossing loops on the connecting events inside and outside $\partial B(N)$, and control the influence through stochastic domination (see Lemma \ref{lemma_sto_dom}). Meanwhile, noting that the event in Item (ii) almost surely occurs (by the trivial upper bound for a probability), we derive the desired upper bound.

    \textbf{P.S.} The reason why this coarse bound does not lead to any essential loss is that in low dimensions, intuitively, the two clusters in Item (i) typically have large capacities, resulting in a uniformly positive probability of being hit by the same macroscopic loop. Furthermore, assuming this intuition holds, we will immediately obtain the lower bound for quasi-multiplicativity (see (\ref{newadd_1.29})). However, this intuition is not yet technically clear due to the lack of quantitative estimates on boundary-to-set connecting probabilities.

\textbf{When $d>6$.} Similar to the low-dimensional case, we construct the two clusters in Item (i) by considering crossing loops in a larger scale (specifically, in the annuli $B(N^{\frac{d-4}{2}})\setminus B(N)$ and $B(N)\setminus B(N^{\frac{2}{d-4}})$; see Section \ref{section5.2}). To achieve the estimate in Item (ii) (which is no longer trivial, unlike in low dimensions), we employ the tree expansion argument (see e.g. \cite[Section 3.4]{cai2024high}) to control the probability for a single loop to intersect both these two clusters, which highly relies on the comparison between point-to-set connecting probabilities (see Propositions \ref{prop_pointtoset_speed} and \ref{prop_pointtoset_harnack}). Instead of detailing the tree expansion argument, we offer the following heuristic to illustrate how the correction factor $N^{6-d}$ emerges. Consider all macroscopic clusters (i.e., clusters of diameters at least of order $N$) within the annulus $\widetilde{B}(CN)\setminus \widetilde{B}(cN)$. In fact, the number of these clusters (denoted by $\mathfrak{X}$) is typically of order $N^{d-6}$. To see this, note that the probability for a lattice point to be contained in a macroscopic cluster is of order $N^{-2}$ (recalling that $\theta_d(N)\asymp N^{-2}$ for all $d>6$), and that the volume of each macroscopic cluster is typically of order $N^4$ (see \cite{cai2024high, werner2021clusters}). These two observations lead to 
\begin{equation}
	\mathfrak{X}\asymp \mathrm{vol}\big (\widetilde{B}(CN)\setminus \widetilde{B}(cN)\big) \cdot N^{-2}\cdot N^{-4} \asymp N^{d-6}. 
\end{equation}
Meanwhile, for each $i\in \{1,2\}$, we explore the loop cluster containing $A_i$ (i.e., the collection of points connected to $A_i$ by loops in $\widetilde{\mathcal{L}}_{1/2}$). When $\mathcal{C}_i$ reaches the annulus $\widetilde{B}(CN)\setminus \widetilde{B}(cN)$ (the probability for this event to occur for both $i\in \{1,2\}$ is of the same order as $\mathbb{P}^{D_1}\big( A_1 \xleftrightarrow{\ge 0} \partial B(N) \big) \mathbb{P}^{D_2}\big( A_2 \xleftrightarrow{\ge 0} \partial B(N) \big)$), we naturally assume that $\mathcal{C}_i$ uniformly selects one of these macroscopic clusters and we further assume (plausibly) that these two selections are approximately independent. Moreover, $A_1$ and $A_2$ are connected by loop clusters if and only if $\mathcal{C}_1$ and $\mathcal{C}_2$ select the same macroscopic cluster, which (by our assumptions on uniformity and independence) happens with probability $\mathfrak{X}^{-1}\asymp N^{6-d}$. Combining these observations, we heuristically derive quasi-multiplicativity with correction factor $N^{6-d}$.

All in all, throughout the analysis above, we heavily rely on comparing connecting probabilities both between point-to-set and boundary-to-set types, as well as within each type (even though we lack quantitative estimates for them). This is why the majority of this paper is dedicated to establishing Propositions \ref{prop_pointtoset_speed}--\ref{prop_relation_pointandboundary}.

\subsection{Organization of the paper}

In Section \ref{section_notation}, we collect some necessary notations
and review some useful results. Propositions \ref{prop_pointtoset_speed} and  \ref{prop_pointtoset_harnack} are established in Section \ref{section_property_pointtoset}. The proof of Proposition \ref{prop_boundtoset_zero} is presented in Section \ref{section_boundary_to_set_stable}. In Section \ref{section_relation}, we prove Proposition \ref{prop_relation_pointandboundary}. Subsequently, we establish Theorem \ref{prop_new_QM} in Section \ref{section_proof_theorem1}. Finally, in Section \ref{section_volume} we confirm Theorem \ref{thm_volume}.

\section{Preliminaries}\label{section_notation}

In this section, we review some preliminary notations and results.

\subsection{Basic notations}

\begin{itemize}

%		\item  Recall that for any $A\subset \mathbb{Z}^d$, the boundary of $A$ is defined as $$\partial A= \big\{x\in A: \exists y\in \mathbb{Z}^d\setminus A\ \text{such that}\ \{x,y\}\in \mathbb{L}^d\big\}.$$ 

		\item  For any $N\ge 0$, recall that the box of the lattice $B(N)$ and the box of the metric graph $\widetilde{B}(N)$ have been defined in Section \ref{section_intro}. We also define the Euclidean ball of the lattice $\mathcal{B}(N):=\{y\in \mathbb{Z}^d:|y|\le N\}$. For any $x\in \mathbb{Z}^d$, let $B_x(N)$ (resp. $\widetilde{B}_x(N)$, $\mathcal{B}_x(N)$) be the set obtained by translating $B(N)$ (resp. $\widetilde{B}(N)$, $\mathcal{B}(N)$) by $x$. Note that $B_x(d^{-\frac{1}{2}}N) \subset  \mathcal{B}_x(N)\subset  B_x(N)$.

	\item  For any $v_1,v_2\in \widetilde{\mathbb{Z}}^d$, let $\|v_1-v_2\|$ be the graph distance between $v_1$ and $v_2$ on the metric graph $\widetilde{\mathbb{Z}}^d$. Especially, $\|x-y\|=d$ for all $\{x,y\}\in \mathbb{L}^d$.

	\item For any sets $V_1,V_2\subset \widetilde{\mathbb{Z}}^d$, the graph distance between them is 
	$$\mathrm{dist}(V_1,V_2):=\inf\nolimits_{v_1\in V_1,v_2\in V_2}\|v_1-v_2\|.$$

	\item For any $V\subset  \widetilde{\mathbb{Z}}^d$, the interior of $V$ is defined as  
	$$V^{\circ}:=\big\{v\in \widetilde{\mathbb{Z}}^d:\mathrm{dist}(\{v\}, \widetilde{\mathbb{Z}}^d \setminus V)>0 \big \}.$$
 The boundary of $V$ in $\widetilde{\mathbb{Z}}^d$ is defined as
 $$
 \widetilde{\partial} V:=\big\{v\in \widetilde{\mathbb{Z}}^d:  \mathrm{dist}(\{v\},  V)=\mathrm{dist}(\{v\}, \widetilde{\mathbb{Z}}^d \setminus V)=0 \big\}. 
 $$

% $$
% \widetilde{\partial} V:=\overbar{V}\setminus V^{\circ}
% $$

 %$\widetilde{\partial} V:=\overbar{V}\setminus V^{\circ}$.
	
%	the closure of $V$ is defined as $$\overbar{V}:= \{v\in  \widetilde{\mathbb{Z}}^d:\mathrm{dist}(\{v\},V)=0 \}.$$

%The interior of $V$, denoted by $V^{\circ}$, is defined as  
%	$$V^{\circ}:=\{v\in \widetilde{\mathbb{Z}}^d:\mathrm{dist}(\{v\}, \widetilde{\mathbb{Z}}^d \setminus V)>0 \}.$$

	\item For any path $\widetilde{\eta}$ on $\widetilde{\mathbb{Z}}^d$ (defined as a continuous function $\widetilde{\eta}: [0,T]\to \widetilde{\mathbb{Z}}^d$ for some $T>0$), we denote its range (i.e.,  the collection of points in $\widetilde{\mathbb{Z}}^d$ visited by $\widetilde{\eta}$) by $\mathrm{ran}(\widetilde{\eta})$.

	\item A rooted loop $\widetilde{\varrho}$ is a path on $\widetilde{\mathbb{Z}}^d$ that starts and ends at the same point. This point is called the root of $\widetilde{\varrho}$.

	\item For any $x\in \mathbb{Z}^d$, we denote by $\widehat{\mathbb{P}}_x$ the law of the simple random walk $\{S_n\}_{n\ge 0}$ on $\mathbb{Z}^d$ starting from $x$. I.e., $S_0=x$ and for any $n\ge 1$, 
	 $$\widehat{\mathbb{P}}_x(S_n=z \mid S_{n-1}=y )= \tfrac{1}{2d}, \ \ \forall \{y,z\}\in \mathbb{L}^d.$$

\end{itemize}

\subsection{Brownian motion on $\widetilde{\mathbb{Z}}^d$}

The Markov process $\big\{\widetilde{S}_t\big\}_{t \ge 0}$, representing a Brownian motion on the metric graph $\widetilde{\mathbb{Z}}^d$, is defined as follows. Inside each interval $I_e$,  $\widetilde{S}_{\cdot}$ evolves as a standard one-dimensional Brownian motion. When the process reaches a lattice point $x \in \widetilde{\mathbb{Z}}^d$, it selects one of the neighboring intervals $\{I_{\{x,y\}}\}_{\{x,y\}\in \mathbb{L}^d}$ uniformly at random and performs as a Brownian excursion from $x$ along this interval. If the excursion arrives at a neighboring point $y$, the process resumes from $y$, continuing in the same manner. Referring to \cite[Section 2]{lupu2016loop}, the total local time accumulated at $x$ during all excursions until reaching the aforementioned neighboring point $y$ in this step is distributed as an independent exponential random variable with parameter $1$. Moreover, the projection of $\big\{\widetilde{S}_t\big\}_{t \ge 0}$ onto the lattice $\mathbb{Z}^d$ behaves exactly as a simple random walk on $\mathbb{Z}^d$. For any $v \in \widetilde{\mathbb{Z}}^d$, let $\widetilde{\mathbb{P}}_v$ denote the law of  $\big\{\widetilde{S}_t\big\}_{t \ge 0}$ starting from $v$, and let $\widetilde{\mathbb{E}}_v$ be the expectation under $\widetilde{\mathbb{P}}_v$.

    For any $t\ge 0$ and $v,w\in \widetilde{\mathbb{Z}}^d$, let $\widetilde{q}_t(v,w)$ denote the transition density of the Brownian motion $\big\{\widetilde{S}_t\big\}_{t \ge 0}$ from $x$ to $y$ at time $t$ with respect to the Lebesgue measure on $\widetilde{\mathbb{Z}}^d$. The transition density of the Brownian bridge with duration $t$, starting from $v$ and ending at $w$, is given by $\frac{\widetilde{q}_s(v,\cdot )\widetilde{q}_{t-s}(\cdot,w)}{\widetilde{q}_t(v,w)}$ for all $0\le s\le t$.

\textbf{Hitting times.} For any $D \subset \widetilde{\mathbb{Z}}^d$, we define $\tau_D := \inf\{t \ge 0 : \widetilde{S}_t \in D \}$ as the first time $\big\{\widetilde{S}_t\big\}_{t \ge 0}$ enters $D$ (with the convention $\inf\emptyset = +\infty$ for completeness). For simplicity, we denote $\tau_{\{v\}}$ by $\tau_v$ for all $v \in \widetilde{\mathbb{Z}}^d$. The following lemma shows that the hitting distribution remains stable under the addition of certain obstacles.

\begin{lemma}\label{lemma_new2.1}
For any $N\ge 1$, $D_1 \subset \widetilde{B}(N)$, $x\in \partial B(2N)$ and $D_2\subset [\widetilde{B}(3N)]^c$ and for each $j\in \{1,2\}$, 
   		\begin{equation} 
				\widetilde{\mathbb{P}}_{x}\big(\tau_{D_j}=\tau_{v}<\tau_{D_{3-j}} \big) \asymp \widetilde{\mathbb{P}}_{x}\big(\tau_{D_j}=\tau_{v}<\infty \big) ,\  \ \ \forall v  \in \widetilde{\partial }D_j.	
				\end{equation}
\end{lemma}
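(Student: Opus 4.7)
The upper bound is immediate from $\{\tau_{D_j}=\tau_v<\tau_{D_{3-j}}\}\subseteq\{\tau_{D_j}=\tau_v<\infty\}$. For the lower bound, the two cases $j=1$ and $j=2$ are handled by analogous arguments (with ``inside $\widetilde{B}(N)$'' and ``outside $\widetilde{B}(3N)$'' exchanging roles relative to the start point $x\in\partial B(2N)$), so I focus on $j=1$. The plan is to first replace the random stopping time $\tau_{D_2}$ by a purely geometric one, then exploit the decay of harmonic measure at infinity via a Harnack-type comparison iterated across dyadic shells.

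Since $D_2\subset[\widetilde{B}(3N)]^c$ forces $\tau_{D_2}\ge \tau_{\widetilde{\partial}\widetilde{B}(3N)}$ almost surely,
\begin{equation}
\widetilde{\mathbb{P}}_x\bigl(\tau_{D_1}=\tau_v<\tau_{D_2}\bigr)\ge \widetilde{\mathbb{P}}_x\bigl(\tau_{D_1}=\tau_v<\tau_{\widetilde{\partial}\widetilde{B}(3N)}\bigr),
\end{equation}
and the task reduces to the distant-killing stability estimate
\begin{equation}\label{plan_red_lem21}
\widetilde{\mathbb{P}}_x\bigl(\tau_{D_1}=\tau_v<\tau_{\widetilde{\partial}\widetilde{B}(3N)}\bigr)\gtrsim \widetilde{\mathbb{P}}_x\bigl(\tau_{D_1}=\tau_v<\infty\bigr).
\end{equation}
Writing $h(z):=\widetilde{\mathbb{P}}_z(\tau_{D_1}=\tau_v<\infty)$---a positive function, harmonic on $\widetilde{\mathbb{Z}}^d\setminus D_1$ and vanishing at infinity for $d\ge 3$---the strong Markov property at $\tau_{\widetilde{\partial}\widetilde{B}(3N)}$ yields the decomposition
\begin{equation}
h(x)=\widetilde{\mathbb{P}}_x\bigl(\tau_{D_1}=\tau_v<\tau_{\widetilde{\partial}\widetilde{B}(3N)}\bigr)+\widetilde{\mathbb{E}}_x\bigl[h(\widetilde{S}_{\tau_{\widetilde{\partial}\widetilde{B}(3N)}});\,\tau_{\widetilde{\partial}\widetilde{B}(3N)}<\tau_{D_1}\bigr],
\end{equation}
so \eqref{plan_red_lem21} amounts to bounding the second (``excursion'') term by $(1-c)h(x)$ for some $c=c(d)>0$.

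To obtain this, I would iterate across the dyadic shells $\widetilde{\partial}\widetilde{B}(3\cdot 2^k N)$, $k\ge 0$, via repeated strong-Markov splits. At level $k$ the sharp asymptotic decay of positive harmonic functions in $d\ge 3$ gives $h(z)\lesssim c(v)(2^k N)^{2-d}$ on $\widetilde{\partial}\widetilde{B}(3\cdot 2^k N)$, while the classical hitting estimate $\widetilde{\mathbb{P}}_z(\tau_{\widetilde{B}(N)}<\infty)\asymp(N/|z|)^{d-2}$ shows that the walk's probability of ever coming back from scale $2^k N$ to the set $\widetilde{B}(N)$ where $D_1$ lies is at most $C\cdot 2^{-k(d-2)}$. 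Consequently the level-$k$ contribution to the excursion term is at most $C'\cdot 2^{-k(d-2)}h(x)$, and summing the convergent geometric series in $k\ge 0$ produces the desired $(1-c)h(x)$.

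The main technical obstacle is the tightness of constants in the base case ($k=0$) in low dimensions: for $d=3$, a naive single-step Harnack bound gives only $\sup_{z\in\widetilde{\partial}\widetilde{B}(3N)}h(z)\lesssim C(2/3)^{d-2}h(x)=2C/3\cdot h(x)$, which need not be less than $h(x)$. The dyadic iteration sidesteps this precisely because each deeper shell contributes an additional compounding factor $2^{-(d-2)}$, so the final decay constant no longer depends on any single Harnack comparison. All the required inputs---asymptotic decay of positive harmonic functions, Harnack's inequality on annuli, and hitting estimates for balls---are classical for simple random walk on $\mathbb{Z}^d$ in $d\ge 3$ and transfer directly to the metric-graph Brownian motion via the projection discussed in the excerpt.
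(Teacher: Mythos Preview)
Your reduction to the killed estimate \eqref{plan_red_lem21} and the strong-Markov decomposition into a direct-hit term plus an excursion term are both correct. The gap is in the dyadic endgame: the iteration does not actually sidestep the Harnack-constant obstruction you identify. Whatever precise meaning one assigns to ``level-$k$ contribution,'' the bound you obtain is of the form $C'\,2^{-k(d-2)}h(x)$, where $C'$ comes from comparing $h$ at a generic point of $\partial B(2N)$ with $h$ at a point of $\widetilde{\partial}\widetilde{B}(3\cdot 2^kN)$. The two ingredients you list---the asymptotic $h(z)\lesssim c(v)|z|^{2-d}$ and the return estimate $\widetilde{\mathbb{P}}_z(\tau_{\widetilde{B}(N)}<\infty)\asymp 2^{-k(d-2)}$---encode the \emph{same} decay and do not compound; invoking both does not yield an extra factor of $2^{-k(d-2)}$. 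Summing the series therefore gives at best $\frac{C'}{1-2^{-(d-2)}}\,h(x)$, and you would need $C'<1-2^{-(d-2)}$ (e.g.\ $C'<\tfrac12$ when $d=3$) to conclude. But $C'$ is precisely the ratio of the upper and lower constants in the two-sided asymptotic for $h$, hence a Harnack-type constant $\ge 1$. The $k=0$ term alone already carries this constant, and adding the deeper shells only increases the sum.

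The paper closes this gap by a different device. Rather than bounding the excursion term at an arbitrary $x$, it first works at the maximizer $x_*:=\arg\max_{w\in\partial B(2N)}\widetilde{\mathbb{P}}_w(\tau_{D_j}=\tau_v<\infty)$. There the excursion term is automatically $\le\alpha\,h(x_*)$ with
\[
\alpha=\Bigl[\widetilde{\mathbb{P}}_{x_*}(\tau_{D_{3-j}}<\infty)\wedge\max_{y\in\widetilde{\partial}D_{3-j}}\widetilde{\mathbb{P}}_y(\tau_{\partial B(2N)}<\infty)\Bigr]<1,
\]
because upon returning to $\partial B(2N)$ the walk can do no better than $h(x_*)$; no Harnack comparison is needed at this step. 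One then transfers to a general $x\in\partial B(2N)$ by applying Harnack not to $h$ but to the \emph{killed} function $w\mapsto\widetilde{\mathbb{P}}_w(\tau_{D_j}=\tau_v<\tau_{D_{3-j}})$: steer the walk from $x$ into a small ball around $x_*$ with uniformly positive probability (avoiding $D_1\cup D_2$), and use Harnack there to get $\widetilde{\mathbb{P}}_x(\cdots)\gtrsim\widetilde{\mathbb{P}}_{x_*}(\cdots)\gtrsim h(x_*)\ge h(x)$. This maximizer trick is the missing idea.
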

\begin{proof}
It is straightforward to see that
 \begin{equation}\label{ineq_Px_tau_D}
 	\widetilde{\mathbb{P}}_{x}\big(\tau_{D_j}=\tau_{v}<\tau_{D_{3-j}} \big) \le  \widetilde{\mathbb{P}}_{x}\big(\tau_{D_j}=\tau_{v}<\infty \big).
 \end{equation}
 Next, we aim to establish the inequality in the opposite direction with a constant factor. For any fixed $v\in \widetilde{\partial }D_j$, let $x_*$ be the point in $\partial B(2N)$ maximizing the probability $\widetilde{\mathbb{P}}_{\cdot }\big(\tau_{D_j}=\tau_{v}<\infty \big)$. Thus, by the strong Markov property, one has 
	\begin{equation*}
	\begin{split}
		&\widetilde{\mathbb{P}}_{x_*}\big(\tau_{D_j}=\tau_{v}<\infty \big)- \widetilde{\mathbb{P}}_{x_*}\big(\tau_{D_j}=\tau_{v}<\tau_{D_{3-j}} \big)\\
		=&  \sum_{y\in \widetilde{\partial}D_{3-j}} \widetilde{\mathbb{P}}_{x_*}\big(\tau_{D_{3-j}}=\tau_{y}<\tau_{D_j} \big) \sum_{z\in \partial B(2N)}\widetilde{\mathbb{P}}_{y}\big(\tau_{\partial B(2N)}=\tau_{z}<\infty \big)   \widetilde{\mathbb{P}}_{z}\big(\tau_{D_j}=\tau_{v}<\infty \big)  \\
		\le & \Big[ \widetilde{\mathbb{P}}_{x_*}\big(\tau_{D_{3-j}}<\infty \big) \land \max_{y\in \widetilde{\partial}D_{3-j}} \widetilde{\mathbb{P}}_{y}\big(\tau_{\partial B(2N)}<\infty \big) \Big]\cdot  \widetilde{\mathbb{P}}_{x_*}\big(\tau_{D_j}=\tau_{v}<\infty \big),
			\end{split}
	\end{equation*}
	where in the last line we used the maximality of $x_*$. Combined with that fact that $\widetilde{\mathbb{P}}_{x_*}\big(\tau_{D_{1}}<\infty \big)$ and $\max\nolimits_{y\in \widetilde{\partial}D_{2}} \widetilde{\mathbb{P}}_{y}\big(\tau_{\partial B(2N)}<\infty \big)$ are both uniformly bounded away from $1$ (by the invariance principle), it yields that 
	\begin{equation}\label{new2.7}
		\widetilde{\mathbb{P}}_{x_*}\big(\tau_{D_j}=\tau_{v}<\infty \big) \lesssim \widetilde{\mathbb{P}}_{x_*}\big(\tau_{D_j}=\tau_{v}<\tau_{D_{3-j}} \big).\end{equation}
For any $x\in \partial B(2N)$, by the strong Markov property, one has 
	\begin{equation*}
	\begin{split}
				\widetilde{\mathbb{P}}_{x}\big(\tau_{D_j}=\tau_{v}<\tau_{D_{3-j}} \big)\ge &\widetilde{\mathbb{P}}_{x}\big(\tau_{B_{x_*}(\frac{1}{10}|x_*|)}<\tau_{D_{3-j}} \big)\min_{w\in B_{x_*}(\frac{1}{10}|x_*|)}\widetilde{\mathbb{P}}_{w}\big(\tau_{D_j}=\tau_{v}<\tau_{D_{3-j}} \big)\\
		\gtrsim &  \widetilde{\mathbb{P}}_{x_*}\big(\tau_{D_j}=\tau_{v}<\tau_{D_{3-j}} \big),
	\end{split}
	\end{equation*}
	where in the last inequality we used the invariance principle and Harnack's inequality. Combined with (\ref{new2.7}) and the maximality of $x_*$, it implies that 
\begin{equation}\label{ineq_Px_tau_D2}
			\widetilde{\mathbb{P}}_{x}\big(\tau_{D_j}=\tau_{v}<\tau_{D_{3-j}} \big)\gtrsim     \widetilde{\mathbb{P}}_{x_*}\big(\tau_{D_j}=\tau_{v}<\infty \big)\ge \widetilde{\mathbb{P}}_{x}\big(\tau_{D_j}=\tau_{v}<\infty \big).
	\end{equation}
By (\ref{ineq_Px_tau_D}) and (\ref{ineq_Px_tau_D2}), we conclude the proof of this lemma.
	\end{proof}

\textbf{Green's function on $\widetilde{\mathbb{Z}}^d$.} For any $D\subset \widetilde{\mathbb{Z}}^d$, the Green's function for the set $D$ is defined as
\begin{equation}
	\widetilde{G}_D(v,w):=\int_{0}^{\infty} \Big\{\widetilde{q}_t(v,w) - \widetilde{\mathbb{E}}_v \big[ \widetilde{q}_{t-\tau_{D}}(\widetilde{S}_{\tau_D},w)\cdot \mathbbm{1}_{\tau_D<t}  \big]\Big\}dt,\ \ \forall v, w\in \widetilde{\mathbb{Z}}^d\setminus D.
	\end{equation}
Let $\widetilde{G}_D(v,w):=0$ when $v$ or $w$ is contained in $D$. As noted in \cite[Section 3]{lupu2016loop}, $\widetilde{G}_D(\cdot, \cdot)$ is finite, symmetric and continuous. Moreover, for any $D_1\subset D_2\subset \widetilde{\mathbb{Z}}^d$, 
\begin{equation}\label{green_D1_D2}
	\widetilde{G}_{D_1}(v,w)\ge \widetilde{G}_{D_2}(v,w), \ \ \forall v,w\in \widetilde{\mathbb{Z}}^d. 
\end{equation} 
In particular, when $D=\emptyset$, we write $\widetilde{G}_{\emptyset}(\cdot ,\cdot )$ as $\widetilde{G}(\cdot ,\cdot )$. When $v,w\in \mathbb{Z}^d$, $\widetilde{G}(v,w)$ equals $G(v ,w)$, the Green's function on $\mathbb{Z}^d$. This implies that 
\begin{equation}\label{bound_green}
	\widetilde{G}(v ,w) \asymp (\|v-w\|+1)^{2-d}, \ \ \forall v,w\in \widetilde{\mathbb{Z}}^d. 
\end{equation}
As shown in the following lemma, the Green's function also exhibits the stability presented in Lemma \ref{lemma_new2.1}.

\begin{lemma}\label{lemma_new2.2}
	For any $N\ge 1$, $D\subset \widetilde{B}(N)\cup [\widetilde{B}(4N)]^c$ and $v,w \in \widetilde{B}(3N)\setminus \widetilde{B}(2N)$,
	\begin{equation}
		\widetilde{G}_{D}(v,w) \asymp  (\|v-w\|+1)^{2-d}.
	\end{equation}
\end{lemma}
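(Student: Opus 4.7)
The plan is to obtain the upper bound directly from monotonicity, and to prove the lower bound via a strong Markov decomposition followed by a short two-scale propagation.

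For the upper bound, by the monotonicity (\ref{green_D1_D2}) and the free-graph estimate (\ref{bound_green}),
$$\widetilde{G}_D(v,w)\le \widetilde{G}(v,w)\asymp (\|v-w\|+1)^{2-d}.$$
For the lower bound, I first reduce to the worst possible obstacle by monotonicity: set $D^{*}:=\widetilde{B}(N)\cup[\widetilde{B}(4N)]^{c}\supset D$, so that $\widetilde{G}_{D}(v,w)\ge \widetilde{G}_{D^{*}}(v,w)$, and aim to prove $\widetilde{G}_{D^{*}}(v,w)\gtrsim (\|v-w\|+1)^{2-d}$. The standard strong Markov decomposition at the time $\tau_{D^{*}}$ gives
$$\widetilde{G}(v,w)=\widetilde{G}_{D^{*}}(v,w)+\widetilde{\mathbb{E}}_{v}\bigl[\widetilde{G}(\widetilde{S}_{\tau_{D^{*}}},w)\mathbbm{1}_{\tau_{D^{*}}<\infty}\bigr].$$
Every point $u\in D^{*}$ is at graph distance $\ge c_{0}N$ from $w$ (since $w\in\widetilde{B}(3N)\setminus\widetilde{B}(2N)$), so (\ref{bound_green}) bounds the second term by $C_{0}N^{2-d}$. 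Hence
$$\widetilde{G}_{D^{*}}(v,w)\ge c_{1}(\|v-w\|+1)^{2-d}-C_{0}N^{2-d}.$$
Fix $\alpha=\alpha(d)\in(0,1)$ small enough that $C_{0}\alpha^{d-2}\le c_{1}/2$; the display already gives $\widetilde{G}_{D^{*}}(v,w)\gtrsim (\|v-w\|+1)^{2-d}$ whenever $\|v-w\|\le \alpha N$.

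In the complementary regime $\|v-w\|>\alpha N$, one has $(\|v-w\|+1)^{2-d}\asymp N^{2-d}$, so it suffices to show $\widetilde{G}_{D^{*}}(v,w)\gtrsim N^{2-d}$. I would propagate the previous bound by one step. Let $U:=\widetilde{B}_{w}(\alpha N/20)$ and $\tau:=\tau_{\widetilde{\partial}U}$; applying the strong Markov property at $\tau$,
$$\widetilde{G}_{D^{*}}(v,w)\ge \widetilde{\mathbb{E}}_{v}\bigl[\widetilde{G}_{D^{*}}(\widetilde{S}_{\tau},w);\,\tau<\tau_{D^{*}}\bigr].$$
On $\{\tau<\tau_{D^{*}}\}$ one has $\|\widetilde{S}_{\tau}-w\|=\alpha N/20\le\alpha N$, while $w$ still sits at distance $\gtrsim N$ from $D^{*}$, so the preceding short calculation applied to the pair $(\widetilde{S}_{\tau},w)$ yields $\widetilde{G}_{D^{*}}(\widetilde{S}_{\tau},w)\gtrsim N^{2-d}$. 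The remaining and main obstacle is the hitting estimate $\widetilde{\mathbb{P}}_{v}(\tau<\tau_{D^{*}})\gtrsim 1$. After projecting $\widetilde{S}$ onto $\mathbb{Z}^{d}$, where it becomes a simple random walk, this reduces to a standard invariance-principle/Harnack-type statement: a SRW started at a point of $B(3N)\setminus B(2N)$ reaches the macroscopic ball $B_{w}(\alpha N/20)$ before exiting the annulus $B(4N)\setminus B(N)$ with probability bounded away from $0$ uniformly in $N$. Combining the two regimes then gives $\widetilde{G}_{D^{*}}(v,w)\gtrsim (\|v-w\|+1)^{2-d}$, completing the proof.
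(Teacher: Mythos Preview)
Your argument is correct, but the route differs from the paper's. The paper converts the lower bound into a hitting-probability estimate via $\widetilde{G}_D(v,w)=\widetilde{\mathbb{P}}_v(\tau_w<\tau_D)\widetilde{G}_D(w,w)\asymp\widetilde{\mathbb{P}}_v(\tau_w<\tau_D)$, then uses the invariance principle to reach $\partial B_w(\tfrac{1}{10}|v-w|)$ and finally invokes Lemma~\ref{lemma_new2.1} (the stability of hitting distributions under the extra obstacle) to get $\widetilde{\mathbb{P}}_z(\tau_w<\tau_D)\gtrsim\widetilde{\mathbb{P}}_z(\tau_w<\infty)\asymp(\|v-w\|+1)^{2-d}$. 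In contrast, you work additively: the identity $\widetilde{G}(v,w)=\widetilde{G}_{D^*}(v,w)+\widetilde{\mathbb{E}}_v[\widetilde{G}(\widetilde{S}_{\tau_{D^*}},w)]$ together with $\mathrm{dist}(w,D^*)\gtrsim N$ controls the subtracted term by $C_0N^{2-d}$, which already handles the small-distance regime; you then bootstrap the large-distance regime by one strong-Markov step and an invariance-principle hitting estimate. Your approach is self-contained and avoids Lemma~\ref{lemma_new2.1} entirely, at the cost of a case split on $\|v-w\|$; the paper's approach is a single chain but relies on the previously established stability lemma. Both are equally valid, and the invariance-principle input (reaching a macroscopic ball around $w$ before leaving the annulus) is common to the two.
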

\begin{proof}
On the one hand, by the monotonicity of $\widetilde{G}_{\cdot }$ and (\ref{bound_green}), we have 
\begin{equation}
	\widetilde{G}_{D}(v,w) \le \widetilde{G}(v,w) \asymp    (\|v-w\|+1)^{2-d}.
\end{equation}

On the other hand, it follows from the definition of the Green's function that 
\begin{equation}\label{new2.13}
	\widetilde{G}_{D}(v,w) = \widetilde{\mathbb{P}}_v(\tau_{w}<\tau_{D})\widetilde{G}_{D}(w,w)\asymp \widetilde{\mathbb{P}}_v(\tau_{w}<\tau_{D}). 
\end{equation}
By the strong Markov property and the invariance principle, one has 
\begin{equation}\label{new2.14}
	\begin{split}
		\widetilde{\mathbb{P}}_v(\tau_{w}<\tau_{D}) \gtrsim   \min_{z\in \partial B_w(\frac{1}{10}|w-v|)} \widetilde{\mathbb{P}}_z(\tau_{w}<\tau_{D}).
			\end{split}
\end{equation}
	Moreover, for any $z\in \partial B_w(\frac{1}{10}|w-v|)$, by Lemma \ref{lemma_new2.1} we have 
	\begin{equation}\label{new2.15}
		\widetilde{\mathbb{P}}_z(\tau_{w}<\tau_{D}) \gtrsim  \widetilde{\mathbb{P}}_z(\tau_{w}<\infty) = \frac{\widetilde{G}(z,w)}{\widetilde{G}(w,w)} \overset{(\ref*{bound_green})}{\asymp } (\|v-w\|+1)^{2-d}.
	\end{equation}
	Combining (\ref{new2.13}), (\ref{new2.14}) and (\ref{new2.15}), we obtain 
	\begin{equation*}
		\widetilde{G}_{D}(v,w)  \gtrsim  (\|v-w\|+1)^{2-d} 
	\end{equation*}
	and thus complete the proof. 
	\end{proof}

\textbf{Boundary excursion kernel.} For any $D\subset \widetilde{\mathbb{Z}}^d$, the boundary excursion kernel for $D$ is defined as 
\begin{equation}\label{def_KD}
	\mathbb{K}_D(v,w) := \lim\limits_{\epsilon \downarrow 0} (2\epsilon)^{-1}\sum\nolimits_{v'\in \widetilde{\mathbb{Z}}^d: \|v'-v\|=\epsilon} \widetilde{\mathbb{P}}_{v'} (\tau_{D}=\tau_{w}<\infty), \ \ \forall v,w\in \widetilde{\partial} D.
\end{equation}
The time-reversal invariance of the Brownian motion implies that $\mathbb{K}_D(v,w)$ is symmetric, i.e., $\mathbb{K}_D(v,w)=\mathbb{K}_D(w,v)$ for all $v,w\in \widetilde{\partial} D$. Referring to \cite{lupu2018random}, $\mathbb{K}_D(v,w)$ can also be considered as the effective equivalent conductance between $v$ and $w$ in the metric graph $\widetilde{\mathbb{Z}}^d\setminus D^{\circ}$.

\textbf{Equilibrium measure and capacity.} For any $D\subset \widetilde{\mathbb{Z}}^d$ and $v\in \widetilde{\partial} D$, the equilibrium measure for $D$ at $v$, denoted by $\mathbb{Q}_D(v)$, can be regarded as the effective equivalent conductance between $v$ and infinity in $\widetilde{\mathbb{Z}}^d\setminus D^{\circ}$. Precisely, define
\begin{equation}
	\mathbb{Q}_D(v):= \lim\limits_{N \to \infty} \sum\nolimits_{w\in \partial B(N)}\mathbb{K}_{D\cup \partial B(N)}(v,w).
\end{equation}
The capacity of $D$ is the total mass of $\mathbb{Q}_D$, i.e., $\mathrm{cap}(D):= \sum_{v\in \widetilde{\partial} D}\mathbb{Q}_D(v)$, where the sum is well-defined because $\widetilde{\partial} D$ is countable. This countability follows from the facts that $D$ consists of finitely many connected components and that $\widetilde{\mathbb{Z}}^d$ is locally one-dimensional. Moreover, the order of the capacity of $\widetilde{B}(N)$ is given by 
\begin{equation}
	\mathrm{cap}\big(\widetilde{B}(N)\big)\asymp N^{d-2}, \ \ \forall N\ge 1. 
\end{equation}

The last-exit decomposition for the Brownian motion (see e.g. \cite[Section 8.2]{morters2010brownian}) implies that for any disjoint $D,D'\subset \widetilde{\mathbb{Z}}^d$ and $w\in \widetilde{\mathbb{Z}}^d\setminus (D\cup D')$, 
\begin{equation}\label{ineq_2.15}
	\widetilde{\mathbb{P}}_w\big(\tau_{D}<\tau_{D'}\big) = \sum\nolimits_{v\in  \widetilde{\partial} D} \widetilde{G}_{D'}(w,v) \mathbb{Q}_D(v). 
\end{equation}
This formula relates the hitting probability and the capacity as follows.

\begin{lemma}\label{lemma_prop_hitting}
For any $N\ge 1$, $D\subset \widetilde{B}(N)$, $x\in \partial B(2N)$ and $D'\subset [\widetilde{B}(3N)]^c$,
	\begin{equation}\label{new2.20}
		\widetilde{\mathbb{P}}_x(\tau_D<\tau_{D'})\asymp N^{2-d}\mathrm{cap}(D).
	\end{equation}
	Especially, when $D=\widetilde{B}(n)$ with $n\le N$, one has 
	\begin{equation}\label{new2.20_1}
		\widetilde{\mathbb{P}}_x(\tau_{\widetilde{B}(n)}<\tau_{D'})\asymp \big(\frac{n}{N}\big)^{d-2}.
	\end{equation}
\end{lemma}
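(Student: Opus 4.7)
The plan is to reduce the obstructed hitting probability $\widetilde{\mathbb{P}}_x(\tau_D<\tau_{D'})$ to the unobstructed one $\widetilde{\mathbb{P}}_x(\tau_D<\infty)$ using Lemma \ref{lemma_new2.1}, and then apply the last-exit decomposition (\ref{ineq_2.15}) together with the Green's function estimate (\ref{bound_green}) to extract the factor $N^{2-d}\mathrm{cap}(D)$.

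First I would apply Lemma \ref{lemma_new2.1} with $D_1=D$ and $D_2=D'$. Its hypotheses are exactly those of the present lemma, so for every $v\in \widetilde{\partial} D$ we obtain
\begin{equation*}
\widetilde{\mathbb{P}}_x\big(\tau_D=\tau_v<\tau_{D'}\big)\asymp \widetilde{\mathbb{P}}_x\big(\tau_D=\tau_v<\infty\big).
\end{equation*}
Since $\widetilde{\partial} D$ is countable (as noted earlier in this section) and the events $\{\tau_D=\tau_v\}_{v\in\widetilde{\partial} D}$ partition $\{\tau_D<\infty\}$, summing over $v$ yields
\begin{equation*}
\widetilde{\mathbb{P}}_x\big(\tau_D<\tau_{D'}\big)\asymp \widetilde{\mathbb{P}}_x\big(\tau_D<\infty\big).
\end{equation*}

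Next, I would invoke (\ref{ineq_2.15}) with the auxiliary obstacle set to $\emptyset$ (so that $\tau_\emptyset=\infty$ and $\widetilde{G}_\emptyset=\widetilde{G}$), which gives
\begin{equation*}
\widetilde{\mathbb{P}}_x\big(\tau_D<\infty\big)=\sum_{v\in \widetilde{\partial} D}\widetilde{G}(x,v)\,\mathbb{Q}_D(v).
\end{equation*}
For every $v\in \widetilde{\partial} D\subset \widetilde{B}(N)$ and $x\in \partial B(2N)$, a direct triangle-inequality computation shows $\|x-v\|\asymp N$. Combined with the polynomial decay (\ref{bound_green}), this gives $\widetilde{G}(x,v)\asymp N^{2-d}$ uniformly, whence
\begin{equation*}
\widetilde{\mathbb{P}}_x\big(\tau_D<\infty\big)\asymp N^{2-d}\sum_{v\in \widetilde{\partial} D}\mathbb{Q}_D(v)=N^{2-d}\,\mathrm{cap}(D).
\end{equation*}
Chaining the two asymptotic equivalences yields (\ref{new2.20}); the special case (\ref{new2.20_1}) follows at once upon substituting the capacity estimate $\mathrm{cap}(\widetilde{B}(n))\asymp n^{d-2}$ recorded earlier in this section.

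The argument is short; essentially all of the work is done by Lemma \ref{lemma_new2.1}, which already absorbs the analytic cost of removing the faraway obstacle $D'$ via an invariance-principle and Harnack-type argument. The only thing to watch is the uniformity of the bound $\widetilde{G}(x,v)\asymp N^{2-d}$, which requires the geometric separation $\|x-v\|\asymp N$ to be genuinely two-sided for every $v$ in the support of $\mathbb{Q}_D$; this is guaranteed by the hypotheses $D\subset \widetilde{B}(N)$ and $x\in \partial B(2N)$, so no serious obstacle arises.
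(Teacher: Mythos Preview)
Your proof is correct and essentially the same as the paper's. The only cosmetic difference is the order of operations: the paper applies the last-exit decomposition (\ref{ineq_2.15}) directly with the obstacle $D'$ present, obtaining $\widetilde{\mathbb{P}}_x(\tau_D<\tau_{D'})=\sum_{v\in\widetilde{\partial}D}\widetilde{G}_{D'}(x,v)\,\mathbb{Q}_D(v)$, and then invokes Lemma~\ref{lemma_new2.2} to bound $\widetilde{G}_{D'}(x,v)\asymp N^{2-d}$; you instead remove $D'$ first via Lemma~\ref{lemma_new2.1} and then apply the unobstructed last-exit decomposition with the bound (\ref{bound_green}). Since Lemma~\ref{lemma_new2.2} is itself proved from Lemma~\ref{lemma_new2.1}, the two arguments are interchangeable.
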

\begin{proof}
	By the last-exit decomposition (\ref{ineq_2.15}), we have 
	\begin{equation}
	 \mathrm{cap}(D)\cdot \min\nolimits_{y\in D}\widetilde{G}_{D'}(x,y) \le 	\widetilde{\mathbb{P}}_x(\tau_D<\tau_{D'})\le  \mathrm{cap}(D)\cdot \max\nolimits_{y\in D} \widetilde{G}_{D'}(x,y).
	\end{equation}
	Combined with the fact that $\widetilde{G}_{D'}(x,y)\asymp N^{2-d}$ for all $y\in D$ (by Lemma \ref{lemma_new2.2}), this implies (\ref{new2.20}). Plugging $\mathrm{cap}\big(\widetilde{B}(n)\big)\asymp n^{d-2}$ into (\ref{new2.20}), we get (\ref{new2.20_1}).	
\end{proof}

The next lemma gives the decay rate of the hitting probability with respect to the distance between the starting point and the target set.

\begin{lemma}\label{lemma_compare_hm}
	For any $M\ge N\ge 1$, $v\in D\subset \widetilde{B}(\frac{N}{2})$, $x_1\in \partial B(N)$, $x_2\in \partial B(M)$ and $D'\subset [\widetilde{B}(2M)]^c$, 
	 \begin{equation}\label{ineq2.19}
	 N^{d-2}	\widetilde{\mathbb{P}}_{x_1}(\tau_{D}=\tau_{v}<\tau_{D'}) \asymp M^{d-2}	\widetilde{\mathbb{P}}_{x_2}(\tau_{D}=\tau_{v}<\tau_{D'}).
	 \end{equation}
\end{lemma}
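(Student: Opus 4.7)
The plan is to express both sides of \eqref{ineq2.19} through the positive harmonic function
\begin{equation*}
h(z) := \widetilde{\mathbb{P}}_z(\tau_D = \tau_v < \tau_{D'}).
\end{equation*}
Since $D \subset \widetilde{B}(N/2)$ and $D' \subset [\widetilde{B}(2M)]^c$ with $M \ge N$, $h$ is harmonic on any open subset of $\widetilde{\mathbb{Z}}^d \setminus (D \cup D')$, in particular on the annulus $\widetilde{B}(3N) \setminus \widetilde{B}(N/2)$. I would compare $h(x_1)$ and $h(x_2)$ via Harnack's inequality and convert this comparison into the claimed estimate using Lemma \ref{lemma_prop_hitting}.

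When $M \le 4N$, both $x_1$ and $x_2$ lie in a bounded annulus disjoint from $D$ and $D'$, so Harnack's inequality gives $h(x_1) \asymp h(x_2)$ directly; combined with $N^{d-2} \asymp M^{d-2}$, this settles \eqref{ineq2.19} in this regime. When $M \ge 4N$, I would apply the strong Markov property at $\tau_{\widetilde{B}(2N)}$: since the walk from $x_2 \in \partial B(M) \subset [\widetilde{B}(2N)]^c$ must reach $\widetilde{\partial}\widetilde{B}(2N)$ before entering $D \subset \widetilde{B}(2N)^{\circ}$,
\begin{equation*}
\widetilde{\mathbb{P}}_{x_2}(\tau_D = \tau_v < \tau_{D'}) = \widetilde{\mathbb{E}}_{x_2}\!\Bigl[\mathbbm{1}_{\tau_{\widetilde{B}(2N)} < \tau_{D'}} \cdot h\bigl(\widetilde{S}_{\tau_{\widetilde{B}(2N)}}\bigr)\Bigr].
\end{equation*}
Harnack's inequality applied to $h$ on $\widetilde{B}(3N) \setminus \widetilde{B}(N/2)$ yields $h(y) \asymp h(x_1)$ uniformly for $y \in \widetilde{\partial}\widetilde{B}(2N)$, so that
\begin{equation*}
\widetilde{\mathbb{P}}_{x_2}(\tau_D = \tau_v < \tau_{D'}) \asymp h(x_1) \cdot \widetilde{\mathbb{P}}_{x_2}\bigl(\tau_{\widetilde{B}(2N)} < \tau_{D'}\bigr).
\end{equation*}
Lemma \ref{lemma_prop_hitting} applied with ambient radius $M/2$ (so that $x_2 \in \partial B(2 \cdot M/2)$ and $D' \subset [\widetilde{B}(3 \cdot M/2)]^c$) and inner set $\widetilde{B}(2N)$ with $n = 2N \le M/2$ then gives $\widetilde{\mathbb{P}}_{x_2}(\tau_{\widetilde{B}(2N)} < \tau_{D'}) \asymp (N/M)^{d-2}$, which combines with the previous display to produce $M^{d-2}\widetilde{\mathbb{P}}_{x_2}(\tau_D = \tau_v < \tau_{D'}) \asymp N^{d-2} h(x_1)$, as required.

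The principal technical point is ensuring that Harnack's inequality for $h$ holds with a constant uniform in $N, M, D, D', v$ on annular regions bounded away from $D \cup D'$. This should follow from the standard lattice Harnack principle transferred through the projection of $\widetilde{S}$ onto $\mathbb{Z}^d$ (an ingredient already invoked in the proof of Lemma \ref{lemma_new2.1}), or equivalently from the Green's function bounds in Lemma \ref{lemma_new2.2}. Modulo this standard ingredient, the proof reduces to a single strong Markov decomposition together with one application of Lemma \ref{lemma_prop_hitting}.
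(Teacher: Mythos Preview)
Your proposal is correct and follows essentially the same approach as the paper: both apply the strong Markov property at a box boundary of scale $N$, use a Harnack-type comparison to show the hitting function is roughly constant there, and then invoke the hitting estimate from Lemma~\ref{lemma_prop_hitting} to produce the factor $(N/M)^{d-2}$. The only cosmetic differences are that the paper stops at $\partial B(N)$ rather than $\widetilde{\partial}\widetilde{B}(2N)$ and proves the Harnack step by hand (via the invariance principle and a detour through $B_{w_2}(N/10)$) instead of citing it directly, and it does not split off the case $M\le 4N$.
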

\begin{proof}
	By the strong Markov property, one has
	\begin{equation}\label{ineq2.21}
	\begin{split}
				\widetilde{\mathbb{P}}_{x_2}(\tau_{D}=\tau_{v}<\tau_{D'}) =& \sum_{w\in \partial B(N)}\widetilde{\mathbb{P}}_{x_2}(\tau_{\partial B(N)}=\tau_{w}<\tau_{D'})\widetilde{\mathbb{P}}_{w}(\tau_{D}=\tau_{v}<\tau_{D'}).
	\end{split}
	\end{equation}
	Meanwhile, for any $w_1,w_2\in \partial B(N)$, by applying the strong Markov property, the invariance principle and Harnack's inequality in sequence, we have  
\begin{equation*}
\begin{split}
		\widetilde{\mathbb{P}}_{w_1}(\tau_{D}=\tau_{v}<\tau_{D'})	\ge & \widetilde{\mathbb{P}}_{w_1}(\tau_{B_{w_2}(\frac{1}{10}N)}<\tau_{D'})\min_{z\in B_{w_2}(\frac{1}{10}N)} \widetilde{\mathbb{P}}_{z}(\tau_{D}=\tau_{v}<\tau_{D'})\\
		\gtrsim &\min_{z\in B_{w_2}(\frac{1}{10}N)} \widetilde{\mathbb{P}}_{z}(\tau_{D}=\tau_{v}<\tau_{D'})
		\gtrsim    \widetilde{\mathbb{P}}_{w_2}(\tau_{D}=\tau_{v}<\tau_{D'}).
\end{split}
\end{equation*}
This implies that for any $w\in \partial B(N)$, 
\begin{equation}\label{ineq_new2.22}
	\widetilde{\mathbb{P}}_{w}(\tau_{D}=\tau_{v}<\tau_{D'}) \asymp \widetilde{\mathbb{P}}_{x_1}(\tau_{D}=\tau_{v}<\tau_{D'}). 
\end{equation}
Combined with (\ref{ineq2.21}), it shows that 
\begin{equation}\label{ineqnew2.22}
	\widetilde{\mathbb{P}}_{x_2}(\tau_{D}=\tau_{v}<\tau_{D'}) \asymp \widetilde{\mathbb{P}}_{x_2}(\tau_{\partial B(N)}<\tau_{D'})\widetilde{\mathbb{P}}_{x_1}(\tau_{D}=\tau_{v}<\tau_{D'}). 
\end{equation}
By (\ref{ineqnew2.22}) and $\widetilde{\mathbb{P}}_{x_2}(\tau_{\partial \mathcal{B}(N)}<\tau_{D'})  \asymp \big(\frac{N}{M}\big)^{d-2}$ (which follows from (\ref{new2.20_1})), we obtain the desired bound (\ref{ineq2.19}). 
\end{proof}

%	It follows from (\ref{new2.20_1}) that 
%	\begin{equation}\label{ineq2.22}
%		\widetilde{\mathbb{P}}_{x_2}(\tau_{\partial \mathcal{B}(N)}<\tau_{D'})  \asymp \big(\frac{N}{M}\big)^{d-2}.
%	\end{equation}
%Combining (\ref{ineqnew2.22}) and (\ref{ineq2.22}), we obtain this lemma.

\subsection{Properties of the GFF}\label{subsection_property_GFF}

For any $D\subset \widetilde{\mathbb{Z}}^d$, recall that $\mathbb{P}^D$ denotes the law of $\{\widetilde{\phi}_v\}_{v\in \widetilde{\mathbb{Z}}^d}$ conditioned on the GFF values on $D$ being zero. The covariance of $ \widetilde{\phi}_\cdot $ under this conditioning satisfies (let $\mathbb{E}^D$ be the expectation under $\mathbb{P}^D$)
\begin{equation}
	\mathbb{E}^D\big[\widetilde{\phi}_{v}\widetilde{\phi}_{w}\big]= \widetilde{G}_{D}(v,w),\ \ \forall v,w\in \widetilde{\mathbb{Z}}^d.
\end{equation}

	\begin{lemma}\label{lemma_revise_B1}
		For any $x\in \mathbb{Z}^d$ and disjoint $A,D\subset \widetilde{\mathbb{Z}}^d\setminus \widetilde{B}_x(2)$,
		\begin{equation}
			\mathbb{P}^{D}\big(\widetilde{B}_x(1)\xleftrightarrow{\ge 0} A \big)\asymp \mathbb{P}^{D}\big(x\xleftrightarrow{\ge 0} A \big). 
		\end{equation}
	\end{lemma}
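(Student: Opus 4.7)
The forward inclusion $\{x \xleftrightarrow{\ge 0} A\} \subset \{\widetilde{B}_x(1) \xleftrightarrow{\ge 0} A\}$ is immediate because $x \in \widetilde{B}_x(1)$, so the only work is to prove the matching lower bound $\mathbb{P}^D(x \xleftrightarrow{\ge 0} A) \gtrsim \mathbb{P}^D(\widetilde{B}_x(1) \xleftrightarrow{\ge 0} A)$. My plan is to introduce the increasing event
\[
\mathcal{E}_x := \{\widetilde{\phi}_v \ge 0 \text{ for all } v \in \widetilde{B}_x(1)\}
\]
and observe that on $\mathcal{E}_x \cap \{\widetilde{B}_x(1) \xleftrightarrow{\ge 0} A\}$ the connected set $\widetilde{B}_x(1)$ lies inside a single component of $\widetilde{E}^{\ge 0}$ that also meets $A$, which forces $x \xleftrightarrow{\ge 0} A$. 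Both events are increasing in $\widetilde{\phi}$, and under $\mathbb{P}^D$ the field is Gaussian with nonnegative covariance $\widetilde{G}_D$, so the FKG inequality for Gaussian measures with nonnegative correlations gives
\[
\mathbb{P}^D(x \xleftrightarrow{\ge 0} A) \ge \mathbb{P}^D(\mathcal{E}_x) \cdot \mathbb{P}^D(\widetilde{B}_x(1) \xleftrightarrow{\ge 0} A).
\]
The whole lemma therefore reduces to the uniform lower bound $\mathbb{P}^D(\mathcal{E}_x) \gtrsim 1$.

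To establish this uniform estimate I would peel $\mathcal{E}_x$ apart into its lattice component and its $2d$ bridge components on the edges of the star of $x$. Set $Y := \widetilde{B}_x(1) \cap \mathbb{Z}^d = \{x\} \cup \{x \pm e_i : 1 \le i \le d\}$ and consider the auxiliary event $\mathcal{E}'_x := \{\widetilde{\phi}_y \ge 1, \forall y \in Y\}$. Since $D \subset \widetilde{\mathbb{Z}}^d \setminus \widetilde{B}_x(2)$ and the entire star of every $y \in Y$ is contained in $\widetilde{B}_x(2)$, each such $y$ is separated from $D$ by a full edge; monotonicity \eqref{green_D1_D2} then yields
\[
\widetilde{G}_D(y,y) \ge \widetilde{G}_{[\widetilde{B}_x(2)]^c}(y,y) \ge c_1 > 0
\]
uniformly in $y \in Y$ and in admissible $D$. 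The vector $(\widetilde{\phi}_y)_{y \in Y}$ is thus Gaussian under $\mathbb{P}^D$ with nonnegative covariances and variances bounded below, so the one-sided FKG inequality for such vectors gives $\mathbb{P}^D(\mathcal{E}'_x) \ge \prod_{y \in Y} \mathbb{P}^D(\widetilde{\phi}_y \ge 1) \ge c_2 > 0$. Conditionally on the lattice values, the restriction of $\widetilde{\phi}$ to the $2d$ edges of $\widetilde{B}_x(1)$ is a family of independent Brownian bridges of length $d$ and Brownian variance $2$, whose endpoints on $\mathcal{E}'_x$ are both $\ge 1$; the classical formula $1 - \exp(-ab/d)$ for nonnegativity of such a bridge (endpoints $a,b\ge 0$, length $d$, diffusion coefficient $2$) then yields a per-edge lower bound $\ge 1 - e^{-1/d} =: c_3 > 0$. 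Multiplying over the $2d$ edges gives $\mathbb{P}^D(\mathcal{E}_x \mid \mathcal{E}'_x) \ge c_3^{2d}$, and hence $\mathbb{P}^D(\mathcal{E}_x) \ge c_2 c_3^{2d} > 0$.

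The main technical obstacle is precisely this uniform lower bound on $\mathbb{P}^D(\mathcal{E}_x)$, and both ingredients above are genuinely necessary. Without the buffer $\widetilde{B}_x(2) \setminus \widetilde{B}_x(1)$, the variances $\widetilde{G}_D(y,y)$ could collapse to zero and the lattice FKG step would break down; without the strictly positive threshold $1$ in $\mathcal{E}'_x$, the bridges on the star's edges would have endpoints allowed to vanish and would stay nonnegative only with probability $0$. With these two safeguards in place, the FKG reduction together with the explicit Brownian-bridge formula delivers the asymptotic equivalence claimed in the lemma.
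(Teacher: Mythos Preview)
Your proof is correct, but it takes a somewhat different route from the paper's. The paper observes that any $\widetilde{E}^{\ge 0}$-path from $\widetilde{B}_x(1)$ to $A\subset [\widetilde{B}_x(2)]^c$ must pass through some lattice neighbor $x'$ of $x$, giving the union bound
\[
\mathbb{P}^D\big(\widetilde{B}_x(1)\xleftrightarrow{\ge 0} A\big)\le \sum_{x':\{x,x'\}\in\mathbb{L}^d}\mathbb{P}^D\big(x'\xleftrightarrow{\ge 0} A\big),
\]
and then for each neighbor applies FKG with the \emph{single-edge} event $\{x'\xleftrightarrow{\ge 0} x\}$ to get $\mathbb{P}^D(x'\xleftrightarrow{\ge 0} A)\lesssim \mathbb{P}^D(x\xleftrightarrow{\ge 0} A)$, using only that $\mathbb{P}^D(x'\xleftrightarrow{\ge 0} x)\asymp 1$ when $D\cap\widetilde{B}_x(2)=\emptyset$. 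By contrast, you apply FKG once with the stronger event $\mathcal{E}_x=\{\widetilde{\phi}\ge 0\ \text{on all of}\ \widetilde{B}_x(1)\}$ and then spend the effort on bounding $\mathbb{P}^D(\mathcal{E}_x)$ from below via a two-stage (lattice-FKG plus bridge-formula) argument. Both are valid; the paper's version is a bit more economical since it avoids the explicit bridge computation on the full star, while your version has the minor conceptual advantage of a single FKG step with a very concrete auxiliary event.
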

	\begin{proof}
For any $x'\in \mathbb{Z}^d$ adjacent to $x$, by the FKG inequality we have 
\begin{equation}\label{revise2.25}
\begin{split}
		\mathbb{P}^{D}\big(x \xleftrightarrow{\ge 0}A  \big) \ge &  \mathbb{P}^{D}\big(x' \xleftrightarrow{\ge 0}A, x' \xleftrightarrow{\ge 0}x \big) \\
		\overset{(\text{FKG})}{\ge } & \mathbb{P}^{D}\big(x' \xleftrightarrow{\ge 0}A \big) \cdot \mathbb{P}^{D}\big( x' \xleftrightarrow{\ge 0}x \big)
		  \gtrsim  \mathbb{P}^{D}\big(x' \xleftrightarrow{\ge 0}A  \big),
\end{split}
\end{equation}
where in the last inequality we need the condition $D\subset  \widetilde{\mathbb{Z}}^d\setminus \widetilde{B}_x(2)$ to ensure that $\mathbb{P}^{D}\big( x' \xleftrightarrow{\ge 0}x \big)\asymp 1$. By the union bound and (\ref{revise2.25}), we conclude this lemma:   
\begin{equation*}
	 \mathbb{P}^{D}\big(\widetilde{B}_x(1) \xleftrightarrow{\ge 0}A  \big)\le \sum\nolimits_{x'\in \mathbb{Z}^d:\{x,x'\}\in \mathbb{L}^d}\mathbb{P}^{D}\big(x' \xleftrightarrow{\ge 0}A  \big) \lesssim \mathbb{P}^{D}\big(x \xleftrightarrow{\ge 0}A  \big). \qedhere
\end{equation*}
	\end{proof}

\textbf{Harmonic average.} For any $D_1\subset D_2\subset \widetilde{\mathbb{Z}}^d$ and $v\in \widetilde{\mathbb{Z}}^d$, given all the GFF values on $D_2$, the harmonic average in $D_1$ for this boundary condition is 
	\begin{equation}\label{def_Hv}
		\mathcal{H}_v(D_1,D_2):=\left\{\begin{array}{ll}
			0   &\   \text{if}\ v\in D_2^{\circ}; \\
			\sum\nolimits_{w\in \widetilde{\partial} D_1} \widetilde{\mathbb{P}}_v\big(\tau_{D_2}=\tau_w<\infty \big) \widetilde{\phi}_{w}  &\  \text{otherwise}.
		\end{array}
		\right.
	\end{equation} 
	For brevity, when $D_1=D_2=D$, we denote $\mathcal{H}_v(D,D)$ by $\mathcal{H}_v(D)$.

\textbf{Strong Markov property of the GFF.} For any $D\subset \widetilde{\mathbb{Z}}^d$, assume that $\mathcal{A}$ is a random compact subset of $\widetilde{\mathbb{Z}}^d\setminus D^{\circ}$, which is measurable with respect to $\widetilde{\phi}_{\cdot } \sim \mathbb{P}^D$ and satisfies that for any compact set $V\subset \widetilde{\mathbb{Z}}^d\setminus D^{\circ}$, $\{\mathcal{A}\subset V\}$ is measurable with respect to $\mathcal{F}_V$, the $\sigma$-field generated by $\{\widetilde{\phi}_v\}_{v\in V}$. The strong Markov property of the GFF (see e.g. \cite[Theorem 8]{ding2020percolation}) states that conditioning on $\mathcal{F}_{\mathcal{A}}$ (i.e., given $\mathcal{A}$ and all the GFF values on $\mathcal{A}$), when $\mathcal{A}=D'$ happens for a certain $D'\subset \widetilde{\mathbb{Z}}^d$, one has the following equivalence between two distributions:
\begin{equation}
	\big\{\widetilde{\phi}_v\big\}_{v\in \widetilde{\mathbb{Z}}^d\setminus (D\cup D')} \overset{\mathrm{d}}{=}  \big\{\widetilde{\phi}_v'+\mathcal{H}_v(D',D\cup D') \big\}_{v\in \widetilde{\mathbb{Z}}^d\setminus (D\cup D')}.
\end{equation}
Here $\widetilde{\phi}'_{\cdot}$ is an independent GFF with law $\mathbb{P}^{D\cup D'}$.

\textbf{Negative cluster.} Parallel to $\widetilde{E}^{\ge 0}$, we also denote the negative level-set by $\widetilde{E}^{\le 0}:= \big\{v\in \widetilde{\mathbb{Z}}^d: \widetilde{\phi}_v\le 0\big\}$. For any $A_1,A_2\subset \widetilde{\mathbb{Z}}^d$, let $A_1 \xleftrightarrow{\le 0} A_2$ be the event that $A_1$ and $A_2$ are connected by $\widetilde{E}^{\le 0}$. For any non-empty $A\subset \widetilde{\mathbb{Z}}^d$, we define the negative cluster containing $A$ by $\mathcal{C}_{A}^{-}:= \big\{v\in \widetilde{\mathbb{Z}}^d: v\xleftrightarrow{\le 0} A  \big\}$. To have better control over the boundary values, we consider positive clusters restricted to single intervals and add those intersecting $A$ to $\mathcal C_A^-$, leading to the following definition:
\begin{equation}
	\widehat{\mathcal{C}}_{A}^{-}:= \mathcal{C}_{A}^{-}\cup \bigcup\nolimits_{v\in A\setminus \mathbb{Z}^d} \big\{w\in \widetilde{\mathbb{Z}}^d : \exists e\in \mathbb{L}^d\ \text{such that}\ v\xleftrightarrow{\widetilde{E}^{\ge 0}\cap I_e}  w \big\}.
\end{equation}
Clearly, one has $A\subset \widehat{\mathcal{C}}_{A}^{-}$ and $\widetilde{\phi}_v=0$ for all $v\in (\widetilde{\partial} \widehat{\mathcal{C}}_{A}^{-})\setminus \mathbb{Z}^d$. This further implies that $\mathcal{H}_v( \widehat{\mathcal{C}}_{A}^{-},\widehat{\mathcal{C}}_{A}^{-}\cup D)=\mathcal{H}_v(\mathbb{Z}^d\cap \widetilde{\partial} \widehat{\mathcal{C}}_{A}^{-},\widehat{\mathcal{C}}_{A}^{-}\cup D)$ for all $D\subset \widetilde{\mathbb{Z}}^d$ and $v\in \widetilde{\mathbb{Z}}^d\setminus D^{\circ}$.

\textbf{Connecting probability.} The following powerful formula was introduced in \cite[Equation (18)]{lupu2018random}. For any disjoint $D_1,D_2\subset \widetilde{\mathbb{Z}}^d$, given all the GFF values on $D_1\cup D_2$, if these values are all non-negative, then the conditional probability of $\{D_1\xleftrightarrow{\ge 0}D_2\}$ exactly equals
	\begin{equation}\label{2.22}
		1-e^{-2\sum_{z_1\in \widetilde{\partial }D_1,z_2\in \widetilde{\partial} D_2}\mathbb{K}_{D_1\cup D_2}(z_1,z_2)\widetilde{\phi}_{z_1}\widetilde{\phi}_{z_2}}. 
	\end{equation}
	In addition, if we assume that the GFF values on $[\widetilde{\partial}(D_1\cup D_2)]\setminus \mathbb{Z}^d$ all equal $0$, then it was shown in the proof of  \cite[Lemma 2.6]{cai2024one} that 
	\begin{equation}\label{new2.44}
	\sum\nolimits_{z_1\in \widetilde{\partial }D_1,z_2\in \widetilde{\partial} D_2}\mathbb{K}_{D_1\cup D_2}(z_1,z_2)\widetilde{\phi}_{z_1}\widetilde{\phi}_{z_2}=  \sum\nolimits_{y \in \widetilde{\partial } D_1\cap \mathbb{Z}^d} \widehat{\mathcal{H}}_{y}(D_2,D_1\cup D_2)\widetilde{\phi}_{y},
\end{equation}
	where $\widehat{\mathcal{H}}_y(D_2, D_1\cup D_2) := (2d)^{-1}\sum\nolimits_{z:\{y,z\}\in \mathbb{L}^d,I_{\{y,z\}}^{\circ}\cap (D_1\cup D_2)=\emptyset } \mathcal{H}_z(D_2, D_1\cup D_2)$. As a complement of \cite[Lemma 3.5]{cai2024one}, the following lemma provides the order of (\ref{new2.44}).

	\begin{lemma}\label{lemma_hat_H}
		We keep the notations above. If we further assume that $D_1\subset \mathbf{B}_1:=\widetilde{B}(N_1)$ and $D_2\subset \mathbf{B}_2:=[\widetilde{B}(N_2)]^c$ for some $N_1\ge 1$ and $N_2\ge 10d^2N_1$, then 
		\begin{equation}\label{newineq_2.32}
			\sum\nolimits_{y \in \widetilde{\partial } D_1\cap \mathbb{Z}^d} \widehat{\mathcal{H}}_{y}(D_2,D_1\cup D_2)\widetilde{\phi}_{y}  \asymp N_1^{d-2}\mathcal{H}_1\mathcal{H}_2,		\end{equation}
			where $\mathcal{H}_1:=\frac{\sum_{z\in \partial \mathcal{B}(dN_1)} \mathcal{H}_z(D_1,D_1\cup D_2)}{|\partial \mathcal{B}(dN_1)|} $ and $\mathcal{H}_2:=\frac{\sum_{z\in \partial \mathcal{B}(N_2/2)} \mathcal{H}_z(D_2,D_1\cup D_2)}{|\partial \mathcal{B}(N_2/2)|}$.
	\end{lemma}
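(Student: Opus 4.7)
The plan is to reduce (\ref{newineq_2.32}) to a pointwise comparison of the boundary excursion kernel and then sum against the non-negative boundary values. By (\ref{new2.44}) the left-hand side equals $\sum_{z_1,z_2}\mathbb K_{D_1\cup D_2}(z_1,z_2)\widetilde\phi_{z_1}\widetilde\phi_{z_2}$, while expanding the two harmonic averages yields $\mathcal H_1\mathcal H_2=\sum_{z_1,z_2}a_{z_1}c_{z_2}\widetilde\phi_{z_1}\widetilde\phi_{z_2}$, where
\[ a_{z_1}=\tfrac{1}{|\partial\mathcal B(dN_1)|}\!\!\sum_{z'\in\partial\mathcal B(dN_1)}\!\!\widetilde{\mathbb P}_{z'}(\tau_{D_1\cup D_2}=\tau_{z_1}),\quad c_{z_2}=\tfrac{1}{|\partial\mathcal B(N_2/2)|}\!\!\sum_{v\in\partial\mathcal B(N_2/2)}\!\!\widetilde{\mathbb P}_{v}(\tau_{D_1\cup D_2}=\tau_{z_2}). \]
It therefore suffices to prove the pointwise bound $\mathbb K_{D_1\cup D_2}(z_1,z_2)\asymp N_1^{d-2}a_{z_1}c_{z_2}$ for all $z_1\in\widetilde\partial D_1$ and $z_2\in\widetilde\partial D_2$.

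I will decouple the near-$D_1$ and near-$D_2$ ends by inserting two intermediate spheres. Because $N_2\ge 10d^2N_1$, every path from a neighbor $z$ of $z_1$ to $z_2$ must successively cross $\partial\mathcal B(dN_1)$ and $\partial\mathcal B(N_2/2)$, so two applications of the strong Markov property express $\widetilde{\mathbb P}_z(\tau_{D_1\cup D_2}=\tau_{z_2})$ as a triple sum over intermediate exit points $u\in\partial\mathcal B(dN_1)$ and $v\in\partial\mathcal B(N_2/2)$. Since $D_1\subset\widetilde B(N_1)$ and $d\ge 3$, the sphere $\partial\mathcal B(dN_1)$ lies at Euclidean distance $\ge (d-1)N_1\ge 2N_1$ from $D_1$, and analogously $\partial\mathcal B(N_2/2)$ is separated from $D_2$ by $\ge N_2/2$. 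Chaining Harnack's inequality along each sphere through overlapping balls of radius $\sim N_1$ (resp.\ $\sim N_2$) gives constant-order comparison of the two inner factors as $u,v$ vary, while the total escape probability $\widetilde{\mathbb P}_u(\tau_{\partial\mathcal B(N_2/2)}<\tau_{D_1\cup D_2})$ is of constant order because Lemma \ref{lemma_prop_hitting} bounds the return-to-$D_1$ probability by $(dN_1)^{2-d}\mathrm{cap}(D_1)\lesssim d^{2-d}<1$. Summing out $u$ and $v$ and using the definition $\mathbb K_{D_1\cup D_2}(z_1,z_2)=(2d)^{-1}\sum_{z\sim z_1}\widetilde{\mathbb P}_z(\tau_{D_1\cup D_2}=\tau_{z_2})$ reduces the pointwise claim to the local estimate
\[ (2d)^{-1}\sum_{z\sim z_1}\widetilde{\mathbb P}_z(\tau_{\partial\mathcal B(dN_1)}<\tau_{D_1})\;\asymp\; N_1^{d-2}\,a_{z_1}, \]
where I have used $\tau_{D_1\cup D_2}=\tau_{D_1}$ while inside $\mathcal B(dN_1)$.

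To establish this local estimate I rewrite the left-hand side as $\sum_{u\in\partial\mathcal B(dN_1)}\mathbb K_{D_1\cup\partial\mathcal B(dN_1)}(u,z_1)$ using the symmetry of the excursion kernel, and then apply Lemma \ref{lemma_compare_hm} with $N=2N_1$, $M=dN_1$, $D=D_1$, $D'=D_2$, together with Lemma \ref{lemma_new2.1} to pass between the obstacles $D_2$ and $\infty$; this relates the hitting densities at $z_1$ from $\partial\mathcal B(dN_1)$ to those from $\partial B(2N_1)$. The factor $N_1^{d-2}$ arises from the Green-function scaling $\widetilde G(x,z_1)\asymp N_1^{2-d}$ for $x\in\partial B(2N_1)$ (Lemma \ref{lemma_new2.2}) balanced against the $|\partial\mathcal B(dN_1)|\asymp N_1^{d-1}$ normalization appearing in $a_{z_1}$. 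Multiplying the resulting pointwise kernel bound by the non-negative values $\widetilde\phi_{z_1},\widetilde\phi_{z_2}$ and summing then yields (\ref{newineq_2.32}).

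The main obstacle is this local estimate, because $\mathbb K_{D_1\cup\partial\mathcal B(dN_1)}(u,z_1)$ only counts walks that remain inside $\mathcal B(dN_1)$ before hitting $D_1$, whereas $a_{z_1}$ involves unrestricted hitting from outside. Reconciling the two requires invoking the last-exit decomposition (\ref{ineq_2.15}) and exploiting that for $u\in\partial\mathcal B(dN_1)$ the unkilled return probability $\widetilde{\mathbb P}_u(\tau_{D_1}<\infty)$ is bounded by a $d$-dependent constant strictly less than $1$, which supplies enough slack to compare the killed and unkilled hitting densities at $z_1$; all other steps of the argument reduce to strong Markov and Harnack chaining in well-separated regions, which the hypotheses $D_1\subset\widetilde B(N_1)$ and $N_2\ge 10d^2N_1$ guarantee.
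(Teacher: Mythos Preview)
Your proposal is essentially the same strategy as the paper's proof: both insert the intermediate spheres $\partial\mathcal B(dN_1)$ and $\partial\mathcal B(N_2/2)$, use strong Markov together with a uniform-hitting/Harnack argument to decouple the $D_1$-side from the $D_2$-side, and identify as the crux a local estimate at $z_1\in\widetilde\partial D_1\cap\mathbb Z^d$ relating the sum over clear neighbors $y'$ of an escape-type quantity to the hitting probability $\widetilde{\mathbb P}_{z'}(\tau_{D_1\cup D_2}=\tau_{z_1})$ from the inner sphere. The paper carries this out by expanding $\mathcal H_{y'}(D_2,D_1\cup D_2)$ via last exit at $\partial\mathcal B(dN_1)$ to obtain a Green's function $\widetilde G_{D_1\cup D_2}(y',z_1)$, and then proves the key two-sided bound
\[
\sum_{y':\{y,y'\}\in\mathbb L^d,\,I^\circ_{\{y,y'\}}\cap(D_1\cup D_2)=\emptyset}\widetilde G_{D_1\cup D_2}(y',z_1)\ \asymp\ \widetilde{\mathbb P}_{z_1}(\tau_{D_1\cup D_2}=\tau_y<\infty),
\]
which is exactly your ``local estimate'' recast; the upper bound is cited from \cite{cai2024one}, and the lower bound is obtained by routing through the midpoint $\tfrac12(y+y')$ and using $\widetilde G_{D_1\cup D_2}(\tfrac12(y+y'),\tfrac12(y+y'))\asymp 1$. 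Your last-paragraph plan (last-exit plus bounded return) points at the same mechanism.

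One correction: the identity $\mathbb K_{D_1\cup D_2}(z_1,z_2)=(2d)^{-1}\sum_{z\sim z_1}\widetilde{\mathbb P}_z(\tau_{D_1\cup D_2}=\tau_{z_2})$ is not the definition of the kernel (which is a normal-derivative limit), and is not what you actually need. What (\ref{new2.44}) hands you is that the left side of (\ref{newineq_2.32}) equals $\sum_{y}\widetilde\phi_y\cdot(2d)^{-1}\sum_{y'\text{ clear}}\mathcal H_{y'}(D_2,D_1\cup D_2)$, so you should work directly with that averaged harmonic quantity rather than the kernel; this also forces you to restrict the neighbor sum to $y'$ with $I^\circ_{\{y,y'\}}\cap(D_1\cup D_2)=\emptyset$, a constraint your formula omits but which is essential in the local estimate.
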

	\begin{proof}
		For any $y\in \widetilde{\partial } D_1\cap \mathbb{Z}^d$, $y'\in \mathbb{Z}^d$ with $\{y,y'\}\in \mathbb{L}^d$ and $I^{\circ}_{\{y,y'\}}\cap (D_1\cup D_2)=\emptyset$, and for any $z\in \widetilde{\partial } D_2$, by the last-exit decomposition, $\widetilde{\mathbb{P}}_{y'}\big(\tau_{D_1\cup D_2}= \tau_{z} <\infty \big)$ equals 
		\begin{equation}\label{newineq_2.33}
			\begin{split}
				\sum_{z_1\in \partial \mathcal{B}(dN_1)} \widetilde{G}_{D_1\cup D_2}(y',z_1) \tfrac{1}{2d}\sum_{z_1'\in [\mathcal{B}(dN_1)]^c:\{z_1,z_1'\}\in \mathbb{L}^d} \widetilde{\mathbb{P}}_{z_1'}\big(\tau_{ D_2\cup \mathcal{B}(dN_1)}= \tau_{z} <\infty \big).
			\end{split}
		\end{equation}
		Moreover, by the strong Markov property one has 
		\begin{equation*}
			\begin{split}
				&\widetilde{\mathbb{P}}_{z_1'}\big(\tau_{ D_2\cup \mathcal{B}(dN_1)}= \tau_{z} <\infty \big) \\
					=& \sum\nolimits_{z_2\in \partial \mathcal{B}(2d^2N_1),z_3\in \partial \mathcal{B}(N_2/2) }\widetilde{\mathbb{P}}_{z_1'}\big(\tau_{   \partial \mathcal{B}(2d^2N_1)} =  \tau_{z_2} < \tau_{  \mathcal{B}(dN_1)} \big)   \\
					&\ \ \ \ \cdot \widetilde{\mathbb{P}}_{z_2}\big( \tau_{ \partial  \mathcal{B}(N_2/2)}= \tau_{z_3}  < \tau_{ \mathcal{B}(dN_1)} \big)  \widetilde{\mathbb{P}}_{z_3}\big(\tau_{ D_2}= \tau_{z} <\tau_{ \mathcal{B}(dN_1)} \big)\\
					\overset{\text{Lemma\ \ref*{lemma_new2.1}}}{\asymp} &\sum\nolimits_{z_2\in \partial \mathcal{B}(2d^2 N_1),z_3\in \partial \mathcal{B}(N_2/2) }\widetilde{\mathbb{P}}_{z_1'}\big(\tau_{   \partial \mathcal{B}(2d^2N_1)} =  \tau_{z_2} < \tau_{  \mathcal{B}(dN_1)} \big)    \\
					&\ \ \ \ \cdot \widetilde{\mathbb{P}}_{z_2}\big( \tau_{ \partial  \mathcal{B}(N_2/2)}= \tau_{z_3}  < \infty  \big)  \widetilde{\mathbb{P}}_{z_3}\big(\tau_{ D_2}= \tau_{z} <\tau_{D_1}  \big). 
								\end{split}
		\end{equation*}
		Combined with the facts that $\widetilde{\mathbb{P}}_{z_2}\big( \tau_{ \partial  \mathcal{B}(N_2/2)}= \tau_{z_3}  < \infty  \big) \asymp | \partial \mathcal{B}(\tfrac{N_2}{2}) |^{-1}$ (see e.g. \cite[Lemma 6.3.7]{lawler2010random}) and that $\widetilde{\mathbb{P}}_{z_1'}\big(\tau_{   \partial \mathcal{B}(2d^2N_1)} < \tau_{  \mathcal{B}(dN_1)} \big) \asymp N_1^{-1}$ (see e.g. \cite[Lemma 6.3.4]{lawler2010random}), it yields that 
		\begin{equation*}
			\begin{split}
				\widetilde{\mathbb{P}}_{z_1'}\big(\tau_{ D_2\cup \mathcal{B}(dN_1)}= \tau_{z} <\infty \big) \asymp N_1^{-1} | \partial \mathcal{B}(\tfrac{N_2}{2}) |^{-1} \sum\nolimits_{z_3\in \partial \mathcal{B}(N_2/2) } \widetilde{\mathbb{P}}_{z_3}\big(\tau_{ D_2}= \tau_{z} <\tau_{D_1}  \big). 
			\end{split}
		\end{equation*}
		This together with (\ref{newineq_2.33}) shows that 
		\begin{equation}\label{revise2.31}
			\begin{split}
				& \mathcal{H}_{y'}(D_2,D_1\cup D_2) \\
				=&\sum\nolimits_{z\in \widetilde{\partial } D_2 } \widetilde{\mathbb{P}}_{y'}\big(\tau_{D_1\cup D_2}= \tau_{z} <\infty \big) \widetilde{\phi}_z\\
				\asymp &  \sum_{z_1\in \partial \mathcal{B}(dN_1)} \widetilde{G}_{D_1\cup D_2}(y',z_1)  N_1^{-1} | \partial \mathcal{B}(\tfrac{N_2}{2}) |^{-1}  \sum_{z_3\in \partial \mathcal{B}(N_2/2) }  \mathcal{H}_{z_3}(D_2,D_1\cup D_2)   \\
				=  & N_1^{-1} \mathcal{H}_2  \sum\nolimits_{z_1\in \partial \mathcal{B}(dN_1)  } \widetilde{G}_{D_1\cup D_2}(y',z_1). 
			\end{split}
		\end{equation}

	Next, we estimate $\widetilde{G}_{D_1\cup D_2}(y',z_1)$. On the one hand, it has been proved in \cite[inequality below (3.25)]{cai2024one} that 
		\begin{equation}\label{newineq_2.36}
	\sum_{y':\{y,y'\}\in \mathbb{L}^d,I^{\circ}_{\{y,y'\}}\cap (D_1\cup D_2)=\emptyset} 		\widetilde{G}_{D_1\cup D_2}(y',z_1)\lesssim \widetilde{\mathbb{P}}_{z_1}\big(\tau_{D_1\cup D_2}=\tau_y<\infty \big). 
		\end{equation}
	On the other hand, by the symmetry of the Green's function, one has 
		\begin{equation}\label{newineq_2.37}
			\begin{split}
				\widetilde{G}_{D_1\cup D_2}(y',z_1) = \widetilde{G}_{D_1\cup D_2}(z_1,y')= \widetilde{\mathbb{P}}_{z_1}\big( \tau_{y'}<\tau_{D_1\cup D_2} \big) \widetilde{G}_{D_1\cup D_2}(y',y'). 
			\end{split}
		\end{equation}
		Let $\frac{1}{2}(y+y')$ denote the middle point of $I_{\{y,y'\}}$. By \cite[Equality (2.6)]{cai2024one} we have 
		\begin{equation}\label{newineq_2.38}
			\widetilde{G}_{D_1\cup D_2}(y',y') = \tfrac{\widetilde{\mathbb{P}}_{y'}(\tau_{\frac{1}{2}(y+y')}<\tau_{D_1\cup D_2}  ) \widetilde{G}_{D_1\cup D_2}(\frac{1}{2}(y+y'),\frac{1}{2}(y+y'))}{\widetilde{\mathbb{P}}_{\frac{1}{2}(y+y')}(\tau_{y'} <\tau_{D_1\cup D_2} )}.
		\end{equation}
		Combining (\ref{newineq_2.37}), (\ref{newineq_2.38}) and $\widetilde{G}_{D_1\cup D_2}(\frac{1}{2}(y+y'),\frac{1}{2}(y+y'))\asymp 1$ (which is ensured by $I^{\circ}_{\{y,y'\}}\cap (D_1\cup D_2)=\emptyset$), we get
		\begin{equation}\label{newineq_2.39}
			\widetilde{G}_{D_1\cup D_2}(y',z_1)  \gtrsim \widetilde{\mathbb{P}}_{z_1}\big( \tau_{y'}<\tau_{D_1\cup D_2} \big) \widetilde{\mathbb{P}}_{y'}(\tau_{\frac{1}{2}(y+y')}<\tau_{D_1\cup D_2}  ). 
		\end{equation}
		Moreover, for the Brownian motion started from $z_1$ to first hit $D_1\cup D_2$ at $y$, it must reach some $y'$ satisfying $\{y,y'\}\in \mathbb{L}^d$ and $I^{\circ}_{\{y,y'\}}\cap (D_1\cup D_2)=\emptyset$, and then hit $\frac{1}{2}(y+y')$. This together with the strong Markov property implies that 
	\begin{equation}\label{newineq_2.40} 
		\begin{split}
			&\widetilde{\mathbb{P}}_{z_1}\big(\tau_{D_1\cup D_2}=\tau_y<\infty \big) \\
			 \le & \sum_{y':\{y,y'\}\in \mathbb{L}^d,I^{\circ}_{\{y,y'\}}\cap (D_1\cup D_2)=\emptyset}\widetilde{\mathbb{P}}_{z_1}\big( \tau_{y'}<\tau_{D_1\cup D_2} \big) \widetilde{\mathbb{P}}_{y'}(\tau_{\frac{1}{2}(y+y')}<\tau_{D_1\cup D_2}  )\\
			 \overset{(\ref*{newineq_2.39} )}{\lesssim } & \sum_{y':\{y,y'\}\in \mathbb{L}^d,I^{\circ}_{\{y,y'\}}\cap (D_1\cup D_2)=\emptyset}\widetilde{G}_{D_1\cup D_2}(y',z_1). 
		\end{split}
	\end{equation}
	Putting (\ref{newineq_2.36}) and (\ref{newineq_2.40}) together, we obtain  	
	\begin{equation}\label{revise2.32}
		\sum_{y':\{y,y'\}\in \mathbb{L}^d,I^{\circ}_{\{y,y'\}}\cap (D_1\cup D_2)=\emptyset}\widetilde{G}_{D_1\cup D_2}(y',z_1) \asymp \widetilde{\mathbb{P}}_{z_1}\big(\tau_{D_1\cup D_2}=\tau_y<\infty \big). 
	\end{equation}

By (\ref{revise2.31}) and (\ref{revise2.32}), we conclude this lemma as follows:
\begin{align*}
	&\sum\nolimits_{y \in \widetilde{\partial } D_1\cap \mathbb{Z}^d} \widehat{\mathcal{H}}_{y}(D_2,D_1\cup D_2)\widetilde{\phi}_{y}\\
			\overset{(\ref*{revise2.31})}{\asymp} & \sum\nolimits_{y \in \widetilde{\partial } D_1\cap \mathbb{Z}^d} \widetilde{\phi}_{y} \sum\nolimits_{y':\{y,y'\}\in \mathbb{L}^d,I^{\circ}_{\{y,y'\}}\cap (D_1\cup D_2)=\emptyset}N_1^{-1} \mathcal{H}_2  \sum\nolimits_{z_1\in \partial \mathcal{B}(dN_1)  } \widetilde{G}_{D_1\cup D_2}(y',z_1)\\
			\overset{(\ref*{revise2.32})}{\asymp } &N_1^{-1} \mathcal{H}_2 \sum\nolimits_{z_1\in \partial \mathcal{B}(dN_1)  }  \sum\nolimits_{y \in \widetilde{\partial } D_1\cap \mathbb{Z}^d}   \widetilde{\mathbb{P}}_{z_1}\big(\tau_{D_1\cup D_2}=\tau_y<\infty \big)\widetilde{\phi}_{y}   \\
			\overset{(\ref*{def_Hv})}{= } & N_1^{-1} \mathcal{H}_2    \sum\nolimits_{z_1\in \partial \mathcal{B}(dN_1)  }  \mathcal{H}_y(D_1,D_1\cup D_2)  
			 \overset{|\partial \mathcal{B}(dN_1) |\asymp N_1^{d-2}}{\asymp   }     N_1^{d-2}\mathcal{H}_1\mathcal{H}_2.   \qedhere
\end{align*}
\end{proof}

%=======
%
%		By plugging (\ref{revise2.32}) into (\ref{revise2.31}), we get 
%		\begin{equation}
%			\widehat{\mathcal{H}}_{y}(D_2,D_1\cup D_2)\asymp N_1^{-1} \mathcal{H}_2 \sum\nolimits_{z_1\in \partial \mathcal{B}(dN_1)}  \widetilde{\mathbb{P}}_{z_1}\big(\tau_{D_1\cup D_2}=\tau_y<\infty \big). 
%		\end{equation}
%		Multiplying by $\widetilde{\phi}_y$ and summing over $y\in \widetilde{\partial } D_1\cap \mathbb{Z}^d$, we conclude this lemma.

Combining (\ref{2.22}), (\ref{new2.44}) and Lemma \ref{lemma_hat_H}, we obtain the following result. Note that in Corollary \ref{lemma_boxtobox_hm} and Lemma \ref{lemma_upper_boundarytoset}, $ \mathbf{B}_j$ and $\mathcal{H}_j$ are defined as in Lemma \ref{lemma_hat_H}.

	\begin{corollary}\label{lemma_boxtobox_hm}
		For any $d\ge 3$, there exist $C>c>0$ such that for any $N_1\ge 1$ and $N_2\ge 10dN_1$, $D_1\subset  \mathbf{B}_1$ and $D_2\subset  \mathbf{B}_2$, the following holds. Given all the GFF values on $\widetilde{\partial}(D_1\cup D_2)$, if they are all non-negative and equal to zero on $[\widetilde{\partial}(D_1\cup D_2)]\setminus \mathbb{Z}^d$, then the conditional probability of $\{D_1\xleftrightarrow{\ge 0}D_2\}$ is bounded from above and below by $1-e^{-CN_1^{d-2} \mathcal{H}_1 \mathcal{H}_2}$ and $1-e^{-cN_1^{d-2} \mathcal{H}_1\mathcal{H}_2}$ respectively.
	\end{corollary}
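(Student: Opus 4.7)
The proof proposal is to simply chain together the three ingredients immediately preceding the corollary: Lupu's exact formula (\ref{2.22}), the boundary-simplification identity (\ref{new2.44}), and the two-sided estimate of Lemma \ref{lemma_hat_H}. The corollary is labeled as a corollary for exactly this reason; no new probabilistic input is required.

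Concretely, I would proceed as follows. Since by assumption the GFF values on $\widetilde{\partial}(D_1\cup D_2)$ are all non-negative, formula (\ref{2.22}) (applied with the pair $D_1, D_2$) expresses
\[
\mathbb{P}\big(D_1\xleftrightarrow{\ge 0} D_2 \,\big|\, \widetilde{\phi}_v, \, v\in \widetilde{\partial}(D_1\cup D_2)\big) \;=\; 1 - e^{-2S},
\]
where
\[
S \;:=\; \sum_{z_1\in \widetilde{\partial} D_1,\, z_2\in \widetilde{\partial} D_2} \mathbb{K}_{D_1\cup D_2}(z_1, z_2)\, \widetilde{\phi}_{z_1}\,\widetilde{\phi}_{z_2}.
\]
Next, using the second assumption that $\widetilde{\phi}_v = 0$ for $v \in [\widetilde{\partial}(D_1\cup D_2)]\setminus \mathbb{Z}^d$, identity (\ref{new2.44}) reduces the double kernel sum to the single lattice-indexed sum
\[
S \;=\; \sum_{y \in \widetilde{\partial}D_1\cap \mathbb{Z}^d} \widehat{\mathcal{H}}_y(D_2, D_1\cup D_2)\, \widetilde{\phi}_y.
\]
Then Lemma \ref{lemma_hat_H}, whose hypotheses $D_1\subset \mathbf{B}_1$, $D_2\subset \mathbf{B}_2$, and sufficiently large $N_2/N_1$ are exactly the ones assumed in the corollary, yields the two-sided bound $S \asymp N_1^{d-2} \mathcal{H}_1 \mathcal{H}_2$. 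Finally, the monotonicity of $x \mapsto 1-e^{-x}$ converts this into the advertised upper and lower bounds $1 - e^{-CN_1^{d-2}\mathcal{H}_1\mathcal{H}_2}$ and $1 - e^{-cN_1^{d-2}\mathcal{H}_1\mathcal{H}_2}$ on the conditional connecting probability, with $C, c$ coming from the implicit constants in Lemma \ref{lemma_hat_H}.

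The main obstacle is essentially nonexistent at this stage, since all the work has been front-loaded into Lemma \ref{lemma_hat_H}. The only minor point to verify is that the separation condition $N_2 \ge 10dN_1$ stated in the corollary is (possibly after harmlessly enlarging a numerical constant) compatible with the $N_2 \ge 10d^2 N_1$ required by Lemma \ref{lemma_hat_H}; this discrepancy, if one insists on the weaker hypothesis, can be absorbed into the constants $C$ and $c$ since the ratio $N_2/N_1$ only enters through the harmonic averages $\mathcal{H}_1, \mathcal{H}_2$ themselves, which already sit on the correct scales. Beyond this bookkeeping, there is nothing further to prove.
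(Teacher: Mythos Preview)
Your proposal is correct and matches the paper's approach exactly: the paper simply states that the corollary follows by combining (\ref{2.22}), (\ref{new2.44}) and Lemma~\ref{lemma_hat_H}, without giving any further details. Your observation about the $10dN_1$ versus $10d^2N_1$ discrepancy is also apt; this appears to be a minor inconsistency in the paper's stated hypotheses.
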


The subsequent lemma establishes a useful bound on the boundary-to-set connecting probability under certain positive boundary conditions. Compared to Corollary \ref{lemma_boxtobox_hm}, it imposes weaker requirements on the boundary conditions but only bounds the connecting probability from above.

	    \begin{lemma}\label{lemma_upper_boundarytoset}
	    	For any $j\in \{1,2\}$, we impose all conditions on $d,N_1,N_2,D_1,D_2$ in Corollary \ref{lemma_boxtobox_hm} except that we replace the condition $D_j \subset \mathbf{B}_j$ with $D_j \cap \mathbf{B}_j  \neq \emptyset$. Given all the GFF values on $\widetilde{\partial}(D_1\cup D_2)$, if they are all non-negative and equal to zero on $[D_j\setminus \mathbf{B}_j]\cup D_{3-j}$, then the conditional probability of $\big\{ D_j \xleftrightarrow{\ge 0} \partial B(N_{3-j})\big\} $ is at most of order $N_j^{d-2}\theta_d(N_{3-j}/4)\mathcal{H}_j$. 
	    \end{lemma}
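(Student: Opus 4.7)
My plan is to combine Corollary \ref{lemma_boxtobox_hm} (for the short-range piece contributing $N_j^{d-2}\mathcal H_j$) with a one-arm extension argument (for the long-range piece contributing $\theta_d(N_{3-j}/4)$), joined via the strong Markov property applied at an intermediate sphere.

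Take $j=1$ without loss of generality. The key preliminary observation is that, since the boundary values on $D_1\setminus \mathbf B_1$ are zero, a random walk hitting any such point registers zero and therefore contributes nothing to the harmonic average $\mathcal H_1$; hence $\mathcal H_1$ is unchanged if we replace $D_1$ by $D_1\cap \mathbf B_1$ and absorb $D_1\setminus \mathbf B_1$ into the zero-boundary obstacle $D_2$. This reduces us to the hypothesis $D_1\subset \mathbf B_1$ of Corollary \ref{lemma_boxtobox_hm}, which, applied (after a harmless rescaling of the intermediate radius to fit the $N_2\ge 10d N_1$ constraint) to the pair $(D_1,\partial B(N_2/2))$, yields
\begin{equation*}
 \mathbb P\bigl(D_1 \xleftrightarrow{\ge 0} \partial B(N_2/2) \,\big|\, \text{orig.\ bdry}\bigr)\lesssim N_1^{d-2}\mathcal H_1\,\mathcal H_{\star},
\end{equation*}
where $\mathcal H_{\star}$ is the analogous harmonic average of the (random) values $\phi|_{\partial B(N_2/2)}$ at the intermediate scale.

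Using the strong Markov property to explore the positive cluster of $D_1$ up to its first exit on $\partial B(N_2/2)$, and conditioning on this trace together with $\phi|_{\partial B(N_2/2)}$, I would bound the conditional probability of extending the cluster further to $\partial B(N_2)$ by $\lesssim \theta_d(N_2/4)$. The key point is that, although the explored trace may intersect $\partial B(N_2/2)$ at many points, its equilibrium mass is controlled through Lemma \ref{lemma_prop_hitting} by the capacity factor already extracted in $\mathcal H_{\star}$, so that the pertinent extension behaves like a one-arm probability from a single point, namely $\theta_d(N_2/4)$ up to constants. Combining this extension estimate with the intermediate bound and taking expectation over $\phi|_{\partial B(N_2/2)}$ (for which Gaussian second-moment bounds yield $E[\mathcal H_{\star}] = O(1)$, since the conditional variance of the intermediate-scale values is $O(1)$) produces the claimed upper bound $\lesssim N_1^{d-2}\theta_d(N_2/4)\mathcal H_1$.

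The main obstacle is the one-arm extension step from $\partial B(N_2/2)$ to $\partial B(N_2)$ under general positive boundary data on $\partial B(N_2/2)$: a naive union bound over crossing points would lose a factor comparable to $|\partial B(N_2/2)|$, and a simple monotonicity argument would inflate the one-arm probability by an unboundedly positive-boundary-dependent factor. My strategy is to exploit the capacity-equilibrium structure of the explored cluster at the intermediate sphere together with a Lupu-formula calculation (via (\ref{2.22}) and (\ref{new2.44})) on the exterior region $[\widetilde B(N_2/2)]^c$ to extract the desired $\theta_d(N_2/4)$ with only bounded Gaussian corrections. The zero-boundary assumption on $D_1\setminus \mathbf B_1$ and $D_2$, combined with standard positive-part truncations $\phi\mapsto \phi^+$, ensures that the Lupu-type bounds apply cleanly throughout the argument.
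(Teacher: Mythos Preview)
Your two-step decomposition has a genuine gap in the extension step, which you correctly flag as the main obstacle but do not resolve. Once you condition on the positive cluster of $D_1$ having reached $\partial B(N_2/2)$, its trace on that sphere may touch many points with uncontrolled positive values, and the conditional probability of extending to $\partial B(N_2)$ is governed by the random size and boundary data of that trace --- there is no reason it should be uniformly bounded by $\theta_d(N_2/4)$. Your claim that the relevant capacity is ``already extracted in $\mathcal H_\star$'' conflates two different quantities: $\mathcal H_\star$ measures how the fixed sphere $\partial B(N_2/2)$ is seen harmonically from the $D_1$ side, whereas the extension probability depends on how the random explored trace is seen from the $\partial B(N_2)$ side. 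These are not comparable without further input, and invoking Lupu's formula on the exterior region just restates the same difficulty one scale out.

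The paper takes a different and more direct route. Instead of exploring outward from $D_1$ through an intermediate sphere, it explores the negative cluster $\mathcal C^-_{\partial B(N_2)}$ of the \emph{target} and applies Lupu's formula once to the pair $(D_1,\mathcal C^-_{\partial B(N_2)})$, whose boundary values are automatically nonnegative. This yields a bound of the form $\mathbb E\big[(N_1^{d-2}\mathcal H_1\,\widehat{\mathcal H}_2)\wedge 1\,\big|\,\mathcal F_{D_1\cup D_2}\big]$, where $\widehat{\mathcal H}_2$ is the harmonic average of $\mathcal C^-_{\partial B(N_2)}$ at scale $N_2/2$. The one-arm factor enters through the estimate $\mathbb E[\widehat{\mathcal H}_2\mid \mathcal F_{D_1\cup D_2}]\lesssim \theta_d(N_2/4)+(N_1/N_2)^{d-2}\mathcal H_1$ (imported from \cite[Lemma~3.7]{cai2024one}), and the correction term is absorbed using the truncation $\wedge 1$ via the elementary inequality $[(a_1+a_2)b]\wedge 1\le a_1 b+\sqrt{a_2 b}$. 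The single Lupu-formula application with the $\wedge 1$ truncation is exactly what handles the correlation that your product decomposition cannot.
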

	\begin{proof}
	The proofs for $j\in \{1,2\}$ are similar, so we only provide the proof details for the case $j=1$. By  (\ref{2.22}), (\ref{new2.44}) and \cite[Lemma 3.5]{cai2024one}, the conditional probability of $\big\{ D_1 \xleftrightarrow{\ge 0} \partial B(N_{2})\big\}$ is bounded from above by 
	\begin{equation}\label{newfinish2.43}
	\begin{split}
			&C\mathbb{E}\big[ \big( N_1^{d-2} \mathcal{H}_1\widehat{\mathcal{H}}_2\big) \land 1 \mid \mathcal{F}_{D_1\cup D_2}\big]\\
			\overset{\text{(Jensen's Ineq)}}{\le}  & CN_1^{d-2} \mathcal{H}_1 \cdot \big( \mathbb{E} \big[ \widehat{\mathcal{H}}_2 \mid \mathcal{F}_{D_1\cup D_2} \big] \land (N_1^{2-d} \mathcal{H}_1^{-1})  \big).
	\end{split}
	\end{equation}
	Here $\widehat{\mathcal{H}}_2$ is defined as $\widehat{\mathcal{H}}_2:=|\partial \mathcal{B}(N_2/2)|^{-1}\sum_{z\in \partial \mathcal{B}(N_2/2)} \mathcal{H}_z(\mathcal{C}_{\partial B(N_2)}^{-},D_1\cup \mathcal{C}_{\partial B(N_2)}^{-})$. Referring to 	\cite[Lemma 3.7]{cai2024one} (here we only need to replace $\mathcal{C}_{B(n)}^{-}$ in \cite[Lemma 3.7]{cai2024one} by $D_1$, which does not cause any difference on the proof), we have 
	\begin{equation}\label{newfinish2.44}
		\mathbb{E} \big[ \widehat{\mathcal{H}}_2 \mid \mathcal{F}_{D_1\cup D_2} \big] \lesssim \theta_d(N_2/4)+ (N_1/N_2)^{d-2} \mathcal{H}_1. 
	\end{equation}
	Similar to \cite[Inequality (3.42)]{cai2024one}, by (\ref{newfinish2.43}) and the inequality that $[(a_1+a_2)b]\land 1 \le a_1b+ \sqrt{a_2b}$ for $a_1,a_2,b>0$, the right-hand side of (\ref{newfinish2.43}) is at most
	\begin{equation}\label{finish2.45}
	\begin{split}
			& CN_1^{d-2}\mathcal{H}_1 \cdot \big[   \big( \theta_d(N_2/4)+ (N_1/N_2)^{d-2} \mathcal{H}_1\big) \land \big( N_1^{2-d} \mathcal{H}_1^{-1} \big)   \big] \\
			\lesssim & N_1^{d-2} \mathcal{H}_1 \cdot  \big[ \theta_d(N_2/4)+ N_2^{-\frac{d}{2}+1} \big]  \overset{(\ref*{one_arm_low})-(\ref*{one_arm_high})}{\lesssim } N_1^{d-2} \theta_d(N_2/4) \mathcal{H}_1 .
	\end{split}
		\end{equation}
	Combined with (\ref{newfinish2.43}), it implies this lemma for $j=1$. 
		\end{proof}

\subsection{Loop soup and isomorphism theorem}\label{subsection_loopsoup}
The loop measure $\widetilde{\mu}$ is defined as 
\begin{equation}\label{def_mu}
	 \widetilde{\mu}(\cdot) := \int_{v\in \widetilde{\mathbb{Z}}^d} \mathrm{d}m(v) \int_{0< t< \infty} t^{-1} \widetilde{q}_t(v,v)\widetilde{\mathbb{P}}^t_{v,v}(\cdot) \mathrm{d}t, 
\end{equation}
where $m(\cdot)$ is the Lebesgue measure on $\widetilde{\mathbb{Z}}^d$ and $\widetilde{\mathbb{P}}^t_{v,v}(\cdot)$ is the law of the Brownian bridge on $\widetilde{\mathbb{Z}}^d$ with duration $t$, starting and ending at $v$. The loop soup of intensity $\alpha > 0$, denoted by $\widetilde{\mathcal{L}}_{\alpha}$, is the Poisson point process with intensity measure $\alpha \widetilde{\mu}$. In the remainder of this paper, by forgetting the roots of all rooted loops, we consider the loop measure $\widetilde{\mu}$ in (\ref{def_mu}) as a measure on the space of equivalence classes of rooted loops, where every pair of loops in the same class are equivalent under a certain time-shift. We refer to each equivalence class as a loop.

For any loop $\widetilde{\ell}$, let its range, denoted by $\mathrm{ran}(\widetilde{\ell})$, be the range of any rooted loop in $\widetilde{\ell}$. For any $D\subset \widetilde{\mathbb{Z}}^d$, we denote $\widetilde{\mu}^{ D}:=\widetilde{\mu} \circ \mathbbm{1}_{\mathrm{ran}(\widetilde{\ell}) \subset \widetilde{\mathbb{Z}}^d\setminus D}$. For any $\alpha>0$, let $\widetilde{\mathcal{L}}_\alpha^{D}$ be the Poisson point process with intensity measure $\widetilde{\mu}^{D}$, i.e., the point process consisting of loops in $\widetilde{\mathcal{L}}_{\alpha}$ that do not intersect $D$.

The following estimate for the measure of loops that intersect three given unit boxes will be used multiple times. For convenience, we set $0^{-a}:=1$ for $a>0$.
\begin{lemma}[{\cite[Lemma 2.3]{cai2024high}}]\label{lemma_revise_2.3}
For any $d\ge 3$, there exists $C>0$ such that for any $x_1,x_2,x_3\in \mathbb{Z}^d$ with $\max\{|x_1-x_2|,|x_2-x_3|,|x_3-x_1|\}\ge C$,
\begin{equation*}
	\widetilde{\mu}\big(\{\widetilde{\ell}:\forall 1\le i\le 3, \widetilde{B}_{x_i}(1)\cap  \mathrm{ran}(\widetilde{\ell})\neq \emptyset \} \big) \lesssim  |x_1-x_2|^{2-d}|x_2-x_3|^{2-d}|x_3-x_1|^{2-d}. 
\end{equation*}
\end{lemma}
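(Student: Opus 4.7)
The plan is to decompose loops by the cyclic order in which they visit the three boxes $V_i := \widetilde{B}_{x_i}(1)$ and to bound each order's contribution using Green's function estimates. First I would observe that any loop visiting all three $V_i$ does so in one of two cyclic orders (modulo cyclic rotation), say $1\to 2\to 3$ or $1\to 3\to 2$. By the evident symmetry and a union bound, it suffices to control the measure of loops visiting the boxes in a single fixed cyclic order, which I take to be $V_1\to V_2\to V_3\to V_1$.

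For such loops, I would apply the excursion-theoretic decomposition of the loop measure: starting from the definition (\ref{def_mu}) and rerooting each loop at the initial moment of its first excursion into $V_1$ absorbs the $t^{-1}$ factor into a parametrization over canonical rooted representatives. Applying the strong Markov property at the first entrances to $V_2$ and $V_3$ then decomposes each such rooted loop into a concatenation of three Brownian pieces: from $V_1$ to $V_2$, from $V_2$ to $V_3$, and from $V_3$ back to $V_1$. Integrating the duration of each piece against the identity $\int_0^\infty \widetilde{q}_s(v,w)\,\mathrm{d}s = \widetilde{G}(v,w)$ yields
\begin{equation*}
\widetilde{\mu}\bigl(\{\widetilde{\ell} : V_1, V_2, V_3 \text{ visited in cyclic order}\}\bigr) \lesssim \int_{V_1\times V_2\times V_3} \widetilde{G}(v_1,v_2)\widetilde{G}(v_2,v_3)\widetilde{G}(v_3,v_1)\,\mathrm{d}m(v_1)\,\mathrm{d}m(v_2)\,\mathrm{d}m(v_3).
\end{equation*}

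To conclude, I would apply (\ref{bound_green}) to each Green's function factor. Since $m(V_i) = O(1)$ and $\|v_i - v_j\| \asymp |x_i - x_j|$ whenever $|x_i - x_j|$ exceeds a suitable constant $C$ (while the convention $0^{-a} := 1$ handles pairs with $|x_i - x_j| = 0$), the integral is at most a constant multiple of $|x_1-x_2|^{2-d}|x_2-x_3|^{2-d}|x_3-x_1|^{2-d}$. Summing over the two cyclic orders then gives the lemma.

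The main technical point is the rerooting identity in the second step: one must verify that fixing the root at the start of the first excursion into $V_1$ picks out exactly one rooted representative per unrooted loop, and that this exactly compensates the $t^{-1}$ factor in the definition of $\widetilde{\mu}$. This is a standard feature of the metric-graph loop measure but requires care because of the continuous excursion structure of Brownian motion at the entry into each $V_i$; one natural way around this is to first argue on the discrete-time loop measure on $\mathbb{Z}^d$ (where rerooting at the first discrete visit to $V_1$ is unambiguous) and then to transfer the bound to $\widetilde{\mu}$ via the standard correspondence between discrete and metric-graph loops.
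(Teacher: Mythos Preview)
The paper does not prove this lemma; it is quoted verbatim from \cite[Lemma 2.3]{cai2024high} and used as a black box. Your sketch follows the standard argument (and is essentially how the cited reference proceeds): reduce to a fixed cyclic order, reroot the loop at a canonical visit to $V_1$ to kill the $t^{-1}$, apply the strong Markov property at the successive entries to $V_2$ and $V_3$, and integrate out the durations to recover a product of Green's functions. Your closing remark about passing through the discrete loop measure is exactly the clean way to handle the rerooting step, and matches the treatment in \cite{cai2024high} and \cite{chang2016phase}; nothing further is needed.
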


\textbf{Isomorphism theorem.} We next review Lupu's powerful and elegant coupling between the loop soup and the GFF, which enriches both models and allows one to derive properties for one model from the other and vice versa. Referring to \cite[Proposition 2.1]{lupu2016loop}, for any $D\subset \widetilde{\mathbb{Z}}^{d}$ there is a coupling between $\widetilde{\mathcal{L}}_{1/2}^{D}$ and $\{\widetilde{\phi}_v\}_{v\in \widetilde{\mathbb{Z}}^d\setminus D}\sim  \mathbb{P}^{D}$ such that (let $\widehat{\mathcal{L}}^{D,v}_{1/2}$ be the total local time at $v$ of loops in $\widetilde{\mathcal{L}}_{1/2}^{D}$)
	\begin{itemize}
		\item  $\widehat{\mathcal{L}}^{D,v}_{1/2}=\frac{1}{2}\widetilde{\phi}_v^{2}$ for all $v\in \widetilde{\mathbb{Z}}^d\setminus D$;

		\item  the sign clusters of $\widetilde{\phi}_\cdot$ are exactly the loop clusters of $\widetilde{\mathcal{L}}_{1/2}^D$.

	\end{itemize}
As a corollary, all estimates on the connecting probabilities of $\widetilde{E}^{\ge 0}$ mentioned in Section \ref{section_intro} also apply to their analogues in loop clusters.

We denote the union of ranges of loops in a point measure $\widetilde{\mathcal{L}}$ by $\cup \widetilde{\mathcal{L}}$. For simplicity, we write ``$\xleftrightarrow{\cup \widetilde{\mathcal{L}}_{1/2}^{D}}$'' as ``$\xleftrightarrow{(D)}$''. When $D=\emptyset$, we abbreviate ``$\xleftrightarrow{\cup \widetilde{\mathcal{L}}_{1/2}}$'' as ``$\xleftrightarrow{}$''. For any $D,A_1,A_2\subset \widetilde{\mathbb{Z}}^d$, by the symmetry of $\widetilde{\phi}_{\cdot}$, we have 
\begin{equation}\label{coro2.1_1}
		\mathbb{P}^{D}\big(A_1 \xleftrightarrow{\ge 0} A_2 \big)=  \mathbb{P}^{D}\big(A_1 \xleftrightarrow{\le 0} A_2 \big).
	\end{equation}
Moreover, under the coupling given by the isomorphism theorem, one has
\begin{equation}
\big\{A_1 \xleftrightarrow{\ge 0} A_2\big\} \subset 	\big\{A_1 \xleftrightarrow{(D)} A_2 \big\}\subset \big\{A_1 \xleftrightarrow{\ge 0} A_2\big\}\cup \big\{A_1 \xleftrightarrow{\le 0} A_2 \big\}. 
\end{equation}
Combined with (\ref{coro2.1_1}), it yields that 
\begin{equation}\label{newfinal_compare}
	\mathbb{P}^{D}\big(A_1 \xleftrightarrow{\ge 0} A_2 \big) \le  \mathbb{P}^{D}\big(A_1 \xleftrightarrow{(D)} A_2 \big)\le 2\mathbb{P}^{D}\big(A_1 \xleftrightarrow{\ge 0} A_2 \big). 
\end{equation}
Meanwhile, since $\{A_1 \xleftrightarrow{(D)} A_2\}$ is decreasing in $D$, one has 
\begin{equation}\label{newfinal_monotone}
\mathbb{P}\big(A_1 \xleftrightarrow{(D_2)} A_2 \big)	\le \mathbb{P}\big(A_1 \xleftrightarrow{(D_1)} A_2 \big), \ \ \forall D_1\subset D_2\subset \widetilde{\mathbb{Z}}^d.
\end{equation}
Putting (\ref{newfinal_compare}) and (\ref{newfinal_monotone}) together, we have 
\begin{equation}\label{ineq_compare}
\mathbb{P}^{D_2}\big(A_1 \xleftrightarrow{\ge 0} A_2 \big)	\le 2\mathbb{P}^{D_1}\big(A_1 \xleftrightarrow{\ge 0} A_2 \big), \ \ \forall D_1\subset D_2\subset \widetilde{\mathbb{Z}}^d.
\end{equation}

%\begin{equation}
%	, \ \ \forall D_1\subset D_2\subset \widetilde{\mathbb{Z}}^d, A_1,A_2\subset \widetilde{\mathbb{Z}}^d.
%\end{equation}

%By the isomorphism theorem and the symmetry of $\widetilde{\phi}_{\cdot}$, one has
%\begin{equation}\label{coro2.1_1}
%		\mathbb{P}^{D}\big(A_1 \xleftrightarrow{\ge 0} A_2 \big)=  \mathbb{P}^{D}\big(A_1 \xleftrightarrow{\le 0} A_2 \big) =  \tfrac{1}{2}\mathbb{P}\big(A_1 \xleftrightarrow{(D)} A_2 \big),\ \ \forall D,A_1,A_2\subset \widetilde{\mathbb{Z}}^d.
%							\end{equation}

\textbf{Loop decomposition.} Next, we review a decomposition for loops. For any disjoint $A_1,A_2\subset \mathbb{Z}^d$ and any loop $\widetilde{\ell}$ intersecting both $A_1$ and $A_2$, let $\widetilde{\varrho}:[0,L]\to \widetilde{\mathbb{Z}}^d$ be an arbitrarily rooted loop in $\widetilde{\ell}$ satisfying the following conditions: (i) $\widetilde{\varrho}(0)\in A_1$; (ii) there exists $t\in [0,L]$ such that $\widetilde{\varrho}(t)\in A_2$ and $\widetilde{\varrho}(t')\notin A_1\cup A_2$ for all $t'\in (t,L)$. For such $\widetilde{\varrho}$, we first set $\tau_0:=0 $ and then recursively define a sequence of stopping times as follows: for each $k\in \mathbb{N}$, we define that $\tau_{2k+1}:= \inf\{t>\tau_{2k}: \widetilde{\varrho}(t)\in A_2\}$ and  $\tau_{2k+2}:= \inf\{t>\tau_{2k+1}: \widetilde{\varrho}(t)\in A_1\}$. Let $\kappa(\widetilde{\varrho};A_1,A_2)$ be the unique integer such that $\tau_{2\kappa}=L$. A forward (resp. backward) crossing path of $\widetilde{\ell}$ from $A_1$ to $A_2$, denoted by $\widetilde{\eta}^{\mathrm{F}}$ (resp. $\widetilde{\eta}^{\mathrm{B}}$), is a sub-path of $\widetilde{\varrho}\in \widetilde{\ell}$ satisfying that $\widetilde{\eta}^{\mathrm{F}}(s)=\widetilde{\varrho}(\tau_{2i-2}+s)$, $\forall 0\le s\le \tau_{2i-1}-\tau_{2i-2}$ (resp. $\widetilde{\eta}^{\mathrm{B}}(s)=\widetilde{\varrho}(\tau_{2i-1}+s)$, $\forall 0\le s\le \tau_{2i}-\tau_{2i-1}$) for some $1\le i\le \kappa(\widetilde{\varrho};A_1,A_2)$. Note that a backward crossing path from $A_1$ to $A_2$ actually starts from $A_2$ and ends at $A_1$. In addition, $\kappa(\widetilde{\varrho};A_1,A_2)$ and the collection of forward and backward crossing paths do not change with the selection of $\widetilde{\varrho}$. In light of this, we denote $\kappa(\widetilde{\varrho};A_1,A_2)$ by $\kappa(\widetilde{\ell};A_1,A_2)$. Referring to \cite[Lemma 2.7]{chang2016phase} and \cite[Corollary 2.2]{cai2024high}, we have the following estimate for $\kappa$.

\begin{lemma}\label{lemma_loop_measure_crossing}
	For $d\ge 3$, there exist $C,c>0$ such that for $N\ge 1$ and $A\subset \widetilde{B}(cN)$, 
	\begin{equation}
			\widetilde{\mu}\big(\{\widetilde{\ell}: \kappa(\widetilde{\ell}; \partial B(N), A)\ge 1 \} \big) \gtrsim N^{2-d} \mathrm{cap}(A), 
	\end{equation}
	\begin{equation}\label{newineq_2.50}
		\widetilde{\mu}\big(\{\widetilde{\ell}: \kappa(\widetilde{\ell}; \partial B(N), A)\ge j \} \big)\le \big[ CN^{2-d} \mathrm{cap}(A)\big]^{j}, \ \ \forall j\in \mathbb{N}^+.
	\end{equation}
\end{lemma}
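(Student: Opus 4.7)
The plan is to analyze both bounds via an excursion decomposition of loops at their visits to $\partial B(N)$ and $A$, together with the standard representation
\begin{equation*}
\widetilde{\mu}(\cdot) = \int_{\widetilde{\mathbb{Z}}^d} dm(v)\int_0^\infty t^{-1}\widetilde{q}_t(v,v)\widetilde{\mathbb{P}}^t_{v,v}(\cdot)\,dt
\end{equation*}
from (\ref{def_mu}). Throughout, I will use Lemma \ref{lemma_prop_hitting} in the form $\widetilde{\mathbb{P}}_x(\tau_A<\infty)\lesssim N^{2-d}\mathrm{cap}(A)$ for all $x\in \partial B(N)$, which is a consequence of the last-exit decomposition since $A\subset \widetilde{B}(cN)$ lies well inside $\partial B(N)$.

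For the upper bound, I would first rewrite $\widetilde{\mu}(\{\widetilde{\ell}:\kappa(\widetilde{\ell};\partial B(N),A)\ge j\})$ by re-rooting each loop at its first visit to $\partial B(N)$. More precisely, for a rooted loop with root at $x\in \partial B(N)$ and duration $t$, the event $\{\kappa\ge j\}$ together with the re-rooting constraint forces the path to consist of: an initial excursion from $x$ to $A$, then alternately $(2j-1)$ further excursions between $A$ and $\partial B(N)$, and finally a closing excursion from $\partial B(N)$ back to $x$ that avoids $A$. Applying the strong Markov property at each of the $2j$ consecutive hitting times, the contribution from each excursion $\partial B(N)\to A$ is bounded by $\sup_{x\in\partial B(N)}\widetilde{\mathbb{P}}_x(\tau_A<\infty)\lesssim N^{2-d}\mathrm{cap}(A)$, while each excursion $A\to \partial B(N)$ contributes $\sup_{y\in A}\widetilde{\mathbb{P}}_y(\tau_{\partial B(N)}<\infty)\le 1$. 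The remaining part (the loop at $x$ of duration $t$ that hits $\partial B(N)$ for the first time at time $0$) integrates to a quantity comparable to $\mathrm{cap}(\partial B(N))\cdot \sup_x \widetilde{G}(x,x) \asymp N^{d-2}$ via the last-exit decomposition (\ref{ineq_2.15}). Multiplying, the factors of $N^{d-2}$ and $N^{2-d}$ pair up to give $(CN^{2-d}\mathrm{cap}(A))^j$ after telescoping, yielding (\ref{newineq_2.50}).

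For the lower bound, the key is to exhibit a set of loops with $\kappa\ge 1$ whose measure is already of order $N^{2-d}\mathrm{cap}(A)$. I would root at the first entry to $A$, i.e., consider rooted loops $\widetilde{\varrho}$ at $v\in \widetilde{\partial}A$ that leave $A$, hit $\partial B(N)$, and return to $v$. By the last-exit decomposition applied to both the outgoing leg $v\to \partial B(N)$ and the return leg $\partial B(N)\to v$, together with the bound $\widetilde{G}_{\partial B(N)}(v,w)\asymp (\|v-w\|+1)^{2-d}$ for $v,w\in A$ and the equilibrium measure representation $\mathrm{cap}(A)=\sum_v \mathbb{Q}_A(v)$, one obtains a lower bound of the desired order $N^{2-d}\mathrm{cap}(A)$. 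The use of Harnack's inequality is needed to compare hitting probabilities from different points of $\partial B(N)$, which is justified by the invariance principle as in Lemma \ref{lemma_compare_hm}.

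The main obstacle is the careful accounting of rooting and the factor $t^{-1}$ in (\ref{def_mu}), since only the $t$-integrated measure on unrooted loops is what we want. Concretely, the re-rooting at the first visit to a designated set absorbs exactly one power of $t$ (by the standard identity that integrating the indicator of ``first visit in $[s,s+ds]$'' over $s\in [0,t]$ produces the factor $t$), which is what makes the bookkeeping work out cleanly and is the reason the bound is geometric with ratio $CN^{2-d}\mathrm{cap}(A)$ rather than something weaker. Once this is handled, the rest of the argument is a routine application of the strong Markov property and the hitting estimates from Section \ref{section_notation}.
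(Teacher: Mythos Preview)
The paper does not prove this lemma; it simply cites \cite[Lemma 2.7]{chang2016phase} and \cite[Corollary 2.2]{cai2024high}. Your excursion-decomposition strategy is indeed the method those references use, so in spirit you are on the right track.

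That said, your accounting for the upper bound contains a genuine inconsistency. You claim that after stripping off the $2j$ crossing excursions, a ``remaining part'' (the loop rooted at $x\in\partial B(N)$) integrates to $\mathrm{cap}(\partial B(N))\cdot\sup_x\widetilde G(x,x)\asymp N^{d-2}$, and that this then ``pairs up'' with a stray $N^{2-d}$ to leave $(CN^{2-d}\mathrm{cap}(A))^j$. But there is no stray $N^{2-d}$: you have exactly $j$ forward crossings (each $\lesssim N^{2-d}\mathrm{cap}(A)$) and $j$ backward crossings (each $\le 1$). With your extra $N^{d-2}$, the bound would be $N^{d-2}(N^{2-d}\mathrm{cap}(A))^j$, which for $j=1$ gives $\mathrm{cap}(A)$ rather than $N^{2-d}\mathrm{cap}(A)$. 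Your own last paragraph in fact contradicts this: if re-rooting at a canonical crossing point absorbs the factor $t^{-1}$ exactly (which it does, producing only a $1/j$ from cyclic symmetry), then no additional $N^{d-2}$ appears. The correct picture is that rooting at one of the $j$ forward-crossing starts yields
\[
\widetilde\mu(\kappa=j)=\tfrac{1}{j}\,\mathrm{Tr}\big[(P^F P^B)^j\big],
\]
with $P^F(x,y)=\widetilde{\mathbb P}_x(\tau_A=\tau_y)$ and $P^B(y,x')=\widetilde{\mathbb P}_y(\tau_{\partial B(N)}=\tau_{x'})$, and bounding this trace by $(Cp)^j$ still requires a short separate argument (e.g.\ via the rank of $P^FP^B$ through $A$, or by first establishing the $j=1$ case and then using the conditional Markov property for each additional crossing). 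A second technical issue: in continuous time, ``root at the first visit to $\partial B(N)$'' is a measure-zero event under the rooted loop measure, so this re-rooting must be made precise either via the discrete projection or via local times---this is exactly the obstacle you flag but do not resolve.
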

%Moreover, for any $j\ge 1$, 

For any integer $l \ge 0 $, we denote by 
\begin{equation}\label{def_Gl_crossing}
	\mathsf{G}_l^{D}(A_1,A_2):= \Big\{ \sum\nolimits_{\widetilde{\ell}\in \widetilde{\mathcal{L}}^{D}_{1/2}} \kappa(\widetilde{\ell};A_1,A_2) = l \Big\}
\end{equation}
the event that there are exactly $l$ crossings for loops in $\widetilde{\mathcal{L}}_{1/2}^{D}$. We also denote 
\begin{equation}
	\mathsf{G}_{\ge l}^{D}(A_1,A_2):= \cup_{m\ge l } \mathsf{G}_m^{D}(A_1,A_2). 
\end{equation}

Based on Lemma \ref{lemma_loop_measure_crossing}, we derive the decay rate of $\mathsf{G}_{\ge l}^{D}$ as follows.

\begin{lemma}\label{lemma_Gl}
	For any $d\ge 3$, there exist $C,c>0$ such that for any $D\subset \widetilde{\mathbb{Z}}^d$, $N\ge 1$, $A\subset \widetilde{B}(cN)$ and $l\ge 1$, 	\begin{equation}\label{final2.55}
		\mathbb{P}\big(\mathsf{G}_{\ge l}^{D}(\partial B(N),A)   \big) \le \big[CN^{2-d}\mathrm{cap}(A)\big]^{l}. 
	\end{equation}
\end{lemma}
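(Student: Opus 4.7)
The plan is to recognize the event $\mathsf{G}_{\ge l}^{D}(\partial B(N), A)$ as a tail event for the Poisson functional
\[
X \;:=\; \sum_{\widetilde{\ell}\, \in\, \widetilde{\mathcal{L}}_{1/2}^{D}} \kappa\bigl(\widetilde{\ell};\, \partial B(N), A\bigr),
\]
and then to deploy the exponential Markov inequality $\mathbb{P}(X \ge l) \le z^{-l}\, \mathbb{E}[z^{X}]$ for an appropriately chosen $z > 1$. Since $\widetilde{\mathcal{L}}_{1/2}^{D}$ is a Poisson point process on the space of loops with intensity measure $\tfrac{1}{2}\widetilde{\mu}^{D}$, the Campbell--Laplace functional provides the explicit identity
\[
\mathbb{E}\bigl[z^{X}\bigr] \;=\; \exp\!\Bigl(\tfrac{1}{2}\int \bigl(z^{\kappa(\widetilde{\ell};\, \partial B(N), A)} - 1\bigr)\, d\widetilde{\mu}^{D}(\widetilde{\ell})\Bigr),
\]
valid as long as the integral on the right converges.

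The next step is to bound the integrand using the layer-cake identity $z^{\kappa} - 1 = (z-1)\sum_{j \ge 0} z^{j}\,\mathbbm{1}_{\kappa \ge j+1}$, together with the tail estimate (\ref{newineq_2.50}) from Lemma \ref{lemma_loop_measure_crossing} and the trivial monotonicity $\widetilde{\mu}^{D} \le \widetilde{\mu}$. Writing $\beta := C_{0} N^{2-d}\,\mathrm{cap}(A)$ for the constant $C_{0}$ supplied by (\ref{newineq_2.50}), this yields
\[
\int \bigl(z^{\kappa} - 1\bigr)\, d\widetilde{\mu}^{D} \;\le\; (z-1)\sum_{j \ge 1} z^{j-1}\,\beta^{j} \;=\; \frac{(z-1)\beta}{1 - z\beta},
\]
provided that $z\beta < 1$. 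Using the capacity estimate $\mathrm{cap}(A) \le \mathrm{cap}(\widetilde{B}(cN)) \lesssim (cN)^{d-2}$ to force $\beta \le 1/4$ by taking $c$ sufficiently small, I can then set $z := 1/(2\beta) \ge 2$, at which point $z\beta = 1/2$ and the displayed integral is at most $1-2\beta \le 1$, so $\mathbb{E}[z^{X}] \le e^{1/2}$. Substituting back into Markov's inequality delivers $\mathbb{P}(X \ge l) \le (2\beta)^{l}\, e^{1/2} \le (4\beta)^{l}$ for every $l \ge 1$, and taking $C := 4C_{0}$ completes the proof.

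The only genuinely delicate point is ensuring that $z\beta < 1$ so that the geometric series above converges; this is precisely where the hypothesis $A \subset \widetilde{B}(cN)$ is used, as the capacity estimate $\mathrm{cap}(\widetilde{B}(cN)) \asymp (cN)^{d-2}$ makes $\beta$ as small as desired by choosing $c$ small. Beyond this calibration the argument is a textbook Poisson exponential-moment computation, and I do not anticipate any substantial obstacle once Lemma \ref{lemma_loop_measure_crossing} is in hand.
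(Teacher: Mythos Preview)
Your proof is correct. The paper takes a different, more hands-on route: it conditions on $\mathfrak{N}$, the number of loops in $\widetilde{\mathcal{L}}_{1/2}^{D}$ with at least one crossing, observes (again from (\ref{newineq_2.50})) that conditionally on $\mathfrak{N}=k$ the individual crossing counts are independent and each stochastically dominated by a geometric random variable with parameter $CN^{2-d}\mathrm{cap}(A)$, so that the total is dominated by a negative binomial; it then combines the Poisson tail $\mathbb{P}(\mathfrak{N}=k)\le (k!)^{-1}\beta^{k}$ with the negative-binomial tail and sums explicitly. Your exponential-moment argument via the Laplace functional of the Poisson point process is the cleaner, textbook packaging of exactly the same ingredients---both proofs rest solely on (\ref{newineq_2.50}) together with the smallness of $\beta$ enforced by $A\subset\widetilde{B}(cN)$. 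Your version avoids the case split on $\mathfrak{N}$ and the intermediate combinatorial sums, and yields slightly tighter constants; the paper's version makes the underlying probabilistic picture (each crossing loop contributing an independent geometric count) more visible, but is otherwise equivalent.
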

\begin{proof}
Let $\mathfrak{N}$ be the number of loops $\widetilde{\ell}\in \widetilde{\mathcal{L}}_{1/2}^{D}$ with $\kappa(\widetilde{\ell};\partial B(N),A)\ge 1$. By Lemma \ref{lemma_loop_measure_crossing}, given a loop $\widetilde{\ell}\in \widetilde{\mathcal{L}}_{1/2}^{D}$ satisfying $\kappa(\widetilde{\ell};\partial B(N),A)\ge 1$, we have that $\kappa(\widetilde{\ell};\partial B(N),A)$ is stochastically dominated by a geometric random variable with success probability $CN^{2-d}\mathrm{cap}(A)$ (which is much smaller than $1$ due to the condition that $A \subset \widetilde{B}(cN)$ for some sufficiently small $c>0$). Moreover, for any $k\ge 1$, conditioning on $\{\mathfrak{N} =k \}$, the values of $\kappa(\cdot  \ ;\partial B(N),A)$ for these $k$ loops are independent. Therefore, given $\{\mathfrak{N} =k \}$, $\sum\nolimits_{\widetilde{\ell}\in \widetilde{\mathcal{L}}^{D}_{1/2}} \kappa(\widetilde{\ell};A_1,A_2) $ is stochastically dominated by a negative binomial random variable for $k$ successes with success probability $CN^{2-d}\mathrm{cap}(A)$. Thus, for any $k<l$, we have 
\begin{equation}\label{newineq_2.53}
	\begin{split}
		&\mathbb{P}\big( \mathsf{G}_{\ge l}^{D}(\partial B(N),A)  \mid \mathfrak{N} =k \big)\\
		\le &  \sum\nolimits_{m\ge l} \binom{m-1}{k-1}\big[CN^{2-d}\mathrm{cap}(A)\big]^{m-k}[1-CN^{2-d}\mathrm{cap}(A)]^{k}\\
		\le & \sum\nolimits_{m\ge l} m^k\big[CN^{2-d}\mathrm{cap}(A)\big]^{m-k}. 
	\end{split}
\end{equation}
Meanwhile, by (\ref{newineq_2.50}) (with $j=1$) one has 
\begin{equation}\label{newineq_2.54}
	\begin{split}
		\mathbb{P}(\mathfrak{N} =k)\le (k!)^{-1}\big[CN^{2-d}\mathrm{cap}(A)\big]^k. 
	\end{split}
\end{equation}
By (\ref{newineq_2.53}) and (\ref{newineq_2.54}), we conclude this lemma as follows:
\begin{equation}
	\begin{split}
		&\mathbb{P}\big( \mathsf{G}_{\ge l}^{D}(\partial B(N),A)    \big)\\
		\le & \mathbb{P}(\mathfrak{N} \ge l)+ \sum\nolimits_{1\le k<l} \mathbb{P}(\mathfrak{N}=k)\cdot \mathbb{P}( \mathsf{G}_{\ge l}^{D}(\partial B(N),A)   \mid \mathfrak{N}=k) \\
		\lesssim  &\mathbb{P}(\mathfrak{N} \ge l)+ \sum\nolimits_{1\le k<l} (k!)^{-1}\big[CN^{2-d}\mathrm{cap}(A)\big]^k\sum\nolimits_{m\ge l} m^k\big[CN^{2-d}\mathrm{cap}(A)\big]^{m-k}\\
		\lesssim  &\big[CN^{2-d}\mathrm{cap}(A)\big]^{l}+ \sum\nolimits_{m\ge l} \big[CN^{2-d}\mathrm{cap}(A)\big]^{m} e^{m}
		\lesssim \big[CeN^{2-d}\mathrm{cap}(A)\big]^{l}. \qedhere
					\end{split}
\end{equation}

\end{proof}

\textbf{Properties of the crossing paths.} On the event $\mathsf{G}_l^{D}(A_1,A_2)$, we enumerate all forward and backward crossing paths of loops in $\widetilde{\mathcal{L}}_{1/2}^{D}$ from $A_1$ to $A_2$ by $\{\widetilde{\eta}_i^{\mathrm{F}}\}_{i=1}^{l}$ and $\{\widetilde{\eta}_i^{\mathrm{B}}\}_{i=1}^{l}$. For each $1\le i\le l$, we denote by $\mathbf{x}_i$ (resp. $\mathbf{y}_i$) the starting (resp. ending) point of $\widetilde{\eta}_i^{\mathrm{F}}$. Note that $\{ \mathbf{x}_i \}_{i=1}^{l}  \subset  A_1$ and $\{ \mathbf{y}_i \}_{i=1}^{l}  \subset A_2$. Let $\Omega_l(A_1,A_2)$ denote the collection of all possible configurations of $\{(\mathbf{x}_i,\mathbf{y}_i)\}_{i=1}^{l}$. For any $\omega_l=\{(x_i,y_i)\}_{i=1}^{l} \in 	\Omega_l(A_1,A_2)$, we denote the event 
\begin{equation}\label{3.36}
	   	\widehat{\mathsf{G}}^{D}(\omega_l;A_1,A_2):= 	 \mathsf{G}^{D}_l( A_1,A_2)\cap \{\mathbf{x}_i=x_i,\mathbf{y}_i=y_i,\forall 1\le i\le l \}.
	   \end{equation}

Thanks to the spatial Markov property of the loop soup (which is thoroughly discussed in \cite{werner2016spatial}; see also \cite{cai2024high,chang2016phase}), the distribution of forward and backward crossing paths can be described as follows.

\begin{lemma}\label{lemma_decomposition}
	For any $d\ge 3$, $D\subset \widetilde{\mathbb{Z}}^d$, disjoint $A_1,A_2\subset \mathbb{Z}^d\setminus D$, $l\ge 1$ and $\omega_l\in 	\Omega_l(A_1,A_2)$, conditioning on the event $\widehat{\mathsf{G}}^{D}(\omega_l;A_1,A_2)$, the following holds: $\{\widetilde{\eta}_i^{\mathrm{F}}\}_{i=1}^{l}$ and $\{\widetilde{\eta}_i^{\mathrm{B}}\}_{i=1}^{l}$ are independent and have distributions $\widetilde{\mathbb{P}}_{x_i}\big(\{\widetilde{S}_t\}_{0\le t\le \tau_{A_2}}\in \cdot \mid \tau_{A_2}=\tau_{y_i}<\tau_{D}\big)$ and $\widetilde{\mathbb{P}}_{y_i}\big(\{\widetilde{S}_t\}_{0\le t\le \tau_{A_1}}\in \cdot \mid \tau_{A_1}=\tau_{x_{i+1}}<\tau_{D}\big)$ for $1\le i\le l$ (where we denote $x_{l+1}:=x_1$).
\end{lemma}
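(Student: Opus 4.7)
The plan is to reduce the claim to the standard factorization property of the loop measure $\widetilde{\mu}^D$ when restricted to loops that cross between $A_1$ and $A_2$. For each loop $\widetilde{\ell}$ with $\kappa(\widetilde{\ell};A_1,A_2)=k\ge 1$, I would single out as a canonical root the first entry into $A_1$ in some rooted representative; the weight $t^{-1}$ in \eqref{def_mu} combined with the $k$ equivalent choices of root inside one unrooted loop supplies the correct combinatorial factor when passing from unrooted loops to rooted crossing sequences.

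Next, I would compute the push-forward of $\widetilde{\mu}^D$ under the map sending such an $\widetilde{\ell}$ to the tuple $\bigl(k,\{(x_i,y_i)\}_{i=1}^{k},\{\widetilde{\eta}_i^{\mathrm{F}}\}_{i=1}^{k},\{\widetilde{\eta}_i^{\mathrm{B}}\}_{i=1}^{k}\bigr)$. Applying the strong Markov property of the Brownian motion on $\widetilde{\mathbb{Z}}^d$ successively at the hitting times $\tau_0<\tau_1<\cdots<\tau_{2k}$ of $A_1\cup A_2$, the resulting measure factorizes as a product of $2k$ sub-probability kernels: the $i$-th forward factor coincides with $\widetilde{\mathbb{P}}_{x_i}\bigl(\{\widetilde{S}_t\}_{0\le t\le \tau_{A_2}}\in\cdot,\tau_{A_2}=\tau_{y_i}<\tau_{D}\bigr)$, and the $i$-th backward factor with the analogous kernel from $y_i$ to $x_{i+1}$. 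Since loops in $\widetilde{\mathcal{L}}_{1/2}^D$ avoid $D$ by construction, every crossing path automatically stays in $\widetilde{\mathbb{Z}}^d\setminus D$.

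Lifting this decomposition to the Poisson point process $\widetilde{\mathcal{L}}_{1/2}^D$ via Poisson superposition and thinning, the joint configuration of crossing paths across all loops is itself a Poisson process whose intensity measure factorizes as above. Conditioning on $\widehat{\mathsf{G}}^D(\omega_l;A_1,A_2)$ fixes the total number of crossings $l$, the cyclic pairing encoded by $\{(x_i,y_i,x_{i+1})\}_{i=1}^{l}$ which groups crossings into loops, and all the endpoints themselves. The residual randomness in $\{\widetilde{\eta}_i^{\mathrm{F}}\}$ and $\{\widetilde{\eta}_i^{\mathrm{B}}\}$ is then independent across $i$ and across $\mathrm{F}/\mathrm{B}$, and Doob's $h$-transform identifies each forward law with $\widetilde{\mathbb{P}}_{x_i}(\cdot\mid \tau_{A_2}=\tau_{y_i}<\tau_D)$ truncated at $\tau_{A_2}$, and each backward law with $\widetilde{\mathbb{P}}_{y_i}(\cdot\mid \tau_{A_1}=\tau_{x_{i+1}}<\tau_D)$ truncated at $\tau_{A_1}$, as required.

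The main obstacle is the combinatorial bookkeeping: the weight $t^{-1}$ in \eqref{def_mu} together with the Lebesgue integral over the root must be shown to cancel exactly the $k$-fold overcounting introduced by picking a preferred root in $A_1$, so that no spurious factor of $k$ contaminates the conditional distribution; one should also carefully verify that the sub-probability kernels assembled from the strong Markov property recombine into the unconditional bridge laws above without leftover normalization factors. Once this accounting is carried out in the spirit of \cite{werner2016spatial,chang2016phase,cai2024high}, both the independence of the crossing paths and their identification with the stated Brownian laws follow from the product form of the intensity.
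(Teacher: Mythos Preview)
Your proposal is correct and follows essentially the same route the paper indicates: the paper does not give its own proof of this lemma but simply attributes it to the spatial Markov property of the loop soup developed in \cite{werner2016spatial} (see also \cite{chang2016phase,cai2024high}), and your outline---factorizing $\widetilde{\mu}^D$ on crossing loops via the strong Markov property at the successive hitting times of $A_1\cup A_2$, handling the $t^{-1}$/root-choice cancellation, and lifting to the Poisson point process---is exactly how those references proceed. The one point worth tightening is your description of what $\widehat{\mathsf{G}}^{D}(\omega_l;A_1,A_2)$ fixes: by the paper's enumeration convention the backward path $\widetilde{\eta}_i^{\mathrm{B}}$ runs from $y_i$ to $x_{i+1}$, so the ordered tuple $\{(x_i,y_i)\}_{i=1}^{l}$ already encodes the cyclic pairing and hence which crossings belong to which loop; once you make that explicit, the independence and the identification of each conditional law via the $h$-transform go through as you wrote.
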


Next, we present some estimates on hitting probabilities for crossing paths.

\begin{lemma}\label{lemma_hitting_crossing_path}
	For any $d\ge 3$ and $N_2\ge N_1\ge 1$, if $\widetilde{\eta}^{\mathrm{F}}$ (resp. $\widetilde{\eta}^{\mathrm{B}}$) is a forward (resp. backward) crossing path of a loop in $\widetilde{\mathcal{L}}_{1/2}^{D}$ from $\partial B(N_2)$ to $\partial B(N_1)$, then the following two items hold. 
	\begin{enumerate}

		\item  For any $w_1,w_2\in [B(10N_2)]^c$ and $D\subset \widetilde{B}(N_1)\cup [\widetilde{B}(2(|w_1|\vee |w_2|))]^c$,
		\begin{equation}\label{newineq_2.69}
			\mathbb{P}\big(\mathrm{ran}(\widetilde{\eta}^{\mathrm{F}})\cap \widetilde{B}_{w_i}(1)\neq \emptyset,\forall i=1,2 \big)\lesssim |w_1|^{2-d}|w_1-w_2|^{2-d}|w_2|^{2-d}N_2^{d-2}. 
		\end{equation}

		\item For any $w_1,w_2\in B(\frac{N_1}{10})$ and $D\subset \widetilde{B}(\frac{1}{2d}(|w_1|\land |w_2|))\cup [\widetilde{B}(N_2)]^c$,  
		\begin{equation}\label{newnewineq_2.73}
			\mathbb{P}\big(\mathrm{ran}(\widetilde{\eta}^{\mathrm{B}})\cap \widetilde{B}_{w_i}(1)\neq \emptyset,\forall i=1,2 \big)\lesssim N_1^{2-d}|w_1-w_2|^{2-d}. 		\end{equation}

	\end{enumerate}	
\end{lemma}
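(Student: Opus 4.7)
My plan is to reduce both items to unconditional Brownian hitting estimates via Lemma~\ref{lemma_decomposition}. Conditional on the crossing configuration $\widehat{\mathsf{G}}^{D}$, that lemma identifies $\widetilde{\eta}^{\mathrm{F}}$ with a Brownian motion from $x_i\in \partial B(N_2)$ conditioned on $\{\tau_{\partial B(N_1)}=\tau_{y_i}<\tau_D\}$, and $\widetilde{\eta}^{\mathrm{B}}$ with a Brownian motion from $y_i\in \partial B(N_1)$ conditioned on $\{\tau_{\partial B(N_2)}=\tau_{x_{i+1}}<\tau_D\}$. It therefore suffices to prove each bound for arbitrary fixed endpoints and then average. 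Writing the conditional probability as a ratio, I will always have a numerator of the form $\widetilde{\mathbb{P}}_{x_i}(\text{visit } w_1,w_2 \text{ and then reach the target})$ and a denominator of the form $\widetilde{\mathbb{P}}_{x_i}(\text{reach the target})$.

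For item (1), I split the event $\{\mathrm{ran}(\widetilde{\eta}^{\mathrm{F}})\cap \widetilde{B}_{w_i}(1)\neq \emptyset,\,i=1,2\}$ by the order in which $\widetilde{B}_{w_1}(1)$ and $\widetilde{B}_{w_2}(1)$ are visited and apply the strong Markov property at these first-hitting times. For the ordering $w_1$ before $w_2$ the numerator factorizes as
\begin{equation*}
\widetilde{\mathbb{P}}_{x_i}\bigl(\tau_{\widetilde{B}_{w_1}(1)}<\tau_{\partial B(N_1)\cup D}\bigr)\cdot \widetilde{\mathbb{P}}_{w_1'}\bigl(\tau_{\widetilde{B}_{w_2}(1)}<\tau_{\partial B(N_1)\cup D}\bigr)\cdot \widetilde{\mathbb{P}}_{w_2'}\bigl(\tau_{\partial B(N_1)}=\tau_{y_i}<\tau_D\bigr),
\end{equation*}
for appropriate first-hitting points $w_1',w_2'$, plus the symmetric term with $w_1,w_2$ swapped. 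The condition $D\subset \widetilde{B}(N_1)\cup [\widetilde{B}(2(|w_1|\vee|w_2|))]^c$ keeps the killing set at distance of order $|w_1|\vee|w_2|$ from the relevant annular region, so the identity $\widetilde{\mathbb{P}}_z(\tau_w<\tau_{D'})=\widetilde{G}_{D'}(z,w)/\widetilde{G}_{D'}(w,w)$ together with Lemma~\ref{lemma_new2.2} and \eqref{bound_green} yields the first two factors as $\lesssim |w_1|^{2-d}$ and $\lesssim |w_1-w_2|^{2-d}$. For the third factor I apply the strong Markov property at $\partial B(2|w_2|)$ followed by Lemma~\ref{lemma_prop_hitting} (giving $\widetilde{\mathbb{P}}_{w_2'}(\tau_{\partial B(2|w_2|)}<\tau_D)\asymp (N_2/|w_2|)^{d-2}$) and Lemma~\ref{lemma_compare_hm} (matching the harmonic-measure factor at $y_i$ from $\partial B(2|w_2|)$ to the one from $x_i\in \partial B(N_2)$), obtaining $\widetilde{\mathbb{P}}_{w_2'}(\tau_{\partial B(N_1)}=\tau_{y_i}<\tau_D)\lesssim (N_2/|w_2|)^{d-2}\cdot \widetilde{\mathbb{P}}_{x_i}(\tau_{\partial B(N_1)}=\tau_{y_i}<\tau_D)$. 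Cancelling the denominator leaves precisely $|w_1|^{2-d}|w_1-w_2|^{2-d}|w_2|^{2-d}N_2^{d-2}$, which proves \eqref{newineq_2.69}.

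Item (2) is handled by the same mechanism applied to $\widetilde{\eta}^{\mathrm{B}}$. The condition $D\subset \widetilde{B}(\tfrac{1}{2d}(|w_1|\wedge|w_2|))\cup [\widetilde{B}(N_2)]^c$ keeps $D$ at distance $\gtrsim |w_j|$ from $w_j$ and at distance $\gtrsim N_1$ from $\partial B(N_1)$, so the analogous three-term factorization gives the first two factors as $\lesssim N_1^{2-d}$ (since $|y_i-w_1|\asymp N_1$) and $\lesssim |w_1-w_2|^{2-d}$ via Lemma~\ref{lemma_new2.2}. The crucial third factor $\widetilde{\mathbb{P}}_{w_2'}(\tau_{\partial B(N_2)}=\tau_{x_{i+1}}<\tau_D)$ is now compared directly to $\widetilde{\mathbb{P}}_{y_i}(\tau_{\partial B(N_2)}=\tau_{x_{i+1}}<\tau_D)$ by noting that $z\mapsto \widetilde{\mathbb{P}}_z(\tau_{\partial B(N_2)}=\tau_{x_{i+1}}<\tau_D)$ is harmonic on the annular region $\widetilde{B}(\tfrac{3}{4}N_2)\setminus \widetilde{B}(\tfrac{1}{2d}(|w_1|\wedge|w_2|))$ that contains both $y_i$ and $w_2'$; Harnack's inequality then forces the two values to be of the same order, and the denominator cancels, yielding \eqref{newnewineq_2.73}.

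The main technical obstacle is the last-step decoupling in item (1): showing that the harmonic-measure factor at $y_i$ is essentially the same whether the walk starts at $x_i$ or at $w_2'$. When $N_1$ is comparable to $N_2$ one has $A_2=\partial B(N_1)\not\subset \widetilde{B}(N_2/2)$, which puts the geometry just outside the direct scope of Lemma~\ref{lemma_compare_hm}. I will bypass this by the intermediate step through $\partial B(2|w_2|)$ described above, splitting the harmonic measure into the ``cost of coming back in'' (handled by Lemma~\ref{lemma_prop_hitting}) and the ``distribution on a far-away sphere'' (handled by Harnack's inequality, after which the statement of Lemma~\ref{lemma_compare_hm} applies cleanly because $2|w_2|\geq 20 N_2$). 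A mirror-image argument --- swapping the roles of small inside and large outside --- closes the analogous gap for item (2). Once this decoupling is in place, the rest is a clean bookkeeping exercise with the Green's function bounds of Section~\ref{section_notation}.
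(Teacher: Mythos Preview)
Your overall strategy is exactly the paper's: invoke Lemma~\ref{lemma_decomposition} to reduce to a Brownian motion conditioned on its endpoint, split according to the order in which $\widetilde B_{w_1}(1)$ and $\widetilde B_{w_2}(1)$ are visited, factor by the strong Markov property, bound the first two hitting probabilities by Green's function estimates, and control the ratio of the third factor to the denominator via Lemma~\ref{lemma_compare_hm}. The paper writes this out for item~(1) and declares item~(2) analogous, just as you do.

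There is, however, a genuine confusion in your ``workaround'' for the hypothesis $\partial B(N_1)\subset\widetilde B(N_2/2)$ of Lemma~\ref{lemma_compare_hm}. You propose an intermediate step through $\partial B(2|w_2|)$ and claim $\widetilde{\mathbb P}_{w_2'}(\tau_{\partial B(2|w_2|)}<\tau_D)\asymp (N_2/|w_2|)^{d-2}$. But $w_2'$ lies \emph{inside} $B(2|w_2|)$, and the part of $D$ inside $B(2|w_2|)$ is contained in $\widetilde B(N_1)$; hence this probability is $\asymp 1$, not $(N_2/|w_2|)^{d-2}$. Moreover, passing outward to $\partial B(2|w_2|)$ does nothing to help you compare back to the denominator $\widetilde{\mathbb P}_{x_i}(\tau_{\partial B(N_1)}=\tau_{y_i}<\tau_D)$ with $x_i\in\partial B(N_2)$: the obstruction is the Harnack comparison on $\partial B(N_2)$ when the target $\partial B(N_1)$ is close to it, and an outer sphere at scale $|w_2|$ is on the wrong side of the picture. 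The paper, for its part, simply cites Lemma~\ref{lemma_compare_hm} here without comment; in every application later in the paper the ratio $N_2/N_1$ is large, so the hypothesis is in fact met. If you want to cover the full range $N_2\ge N_1$ as the lemma is stated, the clean fix is to keep the constraint $\tau_{\widetilde B_{w_1}(1)}<\tau_{\partial B(N_1)}$ in the \emph{first} factor (rather than relaxing to $\tau_{\widetilde B_{w_1}(1)}<\infty$): this introduces an escape factor of order $(N_2-N_1)/N_2$ when the annulus is thin, which exactly offsets the corresponding inflation of the third-factor ratio.
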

\begin{proof}
 For (\ref{newineq_2.69}), assume that $\widetilde{\eta}^{\mathrm{F}}$ starts from $x\in \partial B(N_2)$ and ends at $y\in \partial B(N_1)$. Let $\mathsf{A}_1$ (resp. $\mathsf{A}_2$) be the event that $\widetilde{\eta}^{\mathrm{F}}$ visits $\widetilde{B}_{w_1}(1)$ and $\widetilde{B}_{w_2}(1)$ (resp. $\widetilde{B}_{w_2}(1)$ and $\widetilde{B}_{w_1}(1)$) in order. Note that $\{\mathrm{ran}(\widetilde{\eta}^{\mathrm{F}})\cap \widetilde{B}_{w_i}(1)\neq \emptyset,\forall i=1,2\}\subset \mathsf{A}_1\cup \mathsf{A}_2$. By the strong Markov property, we have 
	\begin{equation}\label{newineq_2.73}
		\begin{split}
			\mathbb{P}\big(\mathsf{A}_1 \big)\lesssim  &  \widetilde{\mathbb{P}}_{x}\big(\tau_{\widetilde{B}_{w_1}(1)}<\infty \big) \sum_{w_1'\in \mathbb{Z}^d: \{w_1,w_1'\}
			\in \mathbb{L}^d} \widetilde{\mathbb{P}}_{w_1'}\big(\tau_{\widetilde{B}_{w_2}(1)}<\infty \big)\\
			&\cdot \sum_{w_2'\in \mathbb{Z}^d: \{w_2,w_2'\}
			\in \mathbb{L}^d}  \frac{\widetilde{\mathbb{P}}_{w_2'}\big(\tau_{\partial B(N_1)}=\tau_{y}<\tau_{D}  \big)}{\widetilde{\mathbb{P}}_{x}\big(\tau_{\partial B(N_1)}=\tau_{y}<\tau_{D}  \big) }.
		\end{split}
	\end{equation}
	Note that $\widetilde{\mathbb{P}}_{x}\big(\tau_{\widetilde{B}_{w_1}(1)}<\infty \big)\asymp |x-w_1|^{2-d}\asymp |w_1|^{2-d}$ and that $\widetilde{\mathbb{P}}_{w_1'}\big(\tau_{\widetilde{B}_{w_2}(1)}<\infty \big)\asymp |w_1-w_2|^{2-d}$ (by Lemma \ref{lemma_prop_hitting}). Moreover, by Lemma \ref{lemma_compare_hm} one has 
\begin{equation}
	|w_2|^{d-2}\widetilde{\mathbb{P}}_{w_2'}\big(\tau_{\partial B(N_1)}=\tau_{y}<\tau_{D}  \big)\asymp N_2^{d-2}\widetilde{\mathbb{P}}_{x}\big(\tau_{\partial B(N_1)}=\tau_{y}<\tau_{D}  \big).
\end{equation}
Plugging these estimates into (\ref{newineq_2.73}), we get 
\begin{equation}\label{new_ineq2.75}
	\mathbb{P}\big(\mathsf{A}_1 \big)\lesssim |w_1|^{2-d}|w_1-w_2|^{2-d}|w_2|^{2-d}N_2^{d-2}. 
\end{equation}
By swapping $w_1$ and $w_2$, we know that the bound in (\ref{new_ineq2.75}) also holds for $\mathbb{P}\big(\mathsf{A}_2 \big)$, and thus obtain (\ref{newineq_2.69}). We omit the proof of (\ref{newnewineq_2.73}) since it follows similarly.
	\end{proof}

By taking $w_1=w_2=w\in [B(10N_2)]^c$ in (\ref{newineq_2.69}), we have 
\begin{equation}\label{special_newineq_2.69}
		\mathbb{P}\big(\mathrm{ran}(\widetilde{\eta}^{\mathrm{F}})\cap \widetilde{B}_{w}(1)\neq \emptyset \big)\lesssim |w|^{4-2d}N_2^{d-2}.
\end{equation}

\textbf{van den Berg-Kesten-Reimer (BKR) inequality.} The BKR inequality is crucial in the analysis of loop clusters. To state it formally, we first need to divide the loops into following types: 
\begin{itemize}
	\item  fundamental loop: a loop that intersects at least two points in $\mathbb{Z}^d$;

	\item  point loop: a loop that intersects exactly one point in $\mathbb{Z}^d$;

	\item  edge loop: a loop that is contained in a single interval $I_e$, $e\in \mathbb{L}^{d}$.	
\end{itemize} 
Based on this classification, the glued loops of $\widetilde{\mathcal{L}}_{1/2}^{D}$ are defined as follows (see \cite{cai2024high}): 
\begin{itemize}
	\item  For any connected $A\subset \mathbb{Z}^d\setminus D$ intersecting at least two points in $\mathbb{Z}^d$, the glued fundamental loop supported on $A$ is defined as the union of ranges of fundamental loops $\widetilde{\ell}\in \widetilde{\mathcal{L}}_{1/2}^{D}$ such that $\mathrm{ran}(\widetilde{\ell})\cap \mathbb{Z}^d=A$.

	\item  For any $x\in \mathbb{Z}^d\setminus D$, the glued point loop supported on $x$ is the union of ranges of point loops $\widetilde{\ell}\in \widetilde{\mathcal{L}}_{1/2}^{D}$ such that $x\in \mathrm{ran}(\widetilde{\ell})$.

	\item  For any $e\in \mathbb{L}^d$, the glued edge loop supported on $I_e$ is the union of ranges of edge loops $\widetilde{\ell} \in \widetilde{\mathcal{L}}_{1/2}^{D}$ contained in $I_e$.

	%$x\in \mathbb{Z}^d\setminus D$ and $y\sim x$, the glued edge loop supported on $I_{\{x,y\}}$ is the union of ranges of edge loops $\widetilde{\ell} \in \widetilde{\mathcal{L}}_{1/2}^{D}$ contained in $I_{\{x,y\}}\setminus D$. 

\end{itemize}

For any events $\mathsf{A}_1,...,\mathsf{A}_j$ ($j\ge 2$) that are measurable with respect to $\widetilde{\mathcal{L}}_{1/2}^{D}$, we say they happen disjointly, denoted by $\mathsf{A}_1\circ \mathsf{A}_2\circ ... \circ \mathsf{A}_j$, if there exist $j$ disjoint collections of glued loops such that for each $1\le i\le j$, the $i$-th collection certifies the event $\mathsf{A}_i$. It is important to clarify that ``disjoint collections'' here means that the collections do not share any glued loop with the same type and support. However, this does not require that every glued loop in one collection is disjoint from those in other collections. Conversely, if $\mathsf{A}_1,...,\mathsf{A}_j$ are certified by different loop clusters, then they indeed happen disjointly.

 For the loop soup $\widetilde{\mathcal{L}}_{1/2}^{D}$, we say an event $\mathsf{A}$ is a connecting event if there exists $A_1,A_2\subset \widetilde{\mathbb{Z}}^{d}\setminus D$ such that $\mathsf{A}=\{A_1\xleftrightarrow{(D)}A_2\}$. Based on \cite[Corollary 8]{arratia2018van}, one can obtain the following version of the BKR inequality (for a detailed proof, readers may refer to \cite[Corollary 3.4]{cai2024high}).

\begin{lemma}\label{lemma_BKR}
	If events $\mathsf{A}_1,\mathsf{A}_2,...,\mathsf{A}_j$ ($j\ge 2$) are connecting events, then we have
	\begin{equation}
		\mathbb{P}\big(\mathsf{A}_1\circ \mathsf{A}_2\circ ... \circ \mathsf{A}_j\big)\le \prod\nolimits_{1\le i\le j} \mathbb{P}\big(\mathsf{A}_i\big).
	\end{equation}
\end{lemma}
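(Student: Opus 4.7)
The plan is to reduce the statement to a product-space version of the BKR/Reimer inequality by exploiting the Poisson structure of the loop soup. First I would decompose $\widetilde{\mathcal L}_{1/2}^{D}$ using the thinning property into a family of \emph{independent} Poisson point processes indexed by possible ``supports'': one PPP for each connected finite set $A\subset\mathbb{Z}^d\setminus D$ with $|A\cap\mathbb{Z}^d|\ge 2$ (tracking the fundamental loops $\widetilde\ell$ with $\mathrm{ran}(\widetilde\ell)\cap\mathbb{Z}^d=A$), one for each $x\in\mathbb{Z}^d\setminus D$ (tracking point loops through $x$), and one for each $e\in\mathbb{L}^d$ with $I_e\cap D=\emptyset$ (tracking edge loops inside $I_e$). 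Each glued loop as defined in the excerpt is then a measurable function of exactly one of these independent components, and the whole loop soup is recovered as the disjoint superposition of these pieces.

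Next I would observe two structural facts that make Reimer-type arguments applicable. First, every connecting event $\mathsf{A}_i=\{A_1^{(i)}\xleftrightarrow{(D)}A_2^{(i)}\}$ is \emph{increasing} with respect to addition of loops: enlarging any of the independent Poisson components can only create more paths in $\cup\widetilde{\mathcal L}_{1/2}^{D}$ and hence can only bring new connections. Second, the definition of ``disjoint occurrence'' in the excerpt, namely certification by disjoint collections of \emph{glued} loops (where two collections are disjoint iff no glued loop of identical type and support belongs to both), coincides exactly with the standard notion of disjoint occurrence for increasing events on the product of the independent PPP components: one partitions the set of ``support labels'' into $j$ disjoint subsets $S_1,\dots,S_j$ such that the loops contributing to $S_i$ alone already certify $\mathsf A_i$.

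With the setup above, the conclusion follows by invoking the Poissonian BKR/Reimer inequality of \cite[Corollary 8]{arratia2018van}, applied to the product of the independent Poisson components listed in the first paragraph and to the increasing connecting events $\mathsf A_1,\dots,\mathsf A_j$; this directly yields
\begin{equation*}
\mathbb{P}\big(\mathsf{A}_1\circ \mathsf{A}_2\circ\cdots\circ \mathsf{A}_j\big)\le \prod_{1\le i\le j}\mathbb{P}(\mathsf{A}_i).
\end{equation*}
The step I expect to require the most care is aligning the combinatorial notion of ``disjoint glued-loop collections'' with the measure-theoretic hypothesis of Reimer/Arratia--Garibaldi--Hales: one must check that the partition of the loop soup into independent PPPs indexed by support is measurable and exhaustive (so that every loop belongs to exactly one component), and that the disjointness relation used in the excerpt is precisely the one the cited inequality needs, so that nothing is lost by ignoring the fact that distinct glued loops may still share points on $\widetilde{\mathbb{Z}}^d$ off the lattice.
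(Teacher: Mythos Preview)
Your proposal is correct and follows essentially the same route as the paper: the paper does not give a self-contained proof but simply cites \cite[Corollary~8]{arratia2018van} (with details deferred to \cite[Corollary~3.4]{cai2024high}), and the argument there is precisely the thinning-into-independent-PPPs-indexed-by-support reduction you describe, together with the identification of ``disjoint glued-loop collections'' with the product-space notion of disjoint occurrence. Your flagged technical point about aligning the combinatorial disjointness with the Reimer/Arratia--Garibaldi--Hales hypothesis is exactly the content that \cite{cai2024high} spells out.
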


\begin{remark}\label{remark_decompose}
We introduce an extension of Lemma \ref{lemma_BKR} as follows. Arbitrarily take $D\subset \widetilde{\mathbb{Z}}^d$ and two disjoint sets $A_1,A_2\in \mathbb{Z}^d\setminus D$. Assume that all starting and ending points of forward and backward crossing paths of all loops in $\widetilde{\mathcal{L}}_{1/2}^{D}$ from $A_1$ to $A_2$ are given. By Lemma \ref{lemma_decomposition}, these crossing paths (enumerated by $\widetilde{\eta}_i^{\mathrm{F}},\widetilde{\eta}_i^{\mathrm{B}}$ for $1\le i\le k$) are conditionally independent. For the remaining loops (i.e., the loops in $\widetilde{\mathcal{L}}_{1/2}^{D}$ that do not intersect both $A_1$ and $A_2$), we define the glued loops as above. Similarly, for any $j$ events ($j\ge 2$), we say they happen disjointly if they are certified by $j$ disjoint collections of glued loops and crossing paths. Here ``disjoint collections'' means that the collections do not share any glued loop with the same type and support, nor any crossing path with the same superscript and subscript. In this way, for the conditional probability measure given the starting and ending points of all crossing paths, the framework in \cite{arratia2018van} and the argument in the proof of \cite[Corollary 3.4]{cai2024high} still work, and thus the BKR inequality holds as in Lemma \ref{lemma_BKR}.
\end{remark}

Using the isomorphism theorem, the loop decomposition and the BKR inequality,  it was proved in \cite[Lemma 3.3]{cai2024one} that for any $A\subset \mathbb{Z}^d$ and $x\in \mathbb{Z}^d\setminus A$,
\begin{equation}\label{ineq_2.22}
	\mathbb{E}\big[\widetilde{\phi}_x\cdot \mathbbm{1}_{x\xleftrightarrow{\ge 0} A}  \big] \lesssim  \mathbb{P}\big(x\xleftrightarrow{\ge 0} A\big). 
\end{equation}
The proof of \cite[Lemma 3.3]{cai2024one} is mainly based on the following observations:
\begin{enumerate}[(i)]

	\item The total local time of loops at $x$ only depends on the point loops at $x$ and the backward crossing paths from $\widetilde{\partial} \widetilde{B}_x(1)$ to $x$.

	\item Given the number of crossings from $\widetilde{\partial} \widetilde{B}_x(1)$ to $x$ (which decays exponentially), the event $\widetilde{\partial} \widetilde{B}_x(1) \xleftrightarrow{} A$ only depends on the loops that are not contained in $\widetilde{B}_x(1)$ and the forward crossing paths from $\widetilde{\partial} \widetilde{B}_x(1)$ to $x$, and thus is conditional independent of the total local time at $x$.

\end{enumerate}
Clearly, these observations are also valid for the sub-graph $\widetilde{\mathbb{Z}}^d\setminus D$. Hence, (\ref{ineq_2.22}) can be extended to the following version: for any $x\in \mathbb{Z}^d$ and $A,D\subset \widetilde{\mathbb{Z}}^d\setminus \widetilde{B}_x(1)$, 
          \begin{equation}\label{ineq_2.23}
	\mathbb{E}^{D}\big[\widetilde{\phi}_x\cdot \mathbbm{1}_{x\xleftrightarrow{\ge 0} A}  \big] \lesssim \mathbb{P}^{D}\big(x\xleftrightarrow{\ge 0} A\big). 
\end{equation}
Readers may refer to \cite[Section 3]{cai2024one} for the proof details. In addition, by the FKG inequality, one has $\mathbb{E}^{D}\big[\widetilde{\phi}_x\cdot \mathbbm{1}_{x\xleftrightarrow{\ge 0} A}  \big] = \mathbb{E}^{D}\big[|\widetilde{\phi}_x|\cdot \mathbbm{1}_{x\xleftrightarrow{\ge 0} A}  \big]\gtrsim  \mathbb{P}^{D}\big(x\xleftrightarrow{\ge 0} A\big)$. Thus, 
\begin{equation}\label{ineq_2.25}
		 \mathbb{P}^{D}\big(x\xleftrightarrow{\ge 0} A\big) \asymp \mathbb{E}^{D}\big[\widetilde{\phi}_x\cdot \mathbbm{1}_{x\xleftrightarrow{\ge 0} A}  \big].	\end{equation}

Next, we provide a useful extension of (\ref{ineq_2.25}) as follows. 
	
\begin{lemma}\label{lemma3.3}
	For any $d\ge 3$, $x\in \mathbb{Z}^d$ and $A,D\subset  \widetilde{\mathbb{Z}}^d\setminus \widetilde{B}_x(1)$, and any increasing event $\mathsf{F}\subset \{x\xleftrightarrow{\le 0} A\}^{c}$ that is measurable with respect to $\mathcal{F}_{\widehat{\mathcal{C}}_A^{-}}$, we have 
	\begin{equation}\label{ineq3.4}
\mathbb{P}^{D}(\mathsf{F})	\mathbb{P}^{D}(x\xleftrightarrow{\ge 0} A)\lesssim 	\mathbb{E}^{D}\big[\widetilde{\phi}_x\cdot \mathbbm{1}_{ \mathsf{F}}  \big] =  \mathbb{E}^{D}\big[\mathcal{H}_x(\widehat{\mathcal{C}}_A^{-}\cup D) \cdot  \mathbbm{1}_{\mathsf{F}}\big]\lesssim \mathbb{P}^{D}(x\xleftrightarrow{\ge 0} A).
	\end{equation}
\end{lemma}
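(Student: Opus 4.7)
The statement packages three inequalities: a middle equality, a matching lower bound, and a matching upper bound, which I will handle in that order. The equality $\mathbb{E}^{D}[\widetilde{\phi}_x\mathbbm{1}_{\mathsf{F}}]=\mathbb{E}^{D}[\mathcal{H}_x(\widehat{\mathcal{C}}_A^{-}\cup D)\mathbbm{1}_{\mathsf{F}}]$ is a direct application of the strong Markov property of $\widetilde{\phi}\sim\mathbb{P}^{D}$ to the local random set $\widehat{\mathcal{C}}_A^{-}$: the hypothesis $\mathsf{F}\in\mathcal{F}_{\widehat{\mathcal{C}}_A^{-}}$ together with the containment $\mathsf{F}\subset\{x\xleftrightarrow{\le 0}A\}^{c}$ (which forces $x\notin\widehat{\mathcal{C}}_A^{-}$ on $\mathsf{F}$) gives $\mathbb{E}^{D}[\widetilde{\phi}_x\mid\mathcal{F}_{\widehat{\mathcal{C}}_A^{-}}]\mathbbm{1}_{\mathsf{F}}=\mathcal{H}_x(\widehat{\mathcal{C}}_A^{-}\cup D)\mathbbm{1}_{\mathsf{F}}$ by definition of the harmonic average, so the identity follows after taking the total expectation.

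For the lower bound $\mathbb{P}^{D}(\mathsf{F})\mathbb{P}^{D}(x\xleftrightarrow{\ge 0}A)\lesssim\mathbb{E}^{D}[\widetilde{\phi}_x\mathbbm{1}_{\mathsf{F}}]$, I would work conditionally on $\mathcal{F}_{\widehat{\mathcal{C}}_A^{-}}$, combining the strong Markov decomposition $\widetilde{\phi}_x=\mathcal{H}_x(\widehat{\mathcal{C}}_A^{-}\cup D)+\widetilde{\phi}'_x$, with $\widetilde{\phi}'_x$ a centered Gaussian of variance $\widetilde{G}_{\widehat{\mathcal{C}}_A^{-}\cup D}(x,x)\asymp 1$, and the connecting formula (\ref{2.22}) applied to $D_1=\widehat{\mathcal{C}}_A^{-}\cup D$ and $D_2=\{x\}$. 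Integrating the exponent of (\ref{2.22}) against the Gaussian density of $\widetilde{\phi}'_x$ yields on $\mathsf{F}$ the pointwise bound $\mathcal{H}_x(\widehat{\mathcal{C}}_A^{-}\cup D)\wedge 1\asymp \mathbb{P}^{D}(x\xleftrightarrow{\ge 0}A\mid\mathcal{F}_{\widehat{\mathcal{C}}_A^{-}})$, of which the lower direction suffices here. Multiplying by $\mathbbm{1}_{\mathsf{F}}$, taking total expectation and using the middle equality yields $\mathbb{E}^{D}[\widetilde{\phi}_x\mathbbm{1}_{\mathsf{F}}]\gtrsim\mathbb{P}^{D}(\mathsf{F}\cap\{x\xleftrightarrow{\ge 0}A\})$, and since $\mathsf{F}$ and $\{x\xleftrightarrow{\ge 0}A\}$ are both increasing in $\widetilde{\phi}$, the FKG inequality closes the estimate via $\mathbb{P}^{D}(\mathsf{F}\cap\{x\xleftrightarrow{\ge 0}A\})\ge\mathbb{P}^{D}(\mathsf{F})\mathbb{P}^{D}(x\xleftrightarrow{\ge 0}A)$.

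The upper bound $\mathbb{E}^{D}[\mathcal{H}_x(\widehat{\mathcal{C}}_A^{-}\cup D)\mathbbm{1}_{\mathsf{F}}]\lesssim\mathbb{P}^{D}(x\xleftrightarrow{\ge 0}A)$ is the most delicate piece. Using the other direction of the Gaussian integration above, I would decompose the expectation according to $\{\mathcal{H}_x\le 1\}$ and $\{\mathcal{H}_x>1\}$: on the first regime the pointwise bound $\mathcal{H}_x\wedge 1\lesssim\mathbb{P}^{D}(x\xleftrightarrow{\ge 0}A\mid\mathcal{F}_{\widehat{\mathcal{C}}_A^{-}})$ gives $\mathbb{E}^{D}[\mathcal{H}_x\mathbbm{1}_{\mathsf{F}}\mathbbm{1}_{\mathcal{H}_x\le 1}]\lesssim\mathbb{P}^{D}(\mathsf{F}\cap\{x\xleftrightarrow{\ge 0}A\})\le\mathbb{P}^{D}(x\xleftrightarrow{\ge 0}A)$, while the tail $\{\mathcal{H}_x>1\}$ is handled by rerunning the loop-soup/BKR decomposition used to prove (\ref{ineq_2.23}) right after (\ref{ineq_2.22}): the total local time at $x$ depends only on point loops at $x$ and backward crossings into $x$, while $\mathsf{F}\cap\{\widetilde{\partial}\widetilde{B}_x(1)\xleftrightarrow{}A\}$ is determined by loops not contained in $\widetilde{B}_x(1)$ together with the forward crossings out of $x$ (the hypothesis $A,D\subset\widetilde{\mathbb{Z}}^{d}\setminus\widetilde{B}_x(1)$ ensures $\widehat{\mathcal{C}}_A^{-}$ is built from such outside data and so $\mathbbm{1}_{\mathsf{F}}$ passes through untouched). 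Conditioning on the number of backward crossings into $x$ restores the conditional independence needed to factor the expectation exactly as in the original argument, and Lemma \ref{lemma_revise_B1} is then used to replace $\widetilde{\partial}\widetilde{B}_x(1)$ by $x$ in the connecting probability. The main obstacle I anticipate is the two-sided pointwise bound $\mathcal{H}_x\wedge 1\asymp\mathbb{P}^{D}(x\xleftrightarrow{\ge 0}A\mid\mathcal{F}_{\widehat{\mathcal{C}}_A^{-}})$: one has to carefully balance the harmonic shift $\mathcal{H}_x$ against the $O(1)$ fluctuation of $\widetilde{\phi}_x$ in the exponent of (\ref{2.22}) to avoid spurious losses when $\mathcal{H}_x\gg 1$, which is exactly what necessitates the regime-split plus loop-soup tail argument above.
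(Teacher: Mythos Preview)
Your argument for the middle equality matches the paper's. For the two inequalities, however, the paper uses a much cleaner device that you have missed: it splits
\[
\mathbb{E}^{D}\big[\widetilde{\phi}_x\mathbbm{1}_{\mathsf{F}}\big]
=\mathbb{E}^{D}\big[\widetilde{\phi}_x\mathbbm{1}_{\mathsf{F}\cap\{x\xleftrightarrow{\ge 0}A\}^c}\big]
+\mathbb{E}^{D}\big[\widetilde{\phi}_x\mathbbm{1}_{\mathsf{F}\cap\{x\xleftrightarrow{\ge 0}A\}}\big]
\]
and kills the first term by conditioning not on $\widehat{\mathcal{C}}_A^{-}$ but on the \emph{full sign cluster} $\mathcal{C}_A^{\pm}=\{v:v\xleftrightarrow{\ge 0}A\text{ or }v\xleftrightarrow{\le 0}A\}$. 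On $\mathsf{F}\cap\{x\xleftrightarrow{\ge 0}A\}^c$ the point $x$ lies outside $\mathcal{C}_A^{\pm}$, and unlike $\widehat{\mathcal{C}}_A^{-}$ the sign cluster has \emph{identically zero} boundary values, so the strong Markov property gives $\mathbb{E}^{D}[\widetilde{\phi}_x\mid\mathcal{F}_{\mathcal{C}_A^{\pm}}]=0$ and the first term vanishes. The second term is then sandwiched directly by (\ref{ineq_2.25}) and FKG applied to the increasing functions $\mathbbm{1}_{\mathsf{F}}$ and $\widetilde{\phi}_x\mathbbm{1}_{x\xleftrightarrow{\ge 0}A}$. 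No pointwise conditional estimate, no regime split, and no revisiting of the loop-soup argument are needed.

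Your route carries a genuine gap. The pointwise claim $\mathcal{H}_x(\widehat{\mathcal{C}}_A^{-}\cup D)\wedge 1\asymp\mathbb{P}^{D}(x\xleftrightarrow{\ge 0}A\mid\mathcal{F}_{\widehat{\mathcal{C}}_A^{-}})$ is not immediate: applying (\ref{2.22}) with $D_2=\{x\}$ produces an exponent $K_x\widetilde{\phi}_x$ with $K_x=\sum_{z}\mathbb{K}_{\widehat{\mathcal{C}}_A^{-}\cup D\cup\{x\}}(z,x)\widetilde{\phi}_z$, and you would need $K_x\asymp\mathcal{H}_x$ uniformly over all realizations of $\widehat{\mathcal{C}}_A^{-}$. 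That relates the boundary excursion kernel at a single lattice point to the hitting probabilities defining $\mathcal{H}_x$; Lemma~\ref{lemma_hat_H} proves an analogous relation only in a macroscopic two-box geometry, and the singleton case with an arbitrary random obstacle is additional work you have not supplied. Your tail argument on $\{\mathcal{H}_x>1\}$ is also shaky: $\mathsf{F}$ is a $\widetilde{\phi}$-event, and the forward/backward crossing decomposition in (\ref{ineq_2.23}) separates \emph{loop-soup} information, so carrying $\mathbbm{1}_{\mathsf{F}}$ through requires controlling the sign data of the isomorphism, which does not factor across that split the way pure connection events do.
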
	
\begin{proof}
The equality in (\ref{ineq3.4}) directly follows from the strong Markov property of the GFF. For the inequalities, we decompose $\mathbb{E}^{D}\big[\widetilde{\phi}_x\cdot \mathbbm{1}_{ \mathsf{F}}  \big]$ into two parts as follows: 
\begin{equation}\label{ineq3.5}
	\mathbb{E}^{D}\big[\widetilde{\phi}_x\cdot \mathbbm{1}_{ \mathsf{F}}  \big]= \mathbb{E}^{D}\big[\widetilde{\phi}_x\cdot \mathbbm{1}_{ \mathsf{F}\cap \{x\xleftrightarrow{\ge 0} A\}^{c}}  \big] + \mathbb{E}^{D}\big[\widetilde{\phi}_x\cdot \mathbbm{1}_{ \mathsf{F}\cap \{x\xleftrightarrow{\ge 0} A\}}  \big].
\end{equation}

We now show that the first term on the right-hand side of (\ref{ineq3.5}) equals $0$, as detailed below. Arbitrarily given the sign cluster containing $A$ (i.e., $\mathcal{C}_{A}^{\pm}:= \{v\in \widetilde{\mathbb{Z}}^d:v\xleftrightarrow{\ge 0 }A\ \text{or}\  v\xleftrightarrow{\le 0 }A\}$) and the GFF values on this sign cluster which satisfies the event $\mathsf{F}\cap \{x\xleftrightarrow{\ge 0} A\}^{c}$ (note that $\mathcal{C}_{A}^{\pm}$ has zero boundary values, and does not intersect $x$ since $\mathsf{F}\subset \{x\xleftrightarrow{\le 0} A\}^{c}$), the conditional expectation of $\widetilde{\phi}_x$ equals $0$ by the strong Markov property of the GFF. This implies $\mathbb{E}^{D}\big[\widetilde{\phi}_x\cdot \mathbbm{1}_{ \mathsf{F}\cap \{x\xleftrightarrow{\ge 0} A\}^{c}}  \big]=0$.

For the second term $\mathbb{E}^{D}\big[\widetilde{\phi}_x\cdot \mathbbm{1}_{ \mathsf{F}\cap \{x\xleftrightarrow{\ge 0} A\}}  \big]$, on the one hand, by (\ref{ineq_2.25}) we have 
\begin{equation}
	\mathbb{E}^{D}\big[\widetilde{\phi}_x\cdot \mathbbm{1}_{ \mathsf{F}\cap \{x\xleftrightarrow{\ge 0} A\}}  \big]\le \mathbb{E}^{D}\big[\widetilde{\phi}_x\cdot \mathbbm{1}_{ x\xleftrightarrow{\ge 0} A}  \big]\asymp  \mathbb{P}^{D}(x\xleftrightarrow{\ge 0} A).
\end{equation}
On the other hand, by the FKG inequality and (\ref{ineq_2.25}), one has 
\begin{equation}
	\mathbb{E}^{D}\big[\widetilde{\phi}_x\cdot \mathbbm{1}_{ \mathsf{F}\cap \{x\xleftrightarrow{\ge 0} A\}}  \big]\ge \mathbb{P}^{D}(\mathsf{F}) \cdot  \mathbb{E}^{D}\big[\widetilde{\phi}_x\cdot \mathbbm{1}_{x\xleftrightarrow{\ge 0} A}  \big]\asymp  \mathbb{P}^{D}(\mathsf{F}) \cdot 	\mathbb{P}^{D}(x\xleftrightarrow{\ge 0} A).
\end{equation}
To sum up, we conclude the proof of this lemma.
\end{proof}

Using Lemma \ref{lemma3.3}, in the next result we show that the point-to-set connecting probability is subharmonic as a function of the point.

\begin{lemma}\label{lemma_subharminic}
	For any disjoint $A,D_1\subset \widetilde{\mathbb{Z}}^d$ and $D_2\subset \mathbb{Z}^d$ such that $\widetilde{B}_w(1)\cap (A\cup D_1)=\emptyset$ for all $w\in \partial D_2$, and for any $x\in D_2$, 
	\begin{equation}
		\mathbb{P}^{D_1}(x\xleftrightarrow{\ge 0} A)\lesssim  \sum\nolimits_{w\in \partial  D_2} \widetilde{\mathbb{P}}_x \big(\tau_{\partial  D_2}= \tau_w < \tau_{D_1} \big)\cdot \mathbb{P}^{D_1}(w\xleftrightarrow{\ge 0} A). 
	\end{equation} 
\end{lemma}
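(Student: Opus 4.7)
The plan is to combine identity (\ref{ineq_2.25})---which relates the point-to-set connecting probability to a GFF expectation---with the strong Markov property of the GFF applied at $\partial D_2\cup D_1$. First, by (\ref{ineq_2.25}) (using the hypothesis $\widetilde{B}_w(1)\cap(A\cup D_1)=\emptyset$ for $w\in \partial D_2$, which propagates to $\widetilde{B}_x(1)\cap(A\cup D_1)=\emptyset$ when $x$ is interior to $D_2$; the case $x\in\partial D_2$ is immediate since then $w=x$ gives $p_w=1$ on the right-hand side),
\[
\mathbb{P}^{D_1}(x\xleftrightarrow{\ge 0}A)\asymp \mathbb{E}^{D_1}\big[\widetilde{\phi}_x\mathbbm{1}_{x\xleftrightarrow{\ge 0}A}\big].
\]

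Next, the strong Markov property of the GFF at $\partial D_2\cup D_1$ yields the decomposition
\[
\widetilde{\phi}_x=\sum_{w\in\partial D_2}p_w\widetilde{\phi}_w+\psi_x,
\]
where $p_w:=\widetilde{\mathbb{P}}_x(\tau_{\partial D_2}=\tau_w<\tau_{D_1})$ is exactly the harmonic measure appearing in the statement, and $\psi_x$ is a centered Gaussian with variance $\widetilde{G}_{D_1\cup\partial D_2}(x,x)$, independent of $\mathcal{F}_{\partial D_2\cup D_1}$ as well as of the GFF on the ``outer'' region $\widetilde{\mathbb{Z}}^d\setminus \widetilde{D}_2$. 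Substituting and using linearity of expectation reduces the claim to establishing (i) $\mathbb{E}^{D_1}[\widetilde{\phi}_w\mathbbm{1}_{x\xleftrightarrow{\ge 0}A}]\lesssim \mathbb{P}^{D_1}(w\xleftrightarrow{\ge 0}A)$ for each $w\in\partial D_2$, and (ii) the residual $\mathbb{E}^{D_1}[\psi_x\mathbbm{1}_{x\xleftrightarrow{\ge 0}A}]$ is controlled by the same harmonic sum $\sum_w p_w \mathbb{P}^{D_1}(w\xleftrightarrow{\ge 0}A)$.

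For (i), I split $\mathbbm{1}_{x\xleftrightarrow{\ge 0}A}=\mathbbm{1}_{x\xleftrightarrow{\ge 0}A,\,w\xleftrightarrow{\le 0}A}+\mathbbm{1}_{x\xleftrightarrow{\ge 0}A,\,w\not\xleftrightarrow{\le 0}A}$. On $\{w\xleftrightarrow{\le 0}A\}$ we have $\widetilde{\phi}_w\le 0$, so the first piece contributes nonpositively and can be dropped. The second piece is an increasing event contained in $\{w\xleftrightarrow{\le 0}A\}^c$; after exposing the negative cluster $\widehat{\mathcal{C}}_A^-$ (which forces $w\notin\widehat{\mathcal{C}}_A^-$), the residual event ``$x\xleftrightarrow{\ge 0}A$'' becomes $\mathcal{F}_{\widehat{\mathcal{C}}_A^-}$-measurable, reducing to a connection from $x$ to $\widetilde{\partial}\widehat{\mathcal{C}}_A^-$ in the surviving field, so Lemma \ref{lemma3.3} applied at $w$ produces the desired bound. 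For (ii), conditioning on $\mathcal{F}_{\partial D_2\cup D_1}$ and on the outer GFF, the event $\{x\xleftrightarrow{\ge 0}A\}$ forces a positive path in $\widetilde{D}_2$ from $x$ to some $w^*\in\partial D_2$ with $\widetilde{\phi}_{w^*}\ge 0$ and $w^*\xleftrightarrow{\ge 0}A$ through the outer part; an application of (\ref{ineq_2.25}) inside $D_2$ to the conditioned field (using that $\psi_x$ is precisely the GFF on $\widetilde{D}_2$ with zero boundary on $\partial D_2\cup D_1$) returns the same harmonic sum.

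The main obstacle is claim (i): the event $\{x\xleftrightarrow{\ge 0}A\}$ is not a priori measurable with respect to $\mathcal{F}_{\widehat{\mathcal{C}}_A^-}$, so Lemma \ref{lemma3.3} does not apply directly. The decoupling must exploit that $\widetilde{\phi}_w$ influences $\{x\xleftrightarrow{\ge 0}A\}$ only through a positive connection from $w$ toward $A$, which is what gets isolated after exposing the negative cluster $\widehat{\mathcal{C}}_A^-$ and passing to the residual conditional GFF. Making this rigorous, together with a careful verification that the reduced event on $\{w\not\xleftrightarrow{\le 0}A\}$ meets the hypotheses of Lemma \ref{lemma3.3} (increasing and contained in $\{w\xleftrightarrow{\le 0}A\}^c$), is the technical heart of the proof.
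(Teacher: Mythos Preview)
Your step (i) has a real gap, and the fix you sketch does not close it. The event $\{x\xleftrightarrow{\ge 0}A\}\cap\{w\not\xleftrightarrow{\le 0}A\}$ is \emph{not} $\mathcal{F}_{\widehat{\mathcal{C}}_A^-}$-measurable: after exposing $\widehat{\mathcal{C}}_A^-$, whether $x$ connects to $\widetilde{\partial}\widehat{\mathcal{C}}_A^-$ through $\widetilde{E}^{\ge 0}$ still depends on the GFF values \emph{outside} $\widehat{\mathcal{C}}_A^-$. So the measurability hypothesis of Lemma~\ref{lemma3.3} fails at the point $w$, and the lemma cannot be invoked as you describe. Your diagnosis that this is ``the technical heart'' is correct; your claim that the residual event ``becomes $\mathcal{F}_{\widehat{\mathcal{C}}_A^-}$-measurable'' is simply false. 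Step (ii) also remains a sketch rather than an argument.

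The paper's proof sidesteps the issue entirely and is three lines long. Rather than carrying the indicator $\mathbbm{1}_{x\xleftrightarrow{\ge 0}A}$, it passes immediately via Lemma~\ref{lemma3.3} to the harmonic average $\mathcal{H}_x(\widehat{\mathcal{C}}_A^-\cup D_1)$, which is a \emph{pathwise nonnegative} functional of the explored cluster. The strong Markov property for the \emph{Brownian motion} (not for the GFF) then gives, deterministically on each GFF configuration,
\[
\mathcal{H}_x(\widehat{\mathcal{C}}_A^-\cup D_1)\le \sum_{w\in\partial D_2}\widetilde{\mathbb{P}}_x(\tau_{\partial D_2}=\tau_w<\tau_{D_1})\,\mathcal{H}_w(\widehat{\mathcal{C}}_A^-\cup D_1),
\]
the inequality coming from discarding the (nonnegative) contribution of trajectories that hit $\widehat{\mathcal{C}}_A^-$ before $\partial D_2$. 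Taking expectations and applying Lemma~\ref{lemma3.3} again at each $w$ finishes the proof. The lesson: convert $\widetilde{\phi}_x\mathbbm{1}_{x\xleftrightarrow{\ge 0}A}$ into $\mathcal{H}_x(\widehat{\mathcal{C}}_A^-\cup D_1)$ \emph{before} any spatial decomposition, so the subharmonicity becomes a deterministic random-walk fact rather than one entangled with a non-measurable event.
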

\begin{proof}
	By the strong Markov property, one has 
	\begin{equation}\label{final_revision2.70}
		\mathcal{H}_x\big( \widehat{\mathcal{C}}_A^{-}\cup D_1 \big)\le  \sum\nolimits_{w\in  \partial D_2} \widetilde{\mathbb{P}}_x \big(\tau_{ \partial  D_2}= \tau_w < \tau_{D_1} \big)\cdot \mathcal{H}_w\big( \widehat{\mathcal{C}}_A^{-}\cup D_1 \big). 
	\end{equation}
	Combining (\ref{final_revision2.70}) with Lemma \ref{lemma3.3}, we obtain this lemma:
	\begin{equation*}
		\begin{split}
			\mathbb{P}^{D_1}(x\xleftrightarrow{\ge 0} A) \overset{\text{Lemma}\ \ref*{lemma3.3}}{\lesssim } & \mathbb{E}\big[ \mathcal{H}_x\big( \widehat{\mathcal{C}}_A^{-}\cup D_1 \big) \big] \\
			\overset{(\ref*{final_revision2.70})}{\le} & \sum\nolimits_{w\in  \partial D_2} \widetilde{\mathbb{P}}_x \big(\tau_{ \partial  D_2}= \tau_w < \tau_{D_1} \big)\cdot  \mathbb{E}\big[ \mathcal{H}_w\big( \widehat{\mathcal{C}}_A^{-}\cup D_1 \big) \big]\\
		\overset{\text{Lemma}\ \ref*{lemma3.3}}{\lesssim } & \sum\nolimits_{w\in \partial  D_2} \widetilde{\mathbb{P}}_x \big(\tau_{\partial  D_2}= \tau_w < \tau_{D_1} \big)\cdot \mathbb{P}^{D_1}(w\xleftrightarrow{\ge 0} A). \qedhere
		\end{split}
	\end{equation*}

\end{proof}

We record a useful application of Lemma \ref{lemma_subharminic} as follows.

\begin{corollary}\label{coro_subharmonic}
	For any $d\ge 3$, $N\ge 1$, $x\in \mathbb{Z}^d\setminus B(3N)$ and $D\subset \widetilde{\mathbb{Z}}^d$, 
		\begin{equation}
		\widetilde{\mathbb{P}}^{D}\big(x\xleftrightarrow{\ge 0} B(N) \big)
		\lesssim  |x|^{2-d} N^{[(\frac{d}{2}-1)\boxdot  (d-4)]+\varsigma_d(N)}. 
	\end{equation}
\end{corollary}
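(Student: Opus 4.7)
The plan is to use Lemma \ref{lemma_subharminic} (the subharmonicity of the point-to-set connecting probability) to transfer the estimate from the far point $x$ to a shell just outside $B(2N)$, and then to bound the connecting probability from any point on that shell by the one-arm probability $\theta_d(N)$.

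First I would reduce to the case $D=\emptyset$: by \eqref{ineq_compare}, $\mathbb{P}^D(x \xleftrightarrow{\ge 0} B(N)) \le 2\,\mathbb{P}(x \xleftrightarrow{\ge 0} B(N))$, so the bound needs to be established only for $D=\emptyset$. I then apply Lemma \ref{lemma_subharminic} with $A = B(N)$, $D_1 = \emptyset$, and $D_2 = \mathbb{Z}^d \setminus B(2N)$. Since $x \in \mathbb{Z}^d \setminus B(3N) \subset D_2$, and for every $w \in \partial D_2$ one has $|w|_\infty = 2N+1$ so that every lattice point in $\widetilde{B}_w(1)$ has $l_\infty$ norm at least $2N>N$, the separation hypothesis $\widetilde{B}_w(1) \cap B(N)=\emptyset$ holds, and the lemma yields
\[
\mathbb{P}\big(x \xleftrightarrow{\ge 0} B(N)\big) \lesssim \sum_{w \in \partial D_2} \widetilde{\mathbb{P}}_x(\tau_{\partial D_2} = \tau_w) \cdot \mathbb{P}\big(w \xleftrightarrow{\ge 0} B(N)\big).
\]

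Next I would bound the two factors uniformly in $w$. Since $\partial D_2 \subset \widetilde{B}(2N+2)$, the sum of hitting probabilities equals
\[
\widetilde{\mathbb{P}}_x(\tau_{\partial D_2}<\infty) \le \widetilde{\mathbb{P}}_x\big(\tau_{\widetilde{B}(2N+2)}<\infty\big) \lesssim (N/|x|)^{d-2},
\]
the last step being the standard hitting estimate for a box on $\widetilde{\mathbb{Z}}^d$ (an easy consequence of Lemma \ref{lemma_prop_hitting} together with transience, obtained by sending the auxiliary set $D'$ to infinity). For the connecting factor, note that for any $w \in \partial D_2$ and any $v \in B(N)$, the reverse triangle inequality gives $|w - v|_\infty \ge |w|_\infty - |v|_\infty \ge N+1$, so $B(N) \subset \mathbb{Z}^d \setminus B_w(N)$. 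Therefore $\{w \xleftrightarrow{\ge 0} B(N)\} \subset \{w \xleftrightarrow{\ge 0} \partial B_w(N)\}$, and by translation invariance the latter event has probability $\theta_d(N)$.

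Combining the two bounds yields $\mathbb{P}(x \xleftrightarrow{\ge 0} B(N)) \lesssim |x|^{2-d}\,N^{d-2}\,\theta_d(N)$. Plugging in \eqref{one_arm_low}--\eqref{one_arm_high} case-by-case: for $3 \le d < 6$, $N^{d-2}\theta_d(N) \asymp N^{d/2-1}$; for $d = 6$, $N^{d-2}\theta_6(N) \lesssim N^{2+\varsigma(N)}$; and for $d > 6$, $N^{d-2}\theta_d(N) \asymp N^{d-4}$. Each case matches the claimed exponent $[(\tfrac{d}{2}-1) \boxdot (d-4)] + \varsigma_d(N)$, completing the proof. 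The argument is essentially routine once Lemma \ref{lemma_subharminic} is in hand; the only mild care required is checking the separation condition for the chosen $D_2$ and handling the reduction to $D=\emptyset$, neither of which is a substantive obstacle.
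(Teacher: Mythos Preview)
Your proof is correct and follows essentially the same approach as the paper's: reduce to $D=\emptyset$ via \eqref{ineq_compare}, apply Lemma~\ref{lemma_subharminic} to pass to a shell around $B(2N)$, bound the connecting factor by $\theta_d(N)$ via $\{w\xleftrightarrow{\ge 0}B(N)\}\subset\{w\xleftrightarrow{\ge 0}\partial B_w(N)\}$, and bound the hitting sum by $(N/|x|)^{d-2}$. The only cosmetic difference is your choice $D_2=\mathbb{Z}^d\setminus B(2N)$ (so $\partial D_2$ sits at $l_\infty$-radius $2N+1$) versus the paper's shell $\partial B(2N)$; your choice makes the separation check $\widetilde{B}_w(1)\cap B(N)=\emptyset$ slightly cleaner but is otherwise equivalent.
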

\begin{proof}
	By (\ref{ineq_compare}) and Lemma \ref{lemma_subharminic}, we have 
	\begin{equation}\label{revise2.68}
		\begin{split}
			\widetilde{\mathbb{P}}^{D}\big(x\xleftrightarrow{\ge 0} B(N) \big) \le &2\widetilde{\mathbb{P}}\big(x\xleftrightarrow{\ge 0} B(N) \big)\\
			\lesssim  & \sum\nolimits_{y\in \partial B(2N)}  \widetilde{\mathbb{P}}_x\big(\tau_{\partial B(2N)}= \tau_y < \infty \big)\cdot \mathbb{P}\big(y \xleftrightarrow{\ge 0} B(N)\big). 
		\end{split}
	\end{equation}
	For any $y\in \partial B(2N)$, since $\{y \xleftrightarrow{\ge 0} B(N)\}\subset \{y \xleftrightarrow{\ge 0} B_y(N)\}$, one has 
	\begin{equation}\label{revise2.69}
		\mathbb{P}\big(y \xleftrightarrow{\ge 0} B(N)\big) \le \theta_d(N) \overset{(\ref*{one_arm_low})-(\ref*{one_arm_high})}{\lesssim} N^{[(-\frac{d}{2}+1)\boxdot (-2)]+\varsigma_d(N)}. 
	\end{equation}
	Plugging (\ref{revise2.69}) into (\ref{revise2.68}) and using (\ref{new2.20_1}), we obtain the desired bound:
	\begin{equation}
		\begin{split}
			\widetilde{\mathbb{P}}^{D}\big(x\xleftrightarrow{\ge 0} B(N) \big) \lesssim  &   \widetilde{\mathbb{P}}_x\big(\tau_{\partial B(2N)} < \infty \big)\cdot  N^{[(-\frac{d}{2}+1)\boxdot (-2)]+\varsigma_d(N)}\\
			\lesssim  & |x|^{2-d} N^{[(\frac{d}{2}-1)\boxdot  (d-4)]+\varsigma_d(N)}. \qedhere
		\end{split}
	\end{equation}

\end{proof}

\subsection{Formulas for the sum over lattice}

We cite the following formulas in \cite{cai2024high}, which will be frequently-used
 in our calculations for high-dimensional GFFs.

\begin{lemma}[{\cite[Lemma 4.2]{cai2024high}}]
	For any $d\ge 3$, $a\in \mathbb{R}$ and $M\ge 1$, 
\begin{align}
	&\text{when}\ a\neq d-1,\ \max\nolimits_{y\in \mathbb{Z}^d}\sum\nolimits_{x\in \partial B(M)}  |x-y|^{-a} \lesssim M^{(d-1-a)\vee 0}; \label{|x|-a_aneqd-1} \\
	&\text{when}\ a\neq d, \ \ \ \ \ \  \max\nolimits_{y\in \mathbb{Z}^d}\sum\nolimits_{x\in B(M)}  |x-y|^{-a} \lesssim M^{(d-a)\vee 0}.\label{|x|-a_aneqd}
\end{align}
\end{lemma}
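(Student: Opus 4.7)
The plan is to prove both inequalities by a uniform dyadic annular decomposition centered at the free point $y$. Fix $y\in\mathbb{Z}^d$ and, for each integer $k\ge 0$, set the shell $S_k:=\{x\in\mathbb{Z}^d:2^{k-1}\le |x-y|<2^k\}$ (with $S_0=\{x:|x-y|<1\}$). Since both $B(M)$ and $\partial B(M)$ lie inside a Euclidean ball of radius $O(M)$ around the origin, the shells that intersect either set have indices $k\le k_*$ with $2^{k_*}\asymp M+|y|$, and on each such shell we have the pointwise estimate $|x-y|^{-a}\asymp 2^{-ka}$. Therefore the whole argument reduces to two lattice point counts combined with a finite geometric sum.

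Next I would establish the key counting bounds. For the solid cube one has the elementary estimate $|B(M)\cap S_k|\lesssim \min(2^{kd},M^d)$, obtained by noting that $S_k$ is contained in a ball of volume $O(2^{kd})$ while $|B(M)|\asymp M^d$. For the boundary I would use that $\partial B(M)$ is contained in the union of the $2d$ faces $\{x_i=\pm M\}\cap B(M)$, each of which is a $(d-1)$-dimensional lattice slab; on each face the intersection with $S_k$ is controlled by the volume of a $(d-1)$-dimensional ball of radius $2^k$, giving $|\partial B(M)\cap S_k|\lesssim \min(2^{k(d-1)},M^{d-1})$ uniformly in $y$. This uniform-in-$y$ count is the only geometric content of the lemma.

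Finally I would sum the geometric series. For $\sum_{x\in\partial B(M)}|x-y|^{-a}$ we get
\[
\sum_{k=0}^{k_*}\min(2^{k(d-1)},M^{d-1})\,2^{-ka}\lesssim \sum_{k=0}^{k_*}2^{k(d-1-a)},
\]
which is bounded by a constant when $a>d-1$ and by $M^{d-1-a}$ when $a<d-1$, yielding exponent $(d-1-a)\vee 0$; the excluded borderline $a=d-1$ would produce a logarithmic factor. The same computation with $d-1$ replaced by $d$ handles the second sum and gives $(d-a)\vee 0$, with $a=d$ again excluded because of a logarithmic factor. Uniformity in $y$ is automatic since all estimates depend only on the dyadic distance structure.

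The only step with any subtlety is the boundary count $|\partial B(M)\cap S_k|\lesssim \min(2^{k(d-1)},M^{d-1})$ when $y$ is close to, or even outside, $\partial B(M)$; the face decomposition above handles this cleanly because there are only $O(1)$ faces and each carries a uniform $(d-1)$-dimensional volume bound. No other step poses a real difficulty, and the proof is essentially a bookkeeping exercise once the shell counts are in place.
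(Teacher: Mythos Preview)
The paper does not prove this lemma; it is quoted from \cite{cai2024high} without argument, so there is no proof to compare against. Your dyadic shell approach is the standard one and the shell counts $|\partial B(M)\cap S_k|\lesssim\min(2^{k(d-1)},M^{d-1})$ and $|B(M)\cap S_k|\lesssim\min(2^{kd},M^d)$ are correct. However, the final summation step has a gap that breaks the claimed uniformity in $y$.

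You set $2^{k_*}\asymp M+|y|$ and then bound
\[
\sum_{k=0}^{k_*}\min\bigl(2^{k(d-1)},M^{d-1}\bigr)\,2^{-ka}\ \le\ \sum_{k=0}^{k_*}2^{k(d-1-a)},
\]
asserting the right-hand side is $\lesssim M^{d-1-a}$ when $a<d-1$. But for $a<d-1$ this geometric series is $\asymp 2^{k_*(d-1-a)}\asymp(M+|y|)^{d-1-a}$, which diverges as $|y|\to\infty$; the sentence ``uniformity in $y$ is automatic'' is therefore false as written. The repair is immediate: do not drop the $\min$. Split at $k_0$ with $2^{k_0}\asymp M$; the head $\sum_{k\le k_0}2^{k(d-1-a)}\lesssim M^{(d-1-a)\vee 0}$, while the tail $M^{d-1}\sum_{k>k_0}2^{-ka}\lesssim M^{d-1}\cdot M^{-a}=M^{d-1-a}$ as soon as $a>0$. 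Equivalently, handle $|y|>2\sqrt{d}\,M$ by the trivial bound $\sum_{x\in\partial B(M)}|x-y|^{-a}\lesssim M^{d-1}|y|^{-a}\le M^{d-1-a}$. The same remark applies to the second estimate with $d-1$ replaced by $d$.

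A side observation: the lemma as literally stated (``for any $a\in\mathbb{R}$'') is false for $a<0$, since then $|x-y|^{-a}\to\infty$ as $|y|\to\infty$ and the supremum over $y$ is infinite. Every application in the paper uses $a>0$, and your argument, once repaired as above, covers exactly that range.
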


\begin{lemma}[{\cite[Lemmas 4.3, 4.4, Corollary 4.5]{cai2024high}}]
	For any $d\ge 7$ and $x,y\in \mathbb{Z}^d$, 
		\begin{equation}\label{2-d_2-d}
		\sum\nolimits_{z\in \mathbb{Z}^d}|x-z|^{2-d}|z-y|^{2-d}\lesssim |x-y|^{4-d},
	\end{equation}
	\begin{equation}\label{2-d_4-d}
			\sum\nolimits_{z\in \mathbb{Z}^d}|x-z|^{2-d}|z-y|^{4-d}\lesssim |x-y|^{6-d}, 
	\end{equation}
	\begin{equation}\label{2-d_6-2d}
		\sum\nolimits_{z\in \mathbb{Z}^d}|x-z|^{2-d}|z-y|^{6-2d}\lesssim |x-y|^{2-d},
	\end{equation}
	\begin{equation}\label{z1z2z3}
		\sum_{z_1,z_2,z_3\in \mathbb{Z}^d}|z_1-z_2|^{2-d}|z_2-z_3|^{2-d}|z_3-z_1|^{2-d}|x-z_1|^{2-d}|y-z_2|^{2-d}\lesssim |x-y|^{4-d},
	\end{equation}
		\begin{equation}\label{z1z2}
		\sum\nolimits_{z_1,z_2\in \mathbb{Z}^d}|x-z_1|^{2-d}|z_1-z_2|^{2-d}|z_2-x|^{2-d}|z_1-y|^{2-d}\lesssim |x-y|^{2-d}.
	\end{equation}
\end{lemma}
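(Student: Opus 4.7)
The plan is to derive each of the five bounds by reducing to the elementary single-scale counts (\ref{|x|-a_aneqd-1}) and (\ref{|x|-a_aneqd}); the basic device is to decompose the range of each summation variable into regions where one factor is controlled by its scale and the other by the triangle inequality. Throughout I may assume $r:=|x-y|\ge 1$ (otherwise every summand is bounded by a constant) and exploit that for $d\ge 7$ every exponent appearing below is negative and bounded away from the critical values $-d$ and $1-d$ required by the hypotheses of (\ref{|x|-a_aneqd-1})--(\ref{|x|-a_aneqd}).

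I would start with (\ref{2-d_2-d}). Split $\mathbb{Z}^d=A_x\cup A_y\cup A_f$ with $A_x:=\{z:|x-z|\le r/2\}$, $A_y:=\{z:|z-y|\le r/2\}$, and $A_f$ the complement. On $A_x$ one has $|z-y|\asymp r$, so the contribution is $\lesssim r^{2-d}\sum_{|x-z|\le r/2}|x-z|^{2-d}\lesssim r^{2-d}\cdot r^{2}$ by (\ref{|x|-a_aneqd}). The $A_y$ contribution is symmetric. On $A_f$ a dyadic decomposition at scales $R\ge r$ yields $\sum_R R^{d}\cdot R^{4-2d}=\sum_R R^{4-d}$, which is geometrically dominated by $r^{4-d}$ since $4-d<0$. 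The same three-region split settles (\ref{2-d_4-d}) and (\ref{2-d_6-2d}); for (\ref{2-d_6-2d}) the key observations are that $\sum_z|z-y|^{6-2d}$ is finite for $d\ge 7$ (as $6-2d<-d$), and that $r^{8-2d}\le r^{2-d}$ for $d\ge 6$ when $r\ge 1$, so that the $A_x$, $A_y$, and $A_f$ contributions all fit under the target $r^{2-d}$.

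The more complex bounds (\ref{z1z2z3}) and (\ref{z1z2}) follow by iterated substitution. For (\ref{z1z2z3}) I would sum out $z_3$ first via (\ref{2-d_2-d}) to replace $|z_1-z_3|^{2-d}|z_2-z_3|^{2-d}$ by $|z_1-z_2|^{4-d}$; combined with the existing $|z_1-z_2|^{2-d}$ this produces a weight $|z_1-z_2|^{6-2d}$. Next I would apply (\ref{2-d_6-2d}) in the variable $z_2$ to reduce the integrand to a multiple of $|z_1-y|^{2-d}$, and finally apply (\ref{2-d_2-d}) in $z_1$ to obtain the target $|x-y|^{4-d}$. The bound (\ref{z1z2}) is shorter: summing out $z_2$ via (\ref{2-d_2-d}) gives $\sum_{z_1}|x-z_1|^{6-2d}|z_1-y|^{2-d}$, which is $\lesssim |x-y|^{2-d}$ by (\ref{2-d_6-2d}).

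The main obstacle will be the borderline convergence in (\ref{2-d_6-2d}) when $d$ is close to $7$: the dyadic tail $\sum_{R\ge r}R^{8-2d}$ is only marginally summable, so to avoid picking up a logarithmic factor one must anchor the dyadic decomposition at $R\asymp r$ rather than at $R=1$ and balance the $A_x$, $A_y$, and $A_f$ contributions with matching powers of $r$. Once this single estimate is established with the correct constant, the remaining bounds reduce to routine exponent arithmetic on top of the convolutions (\ref{2-d_2-d})--(\ref{2-d_6-2d}).
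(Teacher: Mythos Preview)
The paper does not prove this lemma; it is quoted verbatim from \cite[Lemmas 4.3, 4.4, Corollary 4.5]{cai2024high} without argument, so there is no in-paper proof to compare against. Your proposal is correct and is the standard way such convolution estimates are obtained: the three-region split for (\ref{2-d_2-d})--(\ref{2-d_6-2d}) together with (\ref{|x|-a_aneqd}) handles the single sums, and your iterated substitution (first $z_3$, then $z_2$, then $z_1$ for (\ref{z1z2z3}); first $z_2$, then $z_1$ for (\ref{z1z2})) is exactly the right order and yields the stated bounds.

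One minor comment: your closing paragraph overstates the difficulty of (\ref{2-d_6-2d}). For $d\ge 7$ the far-region exponent $8-2d\le -6$ is a full unit below $2-d\le -5$, and the $A_y$ sum $\sum_z |z-y|^{6-2d}$ converges outright since $2d-6\ge 8>d$; there is no danger of a logarithm, and anchoring the dyadic scales at $R\asymp r$ is automatic from the definition of $A_f$ rather than a delicate choice. The estimate is entirely routine once $d\ge 7$.
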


We conclude this section by presenting the following lemma. 
\begin{lemma}\label{lemma_1-d_6-2d}
	For any $d\ge 7$, there exists $C>0$ such that for all $n\ge 1$, 
	\begin{equation}\label{finish2.90}
		\sum\nolimits_{z_1\in \mathcal{B}(n),z_2\in \partial \mathcal{B}(n)}  |z_1|^{1-d}|z_1-z_2|^{6-2d}\le C.
	\end{equation} 
\end{lemma}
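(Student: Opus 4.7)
The plan is to prove (\ref{finish2.90}) by splitting the outer sum on $z_1\in\mathcal{B}(n)$ into two regions based on $|z_1|$ relative to $n$, and in each region applying one of the elementary sum estimates from (\ref{|x|-a_aneqd-1})--(\ref{|x|-a_aneqd}). This is a purely computational lemma, and no substantive obstacle is expected; the difficulty is just to split in the right place so that each piece is dominated by a different factor.

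First, consider $z_1\in\mathcal{B}(n/2)$. The triangle inequality gives $|z_1-z_2|\ge n/2$ for every $z_2\in\partial\mathcal{B}(n)$, so $|z_1-z_2|^{6-2d}\lesssim n^{6-2d}$; combined with the trivial bound $|\partial\mathcal{B}(n)|\lesssim n^{d-1}$, this yields
\[
\sum_{z_2\in\partial\mathcal{B}(n)}|z_1-z_2|^{6-2d}\lesssim n^{5-d}.
\]
Applying (\ref{|x|-a_aneqd}) with $a=d-1$ and $y=\bm{0}$ (we have $a\neq d$, and the single term at $z_1=\bm 0$ is absorbed using the paper's convention $0^{-a}:=1$ for $a>0$), we obtain $\sum_{z_1\in\mathcal{B}(n/2)}|z_1|^{1-d}\lesssim n^{(d-(d-1))\vee 0}=n$. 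Multiplying the two estimates, the contribution of this region is $\lesssim n\cdot n^{5-d}=n^{6-d}\lesssim 1$, since $d\ge 7$.

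Second, for $z_1\in\mathcal{B}(n)\setminus\mathcal{B}(n/2)$ we bound $|z_1|^{1-d}\lesssim n^{1-d}$ and swap the order of summation. For each fixed $z_2\in\partial\mathcal{B}(n)$, applying (\ref{|x|-a_aneqd}) with $a=2d-6$ (which satisfies $a\neq d$ because $d\ge 7$, and in fact $a>d$) gives
\[
\sum_{z_1\in\mathcal{B}(n)}|z_1-z_2|^{6-2d}\lesssim n^{(d-(2d-6))\vee 0}=n^{(6-d)\vee 0}\lesssim 1.
\]
Summing over the $\lesssim n^{d-1}$ points of $\partial\mathcal{B}(n)$, this region contributes at most $n^{1-d}\cdot n^{d-1}\cdot 1\lesssim 1$. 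Combining the two regions proves (\ref{finish2.90}). The only checks to make are the side-conditions $a\neq d$ in the two applications of (\ref{|x|-a_aneqd}) (both valid for $d\ge 7$) and the consistent use of the $0^{-a}:=1$ convention for the single diagonal term.
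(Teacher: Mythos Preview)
Your proof is correct and takes a genuinely different, more elementary route than the paper. The paper first establishes the sharper pointwise estimate
\[
\sum_{z_2\in\partial\mathcal{B}(n)}|z_1-z_2|^{6-2d}\lesssim (n-|z_1|)^{5-d}
\]
via a geometric cone-counting argument (projecting $\partial\mathcal{B}(n)$ onto the sphere $\partial\mathcal{B}_{z_1}(n-|z_1|)$), and then sums over radial shells $\partial\mathcal{B}(k)$ for $0\le k\le n$. Your approach bypasses this entirely: by splitting at $|z_1|=n/2$, in the inner region the distance $|z_1-z_2|$ is uniformly $\gtrsim n$, while in the outer region the weight $|z_1|^{1-d}$ is uniformly $\lesssim n^{1-d}$, and each piece then reduces to a single application of (\ref{|x|-a_aneqd}). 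This is shorter and avoids the geometric claim (\ref{latelate5.45}); the trade-off is that you do not obtain the intermediate bound (\ref{latelate5.45}), but that bound is not used elsewhere in the paper, so nothing is lost for the purposes of Lemma~\ref{lemma_1-d_6-2d}.
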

\begin{proof}
	We claim that for any $z_1\in \mathcal{B}(n) $,
	\begin{equation}\label{latelate5.45}
	\begin{split}
		\sum\nolimits_{z_2\in \partial \mathcal{B}(n)} |z_1-z_2|^{6-2d}\lesssim  \big( n-|z_1|\big)^{5-d}. 
	\end{split}
\end{equation} 
To see this, for $w\in \partial \mathcal{B}_{z_1}(n-|z_1|)$, we denote by $\mathfrak{T}_w$ the set of points $z_2\in \partial \mathcal{B}(n)$ such that the line in $\mathbb{R}^d$ connecting $z_1$ and $z_2$ intersects the Euclidean ball in $\mathbb{R}^d$ centered at $w$ with radius $d$. Since each interval $I_e\subset \widetilde{\mathbb{Z}}^d$ has length $d$, one has 
\begin{equation}\label{revise2.90}
	\partial \mathcal{B}(n)\subset \cup_{w\in \partial \mathcal{B}_{z_1}(n-|z_1|)}\mathfrak{T}_w. 
\end{equation}
In addition, since $\partial \mathcal{B}(n)$ is $(d-1)$-dimensional, one has 
\begin{equation}\label{revise2.91}
	|\mathfrak{T}_w|\asymp \big(\frac{r_w}{n-|z_1|}\big)^{d-1}, 
\end{equation}
where $r_w$ is the Euclidean distance between $w$ and $\mathfrak{T}_w$. By these two observations, we can derive the claim (\ref{latelate5.45}) as follows:
\begin{equation*}
	\begin{split}
	\sum\nolimits_{z_2\in \partial \mathcal{B}(n)} |z_1-z_2|^{6-2d} 
		 \overset{(\ref*{revise2.90})}{\le}&  \sum\nolimits_{w\in  \partial \mathcal{B}_{z_1}(n-|z_1|)} \sum\nolimits_{z_2\in \mathfrak{T}_w}|z_1-z_2|^{6-2d}\\
		 \overset{(\ref*{revise2.91}), \mathrm{vol}( \partial \mathcal{B}_{z_1}(n-|z_1|) )\asymp (n-|z_1|)^{d-1}}{\lesssim  } &  (n-|z_1|)^{d-1}\cdot  \big(\frac{r_w}{n-|z_1|}\big)^{d-1} \cdot r_w^{6-2d} \\
		 \overset{r_w\gtrsim  n-|z_1|,d\ge 7}{\lesssim } & (n-|z_1|)^{5-d}. 
	\end{split}
\end{equation*}
By (\ref{latelate5.45}), $\mathcal{B}(n)=\cup_{0\le k\le n} \partial \mathcal{B}(k)$ and $\mathrm{vol}(\partial \mathcal{B}(k))\asymp k^{d-1}$ for $k\ge 1$, we obtain (\ref{finish2.90}): 
\begin{equation}
	\begin{split}
		&\sum\nolimits_{z_1\in \mathcal{B}(n),z_2\in \partial \mathcal{B}(n)}  |z_1|^{1-d}|z_1-z_2|^{6-2d} \\
		\overset{}{\lesssim} & \sum\nolimits_{z_1\in \mathcal{B}(n) }  |z_1|^{1-d} \big( n-|z_1|\big)^{5-d}\\
		\lesssim & \sum\nolimits_{0\le k\le n} k^{d-1} \cdot k^{1-d}(n-k)^{5-d} \overset{d\ge 7}{\le} C. \qedhere
	\end{split}
\end{equation}

\end{proof}

\section{Properties of point-to-set connecting probabilities}\label{section_property_pointtoset}

This section is devoted to proving Propositions \ref{prop_pointtoset_speed} and \ref{prop_pointtoset_harnack}. Before showing the proof details, we first offer some intuitions behind these results. To prove that the point-to-set connecting probability exhibits certain ``harmonic'' properties, a natural approach is to relate it to a quantity that possesses these properties, which is the harmonic average in our case. More precisely, as shown in Lemma \ref{lemma3.3}, the connecting probability between $x$ and $A$ (where $x\in \mathbb{Z}^d$ and $A\subset \widetilde{\mathbb{Z}}^d\setminus \widetilde{B}_x(1)$) is comparable to the expectation of the harmonic average at $x$ after exploring the negative cluster containing $A$. Moreover, when $A\subset \widetilde{B}(N)$, if the negative cluster containing $A$ remains in the box $\widetilde{B}(CN)$, then the harmonic average for $x\in [\widetilde{B}(2CN)]^c$ automatically satisfies the ``harmonic'' properties such as Harnack's inequality and the decay rate of $|x|^{2-d}$. However, by the estimates on crossing probabilities in (\ref{crossing_low}) and (\ref{crossing_high}), such a favorable event (i.e., $\widehat{\mathcal{C}}_A^{-}\subset \widetilde{B}(CN)$) occurs with a uniformly positive probability when $3\le d<6$, but occurs only with a vanishing probability when $d> 6$. This phenomenon necessitates a more sophisticated analysis on the probability for an independent Brownian motion to hit the negative cluster containing $A$ in high dimensions. As shown in the next subsection, we achieve this by showing that adding the negative cluster containing $A$ as an obstacle (i.e., stopping the Brownian motion upon hitting this negative cluster) will decrease the Green's function outside $\widetilde{B}(2CN)$ by only a constant factor, in an averaged sense. Furthermore, we expect that our arguments for high dimensions can be extended to the critical dimension $6$, assuming that the order of $\theta_6(N)$ is derived.

\subsection{Proof of Proposition \ref{prop_pointtoset_speed}}\label{subsection_proof_pointtoset_speed}
 We first consider the case when $3\le d\le 6$. Recall that $\rho_d(n,N)= \mathbb{P}\big(B(n)\xleftrightarrow{\ge 0} \partial B(N)\big)$. By (\ref{crossing_low}) and  (\ref{crossing_6}), one has
\begin{equation}\label{3.8}
\max\big\{	\rho_d(N, \lambda N^{1+\varsigma_d(N)}),\rho_d(\lambda^{-1}N^{1-\varsigma_d(N)},N ) \big\} \lesssim  \lambda^{-\frac{d}{2}+1} \overset{\lambda \to \infty }{\to } 0
\end{equation}
for $3\le d\le 6$ and $N\ge 1$. Thus, by $A\subset \widetilde{B}(N)$, (\ref{coro2.1_1}), (\ref{ineq_compare}) and (\ref{3.8}), we have 
\begin{equation}\label{ineq3.9}
	\begin{split}
	\mathbb{P}^D\big( A \xleftrightarrow{\le 0} \partial B(C_*N^{1+\varsigma_d(N)}) \big) 
		\overset{(\ref*{coro2.1_1})}{=} &\mathbb{P}^D\big( A \xleftrightarrow{\ge 0} \partial B(C_*N^{1+\varsigma_d(N)}) \big)\\
		\overset{(\ref*{ineq_compare})}{\le } & 2\mathbb{P}\big( B(N)\xleftrightarrow{\ge 0} \partial B(C_*N^{1+\varsigma_d(N)}) \big) \overset{(\ref*{3.8})}{\le} \tfrac{1}{4}, 
	\end{split}
\end{equation}
where we took a sufficiently large $C_*$ to ensure the last inequality. Arbitrarily
take $x_1,x_2\in [B(2C_*N^{1+\varsigma_d(N)})]^c$. Note that $\mathsf{F}:=\{A \xleftrightarrow{\le 0} \partial B(C_*N^{1+\varsigma_d(N)})\}^c$ is increasing in the field $\widetilde{\phi}_\cdot$ and measurable with respect to $\widehat{\mathcal{C}}_A^{-}$. In addition, the event $\mathsf F$ implies $\cap_{i=1,2}\{A \xleftrightarrow{\le 0} x_i\}^c$, and satisfies $\mathbb{P}^{D}(\mathsf{F})\ge \frac{3}{4}$ (by (\ref{ineq3.9})). Thus, by Lemma \ref{lemma3.3},\begin{equation}\label{ineq3.12}
	\mathbb{E}^{D}\big[\mathcal{H}_{x_i}(\widehat{\mathcal{C}}_A^{-}\cup D) \cdot \mathbbm{1}_{\mathsf{F}} \big] \asymp \mathbb{P}^D\big( x_i \xleftrightarrow{\ge 0}  A \big), \ \ \forall i\in \{1,2\}. 
\end{equation}
On $\mathsf{F}$ (which is equivalent to $\{\widehat{\mathcal{C}}^-_{A}\subset \widetilde{B}(C_*N^{1+\varsigma_d(N)})\}$), by Lemma \ref{lemma_compare_hm} one has 
\begin{equation}
|x_1|^{d-2}	\mathcal{H}_{x_1}(\widehat{\mathcal{C}}_A^{-}\cup D) \asymp |x_2|^{d-2}	\mathcal{H}_{x_2}(\widehat{\mathcal{C}}_A^{-}\cup D). 
\end{equation}
Multiplying by $\mathbbm{1}_{\mathsf{F}}$ and taking the expectations on both sides, we get
\begin{equation}\label{ineq3.13}
	|x_1|^{d-2}\mathbb{E}\big[\mathcal{H}_{x_1}(\widehat{\mathcal{C}}_A^{-}\cup D) \cdot \mathbbm{1}_{\mathsf{F}} \big]\asymp 	|x_2|^{d-2}\mathbb{E}\big[\mathcal{H}_{x_2}(\widehat{\mathcal{C}}_A^{-}\cup D) \cdot \mathbbm{1}_{\mathsf{F}} \big]. 
\end{equation}
Combining (\ref{ineq3.12}) and (\ref{ineq3.13}), we obtain this proposition for $3\le d\le 6$.

Now we consider the high-dimensional cases $d\ge 7$. Take a large constant $C_\diamond$ and let $x_1,x_2\in [B(C_\diamond N)]^c$. We divide our proof into two cases.

\noindent\textbf{Case 1:} $|x_1-x_2|\le \frac{1}{20}(|x_1|\vee |x_2|)$. We first show that when $|x_1-x_2|\le \frac{1}{10}|x_1|$,
\begin{equation}\label{PDx_1x_2_case1}
	\mathbb{P}^D\big( x_1 \xleftrightarrow{\ge 0}  A \big) \gtrsim  \mathbb{P}^{D}\big( x_2 \xleftrightarrow{\ge 0}  A \big).
\end{equation}
By Lemma \ref{lemma3.3}, we know that $\mathbb{P}^D\big( x_1 \xleftrightarrow{\ge 0}  A \big)$ is of the same order as 
\begin{equation*}
	\begin{split}
		\mathbb{E}^{D}\big[ \mathcal{H}_{x_1}(\widehat{\mathcal{C}}_A^{-}\cup D)\big]=\mathbb{E}^{D}\Big[ \sum_{z_1 \in \partial B_{x_1}(\frac{1}{2}|x_1|)} \widetilde{\mathbb{P}}_{x_1}\big(\tau_{\partial B_{x_1}(\frac{1}{2}|x_1|)}=\tau_{z_1}<\tau_{\widehat{\mathcal{C}}_A^{-}\cup D} \big) \mathcal{H}_{z_1}(\widehat{\mathcal{C}}_A^{-}\cup D)  \Big].  
	\end{split}
\end{equation*}
Moreover, by the last-exit decomposition, for any $z_1 \in \partial B_{x_1}(\frac{1}{2}|x_1|)$, one has 
\begin{equation*}
	\begin{split}
		&\widetilde{\mathbb{P}}_{x_1}\big(\tau_{\partial B_{x_1}(\frac{1}{2}|x_1|)}=\tau_{z_1}<\tau_{\widehat{\mathcal{C}}_A^{-}\cup D} \big)\\
		=&\sum_{z_2\in \partial B_{x_1}(\frac{1}{5}|x_1|)}\widetilde{G}_{\widehat{\mathcal{C}}_A^{-}\cup D\cup \partial B_{x_1}(\frac{1}{2}|x_1|)}(x_1,z_2)\\
		 &\cdot \tfrac{1}{2d} \sum_{z_2'\in [B_{x_1}(\frac{1}{5}|x_1|)]^c:\{z_2,z_2'\}\in \mathbb{L}^d}\widetilde{\mathbb{P}}_{z_2'}\big(\tau_{\partial B_{x_1}(\frac{1}{2}|x_1|)}=\tau_{z_1}<\tau_{\widehat{\mathcal{C}}_A^{-}\cup D\cup \partial B_{x_1}(\frac{1}{5}|x_1|) }\big).	\end{split}
\end{equation*}
In addition, $\widetilde{G}_{\widehat{\mathcal{C}}_A^{-}\cup D\cup \partial B_{x_1}(\frac{1}{2}|x_1|)}(x_1,z_2)$, $\widetilde{\mathbb{P}}_{z_2'}\big(\tau_{\partial B_{x_1}(\frac{1}{2}|x_1|)}=\tau_{z_1}<\tau_{\widehat{\mathcal{C}}_A^{-}\cup D\cup \partial B_{x_1}(\frac{1}{5}|x_1|) }\big)$ and $\mathcal{H}_{z_1}(\widehat{\mathcal{C}}_A^{-}\cup D)$ are all increasing (because the negative cluster $\widehat{\mathcal{C}}_A^{-}$ shrinks as the GFF values increase). Altogether, we can derive from the FKG inequality that
\begin{equation}\label{newineq_3.6}
	\begin{split}
		\mathbb{E}^{D}\big[ \mathcal{H}_{x_1}(\widehat{\mathcal{C}}_A^{-}\cup D)\big]\ge   \sum_{z_1 \in \partial B_{x_1}(\frac{1}{2}|x_1|)}  \sum_{z_2\in \partial B_{x_1}(\frac{1}{5}|x_1|)} \mathbb{I}_{z_2}\cdot \mathbb{J}_{z_1,z_2},
	\end{split}
\end{equation}
where $\mathbb{I}_{z_2}:=\mathbb{E}^D\Big[\widetilde{G}_{\widehat{\mathcal{C}}_A^{-}\cup D\cup \partial B_{x_1}(\frac{1}{2}|x_1|)}(x_1,z_2)\Big]$, and $\mathbb{J}_{z_1,z_2}$ is defined as 
\begin{equation*}
	\mathbb{E}^D\Big[  \tfrac{1}{2d}\sum_{z_2'\in [B_{x_1}(\frac{1}{5}|x_1|)]^c:\{z_2,z_2'\}\in \mathbb{L}^d}\widetilde{\mathbb{P}}_{z_2'}\big(\tau_{\partial B_{x_1}(\frac{1}{2}|x_1|)}=\tau_{z_1}<\tau_{\widehat{\mathcal{C}}_A^{-}\cup D\cup \partial B_{x_1}(\frac{1}{5}|x_1|) }\big)  \mathcal{H}_{z_1}(\widehat{\mathcal{C}}_A^{-}\cup D)  \Big].
\end{equation*}

We claim that for any $z_2\in \partial B_{x_1}(\frac{1}{5}|x_1|)$, 
\begin{equation}\label{ineq_Iz_2}
	\mathbb{I}_{z_2}  \gtrsim \widetilde{G}(x_1,z_2).\end{equation}
To see this, for any $x\in \mathbb{Z}^d$, recall that $\widehat{\mathbb{P}}_{x}$ denotes the law of a simple random walk $\{S_n\}_{n\ge 0}$ starting from $x$, and that the projection of $\{\widetilde{S}_t\}_{t\ge 0}\sim \widetilde{\mathbb{P}}_x$ onto $\mathbb{Z}^d$ has the law $\widehat{\mathbb{P}}_{x}$. Based on this correspondence, $\mathbb{I}_{z_2}$ can be estimated as follows (here we also use the notation $\tau_\cdot$ for the hitting time of $\{S_n\}_{n\ge 0}$): 
\begin{equation}\label{newineq_3.8}
	\begin{split}
		\mathbb{I}_{z_2} \ge & \mathbb{E}^{D}\Big[ \sum_{|x_1|^2\le n\le 2|x_1|^2} \widehat{\mathbb{P}}_{x_1}\big(S_{n}=z_2,\tau_{\partial B_{x_1}(\frac{1}{2}|x_1|)}>n,  \bigcup_{0\le j\le n}\widetilde{B}_{S_j}(1)\cap \widehat{\mathcal{C}}_A^{-}=\emptyset\big) \Big]\\
		\ge &\sum_{|x_1|^2\le n\le 2|x_1|^2} \widehat{\mathbb{P}}_{x_1}\big(S_{n}=z_2, \tau_{\partial B_{x_1}(\frac{1}{2}|x_1|)}>n\big)\\
		&\cdot \min_{F\subset B_{x_1}(\frac{1}{2}|x_1|):\mathrm{vol}(F)\le 2|x_1|^2}\mathbb{P}^{D}\big(\widetilde{B}_y(1)\cap \widehat{\mathcal{C}}_A^{-}= \emptyset,\forall y\in F\big).
	\end{split}
\end{equation}
By the local central limit theorem (see e.g. \cite[Theorem 2.1.1]{lawler2010random}), one has 
\begin{equation}\label{newineq_3.9}
	\sum_{|x_1|^2\le n\le 2|x_1|^2} \widehat{\mathbb{P}}_{x_1}\big(S_{n}=z_2, \tau_{\partial B_{x_1}(\frac{1}{2}|x_1|)} >n \big) \asymp\sum_{|x_1|^2\le n\le 2|x_1|^2} |x_1|^{-d} \asymp |x_1|^{2-d}.
\end{equation}
Meanwhile, for any $F\subset B_{x_1}(\frac{1}{2}|x_1|)$ containing at most $2|x_1|^2$ points, 
\begin{equation}\label{new_add3.32}
\begin{split}
	&\mathbb{P}^{D}\big(\exists  y\in F\ \text{such that}\ \widetilde{B}_y(1)\cap \widehat{\mathcal{C}}_A^{-}\neq \emptyset \big) \\
	\le  &\sum\nolimits_{y\in F}  \mathbb{P}^{D}\big( \widetilde{B}_y(1)\cap \widehat{\mathcal{C}}_A^{-}\neq \emptyset\big) \\
	\overset{(\ref*{ineq_compare})}{\lesssim} &   |x_1|^2\max\nolimits_{y\in B_{x_1}(\frac{3}{5}|x_1|)}  \mathbb{P}\big(y \xleftrightarrow{\ge 0} \partial B(N)\big)\\
	\overset{\text{Corollary}\ \ref*{coro_subharmonic}}{\lesssim} &  |x_1|^{4-d} N^{d-4}\lesssim C_\diamond^{4-d}. 
	\end{split}	
\end{equation}
By plugging (\ref{newineq_3.9}) and (\ref{new_add3.32}) into (\ref{newineq_3.8}) and taking a large $C_\diamond$, we obtain (\ref{ineq_Iz_2}): 
\begin{equation*}
	\begin{split}
		\mathbb{I}_{z_2} \gtrsim (1-C_\diamond^{4-d})|x_1|^{2-d} \asymp \widetilde{G}(x_1,z_2), \ \ \forall z_2\in \partial B_{x_1}(\tfrac{1}{5}|x_1|). 
	\end{split}
\end{equation*}

For any $z_1 \in \partial B_{x_1}(\frac{1}{2}|x_1|)$, by $(\ref{ineq_Iz_2})$ and (\ref{green_D1_D2}), we have 
\begin{equation}\label{revise_3.12}
	\begin{split}
		&\sum\nolimits_{z_2\in \partial B_{x_1}(\frac{1}{5}|x_1|)} \mathbb{I}_{z_2}\cdot \mathbb{J}_{z_1,z_2}\\ \gtrsim &	\mathbb{E}^D\Big[ \widetilde{G}_{D\cup \partial B_{x_1}(\frac{1}{2}|x_1|)\cup [\widehat{\mathcal{C}}_A^{-}\setminus \widetilde{B}_{x_1}(\frac{1}{5}|x_1|) ]}(x_1,z_2) \cdot  \tfrac{1}{2d}\sum_{z_2'\in [B_{x_1}(\frac{1}{5}|x_1|)]^c:\{z_2,z_2'\}\in \mathbb{L}^d}\\
		&\ \ \ \ \ \ \ \widetilde{\mathbb{P}}_{z_2'}\big(\tau_{\partial B_{x_1}(\frac{1}{2}|x_1|)}=\tau_{z_1}<\tau_{\widehat{\mathcal{C}}_A^{-}\cup D\cup \partial B_{x_1}(\frac{1}{5}|x_1|) }\big)  \mathcal{H}_{z_1}(\widehat{\mathcal{C}}_A^{-}\cup D)  \Big]\\
		\overset{}{=} & \mathbb{E}^D\Big[ \widetilde{\mathbb{P}}_{x_1}\big(\tau_{\partial B_{x_1}(\frac{1}{2}|x_1|)}=\tau_{z_1}<\tau_{D\cup [\widehat{\mathcal{C}}_A^{-}\setminus \widetilde{B}_{x_1}(\frac{1}{5}|x_1|) ]} \big) \mathcal{H}_{z_1}(\widehat{\mathcal{C}}_A^{-}\cup D)  \Big],
	\end{split}
\end{equation}
where the last step follows from the last-exit decomposition. Furthermore, for any $x_2\in \mathbb{Z}^d$ with $|x_1-x_2|\le \frac{1}{10}|x_1|$, by Harnack's inequality, one has 
\begin{equation}\label{revise_3.13}
	\begin{split}
		& \widetilde{\mathbb{P}}_{x_1}\big(\tau_{\partial B_{x_1}(\frac{1}{2}|x_1|)}=\tau_{z_1}<\tau_{D\cup [\widehat{\mathcal{C}}_A^{-}\setminus \widetilde{B}_{x_1}(\frac{1}{5}|x_1|) ]} \big)\\
		\asymp &\widetilde{\mathbb{P}}_{x_2}\big(\tau_{\partial B_{x_1}(\frac{1}{2}|x_1|)}=\tau_{z_1}<\tau_{D\cup [\widehat{\mathcal{C}}_A^{-}\setminus \widetilde{B}_{x_1}(\frac{1}{5}|x_1|) ]} \big)\\
		\ge & \widetilde{\mathbb{P}}_{x_2}\big(\tau_{\partial B_{x_1}(\frac{1}{2}|x_1|)}=\tau_{z_1}<\tau_{D\cup \widehat{\mathcal{C}}_A^{-}} \big). 
	\end{split}
\end{equation}
 Putting (\ref{newineq_3.6}), (\ref{revise_3.12}) and (\ref{revise_3.13}) together, we get\begin{equation*}
	\begin{split}
\mathbb{E}^{D}\big[ \mathcal{H}_{x_1}(\widehat{\mathcal{C}}_A^{-}\cup D)\big]	\gtrsim &  \mathbb{E}^{D}\Big[ \sum_{z_1 \in \partial B_{x_1}(\frac{1}{2}|x_1|)} 
		\widetilde{\mathbb{P}}_{x_2}\big(\tau_{\partial B_{x_1}(\frac{1}{2}|x_1|)}=\tau_{z_1}<\tau_{D\cup \widehat{\mathcal{C}}_A^{-}} \big) \mathcal{H}_{z_1}(\widehat{\mathcal{C}}_A^{-}\cup D)   \Big]\\
		=& \mathbb{E}^{D}\big[ \mathcal{H}_{x_2}(\widehat{\mathcal{C}}_A^{-}\cup D)\big],
	\end{split}
\end{equation*}
which together with Lemma \ref{lemma3.3} implies (\ref{PDx_1x_2_case1}).

Note that $|x_1-x_2|\le \frac{1}{20}|x_1|$ implies $|x_1-x_2|\le \frac{1}{10}|x_2|$. Thus, by switching the roles of $x_1$ and $x_2$ in the proof of (\ref{PDx_1x_2_case1}), we also have 
\begin{equation}\label{newineq_3.16}
	\mathbb{P}^D\big( x_2 \xleftrightarrow{\ge 0}  A \big) \gtrsim  \mathbb{P}^{D}\big( x_1 \xleftrightarrow{\ge 0}  A \big), \ \ \text{when}\ |x_1-x_2|\le \tfrac{1}{20}|x_1|. 
\end{equation}
Also note that $|x_1-x_2|\le \frac{1}{20}(|x_1|\vee |x_2|)$ implies $|x_1|\asymp |x_2|$. Therefore, by (\ref{PDx_1x_2_case1}) and (\ref{newineq_3.16}), we conclude this proposition for Case 1.

\noindent \textbf{Case 2:} $|x_1-x_2|\ge \frac{1}{20}(|x_1|\vee |x_2|)$. In this case, we aim to show that
\begin{equation}\label{PDx_1x_2_case2}
	|x_1|^{d-2}\mathbb{P}^D\big( x_1 \xleftrightarrow{\ge 0}  A \big) \asymp |x_2|^{d-2}\mathbb{P}^{D}\big( x_2 \xleftrightarrow{\ge 0}  A \big).
\end{equation}
By Lemma \ref{lemma3.3}, we know that $\mathbb{P}^{D}\big( x_2 \xleftrightarrow{\ge 0}  A \big)$ is of the same order as $\mathbb{E}^{D}\big[\mathcal{H}_{x_2}(\widehat{\mathcal{C}}_A^{-}\cup D) \big] $, which, by the strong Markov property, is bounded from below by
\begin{equation}\label{new_ineq_3.18}
\begin{split}
	&\sum\nolimits_{w\in \partial \mathcal{B}_{x_1}(\frac{1}{100}|x_1|)} \mathbb{E}^{D}\big[\widetilde{\mathbb{P}}_{x_2}\big(\tau_{ \partial \mathcal{B}_{x_1}(\frac{1}{100}|x_1|)}=\tau_{w}<\tau_{\widehat{\mathcal{C}}_A^{-} \cup D\cup B(3N)} \big)  \mathcal{H}_{w}(\widehat{\mathcal{C}}_A^{-}\cup D)\big]    \\
			\ge &   \mathbb{K}-  \sum\nolimits_{w\in \partial \mathcal{B}_{x_1}(\frac{1}{100}|x_1|)}\mathbb{E}^{D}[\hat{\mathbf{K}}_w],
\end{split}
	\end{equation}
where $ \mathbb{K}:=\sum_{w\in \partial \mathcal{B}_{x_1}(\frac{1}{100}|x_1|)} \widetilde{\mathbb{P}}_{x_2}\big(\tau_{ \partial \mathcal{B}_{x_1}(\frac{1}{100}|x_1|)}=\tau_{w}<\tau_{D\cup B(3N)} \big)\mathbb{E}^{D}\big[ \mathcal{H}_{w}(\widehat{\mathcal{C}}_A^{-}\cup D)\big]$ and $\hat{\mathbf{K}}_w:=  \widetilde{\mathbb{P}}_{x_2}\big(\tau_{\widehat{\mathcal{C}}_{A}^{-}}<\tau_{\partial \mathcal{B}_{x_1}(\frac{1}{100}|x_1|)}=\tau_{w}< \tau_{ D\cup B(3N)} \big)\mathcal{H}_{w}(\widehat{\mathcal{C}}_A^{-}\cup D)$. For $\mathbb{K}$, by Lemma \ref{lemma3.3}, (\ref{newineq_3.16}) and the assumption $|x_1-x_2|\ge \frac{1}{20}(|x_1|\vee |x_2|)$, we have 
\begin{equation}\label{newineq_3.19}
\begin{split}
		\mathbb{K} \overset{\text{Lemma}\ \ref*{lemma3.3}}{\gtrsim} & \widetilde{\mathbb{P}}_{x_2}\big(\tau_{ \partial \mathcal{B}_{x_1}(\frac{1}{100}|x_1|)}<\tau_{D\cup B(3N)} \big)  \min_{w\in \partial \mathcal{B}_{x_1}(\frac{1}{100}|x_1|)}\mathbb{P}^{D}\big( w \xleftrightarrow{\ge 0}  A \big)  \\
	\overset{(\ref*{new2.20_1}),(\ref*{newineq_3.16})}{\gtrsim} & |x_2-x_1|^{2-d}|x_1|^{d-2} \mathbb{P}^{D}\big( x_1 \xleftrightarrow{\ge 0}  A \big) 	\\
\overset{|x_1-x_2|\ge \frac{1}{20}(|x_1|\vee |x_2|)}{\gtrsim} & |x_2|^{2-d}|x_1|^{d-2} \mathbb{P}^{D}\big( x_1 \xleftrightarrow{\ge 0}  A \big).
	\end{split}
\end{equation}
Next, we estimate $\mathbb{E}^{D}[\hat{\mathbf{K}}_w]$ for $w\in \partial \mathcal{B}_{x_1}(\frac{1}{100}|x_1|)$. By the union bound, we have 
\begin{equation}\label{new_add_3.31}
		\begin{split}
			& \widetilde{\mathbb{P}}_{x_2}\times \mathbb{P}^{D}\big(\tau_{\widehat{\mathcal{C}}_{A}^{-}}< \tau_{\mathcal{B}_{x_1}(\frac{1}{100}|x_1|)} < \tau_{ D\cup B(3N)}  \big)\\
			\le & \sum\nolimits_{z\in [B(3N)]^c}  \widetilde{\mathbb{P}}_{x_2}\times \mathbb{P}^{D}\big(\tau_{z}<\tau_{\mathcal{B}_{x_1}(\frac{1}{100}|x_1|)}< \tau_{D\cup B(3N)} ,\widetilde{B}_z(1) \xleftrightarrow{\le 0}A  \big)  \\
			\lesssim  &\sum\nolimits_{z \in [B(3N)]^c}  \mathbb{P}^{D}\big(\widetilde{B}_z(1) \xleftrightarrow{\ge 0}A  \big) \cdot \widetilde{\mathbb{P}}_{x_2}(\tau_{z}<\infty) \cdot \widetilde{\mathbb{P}}_{z}(\tau_{\mathcal{B}_{x_1}(\frac{1}{100}|x_1|)}<\infty).
		\end{split}
	\end{equation} 
 Moreover, for any $z \in [B(3N)]^c$, by Lemma \ref{lemma_revise_B1} and (\ref{ineq_compare}), we have 
  \begin{equation*}
 	\begin{split}
 		 \mathbb{P}^{D}\big(\widetilde{B}_z(1) \xleftrightarrow{\ge 0}A  \big) \overset{}{\lesssim }    \mathbb{P}\big(z \xleftrightarrow{\ge 0}A  \big) \overset{A\subset [\widetilde{B}_z(N)]^c}{\lesssim }\theta_d(N)   \overset{(\ref*{one_arm_high})}{\lesssim } N^{-2}.  
 	\end{split}
 \end{equation*}
Meanwhile, by Lemma \ref{lemma_prop_hitting} one has 
\begin{equation*}
	\widetilde{\mathbb{P}}_{x_2}(\tau_{z}<\infty)\asymp |x_2-z|^{2-d},\ \ \widetilde{\mathbb{P}}_{z}(\tau_{\mathcal{B}_{x_1}(\frac{1}{100}|x_1|)}<\infty)\lesssim |z-x_1|^{2-d}|x_1|^{d-2}.
\end{equation*}
Plugging these three estimates into (\ref{new_add_3.31}), we get 
\begin{equation}\label{newineq_3.25}
\begin{split}
		 & \widetilde{\mathbb{P}}_{x_2}\times \mathbb{P}^{D}\big(\tau_{\widehat{\mathcal{C}}_{A}^{-}}< \tau_{\mathcal{B}_{x_1}(\frac{1}{100}|x_1|)} < \tau_{ D\cup B(3N)}  \big)\\
		 	\lesssim &|x_1|^{d-2}N^{-2}\sum\nolimits_{z\in \mathbb{Z}^d} |x_2-z|^{2-d}|z-x_1|^{2-d}\\
		 	\overset{(\ref*{2-d_2-d})}{\lesssim} & |x_1|^{d-2}N^{-2} |x_1-x_2|^{4-d} \overset{|x_1-x_2|\gtrsim |x_2| \gtrsim C_\diamond N}{\lesssim} C_\diamond^{-2} |x_1|^{d-2}|x_2|^{2-d}.
		 	\end{split}
\end{equation}
For any $w\in \partial \mathcal{B}_{x_1}(\frac{1}{100}|x_1|)$, by the FKG inequality (noting that $\mathcal{H}_{w}(\widehat{\mathcal{C}}_A^{-}\cup D)$ is increasing, and that for any $V\subset \widetilde{\mathbb{Z}}^d$, $\{\widehat{\mathcal{C}}_A^{-}\cap V\neq \emptyset\}$ is decreasing), we have 
\begin{equation*}
	\begin{split}
		\mathbb{E}^{D}[\hat{\mathbf{K}}_w]=& \widetilde{\mathbb{E}}_{x_2}\Big[\mathbbm{1}_{\tau_{\partial \mathcal{B}_{x_1}(\frac{1}{100}|x_1|)}=\tau_{w}< \tau_{ D\cup B(3N)}}\mathbb{E}^{D}\big[\mathbbm{1}_{\widehat{\mathcal{C}}_A^{-}\cap \big\{\widetilde{S}_t: 0\le t\le \tau_{\partial \mathcal{B}_{x_1}(\frac{1}{100}|x_1|)} \big\} \neq \emptyset }\mathcal{H}_{w}(\widehat{\mathcal{C}}_A^{-}\cup D)\big]   \Big]\\
			\overset{(\text{FKG})}{\le}&\widetilde{\mathbb{E}}_{x_2}\Big[\mathbbm{1}_{\tau_{\partial \mathcal{B}_{x_1}(\frac{1}{100}|x_1|)}=\tau_{w}< \tau_{ D\cup B(3N)}} \mathbb{P}^{D}\big(\widehat{\mathcal{C}}_A^{-}\cap \big\{\widetilde{S}_t: 0\le t\le \tau_{\partial \mathcal{B}_{x_1}(\frac{1}{100}|x_1|)} \big\} \neq \emptyset \big)\\
			&\ \ \ \ \ \ \ \cdot \mathbb{E}^{D}\big[\mathcal{H}_{w}(\widehat{\mathcal{C}}_A^{-}\cup D)\big] \Big]  \\
			=&\widetilde{\mathbb{P}}_{x_2}\times \mathbb{P}^{D}\big(\tau_{\widehat{\mathcal{C}}_A^{-}}< \tau_{\mathcal{B}_{x_1}(\frac{1}{100}|x_1|)} =\tau_w < \tau_{ D\cup B(3N)}  \big) \cdot  \mathbb{E}^{D}\big[\mathcal{H}_{w}(\widehat{\mathcal{C}}_A^{-}\cup D)\big]\\
		\lesssim & \widetilde{\mathbb{P}}_{x_2}\times \mathbb{P}^{D}\big(\tau_{\widehat{\mathcal{C}}_A^{-}}< \tau_{\mathcal{B}_{x_1}(\frac{1}{100}|x_1|)} =\tau_w < \tau_{ D\cup B(3N)}  \big) \cdot \mathbb{P}^{D}\big( x_1 \xleftrightarrow{\ge 0}  A \big),
	\end{split}
\end{equation*}
where in the last inequality we used Lemma \ref{lemma3.3} and (\ref{PDx_1x_2_case1}). Summing over $w_1\in  \partial \mathcal{B}_{x_1}(\frac{1}{100}|x_1|)$ and using (\ref{newineq_3.25}), we obtain  
\begin{equation}\label{newineq_3.26}
	\sum\nolimits_{w\in \partial \mathcal{B}_{x_1}(\frac{1}{100}|x_1|)}\mathbb{E}^{D}[\hat{\mathbf{K}}_w] \lesssim  C_\diamond^{-2} |x_1|^{d-2}|x_2|^{2-d}\mathbb{P}^{D}(x_1 \xleftrightarrow{\ge 0} A ). 
\end{equation}
By (\ref{new_ineq_3.18}), (\ref{newineq_3.19}) and (\ref{newineq_3.26}), we derive that for some constants $C_\dagger,c_\dagger>0$, 
\begin{equation}
\begin{split}
		\mathbb{P}^{D}\big( x_2 \xleftrightarrow{\ge 0}  A \big) \gtrsim  &\mathbb{P}(x_1 \xleftrightarrow{\ge 0} A )\cdot \big( c_\dagger -C_\dagger C_\diamond^{-2} \big) |x_1|^{d-2}|x_2|^{2-d} \\
		\gtrsim &\mathbb{P}(x_1 \xleftrightarrow{\ge 0} A )\cdot |x_1|^{d-2}|x_2|^{2-d},
\end{split}
\end{equation}
where we ensure the last inequality by taking a sufficiently large $C_\diamond$. By swapping $x_1$ and $x_2$, we obtain (\ref{PDx_1x_2_case2}) and thus complete the proof of this proposition \qed

\subsection{Proof of Proposition \ref{prop_pointtoset_harnack}}\label{subsection_proof_pointtoset_harnack}

The proof of this proposition shares the same spirit as that of Proposition \ref{prop_pointtoset_speed}. Thus, for simplicity we will only sketch the proof instead of providing similar details.

We start from the cases when $3\le d\le 6$. Let $c_*>0$ be a sufficiently small constant. Parallel to (\ref{ineq3.9}), we have 
\begin{equation}\label{newineq_3.27}
	\mathbb{P}^D\big( A \xleftrightarrow{\le 0} \partial B(c_*N^{1-\varsigma_d(N)}) \big)\le \tfrac{1}{4}.  
\end{equation}
Let $\mathsf{F}:=\{A \xleftrightarrow{\le 0} \partial B(c_*N^{1-\varsigma_d(N)})\}^c$. For any $x_1,x_2\in B(\tfrac{1}{2}c_*N^{1-\varsigma_d(N)})$, by using (\ref{newineq_3.27}) and Lemma \ref{lemma3.3} as in (\ref{ineq3.12}), we have 
\begin{equation}\label{newineq_3.29}
	\mathbb{E}^{D}\big[\mathcal{H}_{x_i}(\widehat{\mathcal{C}}_A^{-}\cup D) \cdot \mathbbm{1}_{\mathsf{F}} \big] \asymp \mathbb{P}^D\big( x_i \xleftrightarrow{\ge 0}  A \big), \ \ \forall i\in \{1,2\}. 
\end{equation}
Combined with the fact that $\mathsf{F}\subset \{\mathcal{H}_{x_1}(\widehat{\mathcal{C}}_A^{-}\cup D) \asymp \mathcal{H}_{x_2}(\widehat{\mathcal{C}}_A^{-}\cup D) \}$, it concludes this proposition for $3\le d\le 6$.

Now we focus on the case $d\ge 7$. We take a small constant $c_\diamond>0$ and let $x_1,x_2\in B(c_\diamond N)$. When $|x_1-x_2|\le \frac{1}{10}|x_1|$, the arguments in the proof of (\ref{PDx_1x_2_case1}) also work here. More precisely, we can show that removing $\widehat{\mathcal{C}}_A^{-}\cap \widetilde{B}_{x_1}(\frac{1}{5}|x_1|)$ from the obstacle $\widehat{\mathcal{C}}_A^{-}\cup D\cup \partial B_{x_1}(\frac{1}{2}|x_1|)$ will increase the Green's function evaluated at $(x_1, z_1)$ (where $z_1\in \partial B_{x_1}(\frac{1}{5}|x_1|)$) by only a constant factor, allowing us to apply the last-exit decomposition, Harnack's inequality and Lemma \ref{lemma3.3} to derive 
\begin{equation}\label{newineq_3.30}
	\mathbb{P}^D\big( x_1 \xleftrightarrow{\ge 0}  A \big) \gtrsim  \mathbb{P}^{D}\big( x_2 \xleftrightarrow{\ge 0}  A \big).
\end{equation}
The  only technical difference arises in the estimate on $\mathbb{P}^{D}\big(\exists  y\in F\ \text{such that}\ \widetilde{B}_y(1)\cap \widehat{\mathcal{C}}_A^{-}\neq \emptyset \big)$ for $F\subset B_{x_1}(\frac{1}{2}|x_1|)$ containing at most $2|x_1|^2$ points (see (\ref{new_add3.32})). Instead of controlling $\max\nolimits_{y\in B_{x_1}(\frac{3}{5}|x_1|)}  \mathbb{P}\big(y \xleftrightarrow{\ge 0} \partial B(N)\big)$ using Corollary \ref{coro_subharmonic}, here we only need to use $\mathbb{P}\big(y \xleftrightarrow{\ge 0} \partial B(N)\big)\le \theta_d(\frac{N}{2})\lesssim N^{-2}$ for $y\in B_{x_1}(\frac{3}{5}|x_1|)$ to obtain 
\begin{equation}
	\mathbb{P}^{D}\big(\exists  y\in F\ \text{such that}\ \widetilde{B}_y(1)\cap \widehat{\mathcal{C}}_A^{-}\neq \emptyset \big) \lesssim |x_1|^2N^{-2} \lesssim c_\diamond^2,
\end{equation}
as achieved in (\ref{new_add3.32}). By swapping $x_1$ and $x_2$, we know that it remains to show
\begin{equation}
	\mathbb{P}^D\big( x_1 \xleftrightarrow{\ge 0}  A \big) \asymp \mathbb{P}^{D}\big( x_2 \xleftrightarrow{\ge 0}  A \big),
\end{equation}
for the case when $|x_1-x_2|\ge \frac{1}{20}(|x_1|\vee |x_2|)$. Similar to (\ref{new_ineq_3.18}), we have 
\begin{equation}\label{newineq_3.33}
	\begin{split}
		\mathbb{P}^D\big( x_2 \xleftrightarrow{\ge 0}  A \big) \asymp \mathbb{E}^{D}\big[\mathcal{H}_{x_2}(\widehat{\mathcal{C}}_A^{-}\cup D) \big] \ge \mathbb{M}-  \sum\nolimits_{w\in \partial \mathcal{B}_{x_1}(\frac{1}{100}|x_1|)}\mathbb{E}^{D}[\hat{\mathbf{M}}_w],
	\end{split}
\end{equation}
where $\mathbb{M}:=\sum_{w\in \partial \mathcal{B}_{x_1}(\frac{1}{100}|x_1|)} \widetilde{\mathbb{P}}_{x_2}\big(\tau_{ \partial \mathcal{B}_{x_1}(\frac{1}{100}|x_1|)}=\tau_{w}<  \tau_{D\cup \partial B(2c_\diamond N)} \big)\mathbb{E}^{D}\big[ \mathcal{H}_{w}(\widehat{\mathcal{C}}_A^{-}\cup D)\big]$ and $\hat{\mathbf{M}}_w:=  \widetilde{\mathbb{P}}_{x_2}\big(\tau_{\widehat{\mathcal{C}}_{A}^{-}}<\tau_{\partial \mathcal{B}_{x_1}(\frac{1}{100}|x_1|)}=\tau_{w}<  \tau_{ D\cup \partial B(2c_\diamond N)} \big)\mathcal{H}_{w}(\widehat{\mathcal{C}}_A^{-}\cup D)$. For any $w\in \partial \mathcal{B}_{x_1}(\frac{1}{100}|x_1|)$, by Lemma \ref{lemma3.3} and (\ref{newineq_3.30}), we have 
\begin{equation}
	\begin{split}
		\mathbb{E}^{D}\big[ \mathcal{H}_{w}(\widehat{\mathcal{C}}_A^{-}\cup D)\big] \overset{\text{Lemma}\ \ref*{lemma3.3}}{\asymp}  \mathbb{P}^{D}(w\xleftrightarrow{\ge 0} A ) \overset{(\ref*{newineq_3.30})}{\gtrsim } \mathbb{P}^{D}(x_1 \xleftrightarrow{\ge 0} A ),
	\end{split}
\end{equation}
which further implies that 
\begin{equation}\label{newineq_3.34}
	\begin{split}
		\mathbb{M}\gtrsim  \widetilde{\mathbb{P}}_{x_2}\big(\tau_{ \partial \mathcal{B}_{x_1}(\frac{1}{100}|x_1|)}<\tau_{D\cup \partial B(2c_\diamond N)} \big) \mathbb{P}^{D}(x_1 \xleftrightarrow{\ge 0} A )\gtrsim \mathbb{P}^{D}\big( x_1 \xleftrightarrow{\ge 0}  A \big).
	\end{split}
\end{equation}
Here in the second inequality we used the invariance principle (and note that the implicit constant does not depend on $c_\diamond$). For $\hat{\mathbf{M}}_w$, similar to (\ref{new_add_3.31}), we have
\begin{equation}\label{revise3.29}
	\begin{split}
		&\widetilde{\mathbb{P}}_{x_2}\times \mathbb{P}^{D}\big(\tau_{\widehat{\mathcal{C}}_{A}^{-}}< \tau_{\mathcal{B}_{x_1}(\frac{1}{100}|x_1|)} < \tau_{ D\cup \partial B(2c_\diamond N)}  \big)\\
		\le &\sum\nolimits_{w\in B(2c_\diamond N)}  \mathbb{P}^{D}\big(\widetilde{B}_w(1) \xleftrightarrow{\ge 0}A  \big) \cdot \widetilde{\mathbb{P}}_{x_2}(\tau_{w}<\infty) \cdot \widetilde{\mathbb{P}}_{w}(\tau_{\mathcal{B}_{x_1}(\frac{1}{100}|x_1|)}<\infty)\\
		\lesssim & \sum\nolimits_{w\in B(2c_\diamond N)} N^{-2}\cdot |x_2-w|^{2-d}  \overset{(\ref*{|x|-a_aneqd})}{\lesssim } c_\diamond^2,
	\end{split}
\end{equation}
where in the second inequality we used Lemma \ref{lemma_revise_B1} and $A\subset [\widetilde{B}(N)]^c$ to obtain that $\mathbb{P}^{D}\big(\widetilde{B}_w(1) \xleftrightarrow{\ge 0}A  \big) \lesssim N^{-2}$. Similar to (\ref{newineq_3.26}), this together with (\ref{newineq_3.30}) implies
\begin{equation}\label{newineq_3.36}
	\sum\nolimits_{w\in \partial \mathcal{B}_{x_1}(\frac{1}{100}|x_1|)}\mathbb{E}^{D}[\hat{\mathbf{M}}_w] \lesssim c_\diamond^{2}\mathbb{P}^{D}(x_1 \xleftrightarrow{\ge 0} A ).
\end{equation}
Plugging (\ref{newineq_3.34}) and (\ref{newineq_3.36}) into (\ref{newineq_3.33}), we get
\begin{equation}
	\mathbb{P}^D\big( x_2 \xleftrightarrow{\ge 0}  A \big) \gtrsim \mathbb{P}^D\big( x_1 \xleftrightarrow{\ge 0}  A \big). 
\end{equation}
By exchanging the roles of $x_1$ and $x_2$, we conclude this proposition.  \qed

\section{Stability of boundary-to-set connecting probabilities}\label{section_boundary_to_set_stable}

In this section, we present the proof of Proposition \ref{prop_boundtoset_zero}. This proof mainly relies on the stochastic domination between crossing paths and loops, as stated in Lemma \ref{lemma_sto_dom}. To get this domination, we present the following lemma as preparation.
\begin{lemma}\label{lemma_paths}
For any $d\ge 3$ and $\delta\in (0,1)$, there exist $0<c<c'<c''<1$ depending on $d$ and $\delta$ such that the following holds. Arbitrarily fix $N_{-1}>N_{1}\ge 1$, $x_1\in \partial B(N_{1})$, $x_{-1}\in \partial B(N_{-1})$ and $D\subset \widetilde{B}(cN_{1})\cup [\widetilde{B}(c^{-1}N_{-1})]^c$. For each $j\in \{1,-1\}$, we arbitrarily take $y_{j}\in \partial B((c'')^{j}N_{j})$ and $y_{j}'\in \partial B((10c'')^{j}N_{j})$ and consider the following two random paths:
\begin{equation}\label{newfinaluse4.1}
	\widetilde{\eta}_{1,j}\sim \widetilde{\mathbb{P}}_{x_{j}}\big(\{\widetilde{S}_t\}_{0\le t\le \tau_{\partial B(N_{-j})}}\in \cdot \mid \tau_{\partial B(N_{-j})}=\tau_{x_{-j}}<\tau_{D}\big),
\end{equation} 
\begin{equation}\label{newfinaluse4.2}
	\widetilde{\eta}_{2,j} \sim \widetilde{\mathbb{P}}_{y_{j}}\big(\{\widetilde{S}_t\}_{0\le t\le \tau_{\partial B((10c'')^{j}N_{j})}}\in \cdot \mid \tau_{\partial B((10c'')^{j}N_{j})}=\tau_{y_{j}'}<\tau_{D}\big).
\end{equation}
For each $i\in \{1,2\}$ and $j\in \{1,-1\}$, let $\mathbf{z}_{i,j}^{\mathrm{F}}$ be the position where $\widetilde{\eta}_{i,j}$ first hits $B((c')^{j}N_{j})$ and let $\mathbf{z}_{i,j}^{\mathrm{B}}$ be the last position where $\widetilde{\eta}_{i,j}$ stays in $B((c')^{j}N_j)$ (when $\widetilde{\eta}_{i,j}$ does not reach $B((c')^{j}N_j)$, we set $\mathbf{z}_{i,j}^{\mathrm{F}}=\mathbf{z}_{i,j}^{\mathrm{B}}=\emptyset$). Then for any $v,w\in \partial B((c')^{j}N_j)$, 		
		\begin{equation}\label{2.29}
			\mathbb{P}_{\widetilde{\eta}_{1,j}}(\mathbf{z}_{1,j}^{\mathrm{F}}=v, \mathbf{z}_{1,j}^{\mathrm{B}}=w) \le \delta \mathbb{P}_{\widetilde{\eta}_{2,j}}(\mathbf{z}_{2,j}^{\mathrm{F}}=v, \mathbf{z}_{2,j}^{\mathrm{B}}=w).
					\end{equation}    
\end{lemma}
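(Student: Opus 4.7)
The plan is to apply the strong Markov property to both conditioned paths at the first entry into and the last exit from $\partial B((c')^{j}N_{j})$, so that a common ``interior'' kernel cancels from the ratio in (\ref{2.29}) and only ``exterior'' hitting probabilities remain to be compared. I detail the argument for $j=1$; the case $j=-1$ follows analogously by swapping the inner and outer roles throughout (which is permitted by the symmetry of the hypothesis on $D$) together with the time-reversibility of the Brownian motion. Fix $v,w\in\partial B(c'N_{1})$; applying the strong Markov property at the first entry (necessarily at $v$) and the last exit (necessarily at $w$) gives, for $i\in\{1,2\}$,
\[
\mathbb{P}_{\widetilde{\eta}_{i,1}}(\mathbf{z}_{i,1}^{\mathrm{F}}=v,\mathbf{z}_{i,1}^{\mathrm{B}}=w)=\frac{\alpha_{i}^{\mathrm{F}}(v)\,K_{v,w}\,\alpha_{i}^{\mathrm{B}}(w)}{Z_{i}},
\]
where $K_{v,w}$ depends only on $v,w$ and on $D\cap\widetilde{B}(cN_{1})$ (hence is the same for $i=1,2$), $Z_{i}$ is the normalizing constant from the conditioning in (\ref{newfinaluse4.1})--(\ref{newfinaluse4.2}), and $\alpha_{i}^{\mathrm{F}}(v)$, $\alpha_{i}^{\mathrm{B}}(w)$ are the exterior first-entry and last-exit probabilities (e.g. $\alpha_{1}^{\mathrm{F}}(v)=\widetilde{\mathbb{P}}_{x_{1}}(\tau_{\partial B(c'N_{1})}=\tau_{v}<\tau_{\partial B(N_{-1})})$ after dropping $D$). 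A preliminary observation using $c\ll c'$ shows that $D$ may be removed from each of the six exterior quantities at the cost of at most a factor $1+O(c^{d-2})$: the inner part is either shielded by $\partial B(c'N_{1})$ (for the $\alpha_{i}$'s) or hit with negligible probability (for the $Z_{i}$'s), and the outer part lies beyond the outer stopping sphere.

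The target ratio reduces to $(\alpha_{1}^{\mathrm{F}}(v)/\alpha_{2}^{\mathrm{F}}(v))\cdot(\alpha_{1}^{\mathrm{B}}(w)/\alpha_{2}^{\mathrm{B}}(w))\cdot(Z_{2}/Z_{1})$. For the first factor, introducing an auxiliary reference sphere $\partial B(2c'N_{1})$ and applying Lemma~\ref{lemma_compare_hm} (supplemented by direct annulus harmonic-measure computations when $N_{-1}/N_{1}$ is bounded) to compare hitting at $v$ from $\partial B(N_{1})$ and from $\partial B(c''N_{1})$ against the reference sphere, combined with Harnack's inequality to relate hitting probabilities from different points on the reference sphere, yields $\alpha_{1}^{\mathrm{F}}(v)/\alpha_{2}^{\mathrm{F}}(v)\lesssim(c'')^{d-2}$, with the scaling factors $(c')^{d-2}$ and $(c'/c'')^{d-2}$ coming from the two outer starting radii. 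For the combined factor $(\alpha_{1}^{\mathrm{B}}(w)/\alpha_{2}^{\mathrm{B}}(w))(Z_{2}/Z_{1})$, a time-reversal of the Brownian motion reinterprets $\alpha_{i}^{\mathrm{B}}(w)/Z_{i}$ as the probability that the reversed conditioned path starting from the outer endpoint first enters $\partial B(c'N_{1})$ at $w$; applying the same Lemma~\ref{lemma_compare_hm}/Harnack analysis (now with both reversed paths approaching $\partial B(c'N_{1})$ from outside) gives a uniform bound $\lesssim 1$. Combining yields a total ratio $\le C(d)(c'')^{d-2}$; it suffices to choose $c''$ small (depending on $d$ and $\delta$) so that $C(d)(c'')^{d-2}\le\delta$, then $c'=c''/10$ so that all Lemma~\ref{lemma_compare_hm} and Harnack applications have constants depending only on $d$, and finally $c\le c'/10$ so that the shielding of $D$ is valid.

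The main obstacle is the combined control of $(\alpha_{1}^{\mathrm{B}}/\alpha_{2}^{\mathrm{B}})(Z_{2}/Z_{1})$: individually, $\alpha_{i}^{\mathrm{B}}$ and $Z_{i}$ involve long-range hitting probabilities at genuinely different spatial scales ($N_{-1}$ versus $10c''N_{1}$) that can differ by arbitrarily many orders of magnitude, so that their individual ratios are unbounded as $N_{-1}/N_{1}\to\infty$. Only after the time-reversal step, together with the near-uniformity of harmonic measure on a large sphere (quantified via Lemma~\ref{lemma_compare_hm}), do the divergent contributions cancel in the product. A secondary technicality is verifying that all the Harnack and Lemma~\ref{lemma_compare_hm} constants depend only on $d$, independent of the scales $N_{1},N_{-1}$ and of the arbitrary choices $x_{-1},y_{1},y_{1}'$; fixing $c''/c'$ at the specific $d$-dependent value $10$ is what makes this uniformity possible.
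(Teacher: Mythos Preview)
Your proposal is correct and follows essentially the same route as the paper: both arguments decompose the conditioned path laws via the strong Markov property at the first entrance and the last exit from $\partial B(c'N_1)$, observe that the ``interior'' kernel (which in the paper is exactly $\widetilde{G}_D(v,w)$---note it depends on all of $D$, not only on $D\cap\widetilde{B}(cN_1)$, though this is immaterial since it cancels) is common to $i=1,2$, and then extract the factor $(c'')^{d-2}$ from the ratio of first-entry probabilities via Lemma~\ref{lemma_compare_hm} and Harnack's inequality. The only substantive difference is your treatment of the combined last-exit/normalization ratio: you invoke time-reversal to recast $\alpha_i^{\mathrm{B}}(w)/Z_i$ as a first-entry probability for the reversed conditioned path, whereas the paper inserts an intermediate sphere $\partial B(4c'N_1)$ and applies the strong Markov property together with Lemma~\ref{lemma_new2.1} and Harnack directly (equations (2.36)--(2.37), leading to (\ref{newfinaluse_4.9})). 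Both routes show this ratio is $\asymp 1$; the paper's method is slightly more explicit and sidesteps any care needed in reversing an $h$-transformed path, while your approach is conceptually cleaner but would need a short justification of the reversal identity.
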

\begin{proof}
Since the proofs for $j=1$ and $j=-1$ are carried out using the same arguments, we only show the details for the case when $j=1$. By the strong Markov property and the last-exit decomposition, we have 
	\begin{equation}\label{2.33}
		\begin{split}
			&\mathbb{P}_{\widetilde{\eta}_{1,1}}(\mathbf{z}_{1,1}^{\mathrm{F}}=v, \mathbf{z}_{1,1}^{\mathrm{B}}=w)= \widetilde{\mathbb{P}}_{x_1}\big(\tau_{B(c'N_1)}=\tau_v<\tau_D \big) \widetilde{G}_D(v,w)\cdot \tfrac{1}{2d}\\
			&\ \ \ \ \ \ \ \ \ \ \ \ \ \ \ \ \ \ \ \ \ \ \ \ \cdot\sum_{w'\in [B(c'N_1)]^c:\{w,w'\}\in \mathbb{L}^d} \frac{\widetilde{\mathbb{P}}_{w'}\big(\tau_{\partial B(N_{-1})}=\tau_{x_{-1}}<\tau_{D\cup B(c'N_1)}\big)}{\widetilde{\mathbb{P}}_{x_1}\big(\tau_{\partial B(N_{-1})}=\tau_{x_{-1}}<\tau_{D}\big)},
		\end{split}
	\end{equation}
	\begin{equation}
		\begin{split}
			&\mathbb{P}_{\widetilde{\eta}_{2,1}}(\mathbf{z}_{2,1}^{\mathrm{F}}=v, \mathbf{z}_{2,1}^{\mathrm{B}}=w)= \widetilde{\mathbb{P}}_{y_1}\big(\tau_{B(c'N_1)}=\tau_v<\tau_D \big) \widetilde{G}_D(v,w)\cdot \tfrac{1}{2d}\\
			&\ \ \ \ \ \ \ \ \ \ \ \ \ \ \ \ \ \ \ \ \ \  \cdot  \sum_{w'\in [B(c'N_1)]^c:\{w,w'\}\in \mathbb{L}^d}  \frac{\widetilde{\mathbb{P}}_{w'}\big(\tau_{\partial B(10c''N_1)}=\tau_{y_1'}<\tau_{D\cup B(c'N_1)}\big)}{\widetilde{\mathbb{P}}_{y_1}\big(\tau_{\partial B(10c''N_1)}=\tau_{y_1'}<\tau_{D}\big)}.
		\end{split}
	\end{equation}
	Firstly, Lemma \ref{lemma_compare_hm} implies that 
	\begin{equation}
		\begin{split}
			|x_1|^{d-2}\widetilde{\mathbb{P}}_{x_1}\big(\tau_{B(c'N_1)}=\tau_v<\tau_D \big) \asymp |y_1|^{d-2}\widetilde{\mathbb{P}}_{y_1}\big(\tau_{B(c'N_1)}=\tau_v<\tau_D \big).
		\end{split}
	\end{equation}
		Meanwhile, for any $w'$ involved in the sum on the right-hand side of (\ref{2.33}), by the strong Markov property, Lemma \ref{lemma_new2.1} and Harnack's inequality, we have (here we require that $c''>10c'$)
	\begin{equation}
		\begin{split}
			&\widetilde{\mathbb{P}}_{w'}\big(\tau_{\partial B(N_{-1})}=\tau_{x_{-1}}<\tau_{D\cup B(c'N_1)}\big)\\
			= & \sum_{z\in \partial B(4c' N_1)}\widetilde{\mathbb{P}}_{w'}\big(\tau_{\partial B(4c' N_1)}=\tau_z<\tau_{B(c'N_1)}\big) \widetilde{\mathbb{P}}_{z}\big( \tau_{\partial B(N_{-1})}=\tau_{x_{-1}}<\tau_{D\cup B(c'N_1)} \big)\\
			\asymp   &\sum_{z\in \partial B(4c' N_1)}\widetilde{\mathbb{P}}_{w'}\big(\tau_{\partial B(4c' N_1)}=\tau_z<\tau_{B(c'N_1)}\big) \widetilde{\mathbb{P}}_{z}\big( \tau_{\partial B(N_{-1})}=\tau_{x_{-1}}<\tau_{D} \big)\\
			\asymp &  \widetilde{\mathbb{P}}_{w'}\big(\tau_{\partial B(4c' N_1)}<\tau_{B(c'N_1)}\big) \widetilde{\mathbb{P}}_{x_1}\big( \tau_{\partial B(N_{-1})}=\tau_{x_{-1}}<\tau_{D} \big).
		\end{split}
	\end{equation}
	For the same reasons, we also have 
	\begin{equation}\label{2.37}
	\begin{split}
		&\widetilde{\mathbb{P}}_{w'}\big(\tau_{\partial B(10c''N_1)}=\tau_{y_1'}<\tau_{D\cup B(c'N_1)}\big)\\
		\asymp & \widetilde{\mathbb{P}}_{w'}\big(\tau_{\partial B(4c' N_1)}<\tau_{B(c'N_1)}\big) \widetilde{\mathbb{P}}_{y_1}\big( \tau_{\partial B(10c''N_1)}=\tau_{y_1'}<\tau_{D} \big).
	\end{split}	
	\end{equation}
Putting (\ref{2.33})--(\ref{2.37}) together, we get 
\begin{equation}\label{newfinaluse_4.9}
|x_1|^{d-2}	\mathbb{P}_{\widetilde{\eta}_{1,1}}(\mathbf{z}_{1,1}^{\mathrm{F}}=v, \mathbf{z}_{1,1}^{\mathrm{B}}=w) \asymp |y_1|^{d-2}	\mathbb{P}_{\widetilde{\eta}_{2,1}}(\mathbf{z}_{2,1}^{\mathrm{F}}=v, \mathbf{z}_{2,1}^{\mathrm{B}}=w).
\end{equation}
	By choosing $c''=c''(d,\delta)$ to be sufficiently small and noting that $|y_1|\lesssim c''|x_1|$, we obtain (\ref{2.29}) for $j=1$ from (\ref{newfinaluse_4.9}).
	\end{proof}

	 %letting $c''=c''(d,\delta)$ be sufficiently small (note that $|y_1|\lesssim c''|x_1|$), this implies (\ref{2.29}) for $j=1$.

Now we are ready to establish the aforementioned stochastic domination.

\begin{lemma}\label{lemma_sto_dom}
	Keep the notations in Lemma \ref{lemma_paths}, where we take a sufficiently small $\delta = \delta(d)>0$. For any $n\ge 1$ and $j\in \{1,-1\}$, let $\mathfrak{L}_j(n)$ be the union of ranges of loops in $\widetilde{\mathcal{L}}^{D}_{1/2}$ intersecting both $\partial B(n)$ and $\partial B(10^{j}n)$. Then for each $j\in \{1,-1\}$, $\mathrm{ran}(\widetilde{\eta}_{1,j})\cap \widetilde{B}((c')^jN_j)$ is stochastically dominated by $\mathfrak{L}_j((c'')^jN_j)\cap \widetilde{B}((c')^jN_j)$.	
\end{lemma}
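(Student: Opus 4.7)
The plan is to focus on $j=1$; the case $j=-1$ is analogous with inner and outer radii exchanged. Write $V = \widetilde{B}(c' N_1)$, $R_- = c'' N_1$, and $R_+ = 10 c'' N_1$, so that $V \subset \widetilde{B}(R_-) \subset \widetilde{B}(R_+)$ and $\mathfrak{L}_1(R_-)$ is the union of ranges of loops in $\widetilde{\mathcal{L}}_{1/2}^D$ that hit both $\partial B(R_-)$ and $\partial B(R_+)$.

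First, I would establish a \emph{conditional identity inside $V$}: conditionally on the first entry $\mathbf{z}_{i,1}^{\mathrm{F}} = v$ and the last exit $\mathbf{z}_{i,1}^{\mathrm{B}} = w$, both lying in $\partial B(c' N_1)$, the distribution of $\mathrm{ran}(\widetilde{\eta}_{i,1}) \cap V$ coincides for $i = 1$ and $i = 2$. Indeed, by applying the strong Markov property of the Brownian motion at the first entry and last exit times, the ``middle portion'' of $\widetilde{\eta}_{i,1}$---the sub-path between first entry at $v$ and last exit at $w$---is a Brownian motion from $v$ under a Doob $h$-transform whose weight factors through the local data $(v, w, D)$ alone; the remote conditioning on hitting $x_{-1}$ (or $y_1'$) before $D$ is fully absorbed into the event $\{\mathbf{z}_{i,1}^{\mathrm{B}} = w\}$ via the Markov property at the last-exit time. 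Combining this conditional identity with Lemma \ref{lemma_paths} and summing over the middle-portion law, for every increasing event $\mathsf{E}$ on subsets of $V$ with $\emptyset \notin \mathsf{E}$ we obtain
\begin{equation*}
\mathbb{P}\big(\mathrm{ran}(\widetilde{\eta}_{1,1}) \cap V \in \mathsf{E}\big) \le \delta \cdot \mathbb{P}\big(\mathrm{ran}(\widetilde{\eta}_{2,1}) \cap V \in \mathsf{E}\big),
\end{equation*}
uniformly in the choice of $y_1, y_1'$ appearing in (\ref{newfinaluse4.2}).

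Next, I would analyze $\mathfrak{L}_1(R_-) \cap V$ through its crossing paths. Each loop contributing to $\mathfrak{L}_1(R_-)$ carries at least one forward crossing path from $\partial B(R_-)$ to $\partial B(R_+)$, and by Lemma \ref{lemma_decomposition} such a crossing path, conditioned on its endpoints, is distributed as $\widetilde{\eta}_{2,1}$ with those endpoints (with $D$ as obstacle). Lemma \ref{lemma_loop_measure_crossing} guarantees that the total loop measure on such crossings is bounded below by a constant $c_0 = c_0(d) > 0$. Integrating the bound of the previous paragraph against the (random) endpoint distribution of these crossings shows that the intensity, at $(v, w) \in [\partial B(c' N_1)]^2$, of forward crossing paths of $\mathfrak{L}_1(R_-)$ entering $V$ first at $v$ and last exiting at $w$ is at least $\tfrac{c_0}{\delta} \cdot \mathbb{P}\big(\mathbf{z}_{1,1}^{\mathrm{F}} = v, \mathbf{z}_{1,1}^{\mathrm{B}} = w\big)$, with the middle-portion law matching that of $\widetilde{\eta}_{1,1}$ by the conditional identity above.

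Choosing $\delta = \delta(d) \le c_0$ makes this intensity at least $\mathbb{P}\big(\mathbf{z}_{1,1}^{\mathrm{F}} = v, \mathbf{z}_{1,1}^{\mathrm{B}} = w\big)$, and a standard Poisson-thinning coupling then produces a single ``designated'' crossing path whose range in $V$ dominates $\mathrm{ran}(\widetilde{\eta}_{1,1}) \cap V$ pointwise and is contained in $\mathfrak{L}_1(R_-) \cap V$, yielding the asserted stochastic domination. The main obstacle is the conditional identity in the first step: justifying that the $h$-transform for the middle portion of $\widetilde{\eta}_{i,1}$ depends only on $(v, w, D)$ requires a careful last-exit decomposition and is the technical heart of the argument. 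The remaining integration over crossing-path endpoints and the Poisson-thinning coupling are standard once this identification is in hand.
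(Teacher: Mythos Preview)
Your proposal is correct and follows essentially the same approach as the paper's proof: both reduce to (i) the conditional identity that the ``middle portion'' of $\widetilde{\eta}_{i,1}$ between first entry and last exit of $B(c'N_1)$ has a law depending only on $(v,w,D)$ (the paper phrases this as a Brownian-bridge mixture, you as an $h$-transform), (ii) the pointwise comparison of the $(v,w)$-distributions from Lemma~\ref{lemma_paths}, (iii) a uniform lower bound on the probability that a crossing loop exists, and (iv) a coupling with $\delta$ chosen small relative to that bound. The paper uses a backward crossing path of an existing loop where you use a forward one, but these are the same object under relabeling of $A_1,A_2$, and the final coupling step you call ``Poisson-thinning'' is exactly the mixture-coupling the paper sketches.
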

\begin{proof}
Since the proofs for $j=1$ and $j=-1$ are similar, we only provide the details for $j=1$. Let $\hat{\eta}_{1,1}$ be the subpath of $\widetilde{\eta}_{1,1}$ (defined in (\ref{newfinaluse4.1})) from the first time hitting $B(c''N_1)$ to the last time staying in $B(c''N_1)$. Note that $\mathrm{ran}(\widetilde{\eta}_{1,1})\cap \widetilde{B}(c'N_1)=\mathrm{ran}(\hat{\eta}_{1,1})\cap \widetilde{B}(c'N_1)$. Let $(\mathbf{v}_1,\mathbf{w}_1)$ be a random tuple satisfying that for any $v,w\in \{\emptyset\}\cup \partial B(c'N_1)$, 
	 \begin{equation}
	 	\mathbb{P}\big((\mathbf{v}_1,\mathbf{w}_1)=(v,w)\big) =\mathbb{P}_{\widetilde{\eta}_1}(\mathbf{z}_{1,1}^{\mathrm{F}}=v, \mathbf{z}_{1,1}^{\mathrm{B}}=w).
	 		 \end{equation}
	 Let $\hat{\eta}_{v,w}$ be a Brownian bridge on $\widetilde{\mathbb{Z}}^d\setminus D$ from $v$ to $w$, independent of $(\mathbf{v}_1, \mathbf{w}_1)$ (we set $\hat{\eta}_{\emptyset,\emptyset}:=\emptyset$). By the strong Markov property, we know that $\mathrm{ran}(\hat{\eta}_{1,1})$ has the same distribution as 
	 	 \begin{equation}\label{2.55}
	 	\sum\nolimits_{v,w\in \{\emptyset\}\cup \partial B(c'N_1)} \mathbbm{1}_{(\mathbf{v}_1,\mathbf{w}_1)=(v,w)}\cdot  \hat{\eta}_{v,w}. 
	 \end{equation}
	 Meanwhile, if $\mathfrak{L}_1(c''N_1)$ includes at least one loop $\widetilde{\ell}$ (which occurs with probability at least $c_\dagger$ for some constant $c_\dagger$; see \cite[Lemma 2.1]{cai2024high}), we consider one of the backward crossing paths of $\widetilde{\ell}$ from $\partial B(10c''N_1)$ to $\partial B(c''N_1)$ as the path $\widetilde{\eta}_{2,1}$ defined in (\ref{newfinaluse4.2}). Note that $\mathrm{ran}(\widetilde{\eta}_{2,1})\subset \mathrm{ran}(\widetilde{\ell})$. Let $\hat{\eta}_{2,1}$ be the counterpart of $\hat{\eta}_{1,1}$, obtained by replacing $\widetilde{\eta}_{1,1}$ with $\widetilde{\eta}_{2,1}$. Similar to (\ref{2.55}), we know that $\hat{\eta}_{2,1}$ has the same distribution as 
	 \begin{equation}\label{2.56}
	 	\sum\nolimits_{v,w\in \{\emptyset\}\cup \partial B(c'N)} \mathbbm{1}_{(\mathbf{v}_2,\mathbf{w}_2)=(v,w)}\cdot  \hat{\eta}_{v,w}, 
	 \end{equation} 
	 where $(\mathbf{v}_2,\mathbf{w}_2)$ is a random tuple (independent of $\hat{\eta}_{v, w}$) such that  
	 \begin{equation}
	 	 	\mathbb{P}\big((\mathbf{v}_2,\mathbf{w}_2)=(v,w)\big) =\mathbb{P}_{\widetilde{\eta}_{2,1}}(\mathbf{z}_2^{\mathrm{F}}=v, \mathbf{z}_2^{\mathrm{B}}=w), \ \ \forall v,w\in \{\emptyset\}\cup \partial B(c'N_1).
	 \end{equation}
	By taking $\delta<c_\dagger$ in Lemma \ref{lemma_paths} (with $c,c'$ and $c''$ selected accordingly), we know that for every $v,w\in \partial B(c'N_1)$, the probability of having $\hat{\eta}_{v,w}$ in $\widetilde{\eta}_{1,1}$ (i.e., $\mathbb{P}_{\widetilde{\eta}_{1,1}}(\mathbf{z}_{1,1}^{\mathrm{F}}=v, \mathbf{z}_{1,1}^{\mathrm{B}}=w)$) is smaller than the probability of having $\hat{\eta}_{v,w}$ in $\mathfrak{L}_1(c''N_1)$ (which is at least $c_\dagger\mathbb{P}_{\widetilde{\eta}_{2,1}}(\mathbf{z}_{2,1}^{\mathrm{F}}=v, \mathbf{z}_{2,1}^{\mathrm{B}}=w)$). In conclusion, $\mathrm{ran}(\widetilde{\eta}_{1,1})\cap \widetilde{B}(c'N_1)$ is stochastically dominated by $\mathfrak{L}_1(c''N_1)\cap \widetilde{B}(c'N_1)$.
\end{proof}

Subsequently, we present an application of Lemma \ref{lemma_sto_dom}, which will be used repeatedly. To simplify the expression, we first introduce some notations as follows. For any $D\subset \widetilde{\mathbb{Z}}^d$ and $m\ge n\ge 1$, let 
\begin{equation}\label{lateradd_4.14}
	\widetilde{\mathcal{L}}_{1/2}^D[n,m]:= \widetilde{\mathcal{L}}_{1/2}^D\cdot \mathbbm{1}_{\mathrm{ran}(\widetilde{\ell})\cap \partial B(n)\neq \emptyset, \mathrm{ran}(\widetilde{\ell})\cap \partial B(m)\neq \emptyset}.
\end{equation}
We also denote $\mathfrak{L}^{D}[n,m]:= \cup \widetilde{\mathcal{L}}_{1/2}^D[n,m]$ and $\overline{\mathfrak{L}}^{D}[n,m]:=\cup (\widetilde{\mathcal{L}}_{1/2}^D-\widetilde{\mathcal{L}}_{1/2}^D[n,m]) $. Recall $\mathsf{G}_l^{D}$ in (\ref{def_Gl_crossing}) and $\Omega_l$ in the paragraph before (\ref{3.36}). For simplicity, let
 \begin{equation}
	\mathsf{G}_l^{D}[n,m]:=\mathsf{G}_l^{D}(\partial B(m),\partial B(n))\ \ \text{and}\ \ \Omega_l[n,m]:=\Omega_l(\partial B(m),\partial B(n)).
\end{equation}
Also recall $	\widehat{\mathsf{G}}^{D}$ in (\ref{3.36}). For any $\omega_l\in \Omega_l[n,m]$, we denote 
\begin{equation}\label{def_widehat_G}
		\widehat{\mathsf{G}}^{D}[\omega_l;n,m]:=	\widehat{\mathsf{G}}^{D}(\omega_l;\partial B(m),\partial B(n)).
\end{equation}

We use $A_0\xleftrightarrow{} A_1 \xleftrightarrow{} ... \xleftrightarrow{} A_k$ to represent the event $\cap_{0\le i<j\le k}\{A_i\xleftrightarrow{} A_{j}\}$. We also denote $\{A_0\xleftrightarrow{} A_1,...,A_k\}:=\cap_{1\le i\le k}\{A_0\xleftrightarrow{} A_i\}$. Note that the former event always implies the later one, while the converse may not hold if $A_0$ contains more than one point.

\begin{lemma}\label{lemma_prepare_loop_decompose}
For any $d\ge 3$, there exists $\Cl\label{const_prepare_loop_decompose}>0$ such that the following holds. Let $\mathbf{B}_1:=[\widetilde{B}(10\Cref{const_prepare_loop_decompose}M)]^c$, $\mathscr{B}_1:= \partial B(\Cref{const_prepare_loop_decompose}M)$, $\mathbf{B}_2:=\widetilde{B}(\frac{N}{10\Cref{const_prepare_loop_decompose}})$ and $\mathscr{B}_2:= \partial B(\frac{M}{\Cref{const_prepare_loop_decompose}})$. Then for any $M> N\ge 1$, $j\in \{1,2\}$, $k\in \mathbb{N}^+$, $A_1,...,A_k,D\subset \mathbf{B}_j$ and $\omega_l\in \Omega_l[N,M]$, 
	\begin{equation}\label{newineq_3.4}
	\begin{split}
			&\mathbb{P}\Big( A_1\xleftrightarrow{\overline{\mathfrak{L}}^{D}[N,M]} ... \xleftrightarrow{\overline{\mathfrak{L}}^{D}[N,M]}   A_k \xleftrightarrow{\overline{\mathfrak{L}}^{D}[N,M]} \mathfrak{L}^{D}[N,M] \cup \mathscr{B}_j \mid \widehat{\mathsf{G}}^{D}[\omega_l; N,M] \Big)\\
		\le &	 (l+1)\mathbb{P}\big( A_1\xleftrightarrow{(D)} ... \xleftrightarrow{(D)} A_k \xleftrightarrow{(D)} \mathscr{B}_j \big).
	\end{split}
	\end{equation} 
\end{lemma}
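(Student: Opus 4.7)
I would decompose the LHS event according to how the cluster $\mathscr{C}$ of $A_1,\dots,A_k$ in the non-crossing loop soup $\overline{\mathfrak{L}}^D[N,M]$ reaches $\mathfrak{L}^D[N,M]\cup\mathscr{B}_j$: either $\mathscr{C}\cap\mathscr{B}_j\neq\emptyset$ (the direct piece) or $\mathscr{C}$ must meet one of the $l$ crossing loops in $\mathfrak{L}^D[N,M]$ (the indirect piece), and bound each piece by $\mathbb{P}(A_1\xleftrightarrow{(D)}\cdots\xleftrightarrow{(D)} A_k \xleftrightarrow{(D)} \mathscr{B}_j)$, giving the total $(l+1)$.

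\textbf{Direct piece.} Since $\overline{\mathfrak{L}}^D[N,M]$ and $\widetilde{\mathcal{L}}_{1/2}^D[N,M]$ are independent Poisson processes (thinning), $\overline{\mathfrak{L}}^D[N,M]$ is independent of the conditioning event $\widehat{\mathsf{G}}^D[\omega_l;N,M]$. Hence the conditional probability of $\{\mathscr{C}\cap\mathscr{B}_j\neq\emptyset\}\cap\{\text{all }A_i\text{ in same cluster of }\overline{\mathfrak{L}}^D[N,M]\}$ equals its unconditional probability, which is clearly dominated by $\mathbb{P}(A_1\xleftrightarrow{(D)}\cdots\xleftrightarrow{(D)} A_k\xleftrightarrow{(D)}\mathscr{B}_j)$ because $\overline{\mathfrak{L}}^D[N,M]\subset\cup\widetilde{\mathcal{L}}_{1/2}^D$. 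This supplies the ``$+1$'' in the factor.

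\textbf{Indirect piece.} By the definition preceding Lemma \ref{lemma_decomposition}, the range of every crossing loop coincides with the union of the ranges of its forward and backward crossing paths; therefore, conditionally on $\widehat{\mathsf{G}}^D[\omega_l;N,M]$,
\[
\mathfrak{L}^D[N,M]\;=\;\bigcup_{i=1}^l\bigl(\mathrm{ran}(\widetilde{\eta}_i^{\mathrm F})\cup\mathrm{ran}(\widetilde{\eta}_i^{\mathrm B})\bigr),
\]
and by Lemma \ref{lemma_decomposition} the $2l$ paths are independent Brownian excursions with prescribed endpoints of exactly the form in \eqref{newfinaluse4.1}. I would take a union bound over the $l$ crossings (pairing each forward-backward pair into a single cycle). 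For each such cycle I invoke Lemma \ref{lemma_sto_dom} to stochastically dominate the portion of $\mathrm{ran}(\widetilde{\eta}_i^{\mathrm F})\cup\mathrm{ran}(\widetilde{\eta}_i^{\mathrm B})$ that lies in the half-annulus containing $\mathbf{B}_j$ by a collection of loops $\mathfrak{L}_{\pm1}(\cdot)\subset\widetilde{\mathcal{L}}_{1/2}^D$. I would choose $\Cref{const_prepare_loop_decompose}$ large and the constants $c,c',c''$ from Lemma \ref{lemma_paths} so that (i) the dominating loops belong to $\overline{\mathfrak{L}}^D[N,M]$ (i.e.\ they do not touch both $\partial B(N)$ and $\partial B(M)$, which follows because the two boundaries they cross both sit strictly on one side of the annulus), and (ii) the connected range of any such dominating loop must cross the intermediate sphere $\mathscr{B}_j$ (because $\mathscr{B}_j$ is sandwiched between the two spheres the loop intersects). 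Under this coupling, the event ``$\mathscr{C}$ meets the $i$-th crossing'' is implied by ``$\mathscr{C}$ is hit by some non-crossing loop whose range meets $\mathscr{B}_j$'', which, combined with the $A_i$-connectivity in $\overline{\mathfrak{L}}^D[N,M]$, is contained in $\{A_1\xleftrightarrow{\overline{\mathfrak{L}}^D[N,M]}\cdots\xleftrightarrow{\overline{\mathfrak{L}}^D[N,M]}A_k\xleftrightarrow{\overline{\mathfrak{L}}^D[N,M]}\mathscr{B}_j\}$, and hence, again by Poisson independence, is bounded by $\mathbb{P}(A_1\xleftrightarrow{(D)}\cdots\xleftrightarrow{(D)} A_k\xleftrightarrow{(D)}\mathscr{B}_j)$. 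Summing over the $l$ crossings yields the remaining $l$ units of the factor.

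\textbf{Main obstacle.} The delicate step is calibrating Lemma \ref{lemma_sto_dom}: we must fix $\Cref{const_prepare_loop_decompose}$ (and hence $\mathbf{B}_j,\mathscr{B}_j$) together with $c,c',c''(d)$ from Lemma \ref{lemma_paths} so that, simultaneously, the dominating annuli straddle $\mathscr{B}_j$, they lie on the $\mathbf{B}_j$ side of both $\partial B(N)$ and $\partial B(M)$ (so the dominating loops are non-crossing), and a single pairing of forward with backward gives one unit per crossing instead of two. This requires using Lemma \ref{lemma_sto_dom} for $j=-1$ in case $j=1$ of the present lemma (the ``outer'' side of the annulus) and for $j=+1$ in case $j=2$ (the ``inner'' side), which is where the choices $10\Cref{const_prepare_loop_decompose}M$ and $N/(10\Cref{const_prepare_loop_decompose})$ in the hypotheses come from.
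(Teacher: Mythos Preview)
Your overall plan --- split into a direct piece ($\mathscr{C}$ already meets $\mathscr{B}_j$) and an indirect piece (union bound over the $l$ crossings, then invoke Lemma~\ref{lemma_sto_dom}) --- is exactly the paper's. The direct piece is fine and supplies the ``$+1$''.

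The indirect piece has a genuine gap in its last step. After Lemma~\ref{lemma_sto_dom}, the dominating loop collection $\mathfrak{L}'$ is a \emph{fresh, independent} Poisson sample; it is not a subset of the non-crossing soup $\overline{\mathfrak{L}}^D[N,M]$ already in play. Hence the claim that the event ``is contained in $\{A_1\xleftrightarrow{\overline{\mathfrak{L}}^D[N,M]}\cdots\mathscr{B}_j\}$'' is false as written: what you actually have is $\mathscr{C}$ (built from $\overline{\mathfrak{L}}^D[N,M]$) meeting an independent $\mathfrak{L}'$, and the sum $\overline{\mathfrak{L}}^D[N,M]+\mathfrak{L}'$ can have intensity up to twice the non-crossing intensity, which is \emph{not} dominated by $\widetilde{\mathcal{L}}_{1/2}^D$. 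Your condition (i), that $\mathfrak{L}'$ consist of non-crossing loops, therefore does not close the argument (and in fact the dominating loops from Lemma~\ref{lemma_sto_dom} may well be crossing).

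The paper's remedy is an observation you omit: once $\mathscr{C}\cap\mathscr{B}_1=\emptyset$, connectivity forces $\mathscr{C}$ to lie entirely in $[\widetilde{B}(\Cref{const_prepare_loop_decompose}M)]^c$, so $\mathscr{C}$ is built only from loops in $\widetilde{\mathcal{L}}_{1/2}^{D\cup B(\Cref{const_prepare_loop_decompose}M)}$. The dominating collection is taken as $\widetilde{\mathcal{L}}_{1/2}^{D}[C'M/10,\,C'M]$ with $C'<\Cref{const_prepare_loop_decompose}$, so every dominating loop touches $B(\Cref{const_prepare_loop_decompose}M)$. These two families are \emph{disjoint} thinnings of $\widetilde{\mathcal{L}}_{1/2}^{D}$, whence their independent sum is stochastically dominated by $\widetilde{\mathcal{L}}_{1/2}^{D}$, and the bound by $\mathbb{P}(A_1\xleftrightarrow{(D)}\cdots\xleftrightarrow{(D)}A_k\xleftrightarrow{(D)}\mathscr{B}_1)$ follows. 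The connection to $\mathscr{B}_1$ itself is automatic because any dominating loop meets both $\partial B(C'M)$ (inside) and $\mathscr{C}$ (outside $B(\Cref{const_prepare_loop_decompose}M)$), hence crosses $\mathscr{B}_1$.

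One smaller simplification: pairing forward with backward is unnecessary. For $j=1$, each backward path starts on $\partial B(N)$ and stops upon first hitting $\partial B(M)$, so it stays in $\widetilde{B}(M)$ and can never meet $\mathscr{C}\subset[\widetilde{B}(\Cref{const_prepare_loop_decompose}M)]^c$ in the indirect case; only the $l$ forward paths contribute, yielding the factor $l$ directly. (For $j=2$ it is the forward paths that stay outside $B(N)$, and only the $l$ backward paths matter.)
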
	
\begin{proof}
	We only provide the proof details for $j=1$ since the one for $j=2$ follows similarly. Let $\mathcal{C}_*:= \{v\in \widetilde{\mathbb{Z}}^d: v\xleftrightarrow{\overline{\mathfrak{L}}^{D}[N,M]}  A_1 \}$. When the event on the left-hand side of (\ref{newineq_3.4}) happens, we know that $\mathcal{C}_*$ intersects each of $A_2,...,A_k$. In addition, if $\mathcal{C}_*\cap \partial B(\Cref{const_prepare_loop_decompose} M)=\emptyset$, then $\mathcal{C}_*$ only consists of loops in $\widetilde{\mathcal{L}}^{D\cup B(\Cref{const_prepare_loop_decompose} M)}_{1/2}$, and thus $\mathcal{C}_*$ must intersect $\mathrm{ran}(\widetilde{\eta}^{\mathrm{F}}_i)\cap [\widetilde{B}(\Cref{const_prepare_loop_decompose} M)]^c$ for some $1\le i\le l$ (where $\{\widetilde{\eta}^{\mathrm{F}}_i\}_{i=1}^{l}$ are forward crossing paths of loops in $\widetilde{\mathcal{L}}_{1/2}^{D}$ from $\partial B(M)$ to $\partial B(N)$). Thus, the left-hand side of (\ref{newineq_3.4}) is bounded from above by     
	\begin{equation*}
		\begin{split}
			&\mathbb{P}\big( A_1\xleftrightarrow{\overline{\mathfrak{L}}^{D}[N,M]} ... \xleftrightarrow{\overline{\mathfrak{L}}^{D}[N,M]} A_k \xleftrightarrow{\overline{\mathfrak{L}}^{D}[N,M]}   \partial B(\Cref{const_prepare_loop_decompose} M) \big)\\
			&+\sum_{1\le i\le l} \mathbb{P}\Big( A_1\xleftrightarrow{(D\cup  B(\Cref{const_prepare_loop_decompose} M))} ... \ A_k \xleftrightarrow{(D\cup  B(\Cref{const_prepare_loop_decompose}M)) } \mathrm{ran}(\widetilde{\eta}^{\mathrm{F}}_i)\cap [\widetilde{B}(\Cref{const_prepare_loop_decompose} M)]^c \mid \widehat{\mathsf{G}}^{D}[\omega_l; N,M] \Big). 
		\end{split}
	\end{equation*} 
	Clearly, the first term is upper-bounded by $\mathbb{P}\big( A_1\xleftrightarrow{(D)} ... \xleftrightarrow{(D)} A_k \xleftrightarrow{(D)}   \partial B(\Cref{const_prepare_loop_decompose} M) \big)$. Moreover, for each $1\le i\le l$, Lemma \ref{lemma_sto_dom} shows that $\mathrm{ran}(\widetilde{\eta}^{\mathrm{F}}_i)\cap [\widetilde{B}(\Cref{const_prepare_loop_decompose} M)]^c$ is stochastically dominated by $\mathfrak{L}^{D}[\frac{C'M}{10},C'M]\cap [\widetilde{B}(\Cref{const_prepare_loop_decompose} M)]^c$ for some constant $C'\in (0,\Cref{const_prepare_loop_decompose})$. Consequently, since $\widetilde{\mathcal{L}}_{1/2}^{D\cup B(\Cref{const_prepare_loop_decompose} M)}+\widetilde{\mathcal{L}}_{1/2}^{D}[\frac{C'M}{10},C'M] \le \widetilde{\mathcal{L}}_{1/2}^{D}$, we get
	\begin{equation*}
		\begin{split}
			&\mathbb{P}\Big( A_1\xleftrightarrow{(D\cup B(\Cref{const_prepare_loop_decompose} M))} ... \ A_k \xleftrightarrow{(D\cup  B(\Cref{const_prepare_loop_decompose} M)) } \mathrm{ran}(\widetilde{\eta}^{\mathrm{F}}_i)\cap [\widetilde{B}(\Cref{const_prepare_loop_decompose} M)]^c \mid \widehat{\mathsf{G}}^{D}[\omega_l; N,M] \Big)\\
			\le & \mathbb{P}\big( A_1\xleftrightarrow{(D)} ... \xleftrightarrow{(D)} A_k \xleftrightarrow{(D)} \partial B(\Cref{const_prepare_loop_decompose} M) \big).
		\end{split}
	\end{equation*}
	To sum up, we conclude the proof for $j=1$. 
\end{proof}	
	
	We are now ready to prove Proposition \ref{prop_boundtoset_zero}.

\begin{proof}[Proof of Proposition \ref{prop_boundtoset_zero}]
	Since the two items are very similar, we will only prove Item (a) and omit the proof of Item (b). For Item (a), by (\ref{ineq_compare}), it suffices to show 
\begin{equation}
	\mathbb{P}^{D\cup \partial B(\Cref{prop_boundtoset_zero1}M)}\big(A\xleftrightarrow{\ge 0} \partial B(M) \big) \gtrsim \mathbb{P}^{D}\big(A\xleftrightarrow{\ge 0} \partial B(M) \big). 
\end{equation}
To get this, by the isomorphism theorem, we only need to establish
\begin{equation}\label{lateadd_3.39}
	\begin{split}
\mathbb{P}(\mathsf{A}_M):= & \mathbb{P}\big(\{A\xleftrightarrow{(D)} \partial B(M)\}\cap \{A\xleftrightarrow{(D\cup \partial B(\Cref{prop_boundtoset_zero1}M))} \partial B(M)\}^c \big) \\
	\le & \tfrac{1}{2} \mathbb{P}\big(A\xleftrightarrow{(D)} \partial B(M) \big).
	\end{split}
\end{equation}
Let $\mathcal{C}^{M}(A):= \{v\in \widetilde{\mathbb{Z}}^d:v\xleftrightarrow{(D\cup \partial B(M))} A\}$. When $\mathsf{A}_M$ happens, there must be a loop $\widetilde{\ell}_*\in \widetilde{\mathcal{L}}_{1/2}^{D}$ intersecting both $\partial B(\Cref{prop_boundtoset_zero1}M)$ and $\mathcal{C}^{M}(A)$. Since $\mathcal{C}^{M}(A)\subset \widetilde{B}(M)$, we know that $\widetilde{\ell}_*$ must cross the annulus $B(\Cref{prop_boundtoset_zero1}^{\frac{2}{3}}M)\setminus B(\Cref{prop_boundtoset_zero1}^{\frac{1}{3}}M)$. Note that any loop included in $\mathcal{C}^{M}(A)$ does not cross this annulus. By these observations, we have 
\begin{equation*}
\begin{split}
	\mathbb{P}(\mathsf{A}_M) \le & \mathbb{P}\big( A\xleftrightarrow{\overline{\mathfrak{L}}^{D}[\Cref{prop_boundtoset_zero1}^{\frac{1}{3}}M,\Cref{prop_boundtoset_zero1}^{\frac{2}{3}}M]  } \mathfrak{L}^{D}[\Cref{prop_boundtoset_zero1}^{\frac{1}{3}}M,\Cref{prop_boundtoset_zero1}^{\frac{2}{3}}M] \cup\partial B(M) , \mathfrak{L}^{D}[\Cref{prop_boundtoset_zero1}^{\frac{1}{3}}M,\Cref{prop_boundtoset_zero1}^{\frac{2}{3}}M] \neq \emptyset \big)\\
	=&\sum\nolimits_{l\ge 1}\sum\nolimits_{w_l\in \Omega_l[\Cref{prop_boundtoset_zero1}^{\frac{1}{3}}M,\Cref{prop_boundtoset_zero1}^{\frac{2}{3}}M]} \mathbb{P}\big( \widehat{\mathsf{G}}^{D}[w_l; \Cref{prop_boundtoset_zero1}^{\frac{1}{3}}M,\Cref{prop_boundtoset_zero1}^{\frac{2}{3}}M ] \big) \\
	&\cdot  \mathbb{P}\big(A\xleftrightarrow{\overline{\mathfrak{L}}^{D}[\Cref{prop_boundtoset_zero1}^{\frac{1}{3}}M,\Cref{prop_boundtoset_zero1}^{\frac{2}{3}}M]  } \mathfrak{L}^{D}[\Cref{prop_boundtoset_zero1}^{\frac{1}{3}}M,\Cref{prop_boundtoset_zero1}^{\frac{2}{3}}M] \cup\partial B(M)   \mid \widehat{\mathsf{G}}^{D}[w_l; \Cref{prop_boundtoset_zero1}^{\frac{1}{3}}M,\Cref{prop_boundtoset_zero1}^{\frac{2}{3}}M ] \big)\\
	\overset{\text{Lemma}\ \ref*{lemma_prepare_loop_decompose}}{\lesssim }  &\mathbb{P}\big(A\xleftrightarrow{(D)} \partial B(M)\big) \sum\nolimits_{l\ge 1}(l+1)\cdot  \mathbb{P}\big( \mathsf{G}_l^{D}[\Cref{prop_boundtoset_zero1}^{\frac{1}{3}}M,\Cref{prop_boundtoset_zero1}^{\frac{2}{3}}M] \big). 
\end{split}
\end{equation*} 
Moreover, by Lemma \ref{lemma_Gl}, the sum on the right-hand side is at most 
\begin{equation*}
	\begin{split}
		\sum\nolimits_{l\ge 1} (l +1 )\cdot (C\Cref{prop_boundtoset_zero1}^{\frac{1}{3}})^{(2-d)l} \le \tfrac{1}{2},
		\end{split}
\end{equation*}
where we require $\Cref{prop_boundtoset_zero1}$ to be sufficiently large to ensure the last inequality. By these two estimates, we obtain (\ref{lateadd_3.39}), thereby proving this proposition for Item (a). 
\end{proof}

\section{Point-to-set versus boundary-to-set connecting probabilities}\label{section_relation}

The aim of this section is to prove Proposition \ref{prop_relation_pointandboundary}. Specifically, in Section \ref{subsection_lower_prop_relation} we establish the lower bound in Proposition \ref{prop_relation_pointandboundary} with slightly weaker conditions. The proof is mainly based on an adaptation of \cite[Proposition 3.1]{cai2024one}. Subsequently, in Section \ref{subsection_loop_cluster_decomposition} we present a decomposition for loop clusters, which leads to a useful upper bound on the connecting probability between general sets (see Lemma \ref{lemma_decompose_moresets}). Based on this decomposition, in Section \ref{subsection_upper_prop_relation} we prove the upper bound in Proposition \ref{prop_relation_pointandboundary}.

\subsection{Lower bounds in Proposition \ref{prop_relation_pointandboundary}}\label{subsection_lower_prop_relation}

\textbf{(a)} We first prove the lower bound in (\ref{ineq_relation_pointandboundary}) with the condition $M\ge \Cref{const_relation_pointandboundary1}N^{1+\varsigma_d(N)}$ (instead of $M\ge \Cref{const_relation_pointandboundary1}N^{(1\boxdot \frac{d-4}{2} )+\varsigma_d(N)}$). Similar to $\mathcal{H}_1$ in Lemma \ref{lemma_hat_H}, we define 
\begin{equation}\label{finish4.1}
	\mathcal{H}^{\mathrm{in}}:= |\partial \mathcal{B}(d^{-2}M)|^{-1}\sum\nolimits_{z\in \partial \mathcal{B}(d^{-2}M) } \mathcal{H}_z(\widehat{\mathcal{C}}_A^{-},\widehat{\mathcal{C}}_A^{-} \cup D). 
\end{equation}
By Lemma \ref{lemma_upper_boundarytoset} (with $j=1$) and Lemma \ref
{lemma3.3}, we have 
\begin{equation}\label{finish4.2}
\begin{split}
	\mathbb{P}^{D}\big( A\xleftrightarrow{\ge 0} \partial B(M) \big) =& \mathbb{E}^{D}\big[ \mathbb{P}\big(\widehat{\mathcal{C}}_A^{-} \xleftrightarrow{\ge 0 } \partial B(M) \mid \mathcal{F}_{\widehat{\mathcal{C}}_A^{-}}\big) \big] \\
\overset{\text{Lemma}\ \ref*{lemma_upper_boundarytoset}}{\lesssim}  & M^{d-2}\theta_d(M/4) \mathbb{E} \big[ \mathcal{H}^{\mathrm{in}} \big] \\
\overset{\text{Lemma}\ \ref*{lemma3.3}}{\lesssim }  & M^{d-2}\theta_d(M/4) \cdot \frac{\sum\nolimits_{z\in \partial \mathcal{B}(d^{-2}M) } \mathbb{P}^{D}(A\xleftrightarrow{\ge 0} z)}{ |\partial \mathcal{B}(d^{-2}M)|}. 
\end{split}
	\end{equation}
Combined with $\theta_d(M/4)\lesssim  M^{-[(\frac{d}{2}-1)\boxdot  2]+\varsigma_d(M)}$ (by (\ref{one_arm_low})--(\ref{one_arm_high})) and $\mathbb{P}^{D}(A\xleftrightarrow{\ge 0} z)\asymp \mathbb{P}^{D}(A\xleftrightarrow{\ge 0} x)$ for all $z\in  \partial \mathcal{B}(d^{-2}M)$ and $x\in \partial B(M)$ (by Proposition \ref{prop_pointtoset_speed}), it concludes the lower bound in (\ref{ineq_relation_pointandboundary}).

\textbf{(b)} Likewise, we prove the lower bound in (\ref{ineq_relation_pointandboundary}) with $M\le \cref{const_relation_pointandboundary2}N^{1-\varsigma_d(N)}$ (instead of $M\le\cref{const_relation_pointandboundary2}N^{(1 \boxdot \frac{2}{d-4} )-\varsigma_d(N)}$). The proof is similar to that of Item (a). As in (\ref{finish4.1}), let
\begin{equation}\label{newineq_4.2_b}
	\mathcal{H}^{\mathrm{out}}:= |\partial \mathcal{B}(d^3M)|^{-1}\sum\nolimits_{z\in \partial \mathcal{B}(d^3M) } \mathcal{H}_z(\widehat{\mathcal{C}}_A^{-},\widehat{\mathcal{C}}_A^{-}\cup D). 
\end{equation}
Parallel to (\ref{finish4.2}) (here we use Lemma \ref{lemma_upper_boundarytoset}  with $j=2$ instead), we have 
\begin{equation}\label{newineq_4.10}
\begin{split}
		\mathbb{P}^{D}\big(A\xleftrightarrow{\ge 0} \partial B(M) \big)\lesssim &M^{d-2}\theta_d(M/4) \mathbb{E} \big[ \mathcal{H}^{\mathrm{out}} \big]\\
		\lesssim &M^{d-2}\theta_d(M/4) \cdot \frac{\sum\nolimits_{z\in \partial \mathcal{B}(d^3M) }  \mathbb{P}^{D}(A\xleftrightarrow{\ge 0} z)}{\partial \mathcal{B}(d^3M)}.
		 \end{split}
\end{equation}
Combined with $\theta_d(M/4)\lesssim M^{-[(\frac{d}{2}-1)\boxdot  2]+\varsigma_d(M)}$ and $\mathbb{P}^{D}(A\xleftrightarrow{\ge 0} z)\asymp \mathbb{P}^{D}(A\xleftrightarrow{\ge 0} x)$ for all $z\in  \partial \mathcal{B}(d^3M)$ and $x\in \partial B(M)$ (by Proposition \ref{prop_pointtoset_harnack}), it confirms the lower bound in (\ref{ineq_relation_pointandboundary}):
\begin{equation}\label{newaddto_4.13}
		\mathbb{P}^{D}\big(A\xleftrightarrow{\ge 0} \partial B(M) \big) \lesssim  	\mathbb{P}^{D}\big(A\xleftrightarrow{\ge 0} x \big)\cdot M^{[(\frac{d}{2}-1)\boxdot  (d-4)]+\varsigma_d(M)}.  \pushQED{\qed} 
\qedhere
\popQED
\end{equation}

We record two useful applications of (\ref{newaddto_4.13}) as follows. 
\begin{itemize}
	\item  For $d\ge 3$, there exists $\Cl\label{const_5.6}>0$ such that for any $N\ge 1$, $M\ge \Cref{const_5.6}N^{1+\varsigma_d(N)}$, $x\in [B(\Cref{const_5.6}N^{1+\varsigma_d(N)})]^c$, $A\subset \widetilde{B}(N)$ and $D\subset \widetilde{B}(N)\cup [\widetilde{B}(2(M\vee |x|))]^c$. By (\ref{newaddto_4.13}) and Proposition \ref{prop_pointtoset_speed}, we have (let $y_M$ be an arbitrary point in $\partial B(M)$)
	\begin{equation}\label{app2}
	\begin{split}
		\mathbb{P}^{D}\big(A\xleftrightarrow{\ge 0 } \partial B(M)\big)\lesssim & \mathbb{P}^{D}\big(A\xleftrightarrow{\ge 0 }  y_M\big)\cdot M^{[(\frac{d}{2}-1)\boxdot  (d-4)]+\varsigma_d(M)}\\
		\lesssim  & \mathbb{P}^{D}\big(A\xleftrightarrow{\ge 0 }  x\big)\cdot |x|^{d-2}M^{-[(\frac{d}{2}-1)\boxdot  2]+\varsigma_d(M)}.
	\end{split}
\end{equation}

	\item For $d\ge 3$, there exists $\cl\label{const_5.7}>0$ such that for any $N\ge 1$, $M\le \cref{const_5.7}N^{1-\varsigma_d(N)}$, $x\in B(\cref{const_5.7}N^{1-\varsigma_d(N)})$, $A\subset [\widetilde{B}(N)]^c$ and $D\subset [\widetilde{B}(N)]^c\cup \widetilde{B}(\frac{1}{2}(M\land |x|))$. By (\ref{newaddto_4.13}) and Proposition \ref{prop_pointtoset_harnack}, we have (likewise, we arbitrarily take $y_M\in \partial B(M)$)
	\begin{equation}\label{app3}
	\begin{split}
		\mathbb{P}^{D}\big(A\xleftrightarrow{\ge 0 } \partial B(M)\big)\lesssim & \mathbb{P}^{D}\big(A\xleftrightarrow{\ge 0 }  y_M \big)\cdot M^{[(\frac{d}{2}-1)\boxdot  (d-4)]+\varsigma_d(M)}\\
		\lesssim  & \mathbb{P}^{D}\big(A\xleftrightarrow{\ge 0 }  x\big)\cdot M^{[(\frac{d}{2}-1) \boxdot  (d-4)]+\varsigma_d(M)}.
	\end{split}
\end{equation}
\end{itemize}

\subsection{Loop cluster decomposition}\label{subsection_loop_cluster_decomposition}

Let $\Upsilon_k$ be the collection of all partitions of $\{1,2,...,k\}$. Here a partition is a collection of disjoint subsets of $\{1,2,...,k\}$ whose union is exactly $\{1,2,...,k\}$. 

\begin{lemma}\label{lemma_new_decomposition}
	For any $d\ge 3$ and $k\ge 1$, there exist $\Cl\label{const_new_decompose1}(d),\Cl\label{const_new_decompose3}(d),\Cl\label{const_new_decompose2}(d,k)>0$ such that for any $M\ge \Cref{const_new_decompose1}N$, $A_0,D \subset \widetilde{B}(\frac{N}{10\Cref{const_new_decompose3} })$ and $A_1,...,A_k,D'\subset [\widetilde{B}(10\Cref{const_new_decompose3}M)]^c$, 
	\begin{equation}\label{add3.32}
	\begin{split}
		&\mathbb{P}\Big( \{A_0\xleftrightarrow{(D\cup D')}A_1,...,A_k\}\cap \{\widetilde{\mathcal{L}}_{1/2}^{D \cup D'}[N,M]\neq 0\}\Big)\\
		\le & \Cref{const_new_decompose2}\big(\frac{N}{M}\big)^{d-2}\mathbb{P}\big( A_0\xleftrightarrow{(D)}\partial B(\tfrac{N}{\Cref{const_new_decompose3} }) \big)\\
		&\cdot  \sum\nolimits_{\gamma_k \in \Upsilon_k} \prod\nolimits_{\{j_1,...,j_m\}\in \gamma_k} \mathbb{P}\big(A_{j_1}\xleftrightarrow{ (D') } ...\xleftrightarrow{ (D') } A_{j_m}\xleftrightarrow{ (D') } \partial B(\Cref{const_new_decompose3} M) \big).
			\end{split}
	\end{equation}
\end{lemma}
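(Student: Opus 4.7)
The strategy mirrors the loop--soup decomposition underlying Lemma~\ref{lemma_prepare_loop_decompose}, but with extra bookkeeping for how the $k$ outside sets $A_1,\ldots,A_k$ are grouped by the crossing loops. The plan is to condition on the endpoint configuration of the $[N,M]$--crossing loops, use the spatial Markov property to decouple the inside from the outside, and combine the two sides via the BKR inequality together with the stochastic domination of Lemma~\ref{lemma_sto_dom}. Concretely, write the left--hand side of \eqref{add3.32} as
\begin{equation*}
\sum_{l\ge 1}\sum_{\omega_l\in\Omega_l[N,M]}\mathbb{P}\Big(\{A_0\xleftrightarrow{(D\cup D')}A_1,\ldots,A_k\}\cap\widehat{\mathsf{G}}^{D\cup D'}[\omega_l;N,M]\Big),
\end{equation*}
and invoke Lemma~\ref{lemma_decomposition}: on $\widehat{\mathsf{G}}^{D\cup D'}[\omega_l;N,M]$ the $l$ forward and $l$ backward crossing paths are conditionally independent Brownian bridges between the prescribed endpoints, and are independent of $\overline{\mathfrak{L}}^{D\cup D'}[N,M]$.

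On this conditional event, group the outside sets by which connected component of $\overline{\mathfrak{L}}^{D\cup D'}[N,M]$ (together with the outer portions of the crossing paths) they belong to; this produces a partition $\gamma_k\in\Upsilon_k$. For each block $\{j_1,\ldots,j_m\}\in\gamma_k$, the sets $A_{j_1},\ldots,A_{j_m}$ are connected to each other and to the outer portion of at least one crossing path using a single collection of glued loops and crossing paths, and the collections associated to different blocks are genuinely disjoint in the extended sense of Remark~\ref{remark_decompose}. The extended BKR inequality therefore factorizes the outside contribution across blocks; applying Lemma~\ref{lemma_sto_dom} to replace each crossing--path outer portion (living inside $[\widetilde{B}(\Cref{const_new_decompose3}M)]^c$) by a loop in $\widetilde{\mathcal{L}}_{1/2}^{D'}$ touching $\partial B(\Cref{const_new_decompose3}M)$, precisely as in the proof of Lemma~\ref{lemma_prepare_loop_decompose}, upgrades the block event into $\mathbb{P}\big(A_{j_1}\xleftrightarrow{(D')}\cdots\xleftrightarrow{(D')}A_{j_m}\xleftrightarrow{(D')}\partial B(\Cref{const_new_decompose3}M)\big)$. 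Choosing for each block a representative among the $l$ crossing paths costs a combinatorial factor $l^{|\gamma_k|}$. Symmetrically, the single inside cluster must contain $A_0$ together with at least one crossing path's inner endpoint; the same stochastic domination bounds this by $\mathbb{P}\big(A_0\xleftrightarrow{(D)}\partial B(N/\Cref{const_new_decompose3})\big)$, with only $D$ appearing since $D'\cap\widetilde{B}(N)=\emptyset$ by hypothesis.

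Finally, sum over $\omega_l$ and $l$. Lemma~\ref{lemma_Gl} applied with outer radius $M$ and target $\widetilde{B}(N)$ gives $\mathbb{P}(\mathsf{G}_l^{D\cup D'}[N,M])\le\big(CM^{2-d}\mathrm{cap}(\widetilde{B}(N))\big)^l\le\big(C'(N/M)^{d-2}\big)^l$; choosing $\Cref{const_new_decompose1}$ large enough that $C'(N/M)^{d-2}\le 1/2$, the series $\sum_{l\ge 1}l^{|\gamma_k|}\big(C'(N/M)^{d-2}\big)^l$ is dominated by its first term up to a constant depending only on $k$. Summing over the finitely many $\gamma_k\in\Upsilon_k$ then yields the claim with $\Cref{const_new_decompose2}=\Cref{const_new_decompose2}(d,k)$. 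The main obstacle is the BKR step: one must verify that the blocks of $\gamma_k$ are indeed certified by disjoint collections of glued loops and crossing paths in the precise sense of Remark~\ref{remark_decompose}, and then carefully interleave this factorization with the stochastic domination of Lemma~\ref{lemma_sto_dom} so that the endpoint annuli $\partial B(N/\Cref{const_new_decompose3})$ and $\partial B(\Cref{const_new_decompose3}M)$ emerge with a single choice of $\Cref{const_new_decompose3}$ uniform in $k$.
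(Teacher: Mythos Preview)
Your proposal is correct and follows essentially the same approach as the paper: condition on the crossing configuration $\widehat{\mathsf{G}}^{D\cup D'}[\omega_l;N,M]$, exhibit a partition $\gamma_k$ such that the inner event and the block events occur disjointly, apply BKR, bound each factor via Lemma~\ref{lemma_prepare_loop_decompose} (which packages the stochastic domination of Lemma~\ref{lemma_sto_dom}), and sum the resulting $(l+1)^{k+1}\mathbb{P}(\mathsf{G}_l)$ using Lemma~\ref{lemma_Gl}. The obstacle you flag---disjointness of the certifying collections, in particular between the inside cluster and the outside blocks---is resolved in the paper by an explicit exploration that stops each cluster upon first touching $\mathfrak{L}\cup\partial B(\cdot)$, followed by a short contradiction: a glued loop common to the inside and an outside explored cluster would have to be the last-added loop in both, hence cross the annulus $B(\Cref{const_new_decompose3}M)\setminus B(N/\Cref{const_new_decompose3})$ and lie in $\mathfrak{L}$, contradicting that explored clusters consist only of loops in $\overline{\mathfrak{L}}$.
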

\begin{proof}
For brevity, in this proof we denote $\mathfrak{L}:=\mathfrak{L}^{D\cup D'}[N,M]$ and $\overline{\mathfrak{L}}:=\overline{\mathfrak{L}}^{D\cup D'}[N,M]$. We claim that when the event on the left-hand side of (\ref{add3.32}) occurs, there exists a partition $\gamma_k \in \Upsilon_k$ such that the following events happen disjointly:
\begin{equation}\label{lastrevision5.9}
	\mathsf{F}_0:= \big\{ A_0\xleftrightarrow{ \overline{\mathfrak{L}} }\mathfrak{L}\cup   \partial B(\tfrac{N}{\Cref{const_new_decompose3} }) \big\}, 
\end{equation}
\begin{equation}\label{lastrevision5.10}
	\mathsf{F}(\{j_1,...,j_m\}):= \big\{ A_{j_1}\xleftrightarrow{ \overline{\mathfrak{L}} }... \xleftrightarrow{ \overline{\mathfrak{L}} } A_{j_m} \xleftrightarrow{\overline{\mathfrak{L}} } \mathfrak{L}\cup \partial B(\Cref{const_new_decompose3}M) \big\}
\end{equation}
for all $\{j_1,...,j_m\}\in \gamma_k$. To see this, we explore the cluster of $\overline{\mathfrak{L}}$ containing $A_0$, i.e., take an increasing sequence of subsets obtained by adding glued loops sequentially, where each newly added loop intersects the current subset. We stop our exploration when the added loop intersects $\mathfrak{L}\cup   \partial B(\tfrac{N}{\Cref{const_new_decompose3} })$, which must happen since $A_0$ (contained in $\widetilde{B}(\frac{N}{10\Cref{const_new_decompose3} })$) and $A_1$ (contained in $[\widetilde{B}(10\Cref{const_new_decompose3}M)]^c$) are connected by $\cup \widetilde{\mathcal{L}}_{1/2}^{D\cup D'}$. We denote this explored cluster by $\hat{\mathcal{C}}_0$. Similarly, we explore the clusters of $\overline{\mathfrak{L}}$ containing $A_i$ for $1\le i\le k$ inductively. To this end, for each $i$, we define the exploration procedure of $A_i$ as follows: we explore the cluster of $\overline{\mathfrak{L}}$ containing $A_i$, and stop when the added loop intersects $\mathfrak{L}\cup \partial B(\Cref{const_new_decompose3}N)$. If the explored cluster of $A_i$ (denoted by $\hat{\mathcal{C}}_i$) exactly intersects $A_{j_1^{(i)}},...,A_{j^{(i)}_{m_i}}$ (we refer to them as ``involved sets''), we then add the set $\hat{\mathbf{j}}_i:=\big\{j_{l}^{(i)}:1\le l\le m_i,A_{j_{l}^{(i)}}\ \text{is not involved in}\ \hat{\mathcal{C}}_{i'}\ \text{for any}\ 1\le i'\le i-1  \big\}$ and the cluster $\hat{\mathcal{C}}_i$ to the collections $\hat{\gamma}$ and $\hat{\mathfrak{C}}$ respectively ($\hat{\gamma}$ and $\hat{\mathfrak{C}}$ are defined to be empty at the beginning). Having defined this, our whole exploration can be described as follows: at every step we let $i$ be the minimal integer such that $A_i$ is not yet an involved set (if such $i$ does not exist, we stop) and then we explore $A_i$ (we must stop eventually because every $A_i$ for $1\le i\le k$ is connected to $A_0\subset \widetilde{B}(\tfrac{N}{10\Cref{const_new_decompose3} })$). An illustration is provided in Figure \ref{pic1}. Here are some observations on this construction: 
	\begin{itemize}

	\item The cluster $\hat{\mathcal{C}}_0$ certifies the event $\mathsf{F}_0$, and each $\hat{\mathcal{C}}_i \in \hat{\mathfrak{C}}$ certifies $\mathsf{F}(\hat{\mathbf{j}}_i)$.

		\item The collection $\hat{\gamma}$ is a partition of $\{1,2,...,k\}$.

		\item  Every two clusters in $\hat{\mathfrak{C}}$ do not include a common glued loop.

	\end{itemize}
Thus, to obtain our claim, it suffices to show that every $\hat{\mathcal{C}}_i \in \hat{\mathfrak{C}}$ does not include a common glued loop with $\hat{\mathcal{C}}_0$. To see this, we employ a proof by contradiction. Assume that $\widetilde{\ell}_*$ is included in both $\hat{\mathcal{C}}_i$ and $\hat{\mathcal{C}}_0$. Note that by our stopping rule in the exploration, for $\hat{\mathcal{C}}_i$ (resp. $\hat{\mathcal{C}}_0$), only the last added loop can intersect $\partial B(\Cref{const_new_decompose3}M)$ (resp. $\partial B(\frac{N}{\Cref{const_new_decompose3}})$), while the previous ones are contained in $[\widetilde{B}(\Cref{const_new_decompose3}N)]^c$ (resp. $\widetilde{B}(\frac{N}{\Cref{const_new_decompose3}})$). Therefore, the common loop $\widetilde{\ell}_*$ must be the last added loop in both $\hat{\mathcal{C}}_i$ and $\hat{\mathcal{C}}_0$. However, this implies that $\widetilde{\ell}_*$ intersects both $\partial B(\Cref{const_new_decompose3}M)$ and $\partial B(\frac{N}{\Cref{const_new_decompose3}})$, and hence is contained in $\mathfrak{L}$, which causes a contradiction with the assumption that $\widetilde{\ell}_*$ is included in $\hat{\mathcal{C}}_0$ (noting that $\hat{\mathcal{C}}_0$ is composed of loops in $\overline{\mathfrak{L}}$). Thus, such a common loop $\widetilde{\ell}_*$ does not exist, and our claim holds.

We continue to the proof of (\ref{add3.32}). For simplicity, we denote $\mathsf{G}_l:=\mathsf{G}^{D\cup D'}_l[N,M]$, $\Omega_l:=\Omega_l[N,M]$ and $\widehat{\mathsf{G}}(\omega_l):=\widehat{\mathsf{G}}^{D\cup D'}[\omega_l;N,M]$ for $\omega_l\in \Omega_l$. By our claim in the last paragraph and the BKR inequality, the left-hand side of (\ref{add3.32}) is at most
	   \begin{equation}\label{3.37}
	   	\begin{split}
	   		&\sum_{l\ge 1} \sum_{\omega_l \in \Omega_l} \mathbb{P}\big(\widehat{\mathsf{G}}(\omega_l) \big)
	   		 \sum_{\gamma_k \in \Upsilon_k} \prod_{\{j_1,...,j_m\}\in \gamma_k} \mathbb{P}\big(\mathsf{F}_0 \mid \widehat{\mathsf{G}}(\omega_l) \big)\cdot \mathbb{P}\big(\mathsf{F}(j_1,...,j_m) \mid \widehat{\mathsf{G}}(\omega_l) \big). 	\end{split}
	   \end{equation}
In addition, it follows from Lemma \ref{lemma_prepare_loop_decompose} that 
\begin{equation}\label{newineq_3.10}
		\mathbb{P}\big(\mathsf{F}_0 \mid \widehat{\mathsf{G}}(\omega_l) \big)\le (l+1)\mathbb{P}\big(A_0\xleftrightarrow{(D) }  \partial B(\tfrac{N}{\Cref{const_new_decompose3}})\big),
\end{equation}
	  \begin{equation}\label{newineq_3.11}
	  	\begin{split}
	  		\mathbb{P}\big(\mathsf{F}(j_1,...,j_m) \mid \widehat{\mathsf{G}}(\omega_l) \big)\le (l+1)\mathbb{P}\big( A_{j_1}\xleftrightarrow{ (D') }... \xleftrightarrow{(D') } A_{j_m} \xleftrightarrow{ (D') }\partial B(\Cref{const_new_decompose3}N)\big).
	  	\end{split}
	  \end{equation}
	 By (\ref{3.37}), (\ref{newineq_3.10}) and (\ref{newineq_3.11}), we obtain that the left-hand side of (\ref{add3.32}) is at most  
 \begin{equation}\label{add_3.12}
	  	\begin{split}
	  			&\sum\nolimits_{l\ge 1} (l+1)^{k+1}\mathbb{P}(\mathsf{G}_l) \cdot \mathbb{P}\big( A_0\xleftrightarrow{(D)}  \partial B(\tfrac{N}{\Cref{const_new_decompose3}})\big)\\
		&\cdot  \sum\nolimits_{\gamma_k \in \Upsilon_k} \prod\nolimits_{\{j_1,...,j_m\}\in \gamma_k} \mathbb{P}\big(A_{j_1}\xleftrightarrow{ (D') } ...\xleftrightarrow{ (D') } A_{j_m}\xleftrightarrow{ (D') } \partial B(\Cref{const_new_decompose3} M) \big).
					  	\end{split}
	  \end{equation}
	  Moreover, Lemma \ref{lemma_Gl} shows that $\mathbb{P}(\mathsf{G}_l) \le  (\frac{CN}{M})^{l(d-2)}$ for all $l\ge 1$, which implies 
    \begin{equation}\label{newadd_3.15}
    	\sum\nolimits_{l\ge 1} (l+1)^{k+1}\mathbb{P}(\mathsf{G}_l) \le C'(d,k)\cdot \big(\frac{N}{M}\big)^{d-2}.
    \end{equation}
    Combining (\ref{add_3.12}) and (\ref{newadd_3.15}), we conclude the proof of this lemma.     
    \end{proof}

	  \begin{figure}[h]
	\centering
	\includegraphics[width=0.6\textwidth]{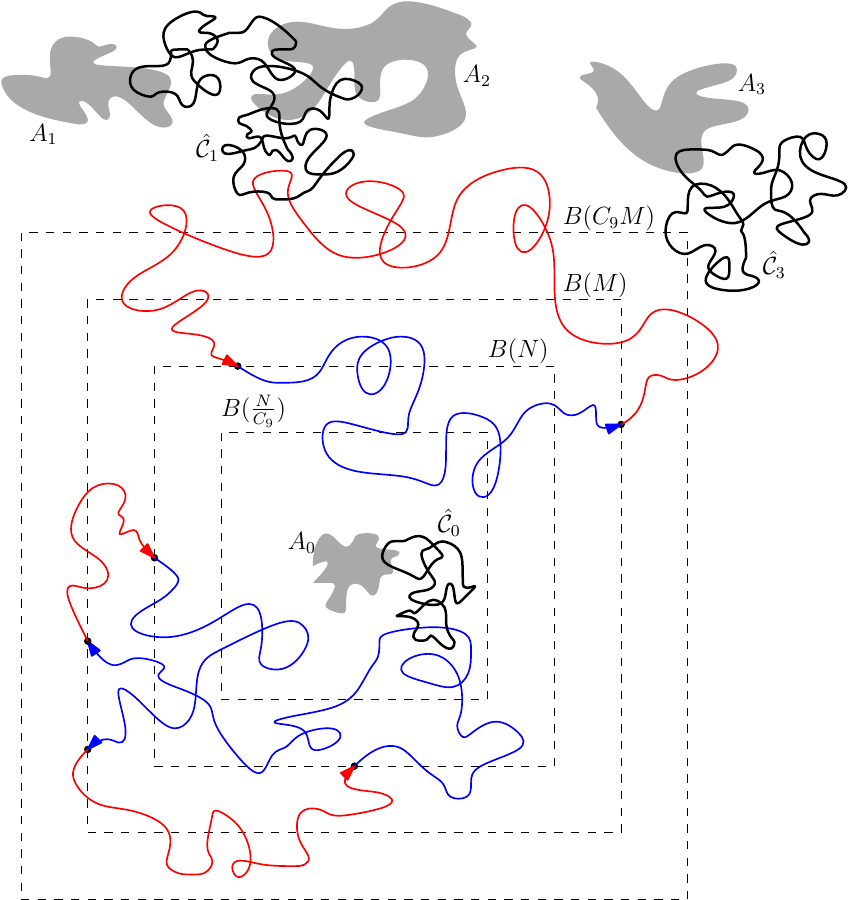}
	\caption{We consider the case $k=3$. The grey regions represent the target sets $\{A_j\}_{0\le j\le 3}$. The red (resp. blue) paths represent the forward (resp. backward) crossing paths from $\partial B(M)$ to $\partial B(N)$. The three black clusters are the explored clusters. As shown in this illustration, the event $\mathsf{F}_0$ in (\ref{lastrevision5.9}) is certified by the explored cluster $\hat{\mathcal{C}}_0$. In this example, the events $\mathsf{F}(\{1,2\})$ and $\mathsf{F}(\{3\})$ defined in (\ref{lastrevision5.10}) both happen and are certified by $\hat{\mathcal{C}}_1$ and $\hat{\mathcal{C}}_3$ respectively.  }\label{pic1}
\end{figure}

	  Now we are ready to show the main result of this subsection.

	\begin{lemma}\label{lemma_decompose_moresets}
	For any $d\ge 3$ and $k\ge 1$, there exist $\Cl\label{const_decompose_moresets1}(d),\Cl\label{const_decompose_moresets2}(d,k)>0$ such that for any $A_0,D\subset \widetilde{B}(\frac{N}{10\Cref{const_decompose_moresets1}})$ and $A_1,...,A_k,D'\subset [\widetilde{B}(10\Cref{const_decompose_moresets1}N)]^c$,  
	\begin{equation*}
	\begin{split}
			&\mathbb{P}\big(A_0\xleftrightarrow{(D\cup D')}A_1,...,A_k  \big)\\
\le &\Cref{const_decompose_moresets2} \mathbb{P}\big( A_0\xleftrightarrow{(D)}\partial B(\tfrac{N}{\Cref{const_decompose_moresets1}}) \big) 
\sum_{\gamma_k \in \Upsilon_k} \prod_{\{j_1,...,j_m\}\in \gamma_k} \mathbb{P}\big(A_{j_1} \xleftrightarrow{(D')}...\xleftrightarrow{(D')}A_{j_m}\xleftrightarrow{(D')} \partial B(\Cref{const_decompose_moresets1} N) \big).
	\end{split}
		\end{equation*}		
\end{lemma}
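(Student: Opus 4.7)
The plan is to adapt the exploration-and-BKR argument from the proof of Lemma~\ref{lemma_new_decomposition}, but this time to sum over \emph{all} values $l \ge 0$ of the number of crossing loops rather than only $l \ge 1$. First I would fix auxiliary radii $N_{\mathrm{in}}, N_{\mathrm{out}}$, both proportional to $N$, so that the natural stopping radii of the exploration match $N/\Cref{const_decompose_moresets1}$ and $\Cref{const_decompose_moresets1} N$ and so that $N_{\mathrm{out}}/N_{\mathrm{in}} \ge \Cref{const_new_decompose1}$; concretely one takes $N_{\mathrm{in}} = \Cref{const_new_decompose3} N / \Cref{const_decompose_moresets1}$ and $N_{\mathrm{out}} = \Cref{const_decompose_moresets1} N / \Cref{const_new_decompose3}$, choosing $\Cref{const_decompose_moresets1}$ large enough (in terms of $\Cref{const_new_decompose1}$ and $\Cref{const_new_decompose3}$) for the ratio condition to hold. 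Under this choice the inclusion hypotheses of Lemma~\ref{lemma_new_decomposition} are met verbatim on the pair $(N_{\mathrm{in}}, N_{\mathrm{out}})$, since $\widetilde{B}(N_{\mathrm{in}}/(10\Cref{const_new_decompose3})) = \widetilde{B}(N/(10\Cref{const_decompose_moresets1}))$ and $[\widetilde{B}(10\Cref{const_new_decompose3} N_{\mathrm{out}})]^c = [\widetilde{B}(10\Cref{const_decompose_moresets1} N)]^c$.

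I would then split the target probability according to $l := |\widetilde{\mathcal{L}}_{1/2}^{D\cup D'}[N_{\mathrm{in}}, N_{\mathrm{out}}]|$. The case $l \ge 1$ is handled directly by Lemma~\ref{lemma_new_decomposition}, which yields the required bound multiplied by the harmless prefactor $(N_{\mathrm{in}}/N_{\mathrm{out}})^{d-2}=O_d(1)$. For $l = 0$ the complementary loop collection $\overline{\mathfrak{L}}^{D\cup D'}[N_{\mathrm{in}}, N_{\mathrm{out}}]$ coincides with the full loop soup, and I would simply replay the cluster exploration used in the proof of Lemma~\ref{lemma_new_decomposition}: grow $\hat{\mathcal{C}}_0$ from $A_0$ until a newly added loop reaches $\partial B(N/\Cref{const_decompose_moresets1})$, and then grow $\hat{\mathcal{C}}_i$ from each previously-unvisited $A_i$ until a loop reaches $\partial B(\Cref{const_decompose_moresets1} N)$. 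This produces the same partition $\gamma_k \in \Upsilon_k$ and the same events $\mathsf{F}_0, \mathsf{F}(\{j_1,\ldots,j_m\})$ as in that proof.

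The only step there that appears to use $l \ge 1$ is the contradiction ruling out a shared loop between $\hat{\mathcal{C}}_0$ and some $\hat{\mathcal{C}}_i$, and this step survives the $l=0$ case essentially unchanged. Such a shared loop $\widetilde{\ell}_*$ must be the last loop added to each of the two clusters (any earlier loop of $\hat{\mathcal{C}}_0$ lies in $\widetilde{B}(N/\Cref{const_decompose_moresets1})^\circ$ and any earlier loop of $\hat{\mathcal{C}}_i$ in $[\widetilde{B}(\Cref{const_decompose_moresets1} N)]^c$, which are disjoint by construction), so $\widetilde{\ell}_*$ intersects both $\partial B(N/\Cref{const_decompose_moresets1})$ and $\partial B(\Cref{const_decompose_moresets1} N)$; since these radii strictly sandwich $[N_{\mathrm{in}}, N_{\mathrm{out}}]$ and $\mathrm{ran}(\widetilde{\ell}_*)$ is connected, $\widetilde{\ell}_*$ must also cross $[N_{\mathrm{in}}, N_{\mathrm{out}}]$---contradicting $l=0$. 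The resulting disjoint-occurrence decomposition, combined with BKR (Lemma~\ref{lemma_BKR}) and Lemma~\ref{lemma_prepare_loop_decompose} (whose $(l+1)$ prefactor equals $1$ here), bounds $\mathbb{P}(\mathsf{F}_0)$ and $\mathbb{P}(\mathsf{F}(\{j_1,\ldots,j_m\}))$ by the point-to-sphere and set-to-sphere probabilities on the right-hand side of the desired inequality.

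Adding the $l=0$ and $l \ge 1$ contributions, the overall combinatorial prefactor is at most $1 + \sum_{l \ge 1} (l+1)^{k+1}\mathbb{P}(\mathsf{G}_l^{D \cup D'}[N_{\mathrm{in}}, N_{\mathrm{out}}])$, which Lemma~\ref{lemma_Gl} together with $N_{\mathrm{out}}/N_{\mathrm{in}} \ge \Cref{const_new_decompose1}$ bounds by a constant $\Cref{const_decompose_moresets2}$ depending only on $d$ and $k$. I do not expect a substantive obstacle beyond bookkeeping the constants: the probabilistic content is already packaged inside Lemma~\ref{lemma_new_decomposition}, and the main extra point is just the purely geometric ``shared loop $\Rightarrow$ crossing loop'' observation in the $l=0$ case, which holds as soon as $\Cref{const_decompose_moresets1}$ is chosen as above.
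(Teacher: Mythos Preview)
Your approach is correct and close in spirit to the paper's, but the two proofs organize the ``no crossing loop'' case differently. The paper works with \emph{two} adjacent annuli $[N,\Cref{const_new_decompose1}N]$ and $[\Cref{const_new_decompose1}N,\Cref{const_new_decompose1}^2N]$: when both are free of crossing loops, the event $\{A_0\xleftrightarrow{}\partial B(N)\}$ can be realized using only loops contained in $\widetilde{B}(\Cref{const_new_decompose1}N)$, while each outer event $\{A_{j_1}\xleftrightarrow{}\dots\xleftrightarrow{}\partial B(\Cref{const_new_decompose1}^2N)\}$ uses only loops in $[\widetilde{B}(\Cref{const_new_decompose1}N)]^c$, so inner--outer disjointness is immediate from spatial separation and the exploration is only needed to decouple the outer events among themselves. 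When either annulus carries a crossing loop, the paper invokes Lemma~\ref{lemma_new_decomposition} on that annulus. Your single-annulus version instead handles $l=0$ by rerunning the full exploration (inner and outer together) and observing that a loop shared between $\hat{\mathcal{C}}_0$ and some $\hat{\mathcal{C}}_i$ would have to hit both $\partial B(N/\Cref{const_decompose_moresets1})$ and $\partial B(\Cref{const_decompose_moresets1}N)$, hence cross $[N_{\mathrm{in}},N_{\mathrm{out}}]$, contradicting $l=0$; this is a clean reuse of the same contradiction mechanism from Lemma~\ref{lemma_new_decomposition}'s proof. One minor point: in your $l=0$ step you don't actually need Lemma~\ref{lemma_prepare_loop_decompose} at all---once disjoint occurrence is established, you simply drop the event $\{l=0\}$, apply BKR in the full loop soup, and use the monotonicity $(D\cup D')\mapsto D$ and $(D\cup D')\mapsto D'$. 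Your final accounting of the prefactor as $1+\sum_{l\ge 1}(l+1)^{k+1}\mathbb{P}(\mathsf{G}_l)$ is also fine (this amounts to redoing \eqref{add_3.12} with the sum started at $l=0$), though it is equally valid to just quote Lemma~\ref{lemma_new_decomposition} as a black box for $l\ge1$ and add the $l=0$ bound separately.
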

\begin{proof}
	Recall $\Cref{const_new_decompose1}$ in Lemma \ref{lemma_new_decomposition}. Suppose that the event $\mathsf{F}:=\big\{A_0\xleftrightarrow{(D\cup D')}A_1,...,A_k \big\}$ happens. If $\widetilde{\mathcal{L}}_{1/2}^{D \cup D'}[N,\Cref{const_new_decompose1}N]=0$, then the event $\big\{ A_0\xleftrightarrow{(D\cup [\widetilde{B}(\Cref{const_new_decompose1}N)]^c)} \partial B(N)\big\}$ must occur. Otherwise (i.e., on $\mathsf{F}\cap \big\{\widetilde{\mathcal{L}}_{1/2}^{D \cup D'}[N,\Cref{const_new_decompose1}N]=0 \big\}\cap \big\{ A_0\xleftrightarrow{(D\cup [\widetilde{B}(\Cref{const_new_decompose1}N)]^c)} \partial B(N)\big\}^c$), the loop cluster of $\widetilde{\mathcal{L}}_{1/2}^{D\cup [\widetilde{B}(\Cref{const_new_decompose1}N)]^c}$ containing $A_0$ is contained in $\widetilde{B}(N)$; however, this implies that any loop $\widetilde{\ell}\in  \widetilde{\mathcal{L}}_{1/2}^{D}-\widetilde{\mathcal{L}}_{1/2}^{D\cup [\widetilde{B}(\Cref{const_new_decompose1}N)]^c}$ intersecting this loop cluster (such $\widetilde{\ell}$ must exist on the event $\mathsf{F}$) has to cross the annulus $B(\Cref{const_new_decompose1}N)\setminus B(N)$, and thus the existence of such a loop violates the assumption that $\widetilde{\mathcal{L}}_{1/2}^{D \cup D'}[N,\Cref{const_new_decompose1}N]=0$. For the same reason, if $\widetilde{\mathcal{L}}_{1/2}^{D \cup D'}[\Cref{const_new_decompose1}N,\Cref{const_new_decompose1}^
	2N]=0$, then $\partial B(\Cref{const_new_decompose1}^
	2N)\xleftrightarrow{D'\cup \widetilde{B}(\Cref{const_new_decompose1}N)} A_1,...,A_k$ happens. This together with the exploration argument in the proof of Lemma \ref{lemma_new_decomposition} implies that there exists a partition $\gamma_k \in \Upsilon_k$ such that
	\begin{equation}
		A_{j_1} \xleftrightarrow{(D'\cup \widetilde{B}(\Cref{const_new_decompose1}N))}...\xleftrightarrow{(D'\cup \widetilde{B}(\Cref{const_new_decompose1}N))}A_{j_m}\xleftrightarrow{(D'\cup \widetilde{B}(\Cref{const_new_decompose1}N))} \partial B(\Cref{const_new_decompose1}^
	2N)
	\end{equation}
	for $\{j_1, \ldots, j_m\} \in \gamma_k$ happen disjointly. Thus, similar to (\ref{add_3.12}), we have 
		\begin{equation}\label{3.42}
		\begin{split}
				&\mathbb{P}\big( \mathsf{F}, \widetilde{\mathcal{L}}_{1/2}^{D\cup D'}[N,\Cref{const_new_decompose1}N]= \widetilde{\mathcal{L}}_{1/2}^{D\cup D'}[\Cref{const_new_decompose1}N,\Cref{const_new_decompose1}^2N]=0 \big)\\
\le & \mathbb{P}\big( A_0\xleftrightarrow{(D)}\partial B(N) \big)\sum_{\gamma_k \in \Upsilon_k} \prod_{\{j_1,...,j_m\}\in \gamma_k} \mathbb{P}\big(A_{j_1} \xleftrightarrow{(D')}...A_{j_m}\xleftrightarrow{(D')} \partial B(\Cref{const_new_decompose1}^2N) \big).
		\end{split}
	\end{equation}
		Meanwhile, Lemma \ref{lemma_new_decomposition} shows that 
	\begin{equation*}\label{newineq_3.18}
		\begin{split}
		&\max\big\{ \mathbb{P}(\mathsf{F}, \widetilde{\mathcal{L}}_{1/2}^{D\cup D'}[N,\Cref{const_new_decompose1}N]\neq 0 ),\mathbb{P}(\mathsf{F}, \widetilde{\mathcal{L}}_{1/2}^{D\cup D'}[\Cref{const_new_decompose1}N,\Cref{const_new_decompose1}^2N]\neq 0) \big \} \\
			\le    &C(d,k) \mathbb{P}\big( A_0\xleftrightarrow{(D)}\partial B(\tfrac{N}{\Cref{const_new_decompose3}}) \big)\sum_{\gamma_k \in \Upsilon_k} \prod_{\{j_1,...,j_m\}\in \gamma_k} \mathbb{P}\big(A_{j_1} \xleftrightarrow{(D')}...A_{j_m}\xleftrightarrow{(D')} \partial B(\Cref{const_new_decompose1}^2\Cref{const_new_decompose3}N) \big).
		\end{split}
	\end{equation*}
Combined with (\ref{3.42}), it confirms this lemma.
	\end{proof}

The following consequence of Lemma \ref{lemma_decompose_moresets} will be useful for later proofs.

\begin{lemma}\label{lemma_final}
	For any $3\le d\le 6$, there exists $\Cl\label{const_twopoint_set}(d)>0$ such that 
	\begin{equation}\label{final4.24}
		 \mathbb{P}^{D}\big( A\xleftrightarrow{\ge 0} x,y \big)\lesssim r^{-\frac{d}{2}+1+3\varsigma_d(r)}\mathbb{P}^{D}\big( A\xleftrightarrow{\ge 0} x \big)
	\end{equation}
	if either of the following two conditions (where $N\ge 1 $ and $r:=|x-y|$) holds:	\begin{enumerate}
		\item[(a)] $A \subset \widetilde{B}(N)$, $M\ge \Cref{const_twopoint_set}N^{1+\varsigma_d(N)}$, $x,y \in \partial B(M)$ and $D\subset \widetilde{B}(N)\cup [\widetilde{B}(2M)]^c$;

		\item[(b)] $A \subset [\widetilde{B}(N)]^c$, $M\le \Cref{const_twopoint_set}^{-1}N^{1-\varsigma_d(N)}$, $x,y\in B(M)$ and $D\subset [\widetilde{B}(N)]^c \cup \widetilde{B}(\frac{M}{2}) $.  
		
	\end{enumerate}

\end{lemma}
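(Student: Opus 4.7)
The plan is to apply the loop-cluster decomposition of Lemma~\ref{lemma_decompose_moresets} with $k = 2$ to the event $\{A \xleftrightarrow{\ge 0} x, y\}$, choosing the three target sets $A_0, A_1, A_2$ from $\{A, \{x\}, \{y\}\}$ and tuning the scale of the decomposition to the relative size of $r$ and $M$. I focus on case (a); case (b) will follow by an analogous small-scale argument that requires no splitting into sub-regimes.

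For the small-$r$ sub-regime $r \le c M^{1 - \varsigma_d(M)}$, I will apply Lemma~\ref{lemma_decompose_moresets} centered at $x$ at scale $\mathsf{N} \asymp r$, with $A_0 = \{x\}$, $A_1 = A$ and $A_2 = \{y\}$. This is permissible because $A \subset \widetilde{B}(N)$ lies at distance $\ge M - N \gg r$ from $x$. The prefactor delivered by the lemma is $\theta_d(\mathsf{N}/C) \asymp r^{-d/2 + 1 + \varsigma_d(r)}$ by (\ref{one_arm_low})--(\ref{one_arm_6}). The partition $\gamma_2 = \{\{1,2\}\}$ will contribute at most $\mathbb{P}^D(A \xleftrightarrow{\ge 0} y \xleftrightarrow{\ge 0} \partial B_x(Cr)) \le \mathbb{P}^D(A \xleftrightarrow{\ge 0} y) \asymp \mathbb{P}^D(A \xleftrightarrow{\ge 0} x)$ via Proposition~\ref{prop_pointtoset_speed}. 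For the partition $\gamma_2 = \{\{1\},\{2\}\}$, the short-range factor $\mathbb{P}^D(y \xleftrightarrow{\ge 0} \partial B_x(Cr)) \lesssim \theta_d(c'r)$ is immediate, whereas the long-range factor $\mathbb{P}^D(A \xleftrightarrow{\ge 0} \partial B_x(Cr))$ will be handled by translating the origin to $x$: the set $A - x$ then lies outside $\widetilde{B}(M - N)$, and since $Cr \lesssim (M - N)^{1 - \varsigma_d}$ precisely in this sub-regime, the previously established bound (\ref{app3}) applies to give $\mathbb{P}^D(A \xleftrightarrow{\ge 0} \partial B_x(Cr)) \lesssim r^{(d/2-1) + \varsigma_d(r)} \mathbb{P}^D(A \xleftrightarrow{\ge 0} x)$. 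Multiplying the three $\theta_d$-type factors accounts for the $3\varsigma_d(r)$ slack in the stated exponent.

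For the complementary large-$r$ sub-regime $r > c M^{1 - \varsigma_d(M)}$ (so $r \asymp M$ up to $\varsigma_d$-corrections at $d = 6$), I will instead apply Lemma~\ref{lemma_decompose_moresets} centered at the origin at scale $\mathsf{N} \asymp M$, with $A_0 = A$, $A_1 = \{x\}$ and $A_2 = \{y\}$, after splitting $D = D_1 \cup D_2$ into its pieces inside $\widetilde{B}(N)$ and outside $\widetilde{B}(2M)$. The boundary-to-set factor $\mathbb{P}^{D_1}(A \xleftrightarrow{\ge 0} \partial B(c''M))$ will be identified with $\mathbb{P}^D(A \xleftrightarrow{\ge 0} \partial B(c''M))$ up to constants via Proposition~\ref{prop_boundtoset_zero}(a) (stability under addition of far obstacles), and then converted to $M^{(d/2-1) + \varsigma_d(M)} \mathbb{P}^D(A \xleftrightarrow{\ge 0} x)$ using (\ref{app2}) together with Proposition~\ref{prop_pointtoset_speed}. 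The bracketed partition sum will be controlled by the two-point estimate $\mathbb{P}(x \xleftrightarrow{\ge 0} y) \asymp r^{2-d}$ from (\ref{two_point}) and by $\theta_d(M)^2 \lesssim M^{-(d-2) + O(\varsigma_d(M))}$; since $r \asymp M$ here, both contributions are of the same order and combine to yield the required bound $r^{-d/2 + 1 + O(\varsigma_d(r))} \mathbb{P}^D(A \xleftrightarrow{\ge 0} x)$.

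The principal technical obstacle will be the boundary-condition bookkeeping: each application of Lemma~\ref{lemma_decompose_moresets} produces a bound in which only a subset of $D$ appears on the right-hand side, so the original measure $\mathbb{P}^D$ must be restored via the stability result Proposition~\ref{prop_boundtoset_zero} and the translated forms of (\ref{app2}) and (\ref{app3}). A secondary difficulty is the careful tracking of the $\varsigma_d(\cdot)$ corrections at the critical dimension $d = 6$, where up to three $\theta_d$-type factors each contribute an $O(\varsigma_6)$ slack to the exponent. Case (b) will be handled by exactly the small-scale decomposition outlined above, with the roles of $A$ and the spheres interchanged relative to the origin; because the hypothesis $r \le 2M \le 2\cref{const_5.7} N^{1 - \varsigma_d(N)}$ automatically places the shifted problem inside the scope of (\ref{app3}), no large-$r$ sub-case is required.
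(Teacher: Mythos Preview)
Your approach uses the same key tool (Lemma~\ref{lemma_decompose_moresets} centered at $x$ with $A_0=\{x\}$, $A_1=A$, $A_2=\{y\}$) as the paper, but your choice of scale forces the extra large-$r$ sub-regime in case~(a). The paper instead applies Lemma~\ref{lemma_decompose_moresets} at the slightly smaller scale $c r^{1-\varsigma_d(r)}$ (rather than $\asymp r$). With that scale, the hypothesis of (\ref{app3}) reads $c' r^{1-\varsigma_d(r)} \le c_{\ref*{const_5.7}}\,(M-N)^{1-\varsigma_d(M-N)}$, which holds for \emph{all} $r\le 2M$ once $M\ge \Cref{const_twopoint_set}N^{1+\varsigma_d(N)}$; so a single decomposition covers both cases (a) and (b) uniformly, with no split. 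For the short-range factor $\mathbb{P}(y\xleftrightarrow{(D)} B_x(c'r^{1-\varsigma_d(r)}))$ the paper appeals to Corollary~\ref{coro_subharmonic} rather than a bare one-arm bound, but this refinement is not essential for the stated exponent.

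Your separate large-$r$ argument (centered at the origin with $A_0=A$) is fine for $3\le d\le 5$, where $\varsigma_d\equiv 0$ and $r\asymp M$. At $d=6$, however, converting between $M$-powers and $r$-powers over the narrow window $cM^{1-\varsigma_6(M)}<r\le 2M$ costs a factor $M^{O(\varsigma_6(M)^2)}$: for instance, the term $M^{2+\varsigma_6(M)}r^{-4}$ only yields $r^{-2+(3+o(1))\varsigma_6(r)}$, not the stated $r^{-2+3\varsigma_6(r)}$. This is a small but real gap in the exponent bookkeeping; it disappears once you adopt the paper's scale $r^{1-\varsigma_d(r)}$, which makes the large-$r$ case unnecessary.
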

\begin{proof}
	Unless otherwise stated, the argument presented in this proof works for both Item (a) and Item (b). By the isomorphism theorem, it suffices to show that 
	\begin{equation}\label{final_4.25}
		 \mathbb{P}\big( A\xleftrightarrow{(D)} x,y \big)\lesssim r^{-(\frac{d}{2}-1)+3\varsigma_d(r)}\mathbb{P}\big( A\xleftrightarrow{(D)} x \big).
	\end{equation}
	By Lemma \ref{lemma_decompose_moresets}, we know that $\mathbb{P}\big( A\xleftrightarrow{(D)} x,y \big) $ is bounded from above by 
	\begin{equation*}
		\begin{split}
			&C \mathbb{P}\big( x \xleftrightarrow{(D)} \partial B_x(cr^{1-\varsigma_d(r)}) \big)  \Big[ \mathbb{P}\big( A\xleftrightarrow{(D)} y\xleftrightarrow{(D)}  B_x(c'r^{1-\varsigma_d(r)})  \big)\\
			 &\ \ \ \  \ \ \ \ \ \ \  \ \ \ \ \ \ \  \ \ \ \ \ \ \  \ \ \ + \mathbb{P}\big( A  \xleftrightarrow{(D)}     B_x(c'r^{1-\varsigma_d(r)})\big) \mathbb{P}\big( y  \xleftrightarrow{(D)}     B_x(c'r^{1-\varsigma_d(r)})\big) \Big]\\
			 \lesssim & \theta_d(cr^{1-\varsigma_d(r)}) \Big[ \mathbb{P}\big( A\xleftrightarrow{(D)} y \big) + \mathbb{P}\big( A  \xleftrightarrow{(D)}     B_x(c'r^{1-\varsigma_d(r)})\big) \mathbb{P}\big( y  \xleftrightarrow{(D)}     B_x(c'r^{1-\varsigma_d(r)})\big)  \Big]. 
		\end{split}
	\end{equation*}	
 Note that $\theta_d(cr^{1-\varsigma_d(r)})\lesssim r^{- (\frac{d}{2}-1) +3\varsigma_d(r)}$ (by (\ref{one_arm_low}) and (\ref{one_arm_6})) and that $\mathbb{P}\big( A\xleftrightarrow{(D)} y \big)\asymp \mathbb{P}\big( A\xleftrightarrow{(D)} x \big)$ (for Item (a) this follows from Proposition \ref{prop_pointtoset_speed}, while for Item (b) it follows from Proposition \ref{prop_pointtoset_harnack} instead). Moreover, by (\ref{app3}) we have  
 \begin{equation}
 	\begin{split}
 		\mathbb{P}\big( A  \xleftrightarrow{(D)}     B_x(c'r^{1-\varsigma_d(r)})\big)\lesssim r^{ (\frac{d}{2}-1)- \varsigma_d(r)}  	\mathbb{P}\big( A  \xleftrightarrow{(D)}    x\big).  
 	\end{split}
 \end{equation}
 Meanwhile, it follows from Corollary \ref{coro_subharmonic} that  
  \begin{equation}
 	\begin{split}
 			\mathbb{P}\big( y  \xleftrightarrow{(D)}     B_x(c'r^{1-\varsigma_d(r)})\big)  \lesssim  M^{2-d}r^{\frac{d}{2}-1- \varsigma_d(r)}   
 			\overset{r\lesssim M}{\lesssim } &r^{- (\frac{d}{2}-1) -\varsigma_d(r)}. 
 	\end{split}
 \end{equation}
To sum up, we obtain (\ref{final_4.25}) and thus establish the desired bound (\ref{final4.24}).
\end{proof}

\subsection{Upper bounds in Proposition \ref{prop_relation_pointandboundary}}\label{subsection_upper_prop_relation}	
\textbf{(a)} We use the second moment method to prove the upper bounds. Arbitrarily take $x\in \partial B(M)$. Let $\mathfrak{X}:=\sum_{z\in \partial B(M)}\mathbbm{1}_{z\xleftrightarrow{\ge 0} A}$. By Proposition \ref{prop_pointtoset_speed} and $|\partial B(M)|\asymp M^{d-1}$, we have
	\begin{equation}\label{new3.38}
		\mathbb{\mathbb{E}}^D[\mathfrak{X}] = \sum\nolimits_{z\in \partial B(M)}\mathbb{P}^D(A\xleftrightarrow{\ge 0} z) \asymp \mathbb{P}^D(A\xleftrightarrow{\ge 0}x ) \cdot  M^{d-1}. 
	\end{equation}
Next, we estimate the second moment 
	\begin{equation}\label{new3.43}
		\mathbb{\mathbb{E}}^D[\mathfrak{X}^2] = \sum\nolimits_{z_1,z_2\in \partial B(M)}\mathbb{P}^D( A\xleftrightarrow{\ge 0}z_1, z_2). 
	\end{equation}
	\noindent \textbf{Case 1:} $3\le d\le 6$. Let $r=r(z_1,z_2):=|z_1-z_2|$. By Lemma \ref{lemma_final} we have 
		\begin{equation}\label{new3.42}
	\begin{split}
			\mathbb{\mathbb{E}}^D[\mathfrak{X}^2] \lesssim & \mathbb{P}^D(A\xleftrightarrow{\ge 0} x)\cdot M^{3\varsigma_d(M)}\sum\nolimits_{z_1,z_2\in \partial B(M)} |z_1-z_2|^{-\frac{d}{2}+1}\\
			\overset{(\ref*{|x|-a_aneqd-1})}{\lesssim } & \mathbb{P}^D(A\xleftrightarrow{\ge 0} x)\cdot M^{\frac{3d}{2}-1+3\varsigma_d(M)}.
	\end{split}
	\end{equation}

\noindent \textbf{Case 2:} $d\ge 7$. This case requires a different approach. For any $z_1,z_2\in \partial B(M)$ and $v_1,v_2,v_3\in \widetilde{\mathbb{Z}}^d$, we define the event
\begin{equation}\label{newdef_4.38}
\begin{split}
	\mathsf{F}^{z_1,z_2}_{v_1,v_2,v_3}:=&\big\{\exists \widetilde{\ell}\in \widetilde{\mathcal{L}}_{1/2}^{D}\ \text{such that}\ v_1,v_2,v_3\in \mathrm{ran}(\widetilde{\ell}) \big\}\\
	&\circ \big\{ v_1\xleftrightarrow{(D)}z_1 \big\}
	\circ \big\{v_2\xleftrightarrow{(D)}z_2 \big\}\circ \big\{ v_3\xleftrightarrow{(D)}A \big\}.
\end{split}
	\end{equation} 
By the tree expansion argument (see e.g. \cite[Section 3.4]{cai2024high}), on $\{A\xleftrightarrow{(D)} z_1,z_2\}$, there exists $\widetilde{\ell}\in \widetilde{\mathcal{L}}_{1/2}^{D}$ connected to $A$, $z_1$ and $z_2$ disjointly. I.e., 
\begin{equation}\label{newineq_4.39}
	\{A\xleftrightarrow{(D)} z_1,z_2\} \subset \cup_{v_1,v_2,v_3\in \widetilde{\mathbb{Z}}^d} \mathsf{F}^{z_1,z_2}_{v_1,v_2,v_3}.
\end{equation}
For brevity, for $q\in \{1,-1\}$, we denote $\mathbf{B}_q^{\mathrm{in}}:=\widetilde{B}(M^{(\frac{2}{d-4})^q})$ and  $\mathbf{B}_q^{\mathrm{out}}:=[\widetilde{B}(M^{(\frac{2}{d-4})^q})]^c$. Moreover, for any $\lambda>0$, let $\widehat{\mathbf{B}}^{\mathrm{in}}(\lambda):=\widetilde{B}(\lambda N)$ and $\widehat{\mathbf{B}}^{\mathrm{out}}:=[\widetilde{B}(\lambda N)]^c$. For any $\diamond  \in \{\mathrm{in}, \mathrm{out}\}$, $q\in \{1,-1\}$ and $\lambda>0$, we define the events 
%We consider the following four sub-events of $\cup_{v_1,v_2,v_3\in \widetilde{\mathbb{Z}}^d} \mathsf{F}^{z_1,z_2}_{v_1,v_2,v_3}$. For any $\diamond  \in \{\mathrm{in}, \mathrm{out}\}$, $q\in \{1,-1\}$ and $\lambda>0$, define 
\begin{equation}\label{improve4.43}
	\mathsf{F}^{z_1,z_2}_{(1)}(\diamond,q):= \cup_{v_1\in \mathbf{B}_q^{\diamond},v_2,v_3\in \widetilde{\mathbb{Z}}^d}\mathsf{F}^{z_1,z_2}_{v_1,v_2,v_3},
\end{equation}
\begin{equation}
	\mathsf{F}^{z_1,z_2}_{(2)}(\diamond,q) := \cup_{v_2\in \mathbf{B}_q^{\diamond},v_1,v_3\in \widetilde{\mathbb{Z}}^d}\mathsf{F}^{z_1,z_2}_{v_1,v_2,v_3},
\end{equation}
\begin{equation}
	\mathsf{F}^{z_1,z_2}_{(3)}(\diamond,q,\lambda):= \cup_{v_1,v_2\in (\mathbf{B}_q^{\diamond})^c,v_3\in [\widehat{\mathbf{B}}^{\diamond}(\lambda)]^c}\mathsf{F}^{z_1,z_2}_{v_1,v_2,v_3},
\end{equation}
\begin{equation}\label{improve4.46}
	\mathsf{F}^{z_1,z_2}_{(4)}(\diamond,q,\lambda):= \cup_{v_1,v_2\in(\mathbf{B}_q^{\diamond})^c,v_3\in \widehat{\mathbf{B}}^{\diamond}(\lambda)}  \mathsf{F}^{z_1,z_2}_{v_1,v_2,v_3}.
\end{equation}
These notations will also be employed in the proof of Item (b). Let $C_*:=\Cref{const_pointtoset_speed}\vee \Cref{const_5.6}$ (recall $\Cref{const_pointtoset_speed}$ and $\Cref{const_5.6}$ in Proposition \ref{prop_pointtoset_speed} and (\ref{app2}) respectively). For Item (a), we denote $\mathsf{F}^{z_1,z_2}_{(j)}:=\mathsf{F}^{z_1,z_2}_{(j)}(\mathrm{in},1)$ for $j\in \{1,2\}$, and denote $\mathsf{F}^{z_1,z_2}_{(j)}:=\mathsf{F}^{z_1,z_2}_{(j)}(\mathrm{in},1,C_*)$ for $j\in \{3,4\}$. Let $\mathbb{I}_j:=\sum_{z_1,z_2\in \partial B(M)}\mathbb{P}(\mathsf{F}^{z_1,z_2}_{(j)})$ for $1\le j\le 4$. Thus, by (\ref{newineq_4.39}) we have
\begin{equation}\label{newadd_3.63}
	\mathbb{E}^{D}[\mathfrak{X}^2]\le \sum\nolimits_{1\le j\le 4}  \mathbb{I}_j.
\end{equation}
%Illustrations for $\mathsf{F}^{z_1,z_2}_{(j)}$ are provided in Figures \ref{pic2} and \ref{pic3}.

In what follows, we estimate $\mathbb{I}_j$ for $1\le j\le 4$ separartely. For any $j\in \{1,2\}$, since $\mathsf{F}^{z_1,z_2}_{(j)}\subset \{z_j\xleftrightarrow{(D)}B(M^{\frac{2}{d-4}})\} \circ \{z_{3-j}\xleftrightarrow{(D)} A \}$ (see Figure \ref{pic2}), we have 
\begin{equation}\label{newadd_3.64}
	\begin{split}
		\mathbb{I}_j \overset{(\text{BKR})}{\le} &\sum_{z_1,z_2\in \partial B(M)}\mathbb{P}\big(z_j\xleftrightarrow{(D)} \partial B(M^{\frac{2}{d-4}})\big) \cdot  \mathbb{P}(z_{3-j}\xleftrightarrow{(D)} A)\\
		\overset{\text{Corollary}\ \ref*{coro_subharmonic},\ \text{Proposition}\ \ref*{prop_pointtoset_speed}}{\lesssim}  
		& \mathbb{P}^{D}(A\xleftrightarrow{\ge 0} x) \cdot M^{d+2}.
	\end{split}
\end{equation}

For $\mathbb{I}_3$, by the union bound, the BKR inequality and Lemma \ref{lemma_revise_2.3}, we have 
\begin{equation}\label{newadd_3.65}
	\begin{split}
		\mathbb{I}_3  	\lesssim & \sum_{z_1,z_2\in \partial B(M)}\sum_{w_1,w_2\in \mathbb{Z}^d,w_3\in [B(C_*N)]^c}|w_1-w_2|^{2-d}|w_2-w_3|^{2-d}\\
&\cdot |w_3-w_1|^{2-d} |w_1-z_1|^{2-d}|w_2-z_2|^{2-d}\mathbb{P}^{D}(w_3\xleftrightarrow{\ge 0} A)\\
\overset{(\ref*{addnew3.25}),(\ref*{|x|-a_aneqd-1})}{\lesssim}&  \mathbb{P}^{D}(A\xleftrightarrow{\ge 0} x) M^{d-1}\sum_{z_1\in \partial B(M)}\sum_{w_1,w_2\in \mathbb{Z}^d,w_3\in [B(C_*N)]^c}|w_1-w_2|^{2-d}\\
&\cdot |w_2-w_3|^{2-d}|w_3-w_1|^{2-d}|w_1-z_1|^{2-d}|w_3|^{2-d}\\
\overset{(\ref*{z1z2z3})}{\lesssim} & \mathbb{P}^{D}(A\xleftrightarrow{\ge 0} x) M^{d-1}  \sum\nolimits_{z_1\in \partial B(M)} |z_1|^{4-d} \lesssim  \mathbb{P}^{D}(A\xleftrightarrow{\ge 0} x)\cdot   M^{d+2}.
\end{split}
\end{equation}
Note that the requirement $w_3\in [B(C_*N)]^c$ in the sum is necessary to guarantee that Proposition \ref{prop_pointtoset_speed} applies to the second transfer in (\ref{newadd_3.65}).

	  \begin{figure}[h]
	\centering
	\includegraphics[width=0.45\textwidth]{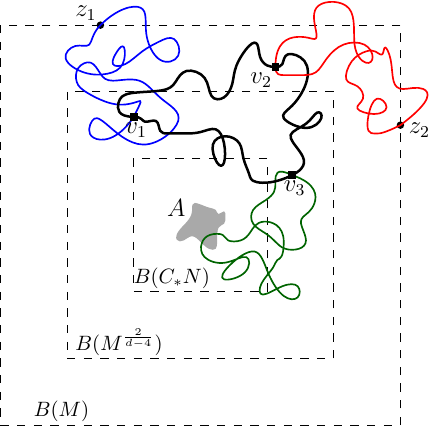}
	\caption{In this illustration, the grey region represents the target set $A$. The black loop represents the loop $\widetilde{\ell}$ involved in the event $\mathsf{F}_{v_1,v_2,v_3}^{z_1,z_2}$ (recalling (\ref{newdef_4.38})). The blue, red and green clusters are three loop clusters that consist of disjoint collections of loops and certify $\big\{ v_1\xleftrightarrow{(D)}z_1 \big\} $, $\big\{ v_2\xleftrightarrow{(D)}z_2 \big\}$ and $\big\{ v_3\xleftrightarrow{(D)}A \big\}$ respectively. In this example, $v_1$ is contained in $B(M^{\frac{2}{d-4}})$ and hence, $\mathsf{F}^{z_1,z_2}_{(1)}$ happens, which further implies that $ \big\{ z_1\xleftrightarrow{(D)}B(M^{\frac{2}{d-4}})\big\}
	$ (certified by the blue cluster) and $\big\{z_2\xleftrightarrow{(D)} A \big\}$ (certified by the union of the black loop and the red and green clusters) happen disjointly. In addition, the event $\mathsf{F}^{z_1,z_2}_{(3)}$ also occurs in this example because $v_3\in [\widetilde{B}(C_*N)]^c$. }\label{pic2}
\end{figure}

%By taking a sufficiently large $\Cref{const_relation_pointandboundary1}$ and using Lemma \ref{lemma_Gl}, one has 
%\begin{equation}\label{finalrevise_5.34}
%	\mathbb{P}\big(\mathsf{G}_l^{D}\big)  \le \big( \frac{CM^{\frac{2}{d-4}}}{\Cref{const_prepare_loop_decompose}C_*N}\big)^{(2-d)l}, \ \ \forall l\in \mathbb{N}^+. 
%\end{equation}

For $\mathbb{I}_4$, we recall the constant $\Cref{const_prepare_loop_decompose}$ in Lemma \ref{lemma_prepare_loop_decompose}, and employ the notations $\mathsf{G}_l^{D}$, $\{\widetilde{\eta}_i^{\mathrm{F}}\}_{i=1}^{l}$, $\{\widetilde{\eta}_i^{\mathrm{B}}\}_{i=1}^{l}$, $\Omega_l$, $\mathfrak{L}^{D}$ and $\overline{\mathfrak{L}}^{D}$ for the crossings from $\partial B(\frac{1}{10} M^{\frac{2}{d-4}})$ to $\partial B( \Cref{const_prepare_loop_decompose}C_*N)$. In addition, for any $\omega_l\in \Omega_l$, we abbreviate (recalling (\ref{def_widehat_G}))
\begin{equation}
	\widehat{\mathsf{G}}^{D}(\omega_l):= \widehat{\mathsf{G}}^{D}[\omega_l;  \Cref{const_prepare_loop_decompose}C_*N,\tfrac{1}{10} M^{\frac{2}{d-4}} ]. 
\end{equation}
When $\mathsf{F}^{z_1,z_2}_{(4)}$ happens, the loop involved in the event $\mathsf{F}^{z_1,z_2}_{v_1,v_2,v_3}$ (see (\ref{newdef_4.38})) must contain a crossing from $\partial B(\frac{1}{10} M^{\frac{2}{d-4}})$ to $\partial B( \Cref{const_prepare_loop_decompose}C_*N)$  (see Figure \ref{pic3}). Thus, we have $\mathsf{F}^{z_1,z_2}_{(4)}\subset \cup_{l\ge 1}\mathsf{G}_l^{D}$. Moreover, on $\mathsf{F}^{z_1,z_2}_{(4)}\cap \mathsf{G}_l^{D}$, there exist $i_1,i_2\in \{1,...,l\}$ such that $\mathrm{ran}(\widetilde{\eta}_{i_1}^{\mathrm{F}})\cap [\widetilde{B}(M^{\frac{2}{d-4}})]^c\xleftrightarrow{(D)}z_1$, $\mathrm{ran}(\widetilde{\eta}_{i_2}^{\mathrm{F}})\cap [\widetilde{B}(M^{\frac{2}{d-4}})]^c \xleftrightarrow{(D)}z_2$ and $\mathfrak{L}^{D}\cap \widetilde{B}(C_*N) \xleftrightarrow{\overline{\mathfrak{L}}^{D}} A$ happen disjointly (we denote this event by $\hat{\mathsf{F}}^{z_1,z_2}_{i_1,i_2}$). This further implies that 
\begin{equation}\label{newadd_3.66}
	\begin{split}
		\mathbb{P}(\mathsf{F}^{z_1,z_2}_{(4)}) 		\le  \sum\nolimits_{l\ge 1}\sum\nolimits_{\omega_l \in \Omega_l} \mathbb{P}\big(\widehat{\mathsf{G}
		}^{D}(\omega_l)\big) \sum\nolimits_{1\le i_1,i_2\le l} \mathbb{P}\big(\hat{\mathsf{F}}^{z_1,z_2}_{i_1,i_2}\mid \widehat{\mathsf{G}}^{D}(\omega_l)\big). 
	\end{split}
\end{equation}

	  \begin{figure}[h]
	\centering
	\includegraphics[width=0.5\textwidth]{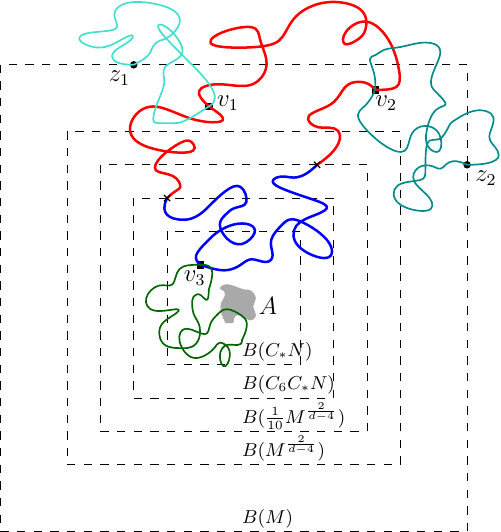}
	\caption{This is an illustration for the event $\mathsf{F}^{z_1,z_2}_{(4)}$. Since $v_1,v_2\in [\widetilde{B}(M^{\frac{2}{d-4}})]^c$ and $v_3\in \widetilde{B}(C_*N)$, the loop $\widetilde{\ell}$ involved in $\mathsf{F}_{v_1,v_2,v_3}^{z_1,z_2}$ must include forward and backward crossing paths (i.e., the red and blue paths) from $\partial B(\frac{1}{10}M^{\frac{2}{d-4}})$ to $\partial B(\Cref{const_prepare_loop_decompose} C_*N)$. The three green clusters (with different shades) consist of disjoint collections of loops and certify $ \{v_1\xleftrightarrow{(D)}z_1 \}$, $ \{ v_2\xleftrightarrow{(D)}z_2  \}$ and $ \{v_3\xleftrightarrow{(D)}A \} $ respectively. 
	 }\label{pic3}
\end{figure}

For any $1\le i_1,i_2\le l$, by the BKR inequality we have 
\begin{equation}\label{improve4.51}
	\begin{split}
		&\mathbb{P}\big(\hat{\mathsf{F}}^{z_1,z_2}_{i_1,i_2}\mid \widehat{\mathsf{G}}^{D}(\omega_l)\big)\\
		\le &  \sum_{w_1,w_2\in [B(M^{\frac{2}{d-4}})]^c} 	\mathbb{P}\big( \mathrm{ran}(\widetilde{\eta}_{i_j}^{\mathrm{F}})\cap \widetilde{B}_{w_j}(1)\neq \emptyset,\forall j\in \{1,2\}  \mid \widehat{\mathsf{G}}^{D}(\omega_l)\big) \\
	&	\cdot \mathbb{P}\big( \widetilde{B}_{w_1}(1)\xleftrightarrow{(D)}z_1 \big) \mathbb{P}\big( \widetilde{B}_{w_2}(1)\xleftrightarrow{(D)}z_2 \big)\mathbb{P}\Big(A\xleftrightarrow{\overline{\mathfrak{L}}^{D}}   \mathfrak{L}^{D} \cap \widetilde{B}(C_* N)    \mid \widehat{\mathsf{G}}^{D}(\omega_l)\Big)\\
		\lesssim & \sum_{w_1,w_2\in [B(M^{\frac{2}{d-4}})]^c} 	\mathbb{P}\big( \mathrm{ran}(\widetilde{\eta}_{i}^{\mathrm{F}})\cap \widetilde{B}_{w_j}(1)\neq \emptyset,\forall j\in \{1,2\}  \mid \widehat{\mathsf{G}}^{D}(\omega_l)\big)\\
		&\cdot |w_1-z_1|^{2-d} |w_2-z_2|^{2-d} (l+1)\mathbb{P}\big(A   \xleftrightarrow{(D)} \partial B(C_*N) \big),
					\end{split}
\end{equation}
where in the last inequality we used Lemma \ref{lemma_revise_B1}, (\ref{two_point}) and Lemma \ref{lemma_prepare_loop_decompose}. We next estimate the probability $\mathbb{P}\big( \mathrm{ran}(\widetilde{\eta}_{i_j}^{\mathrm{F}})\cap \widetilde{B}_{w_j}(1)\neq \emptyset,\forall j\in \{1,2\}  \mid \widehat{\mathsf{G}}^{D}(\omega_l)\big) $ in two cases depending on whether $i_1 = i_2$:
\begin{itemize}
	\item when $i_1=i_2$, it follows from (\ref{newineq_2.69}) that this probability is at most 
	\begin{equation}\label{improve_4.52}
\mathbb{J}_{w_1,w_2}:=C|w_1|^{2-d}|w_1-w_2|^{2-d}|w_2|^{2-d}\big(M^{\frac{2}{d-4}}\big)^{d-2}; 
	\end{equation}
	
	\item when $i_1\neq i_2$, conditioning on $\widehat{\mathsf{G}}^{D}(\omega_l)$, since $\widetilde{\eta}_{i_1}^{\mathrm{F}}$ and $\widetilde{\eta}_{i_2}^{\mathrm{F}}$ are conditionally independent, by (\ref{special_newineq_2.69}) we have	\begin{equation}\label{improve_4.53}
		\begin{split}
		&\mathbb{P}\big( \mathrm{ran}(\widetilde{\eta}_{i_j}^{\mathrm{F}})\cap \widetilde{B}_{w_j}(1)\neq \emptyset,\forall j\in \{1,2\}  \mid \widehat{\mathsf{G}}^{D}(\omega_l)\big) \\ 
		=&\prod\nolimits_{j=1,2}	\mathbb{P}\big( \mathrm{ran}(\widetilde{\eta}_{i_j}^{\mathrm{F}})\cap \widetilde{B}_{w_j}(1)\neq \emptyset \mid \widehat{\mathsf{G}}^{D}(\omega_l)\big)\\
		\lesssim &\mathbb{J}_{w_1,w_2}':=  |w_1|^{4-2d}|w_2|^{4-2d} \big(M^{\frac{2}{d-4}}\big)^{2d-4}.
		\end{split}
	\end{equation}

\end{itemize}
For any $w_1,w_2\in [B(M^{\frac{2}{d-4}})]^c$, since $|w_1-w_2|\le |w_1|+|w_2|\le 2(|w_1|\vee |w_2|)$ and $|w_1|\land |w_2|\gtrsim M^{\frac{2}{d-4}}$, one has
\begin{equation}
\begin{split}
		\frac{\mathbb{J}_{w_1,w_2}'}{\mathbb{J}_{w_1,w_2}}= \big(\frac{|w_1-w_2|\cdot M^{\frac{2}{d-4}}}{|w_1|\cdot |w_2|}\big)^{d-2} \lesssim 1. 
\end{split}
\end{equation}
Therefore, for all $1\le i_1,i_2\le l$, we have 
\begin{equation}\label{improve4.55}
	\mathbb{P}\big( \mathrm{ran}(\widetilde{\eta}_{i_j}^{\mathrm{F}})\cap \widetilde{B}_{w_j}(1)\neq \emptyset,\forall j\in \{1,2\}  \mid \widehat{\mathsf{G}}^{D}(\omega_l)\big) \lesssim \mathbb{J}_{w_1,w_2}. 
\end{equation}
Moreover, it follows from (\ref{app2}) that 
\begin{equation}\label{improve4.56}
	\mathbb{P}\big(A   \xleftrightarrow{(D)} \partial B(C_*N) \big) \lesssim \mathbb{P}^{D}\big(A   \xleftrightarrow{\ge 0} x \big)\cdot N^{-2}M^{d-2}.
\end{equation}
Thus, by summing over $z_1,z_2\in \partial B(M)$ on both sides of (\ref{improve4.51}), we obtain 
\begin{equation*}
	\begin{split}
		&\sum\nolimits_{z_1,z_2\in \partial B(M)}\mathbb{P}\big(\hat{\mathsf{F}}^{z_1,z_2}_{i_1,i_2}\mid \widehat{\mathsf{G}}^{D}(\omega_l)\big)\\
	\overset{(\ref*{improve4.55}),(\ref*{improve4.56})}{	\lesssim} & \mathbb{P}^{D}\big(A   \xleftrightarrow{\ge 0}   x \big)\cdot (l+1) N^{-2}M^{d-2+\frac{2(d-2)}{d-4}} \sum\nolimits_{w_1,w_2\in \mathbb{Z}^d} |w_1|^{2-d}|w_1-w_2|^{2-d}|w_2|^{2-d}\\
		&\cdot  \sum\nolimits_{z_1,z_2\in \partial B(M)} |w_1-z_1|^{2-d}|w_2-z_2|^{2-d} \\
		\overset{(\ref*{|x|-a_aneqd-1})}{\lesssim }	&	 \mathbb{P}^{D}\big(A   \xleftrightarrow{\ge 0}   x \big)\cdot (l+1) N^{-2}M^{d-1+\frac{2(d-2)}{d-4}} \sum\nolimits_{w_1,w_2\in \mathbb{Z}^d} |w_1|^{2-d}|w_1-w_2|^{2-d}|w_2|^{2-d}\\
		&\cdot  \sum\nolimits_{z_1\in \partial B(M)} |w_1-z_1|^{2-d}\\
		\overset{(\ref*{z1z2})}{\lesssim }	&\mathbb{P}^{D}\big(A   \xleftrightarrow{\ge 0}   x \big)\cdot (l+1) N^{-2}M^{d-1+\frac{2(d-2)}{d-4}}  \sum\nolimits_{z_1\in \partial B(M)} |z_1|^{2-d}\\
		\lesssim & \mathbb{P}^{D}\big(A   \xleftrightarrow{\ge 0}   x \big)\cdot (l+1) N^{-2}M^{d+\frac{2(d-2)}{d-4}}.
		\end{split}
\end{equation*}
Combined with (\ref{newadd_3.66}), it yields that 
\begin{equation}\label{newadd_3.73}
	\begin{split}
	\mathbb{I}_4\lesssim  	&\mathbb{P}^{D}\big(A   \xleftrightarrow{\ge 0}   x \big) \cdot  N^{-2}M^{d+\frac{2(d-2)}{d-4}} \sum\nolimits_{l\ge 1}   l^3 \mathbb{P}\big(\mathsf{G}_l^{D}\big)  	\\
	\overset{\text{Lemma}\ \ref*{lemma_Gl} }{\lesssim} &\mathbb{P}^{D}\big(A   \xleftrightarrow{\ge 0}   x \big)\cdot   N^{-2}M^{d+\frac{2(d-2)}{d-4}} \big( \frac{M^{\frac{2}{d-4}}}{N}\big)^{2-d}
	\lesssim  \mathbb{P}^{D}\big(A   \xleftrightarrow{\ge 0}   x \big)\cdot M^{d+2},
		\end{split}
\end{equation}
where the last inequality is ensured by $M\gtrsim  N^{\frac{d-4}{2}}$. Combining (\ref{newadd_3.63}), (\ref{newadd_3.64}), (\ref{newadd_3.65}) and (\ref{newadd_3.73}), we have: for all $d\ge 7$, 
\begin{equation}\label{newadd_3.74}
	\mathbb{E}^{D}[\mathfrak{X}^2]\lesssim \mathbb{P}^{D}\big(A   \xleftrightarrow{\ge 0}   x \big)\cdot M^{d+2}. 
	\end{equation}

Now we consider all dimensions $d\ge 3$ collectively. By applying the Paley-Zygmund inequality, and using (\ref{new3.38}), (\ref{new3.42}) and (\ref{newadd_3.74}), we derive the desired bound in Item (a):
\begin{equation}\label{newaddineq_4.51}
	\begin{split}
\mathbb{P}^D\big(A\xleftrightarrow{\ge 0} \partial B(M)\big)= \mathbb{P}^{D}(\mathfrak{X}>0)
\gtrsim  & \frac{\big[\mathbb{P}^D(A\xleftrightarrow{\ge 0}x ) \cdot  M^{d-1}\big]^2}{ \mathbb{P}^{D}\big(A   \xleftrightarrow{\ge 0}   x \big)\cdot M^{[(\frac{3d}{2}-1)\boxdot (d+2)]+3\varsigma_d(N)}} \\
=&\mathbb{P}^{D}\big(A   \xleftrightarrow{\ge 0}   x \big)\cdot  M^{[(\frac{d}{2}-1)]\boxdot  (d-4)]-3\varsigma_d(N)}.
	\end{split}
\end{equation}

\textbf{(b)} We also use the second moment method as in the proof for Item (a). Likewise, we arbitrarily take $x\in \partial B(M)$ and denote $\mathfrak{X}:=\sum_{z\in \partial B(M)}\mathbbm{1}_{z\xleftrightarrow{\ge 0} A}$. The estimate for $\mathbb{E}^{D}[\mathfrak{X}]$ in (\ref{new3.38}) also holds using Proposition \ref{prop_pointtoset_speed}. Then we also bound $\mathbb{E}^{D}[\mathfrak{X}^2]$ separately when $3\le d\le 6$ and when $d\ge 7$.

\noindent \textbf{Case 1:} $3\le d\le 6$. Similar to (\ref{new3.42}), by Lemma \ref{lemma_final} we have
\begin{equation}\label{newaddineq_4.53}
	\mathbb{E}^{D}[\mathfrak{X}^2] \lesssim \mathbb{P}^D(A\xleftrightarrow{\ge 0} x)\cdot M^{\frac{3d}{2}-1+3\varsigma_d(M)}.
\end{equation}

\noindent \textbf{Case 2:} $d\geq 7$. Recall the notations in (\ref{newdef_4.38}) and (\ref{improve4.43})--(\ref{improve4.46}). Also recall $\cref{const_pointtoset_harnack}$ and $\cref{const_5.7}$ in Proposition \ref{prop_pointtoset_harnack} and (\ref{app3}). Let $c_*:= \cref{const_pointtoset_harnack}\land \cref{const_5.7}$. We denote $\mathsf{F}^{z_1,z_2}_{(j),*}:= \mathsf{F}^{z_1,z_2}_{(j)}(\mathrm{out},-1)$ for $j\in \{1,2\}$, and denote $\mathsf{F}^{z_1,z_2}_{(j),*}:=\mathsf{F}^{z_1,z_2}_{(j)}(\mathrm{out},-1,c_*)$ for $j\in \{3,4\}$. As in (\ref{newadd_3.63}), 
\begin{equation}\label{newaddineq_4.54}
	\mathbb{E}^{D}[\mathfrak{X}^2]\le \sum\nolimits_{1\le j\le 4}  \mathbb{I}_j^*:=\sum\nolimits_{1\le j\le 4} \Big(\sum\nolimits_{z_1,z_2\in \partial B(M)}\mathbb{P}(\mathsf{F}^{z_1,z_2}_{(j),*}) \Big).
\end{equation}

For $\mathbb{I}_1^*$ and $\mathbb{I}_2^*$, since $\mathsf{F}^{z_1,z_2}_{(j),*}\subset \{z_j\xleftrightarrow{(D)} \partial B(M^{\frac{d-4}{2}}) \}\circ \{z_{3-j}\xleftrightarrow{(D)} A \}$ for $j\in \{1,2\}$ and $\big\{ z \xleftrightarrow{} \partial B(M^{\frac{d-4}{2}}) \big\}\subset \big\{ z \xleftrightarrow{} \partial B_z(\frac{1}{2}M^{\frac{d-4}{2}}) \big\} $ for $z\in \partial B(M)$, one has 
\begin{equation}\label{newaddineq_4.59}
	\begin{split}
		\max\{\mathbb{I}_1^*, \mathbb{I}_2^* \} \overset{(\text{BKR})}{\lesssim} & |\partial B(M)|\cdot \theta_d(\tfrac{1}{2}M^{\frac{d-4}{2}}) \sum\nolimits_{z\in \partial B(M)}\mathbb{P}^{D}(A\xleftrightarrow{\ge 0}z) \\
		\lesssim  &\mathbb{P}^{D}(A\xleftrightarrow{\ge 0}x)\cdot M^{d+2}. 
	\end{split}
\end{equation}
For $\mathbb{I}_3^*$, parallel to (\ref{newadd_3.65}), by applying the BKR inequality and using Lemma \ref{lemma_revise_2.3} and Proposition \ref{prop_pointtoset_harnack}, we have 
\begin{equation}\label{newaddineq_4.60}
	\begin{split}
		\mathbb{I}_3^* \lesssim    &   \mathbb{P}^{D}(A\xleftrightarrow{\ge 0} x) \sum_{z_1,z_2\in \partial B(M)}\sum_{w_1,w_2\in \mathbb{Z}^d,w_3\in B(c_* N)}|w_1-w_2|^{2-d}|w_2-w_3|^{2-d}\\
&\cdot |w_3-w_1|^{2-d}|w_1-z_1|^{2-d}|w_2-z_2|^{2-d}\\
\overset{(\ref*{z1z2z3})}{\lesssim } &  \mathbb{P}^{D}(A\xleftrightarrow{\ge 0} x) \sum_{z_1,z_2\in \partial B(M)} |z_1-z_2|^{4-d}\overset{(\ref*{|x|-a_aneqd-1})}{\lesssim } \mathbb{P}^{D}(A\xleftrightarrow{\ge 0} x)\cdot M^{d+2}. 
	\end{split}
\end{equation}

For $\mathbb{I}_4^*$, we recall $\Cref{const_prepare_loop_decompose}$ in Lemma \ref{lemma_prepare_loop_decompose} and employ the notations $\mathsf{G}_l^{D}$, $\{\widetilde{\eta}_i^{\mathrm{F}}\}_{i=1}^{l}$, $\{\widetilde{\eta}_i^{\mathrm{B}}\}_{i=1}^{l}$, $\Omega_l$, $\mathfrak{L}^D$, $\overline{\mathfrak{L}}^D$ and $\widehat{\mathsf{G}}^{D}(\omega_l)$ for the crossings from $\partial B(10 M^{\frac{d-4}{2}} )$ to $\partial B( \Cref{const_prepare_loop_decompose}^{-1} c_*N)$. By similar arguments as in the proof of (\ref{newadd_3.66}), one has $\hat{\mathsf{F}}^{z_1,z_2}_{(4)}\cap \mathsf{G}_l^{D}\subset \cup_{1\le i_1,i_2\le l}\hat{\mathsf{F}}^{z_1,z_2}_{i_1,i_2,*}$, where $\hat{\mathsf{F}}^{z_1,z_2}_{i_1,i_2,*}$ is the event that $\mathrm{ran}(\widetilde{\eta}_{i_1}^{\mathrm{B}})\cap \widetilde{B}(M^{\frac{d-4}{2}})\xleftrightarrow{(D)} z_1$, $\mathrm{ran}(\widetilde{\eta}_{i_2}^{\mathrm{B}})\cap \widetilde{B}(M^{\frac{d-4}{2}}) \xleftrightarrow{(D)} z_2$ and $A \xleftrightarrow{\overline{\mathfrak{L}}^D} \mathfrak{L}^{D}\cap [\widetilde{B}(c_*N)]^c $ happen disjointly. Thus, we have 
\begin{equation}\label{newaddineq_4.61}
	\begin{split}
		\mathbb{P}(\mathsf{F}^{z_1,z_2}_{(4),*})\le  \sum\nolimits_{l\ge 1}\sum\nolimits_{\omega_l \in \Omega_l} \mathbb{P}\big( \widehat{\mathsf{G}}^{D}(\omega_l)\big) \sum\nolimits_{1\le i_1,i_2\le l} \mathbb{P}\big(\hat{\mathsf{F}}^{z_1,z_2}_{i_1,i_2,*}\mid \widehat{\mathsf{G}}^{D}(\omega_l)\big). 
	\end{split}
\end{equation}
In addition, by the BKR inequality, $\mathbb{P}\big(\hat{\mathsf{F}}^{z_1,z_2}_{i_1,i_2,*}\mid \widehat{\mathsf{G}}^{D}(\omega_l)\big)$ is bounded from above by 
\begin{equation}\label{improve4.66}
	\begin{split}
		  & \sum_{w_1,w_2\in B(M^{\frac{d-4}{2}})} 	\mathbb{P}\big( \mathrm{ran}(\widetilde{\eta}_{i_j}^{\mathrm{B}})\cap \widetilde{B}_{w_j}(1)\neq \emptyset,\forall j\in \{1,2\}  \mid \widehat{\mathsf{G}}^{D}(\omega_l)\big) \\
		& \mathbb{P}\big( \widetilde{B}_{w_1}(1)\xleftrightarrow{(D)}z_1 \big) \mathbb{P}\big(\widetilde{B}_{w_2}(1)\xleftrightarrow{(D)}z_2 \big)\mathbb{P}\Big(A\xleftrightarrow{\overline{\mathfrak{L}}^{D}}  \mathfrak{L}^{D}\cap [\widetilde{B}(c_*  N)]^c    \mid \widehat{\mathsf{G}}^{D}(\omega_l)\Big)\\
		\lesssim & \sum_{w_1,w_2\in B(M^{\frac{d-4}{2}})}  	\mathbb{P}\big( \mathrm{ran}(\widetilde{\eta}_{i_j}^{\mathrm{B}})\cap \widetilde{B}_{w_j}(1)\neq \emptyset,\forall j\in \{1,2\}  \mid \widehat{\mathsf{G}}^{D}(\omega_l)\big)\\
		& \cdot  |w_1-z_1|^{2-d}|w_2-z_2|^{2-d} \cdot (l+1)\mathbb{P}^{D}\big(A\xleftrightarrow{\ge0} \partial B(c_* N) \big),
	\end{split}
\end{equation}
where in the last inequality we used Lemma \ref{lemma_revise_B1}, (\ref{two_point}) and Lemma \ref{lemma_prepare_loop_decompose}. Similar to (\ref{improve_4.52}) and (\ref{improve_4.53}), it follows from (\ref{newnewineq_2.73}) that when $i_1= i_2$ (resp. $i_1\neq i_2$), the probability $\mathbb{P}\big( \mathrm{ran}(\widetilde{\eta}_{i_j}^{\mathrm{B}})\cap \widetilde{B}_{w_j}(1)\neq \emptyset,\forall j\in \{1,2\}  \mid \widehat{\mathsf{G}}^{D}(\omega_l)\big)$ is bounded from above by $CN^{2-d} |w_1-w_2|^{2-d}$ (resp. $CN^{2(2-d)}$). Combined with the fact that $N^{2-d} |w_1-w_2|^{2-d}\gtrsim N^{2(2-d)}$ (since $w_1,w_2\in \widetilde{B}(M^{\frac{d-4}{2}})\subset \widetilde{B}(N)$), it implies that 
\begin{equation}\label{improve4.67}
	\mathbb{P}\big( \mathrm{ran}(\widetilde{\eta}_{i_j}^{\mathrm{B}})\cap \widetilde{B}_{w_j}(1)\neq \emptyset,\forall j\in \{1,2\}  \mid \widehat{\mathsf{G}}^{D}(\omega_l)\big) \lesssim N^{2-d} |w_1-w_2|^{2-d}. 
\end{equation}
Meanwhile, by (\ref{app3}) we have 
\begin{equation}\label{improve4.68}
	\mathbb{P}^{D}\big(A\xleftrightarrow{\ge0} \partial B(c_*N) \big)\lesssim \mathbb{P}^{D}\big(A\xleftrightarrow{\ge0} x \big)\cdot  N^{d-4}. 
\end{equation}
Plugging (\ref{improve4.67}) and (\ref{improve4.68}) into (\ref{improve4.66}), , we get 
\begin{equation*} 
	\begin{split}
		&\sum\nolimits_{z_1,z_2\in \partial B(M)}\mathbb{P}\big(\hat{\mathsf{F}}^{z_1,z_2}_{i_1,i_2,*}\mid  \widehat{\mathsf{G}}^{D}(\omega_l) \big) \\
		\lesssim & \mathbb{P}^{D}\big(A\xleftrightarrow{\ge0} x \big)\cdot (l+1) N^{-2}\sum\nolimits_{z_1,z_2\in \partial B(M)} \sum\nolimits_{w_1,w_2\in \mathbb{Z}^d}|w_1-w_2|^{2-d}\\
		&\cdot |w_1-z_1|^{2-d}|w_2-z_2|^{2-d}\\
		 \overset{(\ref*{2-d_2-d}),(\ref*{2-d_4-d})}{\lesssim}  &\mathbb{P}^{D}\big(A\xleftrightarrow{\ge0} x \big)\cdot (l+1) N^{-2}\sum\nolimits_{z_1,z_2\in \partial B(M)}|z_1-z_2|^{6-d}\\
		 \overset{(\ref*{|x|-a_aneqd})}{\lesssim} &\mathbb{P}^{D}\big(A\xleftrightarrow{\ge0} x \big)\cdot (l+1) N^{-2}M^{d+4}. 
	\end{split}
\end{equation*}
Combined with (\ref{newaddineq_4.61}), it yields that 
\begin{equation}\label{newaddineq_4.64}
	\begin{split}
		\mathbb{I}_4^* \lesssim &\mathbb{P}^{D}\big(A\xleftrightarrow{\ge0} x \big)\cdot   N^{-2}M^{d+4}   \sum\nolimits_{l\ge 1}  l^3\mathbb{P}\big( \mathsf{G}_l^{D}\big)  \\
		\lesssim &\mathbb{P}^{D}\big(A\xleftrightarrow{\ge0} x \big)\cdot M^{d+2},
			\end{split}
\end{equation}
where in the last inequality we used Lemma \ref{lemma_Gl} and the condition $N\gtrsim  M^{\frac{d-4}{2}}$.

By (\ref{newaddineq_4.54}), (\ref{newaddineq_4.59}), (\ref{newaddineq_4.60}) and (\ref{newaddineq_4.64}), we establish that for all $d\ge 7$, 
\begin{equation}\label{newaddineq_4.65}
	\mathbb{E}^{D}[\mathfrak{X}^2]\lesssim \mathbb{P}^{D}\big(A   \xleftrightarrow{\ge 0}   x \big)\cdot M^{d+2}.
\end{equation}
Parallel to (\ref{newaddineq_4.51}), by applying the Paley-Zygmund inequality and using (\ref{new3.38}), (\ref{newaddineq_4.53}) and (\ref{newaddineq_4.65}), we confirm the desired bound for Item (b) and thus complete the proof of this proposition. \qed

\section{Quasi-multiplicativity}\label{section_proof_theorem1}

The aim of this section is to establish quasi-multiplicativity for the connecting probability of the GFF, as stated in Theorem \ref{prop_new_QM}.

\subsection{Proof of the lower bounds in Theorem \ref{prop_new_QM}}

Before starting the proof, we need some preparations as follows. For any $N\ge 1$ and $A_1,A_2,D_1,D_2\subset \widetilde{\mathbb{Z}}^d$ satisfying the conditions in Theorem \ref{prop_new_QM}, we define 
			\begin{equation}\label{fix5.1}
	\mathcal{C}^{\mathrm{in}}:= \big\{v\in \widetilde{B}(d^{-2}N): v\xleftrightarrow{\widetilde{E}^{\ge 0}\cap \widetilde{B}(d^{-2}N)} A_1 \ \text{or}\ v\xleftrightarrow{ \widetilde{E}^{\le 0} \cap \widetilde{B}(d^{-2}N)} A_1 \big\}
\end{equation}
as the sign cluster containing $A_1$ obtained through exploration within  $\widetilde{B}(d^{-2}N)$. Similarly, we also define its counterpart for $A_2$ as follows: 
\begin{equation}\label{fix5.2}
	\mathcal{C}^{\mathrm{out}}:= \big\{v\in [\widetilde{B}(10N)]^c: v\xleftrightarrow{\widetilde{E}^{\ge 0}\cap [\widetilde{B}(10N)]^c} A_2\ \text{or}\  v\xleftrightarrow{\widetilde{E}^{\le 0}\cap [\widetilde{B}(10N)]^c} A_2 \big\}.
\end{equation}
Based on these two clusters, we denote the following harmonic averages:
\begin{equation}\label{fix5.3}
\mathcal{H}^{\mathrm{in}}:=|\partial \mathcal{B}(d^{-1}N)|^{-1}\sum\nolimits_{y\in \partial \mathcal{B}(d^{-1}N)}\mathcal{H}_{y}(	\mathcal{C}^{\mathrm{in}},	\mathcal{C}^{\mathrm{in}}\cup D_1\cup D_2 ),
\end{equation}
\begin{equation}\label{fix5.4}
	\mathcal{H}^{\mathrm{out}}:=|\partial \mathcal{B}(5N)|^{-1}\sum\nolimits_{y\in \partial \mathcal{B}(5N)}\mathcal{H}_{y}(\mathcal{C}^{\mathrm{out}},\mathcal{C}^{\mathrm{out}}\cup D_1\cup D_2).
\end{equation}
For any $a>0$, we define the events
\begin{equation}\label{AA_xn_in}
	\mathsf{F}_{a}^{\mathrm{in}}:=  \big\{\mathcal{H}^{\mathrm{in}}  \ge a N^{-[(\frac{d}{2}-1)\boxdot  (d-4)]-5\varsigma_d(N)} \big\}\cap \big\{A_1\xleftrightarrow{\le 0} \partial B(d^{-2}N)\big\}^c, 
\end{equation}
\begin{equation}\label{AA_xn_out}
\mathsf{F}_{a}^{\mathrm{out}}:=  \big\{\mathcal{H}^{\mathrm{out}} \ge a N^{-[(\frac{d}{2}-1)\boxdot(d-4)]-5\varsigma_d(N)}  \big\}\cap \big\{A_2 \xleftrightarrow{\le 0} B(10N)\big\}^c. 
\end{equation}
Note that $\mathsf{F}_{a}^{\mathrm{in}}$ (resp. $\mathsf{F}_{a}^{\mathrm{out}}$) is measurable with respect to $\mathcal{C}^{\mathrm{in}}$ (resp. $\mathcal{C}^{\mathrm{out}}$).

\begin{lemma}\label{lemma_newHr}
For any $d\ge 3$, there exists $\cl\label{const_newHr}(d)>0$ such that 
	\begin{equation}\label{late5.7}
		\mathbb{P}^{D_1\cup D_2}\big(\mathsf{F}_{\cref{const_newHr}}^{\mathrm{in}}  \big)\gtrsim   \mathbb{P}^{D_1\cup D_2}\big(A_1 \xleftrightarrow{\ge 0} \partial B(N)   \big),	\end{equation}
	\begin{equation}\label{late5.8}
		\mathbb{P}^{D_1\cup D_2}\big(\mathsf{F}_{\cref{const_newHr}}^{\mathrm{out}}  \big)\gtrsim  \mathbb{P}^{D_1\cup D_2}\big(A_2\xleftrightarrow{\ge 0}   B(N)   \big). 
	\end{equation}
\end{lemma}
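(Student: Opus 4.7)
The plan is to prove (\ref{late5.7}) by a combined first- and second-moment argument for the harmonic average $\mathcal{H}^{\mathrm{in}}$; the bound (\ref{late5.8}) follows by the same strategy applied to $\mathcal{C}^{\mathrm{out}}$ and $\mathcal{H}^{\mathrm{out}}$, so I focus on (\ref{late5.7}). The first step is to reduce to the event $\mathsf{F}_0 := \{A_1 \not\xleftrightarrow{\le 0} \partial B(d^{-2}N)\}$ (the decreasing part of $\mathsf{F}_c^{\mathrm{in}}$ in (\ref{AA_xn_in})). On $\mathsf{F}_0$ the boundary values of $\mathcal{C}^{\mathrm{in}}$ on $\widetilde{\partial}\mathcal{C}^{\mathrm{in}}$ are all non-negative: the interior zero-crossings contribute $0$, and on the exits to $\partial\widetilde{B}(d^{-2}N)$ only the positive sign component can appear, since $\mathsf{F}_0$ forbids a negative exit from $A_1$; in particular $\mathcal{H}^{\mathrm{in}} \ge 0$ on $\mathsf{F}_0$. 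Using the FKG inequality between the increasing event $\{A_1 \xleftrightarrow{\ge 0}\partial B(N)\}$ and the decreasing event $\mathsf{F}_0$, together with the symmetry $\mathbb{P}(A_1 \xleftrightarrow{\le 0}\partial B(d^{-2}N)) = \mathbb{P}(A_1 \xleftrightarrow{\ge 0}\partial B(d^{-2}N))$ and the crossing estimates (\ref{crossing_low})--(\ref{crossing_high}) (shrinking $\cref{const_new_QM2}$ in Theorem \ref{prop_new_QM} so that this probability is at most $1/2$), I obtain
\[
\mathbb{P}^{D_1\cup D_2}(A_1\xleftrightarrow{\ge 0}\partial B(N), \mathsf{F}_0) \gtrsim \mathbb{P}^{D_1\cup D_2}(A_1\xleftrightarrow{\ge 0}\partial B(N)).
\]

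Next I would establish the first-moment lower bound
\[
\mathbb{E}^{D_1\cup D_2}[\mathcal{H}^{\mathrm{in}}\, \mathbbm{1}_{\mathsf{F}_0}] \gtrsim N^{-[(\frac{d}{2}-1)\boxdot(d-4)] - \varsigma_d(N)}\,\mathbb{P}^{D_1\cup D_2}(A_1 \xleftrightarrow{\ge 0}\partial B(N))
\]
by conditioning on $\mathcal{F}_{\mathcal{C}^{\mathrm{in}}}$, using $\{A_1\xleftrightarrow{\ge 0}\partial B(N)\} \subset \{\mathcal{C}^{\mathrm{in}} \xleftrightarrow{\ge 0}\partial B(N)\}$, and applying Lemma \ref{lemma_upper_boundarytoset} with $j=1$, $N_1 = d^{-2}N$, $N_2=N$ and $\mathsf{D}_1 = \mathcal{C}^{\mathrm{in}}\cup D_1$, $\mathsf{D}_2 = D_2$; this yields a conditional upper bound $CN_1^{d-2}\theta_d(N/4)\mathcal{H}^{\mathrm{in}}$ for $\{\mathcal{C}^{\mathrm{in}} \xleftrightarrow{\ge 0}\partial B(N)\}$, and the one-arm asymptotics (\ref{one_arm_low})--(\ref{one_arm_high}) give $N_1^{d-2}\theta_d(N/4) \asymp N^{[(\frac{d}{2}-1)\boxdot(d-4)] + \varsigma_d(N)}$. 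The technical heart is then the matching second-moment estimate
\[
\mathbb{E}^{D_1\cup D_2}[(\mathcal{H}^{\mathrm{in}})^2\, \mathbbm{1}_{\mathsf{F}_0}] \lesssim N^{-2[(\frac{d}{2}-1)\boxdot(d-4)] + C\varsigma_d(N)}\,\mathbb{P}^{D_1\cup D_2}(A_1 \xleftrightarrow{\ge 0}\partial B(N)),
\]
which I would prove by expanding $(\mathcal{H}^{\mathrm{in}})^2$ as an average over pairs $y_1,y_2 \in \partial\mathcal{B}(d^{-1}N)$, reducing each $\mathbb{E}[\mathcal{H}_{y_1}\mathcal{H}_{y_2}\mathbbm{1}_{\mathsf{F}_0}]$ to the two-point connecting probability $\mathbb{P}(A_1\xleftrightarrow{\ge 0}y_1,y_2)$ via an extension of Lemma \ref{lemma3.3} to products of harmonic averages, controlling that two-point probability by Lemma \ref{lemma_final} together with Propositions \ref{prop_pointtoset_speed}, \ref{prop_pointtoset_harnack} and \ref{prop_relation_pointandboundary}, and finally summing over $(y_1,y_2)$ via the lattice-sum estimates (\ref{|x|-a_aneqd-1})--(\ref{z1z2}).

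With both moment bounds in hand, the Cauchy--Schwarz decomposition
\[
\mathbb{E}[\mathcal{H}^{\mathrm{in}}\mathbbm{1}_{\mathsf{F}_0}] \le c\beta\,\mathbb{P}(\mathsf{F}_0) + \sqrt{\mathbb{E}[(\mathcal{H}^{\mathrm{in}})^2\mathbbm{1}_{\mathsf{F}_0}]\cdot\mathbb{P}(\mathcal{H}^{\mathrm{in}}\ge c\beta,\,\mathsf{F}_0)},\quad \beta := N^{-[(\frac{d}{2}-1)\boxdot(d-4)] - 5\varsigma_d(N)},
\]
combined with a sufficiently small dimension-dependent choice $c = \cref{const_newHr}$ gives $\mathbb{P}(\mathsf{F}_c^{\mathrm{in}}) \gtrsim \mathbb{P}^{D_1\cup D_2}(A_1\xleftrightarrow{\ge 0}\partial B(N))$; the exponent $5\varsigma_d(N)$ appearing in the definition of $\mathsf{F}_c^{\mathrm{in}}$ is calibrated exactly so that the polylogarithmic slack produced by the two moment bounds at the critical dimension $d=6$ (where $\varsigma_d(N)\neq 0$) is fully absorbed, leaving a dimension-free ratio.

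I expect the hard part to be the second-moment estimate. While the first moment follows from a single application of Lemma \ref{lemma_upper_boundarytoset}, the product $\mathcal{H}_{y_1}\mathcal{H}_{y_2}$ mixes a correlated regime, in which a single excursion of $\mathcal{C}^{\mathrm{in}}$ simultaneously controls the boundary values near both $y_1$ and $y_2$ (handled via the two-point connecting estimate of Lemma \ref{lemma_final}), with a disjoint regime, in which separate portions of the sign cluster are responsible (handled via a BKR-type splitting parallel to the one in the upper bound of Proposition \ref{prop_relation_pointandboundary}); both contributions must be bounded without polynomial loss in $N$ and then summed uniformly over $\sim N^{2(d-1)}$ pairs $(y_1,y_2)$ using the lattice-sum machinery of Section \ref{section_notation}.
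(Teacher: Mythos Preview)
Your first-moment step and the FKG reduction to $\mathsf{F}_0$ are exactly what the paper does. The divergence is after that: you propose a Paley--Zygmund argument built on a second-moment bound for $\mathcal{H}^{\mathrm{in}}$, whereas the paper needs no second moment at all. The paper observes that $\{A_1\xleftrightarrow{\ge 0}\partial B(N)\}\cap\mathsf{F}_0\subset\{\mathcal{H}^{\mathrm{in}}>0\}\cap\mathsf{F}_0$, so
\[
\mathbb{P}^{D_1\cup D_2}(\mathsf{F}_c^{\mathrm{in}})\ \ge\ \mathbb{P}^{D_1\cup D_2}\big(\{A_1\xleftrightarrow{\ge 0}\partial B(N)\}\cap\mathsf{F}_0\big)\ -\ \mathbb{P}^{D_1\cup D_2}\big(\mathsf{F}'\cap\{A_1\xleftrightarrow{\ge 0}\partial B(N)\}\big),
\]
where $\mathsf{F}':=\{0<\mathcal{H}^{\mathrm{in}}\le c\beta\}\cap\mathsf{F}_0$. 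For the subtracted term one applies Lemma~\ref{lemma_upper_boundarytoset} \emph{again}, but now using the pointwise cap $\mathcal{H}^{\mathrm{in}}\le c\beta$ on $\mathsf{F}'$: the conditional probability of $\{\mathcal{C}^{\mathrm{in}}\xleftrightarrow{\ge 0}\partial B(N)\}$ given $\mathcal{F}_{\mathcal{C}^{\mathrm{in}}}$ is $\lesssim N^{d-2}\theta_d(N/4)\cdot c\beta\lesssim cN^{-4\varsigma_d(N)}$. Combining with $\{\mathsf{F}'\}\subset\{\mathcal{H}^{\mathrm{in}}>0\}\subset\{A_1\xleftrightarrow{\ge 0}\partial B(d^{-2}N)\}$ and Corollary~\ref{corollary_compare_boundtoset} gives $\mathbb{P}^{D_1\cup D_2}(\mathsf{F}',A_1\xleftrightarrow{\ge 0}\partial B(N))\lesssim c\,\mathbb{P}^{D_1\cup D_2}(A_1\xleftrightarrow{\ge 0}\partial B(N))$, and taking $c$ small finishes. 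In short: the same Lemma~\ref{lemma_upper_boundarytoset} that gives your first-moment \emph{lower} bound on $\mathbb{E}[\mathcal{H}^{\mathrm{in}}]$ also shows directly that ``small $\mathcal{H}^{\mathrm{in}}$'' rarely coexists with ``connected to $\partial B(N)$''.

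There is also a genuine gap in your final Cauchy--Schwarz step as written. The term $c\beta\,\mathbb{P}(\mathsf{F}_0)$ cannot be absorbed into $\mathbb{E}[\mathcal{H}^{\mathrm{in}}\mathbbm{1}_{\mathsf{F}_0}]$ by choosing $c$ to be a dimension-only constant: $\mathbb{P}(\mathsf{F}_0)$ is typically of order $1$, while your first-moment lower bound is $\gtrsim\beta\,\mathbb{P}^{D_1\cup D_2}(A_1\xleftrightarrow{\ge 0}\partial B(N))$, and the latter probability can be arbitrarily small (take $A_1=\{\bm 0\}$ and $N$ large). The repair is to replace $\mathbb{P}(\mathsf{F}_0)$ by $\mathbb{P}(\mathsf{F}_0,\mathcal{H}^{\mathrm{in}}>0)\le\mathbb{P}(A_1\xleftrightarrow{\ge 0}\partial B(d^{-2}N))$ and invoke Corollary~\ref{corollary_compare_boundtoset}; but once you have made that observation you have already isolated exactly the ``bad'' event $\mathsf{F}'$ and can bound $\mathbb{P}(\mathsf{F}',A_1\xleftrightarrow{\ge 0}\partial B(N))$ directly as above, rendering the second-moment bound --- which you correctly flag as the technical heart, and which would require a genuinely new two-point extension of Lemma~\ref{lemma3.3} not present in the paper --- entirely unnecessary.
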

\begin{proof}
	Since $\{A_1\xleftrightarrow{\ge 0} \partial B(N)\} \cap \{A_1\xleftrightarrow{\le 0} \partial B(d^{-2}N)\}^c\subset \{\mathcal{H}^{\mathrm{in}}>0\}$, one has 
	\begin{equation}\label{newlyimprove_5.9}
		\begin{split}
			\mathbb{P}^{D_1\cup D_2}\big(\mathsf{F}_{\cref{const_newHr}}^{\mathrm{in}}  \big) \ge & \mathbb{P}^{D_1\cup D_2}\big(\{A_1\xleftrightarrow{\ge 0} \partial B(N)\}\cap \{A_1\xleftrightarrow{\le 0} \partial B(d^{-2}N)\}^c  \big) \\
			&-\mathbb{P}^{D_1\cup D_2}\big(\mathsf{F}', A_1\xleftrightarrow{\ge 0} \partial B(N)  \big),
		\end{split}
	\end{equation}
	where $\mathsf{F}':=  \big\{0<\mathcal{H}^{\mathrm{in}} \le \cref{const_newHr} N^{-[(\frac{d}{2}-1)\boxdot (d-4)]-5\varsigma_d(N)} \big\}\cap \{A_1\xleftrightarrow{\le 0} \partial B(d^{-2}N)\}^c$. For the first term on the right-hand side of (\ref{newlyimprove_5.9}), by the FKG inequality and using the symmetry of $\widetilde{\phi}_\cdot$ (see (\ref{coro2.1_1})), we have 
	\begin{equation}\label{3.126}
	\begin{split}
			&\mathbb{P}^{D_1\cup D_2}\big(\{A_1\xleftrightarrow{\ge 0} \partial B(N)\}\cap \{A_1\xleftrightarrow{\le 0} \partial B(d^{-2}N)\}^c  \big) \\
		\overset{}{\ge} & \mathbb{P}^{D_1\cup D_2}\big(A_1\xleftrightarrow{\ge 0} \partial B(N) \big)\cdot \big[1- \mathbb{P}^{D_1\cup D_2}\big(A_1\xleftrightarrow{\ge 0} \partial B(d^{-2}N) \big) \big] \\
			\overset{(\ref*{crossing_low})\text{--}(\ref*{crossing_high})}{\ge} & \tfrac{1}{2}  \mathbb{P}^{D_1\cup D_2}\big(A_1\xleftrightarrow{\ge 0} \partial B(N) \big),
		\end{split}
	\end{equation}
	where in the last line we used the assumption that $A_1\subset \widetilde{B}(\cref{const_new_QM2}N)$ with a sufficiently small $\cref{const_new_QM2}$. For the second term, by Lemma \ref{lemma_upper_boundarytoset} (with $j=1$), we have 
	\begin{equation}\label{late5.11}
		\begin{split}
			\mathbb{P}^{D_1\cup D_2}\big(\mathsf{F}', A_1\xleftrightarrow{\ge 0} \partial B(N)  \big) \lesssim & N^{d-2} \theta_d(\tfrac{N}{4}) \mathbb{E}^{D_1\cup D_2} \big[ \mathcal{H}^{\mathrm{in}}\cdot \mathbbm{1}_{\mathsf{F}'}  \big]\\
			\overset{(\text{definition of}\ \mathsf{F}')}{ \lesssim} &  \cref{const_newHr} N^{-4\varsigma_d(N)}\mathbb{P}^{D_1\cup D_2}\big(\mathcal{H}^{\mathrm{in}}>0\big). 
		\end{split}
	\end{equation}
	Moreover, since $\{\mathcal{H}^{\mathrm{in}}>0\}\subset \{A_1\xleftrightarrow{\ge 0} \partial B(d^{-2}N)\}$, one has 
	\begin{equation}\label{late5.12}
		\begin{split}
			\mathbb{P}^{D_1\cup D_2}\big(\mathcal{H}^{\mathrm{in}}>0\big) \le &\mathbb{P}^{D_1\cup D_2}\big(A_1\xleftrightarrow{\ge 0} \partial B(d^{-2}N)\big)\\
			\overset{\text{Corollary}\ \ref*{corollary_compare_boundtoset}}{\lesssim } &N^{4\varsigma_d(N)}\mathbb{P}^{D_1\cup D_2}\big(A_1\xleftrightarrow{\ge 0} \partial B(N)\big). 
		\end{split}
	\end{equation}
By plugging (\ref{late5.12}) into (\ref{late5.11}) and taking a sufficiently small $\cref{const_newHr}$, we have
	\begin{equation}\label{newlyimprove_5.13}
		\mathbb{P}^{D_1\cup D_2}\big(\mathsf{F}', A_1\xleftrightarrow{\ge 0} \partial B(N)  \big)\le  \tfrac{1}{4}  \mathbb{P}^{D_1\cup D_2}\big(A_1\xleftrightarrow{\ge 0} \partial B(N) \big).
	\end{equation}
	Combining (\ref{newlyimprove_5.9}), (\ref{3.126}) and (\ref{newlyimprove_5.13}), we obtain (\ref{late5.7}). As the proof of (\ref{late5.8}) employs the same approach as above, we omit further details.
\end{proof}

Now we are ready to prove the lower bounds in Theorem \ref{prop_new_QM}. 
\begin{proof}[Proof of Theorem \ref{prop_new_QM}: lower bounds]
	It follows from Corollary \ref{lemma_boxtobox_hm} that
		\begin{equation}\label{3.130}
		\begin{split}
			\mathbb{P}^{D_1\cup D_2}\big(A_1\xleftrightarrow{\ge 0} A_2 \big)
			\ge &\mathbb{P}^{D_1\cup D_2}\big(A_1\xleftrightarrow{\ge 0} A_2 , \mathsf{F}_{\cref{const_newHr}}^{\mathrm{in}},\mathsf{F}_{\cref{const_newHr}}^{\mathrm{out}}\big)\\
			=& \mathbb{E}^{D_1\cup D_2}\Big[ \mathbbm{1}_{\mathsf{F}_{\cref{const_newHr}}^{\mathrm{in}}\cap \mathsf{F}_{\cref{const_newHr}}^{\mathrm{out}}} \cdot \big(1- e^{-cN^{d-2} \mathcal{H}^{\mathrm{in}} \mathcal{H}^{\mathrm{out}}} \big) \Big].
		\end{split}
	\end{equation}
	On $\mathsf{F}_{\cref{const_newHr}}^{\mathrm{in}}\cap \mathsf{F}_{\cref{const_newHr}}^{\mathrm{out}}$, one has $\mathcal{H}^{\mathrm{in}}\land \mathcal{H}^{\mathrm{out}}\ge \cref{const_newHr}N^{-[(\frac{d}{2}-1)\boxdot  (d-4)]-5\varsigma_d(N)}$, which implies that 
	\begin{equation}\label{late5.14}
		1- e^{-cN^{d-2} \mathcal{H}^{\mathrm{in}} \mathcal{H}^{\mathrm{out}}}  \gtrsim N^{[0 \boxdot (6-d) ]-10\varsigma_d(N)}. 
	\end{equation}
	Combining (\ref{3.130}) and (\ref{late5.14}), we get 
	\begin{equation}\label{late5.15}
		\mathbb{P}^{D_1\cup D_2}\big(A_1\xleftrightarrow{\ge 0} A_2 \big)
			\gtrsim N^{[0\boxdot (6-d) ]-10\varsigma_d(N)} \mathbb{P}^{D_1\cup D_2}\big(\mathsf{F}_{\cref{const_newHr}}^{\mathrm{in}}\cap  \mathsf{F}_{\cref{const_newHr}}^{\mathrm{out}}  \big). 
	\end{equation}
	Moreover, by the FKG inequality, Lemma \ref{lemma_newHr} and Proposition \ref{prop_boundtoset_zero}, we have 
	\begin{equation}\label{3.131}
		\begin{split}
			\mathbb{P}^{D_1\cup D_2}\big(\mathsf{F}_{\cref{const_newHr}}^{\mathrm{in}}\cap  \mathsf{F}_{\cref{const_newHr}}^{\mathrm{out}}  \big)
			\overset{(\text{FKG})}{\ge} &\mathbb{P}^{D_1\cup D_2}\big(\mathsf{F}_{\cref{const_newHr}}^{\mathrm{in}} \big)\mathbb{P}^{D_1\cup D_2}\big(  \mathsf{F}_{\cref{const_newHr}}^{\mathrm{out}}  \big)\\
			\overset{\text{Lemma}\ \ref*{lemma_newHr}}{\gtrsim} & \mathbb{P}^{D_1\cup D_2}\big(A_1 \xleftrightarrow{\ge 0} \partial B(N)   \big)  \mathbb{P}^{D_1\cup D_2}\big(A_2\xleftrightarrow{\ge 0}   B(N)   \big)\\
			\overset{\text{Proposition}\ \ref*{prop_boundtoset_zero}}{\gtrsim} & \mathbb{P}^{D_1}\big(A_1 \xleftrightarrow{\ge 0} \partial B(N)   \big)  \mathbb{P}^{ D_2}\big(A_2\xleftrightarrow{\ge 0}   B(N)   \big).			\end{split}
	\end{equation}
	By (\ref{late5.15}) and (\ref{3.131}), we confirm the lower bounds in Theorem \ref{prop_new_QM}
\end{proof}

\subsection{Proof of the upper bounds in Theorem \ref{prop_new_QM}}\label{section5.2}

For $3\le d\le 6$, it follows from Lemma \ref{lemma_decompose_moresets} and Corollary \ref{corollary_compare_boundtoset} that 
\begin{equation}
\begin{split}
		\mathbb{P}\big(A_1\xleftrightarrow{(D_1\cup D_2)} A_2 \big)\lesssim &\mathbb{P}\big(A_1\xleftrightarrow{(D_1)} \partial B(N) \big) \mathbb{P}\big(A_2\xleftrightarrow{(D_2)} \partial B(CN) \big) \\
		\lesssim  &N^{\varsigma_d(N)} \mathbb{P}\big(A_1\xleftrightarrow{(D_1)} \partial B(N) \big) \mathbb{P}\big(A_2\xleftrightarrow{(D_2)} \partial B(N) \big).
\end{split}
\end{equation}
This together with (\ref{newfinal_compare}) implies the upper bounds in Theorem \ref{prop_new_QM} for $3\le d\le 6$.

Next, we consider the case when $d\ge 7$. For brevity, we denote ${D}:=D_1\cup D_2$. By (\ref{newfinal_compare}), it suffices to estimate $\mathbb{P}(A_1\xleftrightarrow{(D)}A_2)$. In fact, on the event $\{A_1\xleftrightarrow{(D)}A_2\}$, there exists a finite sequence of distinct glued loops $\widetilde{\ell}_1,...,\widetilde{\ell}_k$ in $\widetilde{\mathcal{L}}_{1/2}^{D}$ such that $\mathrm{ran}(\widetilde{\ell}_1)\cap A_1\neq \emptyset$, $\mathrm{ran}(\widetilde{\ell}_k)\cap A_2\neq \emptyset$ and $\mathrm{ran}(\widetilde{\ell}_j)\cap \mathrm{ran}(\widetilde{\ell}_{j+1})\neq \emptyset$ for all $1\le j\le k-1$. Let $j_*$ be the smallest integer in $[1,k]$ such that $\widetilde{\ell}_{j_*}$ intersects $\partial \mathcal{B}(N)$ (such a $j_*$ must exist because $A_1$ and $A_2$ lie on opposite sides of $\partial \mathcal{B}(N)$). We denote $\widetilde{\mathcal{C}}_*:= \cup_{1\le j\le j_*-1}\mathrm{ran}(\widetilde{\ell}_j)$ and $\widetilde{\ell}_*:=\widetilde{\ell}_{j_*}$. By the minimality of $j_*$, we know that $\widetilde{\mathcal{C}}_*$ is disjoint from $\partial \mathcal{B}(N)$. Moreover, $\widetilde{\ell}_*$ is connected to $A_2$ by $\cup_{ j_*+1 \le j\le k}\mathrm{ran}(\widetilde{\ell}_j)$, without involving the loops contained in $\widetilde{\mathcal{C}}_*$. In what follows, we estimate the probability of $\{A_1 \xleftrightarrow{(D)} A_2\}$ under different restrictions on the loop $\widetilde{\ell}_*$.

 %and the cluster $\widetilde{\mathcal{C}}_*$. 

%Let $\widetilde{\ell}_*\in \{\widetilde{\ell}_1,...,\widetilde{\ell}_k\}$ be the one that intersects $\partial \mathcal{B}(N)$ and has the smallest index, and let $ $

%To achieve this, we consider the following exploration process, which is similar to the argument in the proof of Lemma \ref{lemma_new_decomposition}. We explore the cluster of $\widetilde{\mathcal{L}}_{1/2}^{D}$ containing $A_1$ (where the glued loops are added to the cluster one by one), and we stop when the added loop intersects $\partial \mathcal{B}(N)$ (here we take the Euclidean ball to facilitate subsequent calculations). On $\{A_1 \xleftrightarrow{(D)} A_2\}$, we know that the last added loop (denoted by $\widetilde{\ell}_*$) must intersect both $\partial \mathcal{B}(N)$ and the explored cluster before stopping (denoted by $\widetilde{\mathcal{C}}_*$). Note that $\widetilde{\mathcal{C}}_*$ is disjoint from $\partial \mathcal{B}(N)$, and that $\widetilde{\ell}_*$ can be connected to $A_2$ without using the loops included in $\widetilde{\mathcal{C}}_*$ (when $\mathrm{ran}(\widetilde{\ell}_*)\cap A_2\neq \emptyset$, this holds trivially; when $\mathrm{ran}(\widetilde{\ell}_*)\cap A_2=\emptyset$,  ). 

\textbf{Case 1:} when $\mathrm{ran}(\widetilde{\ell}_*)\cap \partial B(N^{\frac{2}{d-4}})\neq \emptyset$. In this case, we have $\widetilde{\ell}_* \in \widetilde{\mathcal{L}}_{1/2}^{D}[N^{\frac{2}{d-4}},  N]$ (recalling $\widetilde{\mathcal{L}}_{1/2}^{D}[\cdot,\cdot]$ in (\ref{lateradd_4.14})). Thus, by Lemma \ref{lemma_new_decomposition}, the probability of this case is bounded from above by 
\begin{equation}\label{lateimprove_5.18}
	\begin{split}
		&C\Big( \frac{N}{N^{\frac{2}{d-4}}}\Big)^{d-2} \mathbb{P}\big(A_1\xleftrightarrow{(D)} \partial B(cN^{\frac{2}{d-4}}) \big)\mathbb{P}\big(A_2 \xleftrightarrow{(D)} \partial B(CN) \big)\\
		\lesssim &  N^{6-d}\mathbb{P}^{D}\big(A_1\xleftrightarrow{\ge 0} \partial B(N) \big)\mathbb{P}^{D}\big(A_2 \xleftrightarrow{\ge 0} \partial B(N) \big),
	\end{split}
\end{equation}
where in the second line we used (\ref{newfinal_compare}) and Corollary \ref{corollary_compare_boundtoset}.

\textbf{Case 2:} when $\mathrm{ran}(\widetilde{\ell}_*)\cap \partial B(N^{\frac{d-4}{2}})\neq \emptyset$. Similar to Case 1, we have $\widetilde{\ell}_* \in \widetilde{\mathcal{L}}_{1/2}^{D}[N, N^{\frac{d-4}{2}}]$. Therefore, by Lemma \ref{lemma_new_decomposition}, (\ref{newfinal_compare}) and Corollary \ref{corollary_compare_boundtoset}, the probability of this case is bounded from above by 
\begin{equation}\label{lateimprove_5.19}
	\begin{split}
		& C\Big( \frac{N^{\frac{d-4}{2}}}{N}\Big)^{d-2} \mathbb{P}\big(A_1\xleftrightarrow{(D)} \partial B(cN) \big)\mathbb{P}\big(A_2 \xleftrightarrow{(D)} \partial B(CN^{\frac{d-4}{2}}) \big)\\
\lesssim &  N^{6-d}\mathbb{P}^{D}\big(A_1\xleftrightarrow{\ge 0} \partial B(N) \big)\mathbb{P}^{D}\big(A_2 \xleftrightarrow{\ge 0} \partial B(N) \big). 
	\end{split}
\end{equation}

Now it remains to consider the following case.

\textbf{Case 3:} when $\mathrm{ran}(\widetilde{\ell}_*)\subset \widetilde{B}(N^{\frac{d-4}{2}})\setminus \widetilde{B}(N^{\frac{2}{d-4}})$. In this case, there exist $w_1\in \mathcal{B}(N)\setminus B(N^{\frac{2}{d-4}}),w_2\in B(N^{\frac{d-4}{2}})$ such that $\mathrm{ran}(\widetilde{\ell}_*)\cap \widetilde{B}_{w_j}(1)\neq \emptyset$ for all $j\in \{1,2\}$, and that $\widetilde{B}_{w_1}(1) \xleftrightarrow{D\cup \partial \mathcal{B}(N)} A_1$ (certified by the cluster $\mathcal{C}_*$) and $\widetilde{B}_{w_2}(1) \xleftrightarrow{(D)} A_2$ happen disjointly without using the loop $\widetilde{\ell}_*$; in addition, $\mathrm{ran}(\widetilde{\ell}_*)\cap \partial \mathcal{B}(N)\neq \emptyset$ implies that there is $w_3\in \partial \mathcal{B}(N)$ such that $\widetilde{\ell}_*$ includes a sub-path starting from $w_3$ and hitting $\widetilde{B}_{w_1}(1)$ before $\partial \mathcal{B}(N+1)$ (we denote this event, involving $w_1,w_2, w_3$ as mentioned above, by $\mathsf{F}_{w_1,w_2,w_3}$). Referring to Section \ref{subsection_loopsoup}, the total loop measure of loops that intersect both $\widetilde{B}_{w_1}(1)$ and $\widetilde{B}_{w_2}(1)$, and include a sub-path starting from $w_3$ and hitting $\widetilde{B}_{w_1}(1)$ before $\partial \mathcal{B}(N+1)$, is bounded from above by 
\begin{equation}
	C|w_1-w_2|^{2-d}|w_2-w_3|^{2-d}\widetilde{\mathbb{P}}_{w_3}\big(\tau_{\widetilde{B}_{w_1}(1)}<\tau_{\partial \mathcal{B}(N+1)} \big). 
\end{equation}
For any $w_1\in \mathcal{B}(N)$ and $w_3\in \partial \mathcal{B}(N)$, note that $|w_1-w_3|\gtrsim  N-|w_1|$. Therefore, by the strong Markov property and the potential theory for simple random walks (see e.g. \cite[Proposition 1.5.10]{lawler2012intersections}), we have 
\begin{equation}\label{later5.39}
\begin{split}
	&\widetilde{\mathbb{P}}_{w_3}\big(\tau_{\widetilde{B}_{w_1}(1)}<\tau_{\partial \mathcal{B}(N+1)} \big)\\
	\le &\widetilde{\mathbb{P}}_{w_3}\big(\tau_{\partial \mathcal{B}_{w_3}(c(N-|w_1|))}<\tau_{\partial \mathcal{B}(N+1)} \big)\max_{z\in \partial \mathcal{B}_{w_3}(c(N-|w_1|))}\widetilde{\mathbb{P}}_{z}(\tau_{w_1}<\infty)\\
	\lesssim &  \big(N-|w_1|\big)^{-1}|w_1-w_3|^{2-d}. 
	\end{split}
\end{equation}
Therefore, by the BKR inequality and (\ref{newfinal_compare}), we know that the probability of this case is at most (letting $x_N$ be an arbitrarily point in $\partial B(N)$) 
\begin{equation}\label{latefix5.37}
	\begin{split}
		&C\sum_{w_1\in \mathcal{B}(N)\setminus B(N^{\frac{2}{d-4}}),w_2\in B(N^{\frac{d-4}{2}}),w_3\in \partial \mathcal{B}(N)}\big(N-|w_1|\big)^{-1}|w_1-w_3|^{2-d}	\\
		& \cdot  |w_1-w_2|^{2-d}|w_2-w_3|^{2-d}\mathbb{P}^{D}\big( \widetilde{B}_{w_2}(1)\xleftrightarrow{\ge 0} A_2\big)  \mathbb{P}^{D\cup \partial \mathcal{B}(N)}\big( \widetilde{B}_{w_1}(1)\xleftrightarrow{\ge 0} A_1\big) \\
		\lesssim & \sum_{w_1\in \mathcal{B}(N)\setminus B(N^{\frac{2}{d-4}}),w_3\in \partial \mathcal{B}(N)} \big(N-|w_1|\big)^{-1} |w_1-w_3|^{6-2d}\mathbb{P}^{D}\big( x_N\xleftrightarrow{\ge 0} A_2\big)\\
		&\ \ \ \ \ \ \ \ \ \ \ \ \ \ \ \ \ \ \ \ \ \ \ \ \ \ \ \ \ \ \ \  \cdot\mathbb{P}^{D\cup \partial \mathcal{B}(N)}\big( \widetilde{B}_{w_1}(1)\xleftrightarrow{\ge 0} A_1\big),
				\end{split}
\end{equation}
where in the last inequality we used (\ref{2-d_2-d}), Lemma \ref{lemma_revise_B1} and Proposition \ref{prop_pointtoset_harnack}. Moreover, for any $w_1\in \mathcal{B}(N)\setminus B(N^{\frac{2}{d-4}})$, by using (\ref{ineq_compare}), Lemma \ref{lemma_revise_B1}, Lemma \ref{lemma_subharminic} and Proposition \ref{prop_pointtoset_speed}, we have
\begin{equation*}
	\begin{split}
	 	&\mathbb{P}^{D\cup \partial \mathcal{B}(N)}\big( \widetilde{B}_{w_1}(1)\xleftrightarrow{\ge 0} A_1\big)\\
		\overset{(\ref*{ineq_compare})}{\le} &  2\mathbb{P}^{D\cup \partial \mathcal{B}(N+10)}\big( \widetilde{B}_{w_1}(1)\xleftrightarrow{\ge 0} A_1\big) \\
	\overset{\text{Lemma}\ \ref*{lemma_revise_B1}}{	\lesssim } & \mathbb{P}^{D\cup \partial \mathcal{B}(N+10)}\big( w_1\xleftrightarrow{\ge 0} A_1\big) \\
	\overset{\text{Lemma}\ \ref*{lemma_subharminic} }{\lesssim } & \sum\nolimits_{z\in \partial  \mathcal{B}(\frac{1}{2}N^{\frac{2}{d-4}})}\mathbb{P}_{w_1}\big(\tau_{\mathcal{B}(\frac{1}{2}N^{\frac{2}{d-4}})}=\tau_{z}<\tau_{\partial \mathcal{B}(N+10)} \big) \cdot  \mathbb{P}^{D\cup \partial \mathcal{B}(N+10)}\big( z\xleftrightarrow{\ge 0} A_1\big)\\
	\overset{(\ref*{addnew3.25}),(\ref*{ineq_compare})}{\lesssim }&\mathbb{P}_{w_1}\big( \tau_{\mathcal{B}(\frac{1}{2}N^{\frac{2}{d-4}})}<\tau_{\partial \mathcal{B}(N+10)} \big)\cdot \mathbb{P}^{D}\big( x_N\xleftrightarrow{\ge 0} A_1\big)\cdot \Big( \frac{N^{\frac{2}{d-4}}}{N}\Big)^{2-d}.  
	\end{split}
	\end{equation*} 
Meanwhile, applying the potential theory for simple random walks, one has 
\begin{equation*}\label{latefix_5.39}
\begin{split}
		\mathbb{P}_{w_1}\big( \tau_{\mathcal{B}(\frac{1}{2}N^{\frac{2}{d-4}})}<\tau_{\partial \mathcal{B}(N+10)} \big)  \lesssim &  \big(|w_1|^{2-d}-N^{2-d}\big) N^{\frac{2(d-2)}{d-4}} \\
		\lesssim&  \big(N- |w_1| \big)|w_1|^{1-d}N^{\frac{2(d-2)}{d-4}}. 
\end{split}
\end{equation*}
Combining these two estimates, we get 
\begin{equation}\label{submitnew_6.24}
	\mathbb{P}^{D\cup \partial \mathcal{B}(N)}\big( \widetilde{B}_{w_1}(1)\xleftrightarrow{\ge 0} A_1\big) \lesssim \big(N- |w_1| \big)|w_1|^{1-d} N^{d-2}\mathbb{P}^{D}\big( x_N\xleftrightarrow{\ge 0} A_1\big). 
\end{equation}
This implies that the right-hand side of (\ref{latefix5.37}) is at most  
\begin{equation}\label{newfix_5.43}
	\begin{split}
		& CN^{d-2}\mathbb{P}^{D}\big( x_N\xleftrightarrow{\ge 0} A_1\big)\mathbb{P}^{D}\big( x_N\xleftrightarrow{\ge 0} A_2\big)\\
		&   \cdot \sum\nolimits_{w_1\in \mathcal{B}(N)}  |w_1|^{1-d} \sum\nolimits_{w_3\in \partial \mathcal{B}(N)} |w_1-w_3|^{6-2d}\\
		\overset{}{\lesssim } & N^{6-d}\mathbb{P}^{D}\big( A_1\xleftrightarrow{\ge 0} \partial B(N)\big) \mathbb{P}^{D}\big( A_2\xleftrightarrow{\ge 0} \partial B(N)\big),
	\end{split}
\end{equation}
where in the last inequality we used Proposition \ref{prop_relation_pointandboundary} and Lemma \ref{lemma_1-d_6-2d}.

Putting (\ref{lateimprove_5.18}), (\ref{lateimprove_5.19}) and (\ref{newfix_5.43}) together and using (\ref{newfinal_compare}), we obtain that 
\begin{equation}
	\mathbb{P}^{D}(A_1\xleftrightarrow{\ge 0}A_2)\lesssim N^{6-d}\mathbb{P}^{D}\big(A_1\xleftrightarrow{\ge 0} \partial B(N)\big)\mathbb{P}^{D}\big( A_2\xleftrightarrow{\ge 0} \partial B(N)\big) 
\end{equation}
for all $d>6$. To sum up, we conclude the proof of Theorem \ref{prop_new_QM}. \qed

\section{Decay rate of the volume}\label{section_volume}

In this section, we present the proof of Theorem \ref{thm_volume}, which is mainly inspired by \cite[Section 5]{borgs1999uniform}. For any $N\ge n\ge 1$, let $\mathfrak{C}[n,N]$ be the collection of clusters of $\widetilde{E}^{\ge 0}$ intersecting both $B(n)$ and $\partial B(N)$. Recall that for any $A\subset \widetilde{\mathbb{Z}}^d$, the volume of $A$ (denoted by $\mathrm{vol}(A)$) is the number of points in $\mathbb{Z}^d$ contained in $A$. Let $\mathcal{C}[n,N]$ be the cluster in $\mathfrak{C}[n,N]$ that maximizes $\mathrm{vol}(\mathcal{C}[n,N]\cap B(n))$ (we break the tie in a predetermined manner). 

% with the maximal volume within $B(n)$ 

\begin{lemma}\label{lemma_expected_max_volume}
	For $3\le d\le 5$, there exist $\cl\label{const_expected_max_volume1},\cl\label{const_expected_max_volume2},\cl\label{const_expected_max_volume3}>0$ such that for any $N\ge 1$,
	\begin{equation}
		\mathbb{P}\big(\mathrm{vol}(\mathcal{C}[\cref{const_expected_max_volume1} N,N]\cap B(\cref{const_expected_max_volume1}  N))\ge \cref{const_expected_max_volume2}N^{\frac{d}{2}+1}  \big)\ge \cref{const_expected_max_volume3}.   \end{equation}
\end{lemma}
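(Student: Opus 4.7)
The plan is to adapt the second-moment strategy of \cite{borgs1999uniform} to the GFF setting, using the tools built earlier in this paper. Introduce $V(x) := \mathrm{vol}(\mathcal{C}^{\ge 0}(x) \cap B(\cref{const_expected_max_volume1}N))$ and the total crossing volume
\[
\mathfrak{X} := \sum_{x \in B(\cref{const_expected_max_volume1}N)} \mathbbm{1}_{x\xleftrightarrow{\ge 0}\partial B(N)} = \sum_{\mathcal{C}\in \mathfrak{C}[\cref{const_expected_max_volume1}N, N]} V_\mathcal{C},
\]
with $V_\mathcal{C} := \mathrm{vol}(\mathcal{C}\cap B(\cref{const_expected_max_volume1}N))$ and $V_{\max} := V(\mathcal{C}[\cref{const_expected_max_volume1}N, N])$. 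The target statement is $\mathbb{P}(V_{\max}\ge \cref{const_expected_max_volume2}N^{d/2+1})\ge \cref{const_expected_max_volume3}$.

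First I would compute the two moments of $\mathfrak{X}$. Proposition~\ref{prop_pointtoset_speed} and (\ref{one_arm_low}) give $\mathbb{P}(x \xleftrightarrow{\ge 0} \partial B(N)) \asymp N^{-d/2+1}$ uniformly in $x \in B(\cref{const_expected_max_volume1}N)$ once $\cref{const_expected_max_volume1}$ is small, hence $\mathbb{E}[\mathfrak{X}]\asymp N^{d/2+1}$. For the second moment I split $\mathbb{E}[\mathfrak{X}^2]$ by whether $x \xleftrightarrow{\ge 0} y$. The disjoint-cluster contribution is at most $\mathbb{E}[\mathfrak{X}]^2\asymp N^{d+2}$ via the isomorphism theorem and the BKR inequality (Lemma~\ref{lemma_BKR}), since the two connections $x\to\partial B(N)$ and $y\to\partial B(N)$ are then witnessed by disjoint collections of loops. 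The same-cluster contribution $\sum_{x,y}\mathbb{P}(x\xleftrightarrow{\ge 0}y\xleftrightarrow{\ge 0}\partial B(N))$ is controlled by Lemma~\ref{lemma_final}, which yields the pointwise bound $\mathbb{P}(x\xleftrightarrow{\ge 0}y\xleftrightarrow{\ge 0}\partial B(N))\lesssim |x-y|^{1-d/2}\theta_d(N)$; summing via (\ref{|x|-a_aneqd}) yields $\lesssim N^{d+2}$, so $\mathbb{E}[\mathfrak{X}^2]\lesssim N^{d+2}$.

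To pass from $\mathfrak{X}$ to $V_{\max}$ I would use the following Cauchy--Schwarz observation. Setting $L := \cref{const_expected_max_volume2}N^{d/2+1}$ and $\mathfrak{X}_{\ge L} := \sum_\mathcal{C}V_\mathcal{C}\mathbbm{1}_{V_\mathcal{C}\ge L}$, one has $\{\mathfrak{X}_{\ge L}>0\}=\{V_{\max}\ge L\}$ and
\[
\mathbb{E}[\mathfrak{X}_{\ge L}]^2 = \mathbb{E}[\mathfrak{X}_{\ge L}\mathbbm{1}_{V_{\max}\ge L}]^2 \le \mathbb{P}(V_{\max}\ge L)\cdot \mathbb{E}[\mathfrak{X}_{\ge L}^2] \le \mathbb{P}(V_{\max}\ge L)\cdot \mathbb{E}[\mathfrak{X}^2].
\]
Combined with $\mathbb{E}[\mathfrak{X}^2]\lesssim N^{d+2}$, the lemma would follow once I establish the lower bound $\mathbb{E}[\mathfrak{X}_{\ge L}]\gtrsim N^{d/2+1}$. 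By translation invariance, the latter reduces to the uniform conditional bound $\mathbb{P}(V(x)\ge L\mid x\xleftrightarrow{\ge 0}\partial B(N))\ge c$ for $x\in B(\cref{const_expected_max_volume1}N)$; a conditional Paley--Zygmund argument on $V(x)$ then requires the matching conditional first moment $\mathbb{E}[V(x)\mid x\to\partial B(N)]\gtrsim N^{d/2+1}$ and second moment $\mathbb{E}[V(x)^2\mid x\to\partial B(N)]\lesssim N^{d+2}$.

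The hard part will be the sharp conditional first moment, equivalently
\[
\sum_{y\in B(\cref{const_expected_max_volume1}N)}\mathbb{P}(x\xleftrightarrow{\ge 0}y\xleftrightarrow{\ge 0}\partial B(N))\gtrsim N^2,
\]
the upper bound $\lesssim N^2$ being already provided by Lemma~\ref{lemma_final}. The FKG inequality alone yields only the weaker pointwise lower bound $\mathbb{P}(x\xleftrightarrow{\ge 0}y\xleftrightarrow{\ge 0}\partial B(N))\ge \mathbb{P}(x\xleftrightarrow{\ge 0}y)\,\theta_d(N)\asymp |x-y|^{2-d}\theta_d(N)$, losing a factor of $|x-y|^{d/2-1}$ and summing to only $\gtrsim N^{3-d/2}$, which is strictly smaller than $N^2$ for every $d\ge 3$. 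The plan for closing this gap is to prove the matching pointwise lower bound $\mathbb{P}(x\xleftrightarrow{\ge 0}y\xleftrightarrow{\ge 0}\partial B(N))\gtrsim |x-y|^{1-d/2}\theta_d(N)$ by first exploring the cluster of $x$ up to diameter $\asymp |x-y|$--using the small-scale FKG inequality to capture $y$ in this exploration with positive conditional probability--and then invoking the quasi-multiplicativity of Theorem~\ref{prop_new_QM} at the intermediate scale $|x-y|$ to produce the further factor $\theta_d(N)/\theta_d(|x-y|)\asymp (|x-y|/N)^{d/2-1}$ that extends the partial cluster to $\partial B(N)$. The analogous sharpening of the three-point estimate needed for the conditional second moment bound should follow by an iteration of the same argument together with BKR.
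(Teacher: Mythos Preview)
Your first- and second-moment bounds on $\mathfrak{X}$ are correct and essentially match the paper (the paper applies Lemma~\ref{lemma_final} directly to $\mathbb{P}(\partial B(N)\xleftrightarrow{\ge 0}x,y)$ without splitting into same/different cluster, but your split works too). The divergence is in how you pass from $\mathfrak{X}$ to $V_{\max}$, and there your plan is much harder than necessary and contains a real gap.

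The paper's route is a two-line pigeonhole. Having chosen $\cref{const_expected_max_volume1}$ small enough that $\rho_d(\cref{const_expected_max_volume1}N,N)\le\tfrac12$ (by~(\ref{crossing_low})), the BKR inequality gives $\mathbb{P}(|\mathfrak{C}[\cref{const_expected_max_volume1}N,N]|\ge K)\le 2^{-K}$; taking $K$ large enough, one has with probability at least $c_\dagger/2$ both $\mathfrak{X}\ge c_\diamond N^{d/2+1}$ and fewer than $K$ crossing clusters, and pigeonhole forces $V_{\max}\ge K^{-1}c_\diamond N^{d/2+1}$. No three-point \emph{lower} bound is needed.

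Your conditional-Paley--Zygmund route, by contrast, hinges on the pointwise bound $\mathbb{P}(x\xleftrightarrow{\ge 0}y\xleftrightarrow{\ge 0}\partial B(N))\gtrsim |x-y|^{1-d/2}\theta_d(N)$, and the sketch for it does not close. The step ``capture $y$ with positive conditional probability'' is false as stated: since $\{x\leftrightarrow y\}\subset\{x\leftrightarrow\partial B_x(|x-y|)\}$,
\[
\mathbb{P}\big(x\leftrightarrow y\ \big|\ x\leftrightarrow\partial B_x(|x-y|)\big)=\frac{\mathbb{P}(x\leftrightarrow y)}{\theta_d(|x-y|)}\asymp |x-y|^{1-d/2}\to 0,
\]
not a uniform positive constant; indeed, were it a constant your accounting would yield $\mathbb{P}(x\leftrightarrow y\leftrightarrow\partial B(N))\gtrsim\theta_d(N)$ uniformly in $y$, and summing over $y\in B(\cref{const_expected_max_volume1}N)$ would give $N^{d/2+1}\gg N^2$, contradicting the upper bound from Lemma~\ref{lemma_final}. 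Even after correcting this factor, the remaining step---extending the partial cluster to $\partial B(N)$ with conditional probability $\gtrsim\theta_d(N)/\theta_d(|x-y|)$ \emph{given} $\{x\leftrightarrow y\}$---is a conditional form of quasi-multiplicativity that Theorem~\ref{prop_new_QM} does not supply; you would need something like $\mathbb{P}(\mathsf{F}^{\mathrm{in}}_c\mid x\leftrightarrow y)\ge c$ at scale $|x-y|$ in the sense of Lemma~\ref{lemma_newHr}, which is plausible but not established here. Use the pigeonhole argument instead.
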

\begin{proof}
	We prove this lemma using the second moment method. Recalling (\ref{crossing_low}), we can take a small $\cref{const_expected_max_volume1}$ such that for all sufficiently large $N$,
	\begin{equation}\label{final_revision_7.2}
		\rho_d(\cref{const_expected_max_volume1} N,N)\le \tfrac{1}{2}.
	\end{equation}
	For simplicity, we denote $\hat{N}:=\cref{const_expected_max_volume1} N$, $\hat{\mathfrak{C}}:=\mathfrak{C}[\hat{N},N]$ and $ \hat{\mathcal{C}}:=\mathcal{C}[\hat{N},N]$. Let $\mathfrak{X}:=\sum_{x\in B(\hat{N})}\mathbbm{1}_{x\xleftrightarrow{\ge 0} \partial B(N)}$. By the lower bounds on $\theta_d$ as stated in (\ref{one_arm_low}), we have 
\begin{equation}\label{ineq_EX}
	\mathbb{E}[\mathfrak{X}]\gtrsim   \hat{N}^{d}\cdot N^{-\frac{d}{2}+1} \asymp N^{\frac{d}{2}+1}.
	\end{equation}
Next, we estimate the second moment of $\mathfrak{X}$:
\begin{equation}
	\mathbb{E}[\mathfrak{X}^2]= \sum\nolimits_{x_1,x_2\in B(\hat{N})} \mathbb{P}\big(\partial B(N)  \xleftrightarrow{\ge 0} x_1,x_2 \big). 
\end{equation}
For any $x_1,x_2\in B(\hat{N})$, let $r=r(x_1,x_2):=|x_1-x_2|$. By Lemma \ref{lemma_final} we have 
\begin{equation}\label{ineq_EX2}
	\begin{split}
		\mathbb{E}[\mathfrak{X}^2] \lesssim & \sum\nolimits_{x_1 \in B(\hat{N})} \mathbb{P}\big(x_1\xleftrightarrow{\ge 0} \partial B(N) \big)  \sum\nolimits_{x_2 \in B(\hat{N})}  |x_1-x_2|^{-\frac{d}{2}+1}\\
		\overset{(\ref*{|x|-a_aneqd})}{\lesssim } & |B(\hat{N})|\cdot \theta_d(\tfrac{1}{2}N)\cdot N^{\frac{d}{2}+1} \overset{(\ref*{one_arm_low})}{\lesssim} N^{d+2}. 
	\end{split}
\end{equation}
By applying the Paley-Zygmund inequality and using (\ref{ineq_EX}) and (\ref{ineq_EX2}), we have\begin{equation}\label{new4.6}
	\mathbb{P}\big( \mathfrak{X}\ge c_\diamond N^{\frac{d}{2}+1}\big) \ge  c_\dagger
\end{equation}
 for some constants $c_\diamond , c_\dagger>0$.

For any $k\ge 1$, let $\mathsf{G}_k:=\{|\hat{\mathfrak{C}}|\ge k \}$ be the event that there are at least $k$ clusters in $\hat{\mathfrak{C}}$. Let $C_\dagger:=\lceil  \log_2(c_\dagger^{-1}) \rceil+1$. By the BKR inequality and (\ref{final_revision_7.2}), one has  
\begin{equation}\label{new4.7}
	\mathbb{P}\big( \mathsf{G}_{C_\dagger}  \big) \overset{(\text{BKR})}{\le} [\rho_d(\hat{N},N) ]^{C_\dagger} \overset{(\ref*{final_revision_7.2})}{\le}  2^{-C_\dagger}< \tfrac{1}{2}c_\dagger.
\end{equation}
 On $\{\mathfrak{X}\ge c_\diamond N^{\frac{d}{2}+1}\}\cap [\mathsf{G}_{C_\dagger}]^c$, by the pigeonhole principle, we know that there exists a cluster $\mathcal{C}'\in \hat{\mathfrak{C}}$ satisfying $\mathrm{vol}(\mathcal{C}'\cap B(\hat{N}))\ge C_\dagger^{-1} c_\diamond  N^{\frac{d}{2}+1}$, which together with the maximality of $\hat{\mathcal{C}}$ implies that $\mathrm{vol}(\hat{\mathcal{C}}\cap B(\hat{N}))\ge C_\dagger^{-1}  c_\diamond N^{\frac{d}{2}+1}$. Thus, by (\ref{new4.6}) and (\ref{new4.7}), we obtain this lemma:
 \begin{equation}
\begin{split}
	&\mathbb{P}\big(\mathrm{vol}(\hat{\mathcal{C}}\cap B(\hat{N}))\ge C_\dagger^{-1} c_\diamond  N^{\frac{d}{2}+1} \big)\\
	\ge &\mathbb{P}\big( \mathfrak{X}\ge c_\diamond N^{\frac{d}{2}+1}\big) -  \mathbb{P}\big( \mathsf{G}_{C_\dagger} \big)
	\overset{(\ref*{new4.6}),(\ref*{new4.7}) }{\ge} \tfrac{1}{2}c_\dagger. \qedhere
\end{split}
\end{equation}

\end{proof}

Based on Lemma \ref{lemma_expected_max_volume}, the proof of Theorem \ref{thm_volume} becomes straightforward.

\begin{proof}[Proof of Theorem \ref{thm_volume}]
	As in the proof of Lemma \ref{lemma_expected_max_volume}, we denote $\hat{N}:=\cref{const_expected_max_volume1} N$ and $ \hat{\mathcal{C}}:=\mathcal{C}[\hat{N},N]$. For any $N,s\ge 1$, we have 
\begin{equation}\label{3.3}
	\begin{split}
		\mathbb{E}\big[\mathrm{vol}(\hat{\mathcal{C}}\cap B(\hat{N}))\big]\le& s+ \mathbb{E}\big[\mathrm{vol}(\hat{\mathcal{C}}\cap B(\hat{N}))\cdot \mathbbm{1}_{\mathrm{vol}(\hat{\mathcal{C}})\ge s}\big]\\
		\le &s+ \sum\nolimits_{x\in  B(\hat{N})}\mathbb{P}\big(\mathrm{vol}\big(\mathcal{C}^{\ge 0}(x)\big) \ge s \big)\\
		\le &s+ C\hat{N}^d \mathbb{P}\big(\mathrm{vol}\big(\mathcal{C}^{\ge 0}(\bm{0})\big) \ge s \big).
	\end{split}
\end{equation}
By taking $s=\frac{1}{2}\mathbb{E}\big[\mathrm{vol}(\hat{\mathcal{C}}\cap B(\hat{N}))\big]$ in (\ref{3.3}), we get 
\begin{equation}
	\mathbb{P}\big( \mathrm{vol}\big(\mathcal{C}^{\ge 0}(\bm{0})\big) \ge \tfrac{1}{2}\mathbb{E}\big[\mathrm{vol}(\hat{\mathcal{C}}\cap B(\hat{N}))\big] \big) \gtrsim N^{-d} \mathbb{E}\big[\mathrm{vol}(\hat{\mathcal{C}}\cap B(\hat{N}))\big]. 
\end{equation}
Combined with $\mathbb{E}\big[\mathrm{vol}(\hat{\mathcal{C}}\cap B(\hat{N}))\big]\gtrsim N^{\frac{d}{2}+1}$ (which follows from Lemma \ref{lemma_expected_max_volume}), it implies that for some constant $c_*>0$, 
\begin{equation}\label{3.15}
		\mathbb{P}\big( \mathrm{vol}\big(\mathcal{C}^{\ge 0}(\bm{0})\big) \ge c_*N^{\frac{d}{2}+1} \big) \gtrsim  N^{-\frac{d}{2}+1}. 
\end{equation}
By letting $M=c_*N^{\frac{d}{2}+1}$, we obtain Theorem \ref{thm_volume}. 
\end{proof}

As promised in Section \ref{section_intro}, we provide the proof of (\ref{volume_extend}) as follows. 
\begin{proof}[Proof of (\ref{volume_extend})]
	For any $M\ge 1$, on the event $\big\{\bm{0}\xleftrightarrow{\ge 0} \partial B(M^{\frac{2}{d+2}})\big\}^c$, we have $\mathcal{C}^{\ge 0}(\bm{0})\cap \mathbb{Z}^d \subset B(M^{\frac{2}{d+2}})$, and hence $\mathrm{vol}\big(\mathcal{C}^{\ge 0}(\bm{0})\big)= \mathrm{vol}\big(\mathcal{C}^{\ge 0}(\bm{0})\cap B(M^{\frac{2}{d+2}})\big)$. Applying the Markov's inequality, this implies that 
\begin{equation}\label{v_d_theta_d}
\begin{split}
		\nu_d(M)\le&  \theta_d(M^{\frac{2}{d+2}})+ \mathbb{P}\big( \mathrm{vol}\big(\mathcal{C}^{\ge 0}(\bm{0})\cap B(M^{\frac{2}{d+2}})\big) \ge M  \big)\\
		\le &\theta_d(M^{\frac{2}{d+2}})+ M^{-1}\mathbb{E}\Big[ \mathrm{vol}\big(\mathcal{C}^{\ge 0}(\bm{0})\cap B(M^{\frac{2}{d+2}})\big)\Big]. 
		\end{split}
\end{equation}
By the two-point function estimate in (\ref{two_point}), we have 
\begin{equation}\label{E_vol_M}
	\begin{split}
		\mathbb{E}\big[ \mathrm{vol}\big(\mathcal{C}^{\ge 0}(\bm{0})\cap B(M^{\frac{2}{d+2}})\big)\big] \lesssim & \sum\nolimits_{x\in B(M^{\frac{2}{d+2}})}  |x|^{2-d}\overset{(\ref*{|x|-a_aneqd})}{\lesssim} M^{\frac{4}{d+2}}.	\end{split}
\end{equation}
Plugging (\ref{E_vol_M}) into (\ref{v_d_theta_d}), we conclude the desired bound (\ref{volume_extend}).
\end{proof}

%\section*{Acknowledgments}

	\bibliographystyle{plain}
	\bibliography{ref}
	
\end{document}